 \newtheorem{teo}{\bf Theorem}[section]
 \newtheorem{thm}[teo]{\bf Theorem}
 \newtheorem{lem}[teo]{\bf Lemma}
 \newtheorem{cor}[teo]{\bf Corollary}
 \newtheorem{prop}[teo]{\bf Proposition}
 \newtheorem{defi}[teo]{\bf Definition}
 \newtheorem{rem}[teo]{\bf Remark}
\newcommand{\ba}{\begin{array}}
\newcommand{\ea}{\end{array}}
\newcommand{\N}{\mathbb{N}}
\newcommand{\R}{\mathbb{R}}
\newcommand{\U}{\mathcal{U}}
\newcommand{\V}{\mathcal{V}}
\newcommand{\I}{\mbox{\rm{im\,}}\!}
\newcommand{\p}{\varphi}
\newcommand{\szf}{\ell_{(X,\p)}}
\newcommand{\sza}{\ell_{(A,\p|_A)}}
\newcommand{\szb}{\ell_{(B,\p|_B)}}
\newcommand{\szab}{\ell_{(A\cap B,\p|_{A\cap B})}}
\newcommand{\eps}{\epsilon}
\newcommand{\Cech}{{\v{C}ech }}
\newcommand{\rank}{\mbox{\rm{rank}}}
\begin{document}

\title{\Cech homology for shape recognition in the presence of occlusions}
\author{Barbara Di Fabio$^{1}$ \quad Claudia
Landi$^{2}${\footnote{Corresponding author. E-mail Address:
\texttt{clandi@unimore.it}}}\\
\small{$^{1}$Dipartimento di Matematica, Universit\`a di
Bologna,}\\
\small{P.zza di Porta S. Donato 5, I--$40126$ Bologna,
Italia}\\
\small{$^2$Dipartimento di Scienze e Metodi dell'Ingegneria, Universit\`a di Modena e Reggio Emilia,}\\
\small{Via Amendola $2$, Pad. Morselli, I--$42100$ Reggio Emilia,
Italia}}
\maketitle

\begin{abstract}
In Computer Vision the ability to recognize objects in the
presence of occlusions is a necessary requirement for any shape
representation method. In this paper we investigate how the size
function of a shape changes when a portion of the shape is
occluded by another shape. More precisely,  considering a set
$X=A\cup B$ and a measuring function $\p$ on $X$, we establish a
condition so that $\szf=\sza+\szb-\szab$. The main tool we use is
the Mayer-Vietoris sequence of \Cech homology groups. This result
allows us to prove that  size functions are able to detect partial
matching between  shapes by showing a common subset of
cornerpoints.
\end{abstract}

\textbf{Keywords:} Size function, Mayer-Vietoris sequence,
persistent homology, shape occlusion

\textbf{MSC (2000):} 55N05, 68U05

\section{Introduction}
Shape matching and retrieval are key aspects in the design of
search engines based on visual, rather than keyword, information.
Generally speaking, shape matching methods rely on the computation
of a shape description, also called a signature, that effectively
captures some essential features of the object.   The ability to
perform not only global matching, but also partial matching, is
regarded  as one of the most meaningful properties in order to
evaluate the performance of a shape matching method (cf., e.g.,
\cite{VeHa01}).  Basically, the interest in robustness against
partial occlusions is motivated  by the problem of recognizing an
object partially hidden by some other foreground object in the
same image. However, there are also other situations in which
partial matching is useful, such as when dealing with the problem
of identifying  similarities between  different configurations of
articulated objects, or when dealing with unreliable object
segmentation from images. For these reasons, the ability  to
recognize shapes, even when they are partially occluded by another
pattern, has been investigated in the Computer Vision literature
by various authors, with reference to a variety of shape
recognition methods (see, e.g.,
\cite{ChCh05,GhPe05,HoOh03,MoBeMa01,SuSu05,TaVe05}).

Size functions belong to a class of methods for shape description,
characterized by the study of the topological changes in the lower
level sets  of a real valued function defined on the shape to
derive its signature (cf., e.g., \cite{BiFlXX, KaMiMr04}). In this
paper we study the robustness of size functions against partial
occlusions.   Previous works have already assessed the  robustness
of size functions with respect to continuous deformations of the
shape \cite{DAFrLa06}, the conciseness of the descriptor
\cite{FrLa01}, the invariance of the descriptor to transformation
groups \cite{DiFrPa04,VeUr94},  that are further properties
recognized as important for  shape matching methods. Size
functions, like most methods of their class, work on a shape as a
whole. In general, it is argued that global object methods are not
robust against occlusions, whereas methods based on computing
local features may be more suited to this task. Our aim is to show
that size functions are able to  preserve local information, so
that they  can manage uncertainty  due to the presence of occluded
shapes.

We model the presence of occlusions in a shape as follows. The
visible object is a locally connected compact Hausdorff space $X$.
The shape of interest $A$ is occluded by a shape $B$, so that
$X=A\cup B$.  In particular, $A$ and $B$ have the topology induced
from $X$ and are assumed to be locally connected. The shapes of
$X$, $A$, and $B$ are analyzed through the size functions
$\ell_{(X, \p)}$, $\ell_{(A, \varphi_{|A})}$, and $\ell_{(B,
\varphi_{|B})}$, respectively, where $\p:X\rightarrow \R$ is the
continuous function chosen to extract the shape features.

The starting point of this research is the fact that the size
function  $\ell_{(X, \p)}$, evaluated at a point $(u,v)$ of
$\R^2$, with $u<v$, is equal to the rank of the image of the
homomorphism induced by inclusion between the \Cech homology
groups $\check H_0(X_u)$ and  $\check H_0(X_v)$, where $X_u=\{p\in
X: \p(p)\le u\}$ and $X_v=\{p\in X:\p(p)\le v\}$.

Our main result establishes a necessary and sufficient condition
so that the equality \begin{eqnarray}\label{grassmann1} \ell_{(X,
\p)}(u, v) = \ell_{(A, \varphi_{|A})}(u, v) + \ell_{(B,
\varphi_{|B})}(u, v) - \ell_{(A\cap B, \varphi_{|A\cap B})}(u, v)
\end{eqnarray}
holds.  This is proved using the Mayer-Vietoris sequence of \Cech homology groups.

From this result we can deduce that the size function of $X$
contains features of the size functions of $A$ and $B$.  In
particular, when size functions are represented as formal series
of points in the plane through their {\em cornerpoints}
\cite{FrLa01}, relation (\ref{grassmann1}) allows us to prove that
the set of  cornerpoints of   $\ell_{(X, \p)}$ contains a subset
of  cornerpoints of $\ell_{(A, \varphi_{|A})}$. These are a kind
of ``fingerprint'' of the presence of $A$ in $X$. In other words,
size functions are able to detect  a partial matching between two
shapes by showing a common subset of cornerpoints.

The paper is organized as follows. In Section 2 we introduce
background notions about size functions. In Section 3 some general
results concerning the link between size functions and \Cech
homology are proved, with a particular emphasis on the relation
existing between discontinuity points of size functions \cite{FrLa01}
and homological critical values \cite{CoEdHa05}. The reader not
familiar with \Cech homology can find a brief survey of the
subject in Appendices A and B. However, we use \Cech homology only
for technical reasons, so that, after establishing that, in our setting, \Cech
homology groups satisfy all the
ordinary homological axioms, we can use them as ordinary homology groups.
Therefore, the reader acquainted with ordinary homology can easily
go through the next sections. In Section 4 we prove our main
result concerning the relationship between the size function of
$A$, $B$ and $A\cup B$. The relation we obtain holds subject to a
homological condition derived from the Mayer-Vietoris sequence of
\Cech homology. In the same section we also investigate this
homological condition in terms of size functions. Moreover, we
introduce the Mayer-Vietoris sequence of persistent \Cech homology
groups. Section 5 is devoted to the consequent relationship
between cornerpoints for $\sza$, $\szb$ and $\szf$ in terms of
their coordinates and multiplicities. Before concluding the paper
with a brief discussion of our results, we show some experimental
applications in Section 6, demonstrating the potential of our
approach.

\section{Background on size functions}\label{background}

Size functions are a method for shape analysis that is suitable
for any multi-dimensional data set that can be modeled as a
topological space $X$, and whose shape properties can be described
by a continuous function $\p$ defined on it (e.g. a domain of
$\R^2$ and the height function may model  terrain elevations).
Size functions were introduced by P. Frosini at the beginning of
the 1990s (cf., e.g., \cite{Fr91}), and are defined in terms of
the number of connected components of lower level sets associated
with the given space  and  function defined on it.  They belong to
a class of methods that are grounded in Morse theory, as described
in \cite{BiFlXX}.  From the theoretical point of view, the main
properties of size functions that have been studied since their
introduction are the computational issues \cite{d'A00, Fr92}, the
robustness of size functions with respect to continuous
deformations of the shape \cite{DAFrLa06}, the conciseness of the
descriptor \cite{FrLa01}, the invariance of the descriptor to
transformation groups \cite{DiFrPa04,VeUr94},  the  connections of
size functions to the natural pseudo-distance in order to compare
shapes \cite{DoFr04}, their algebraic topological counterparts
\cite{ CaFePo01,FrMu99}, and their generalization to a setting
where many functions are used at the same time to describe the
same space \cite{BiCeXX}. As far as application is concerned, the
most recent papers describe the retrieval of 3D objects
\cite{Bi06} and trademark retrieval \cite{Ce06}.

In this section we provide the reader with the necessary
mathematical background concerning size functions that will be
used in the next sections.

In this paper a pair $(X,\p)$, where $X$ denotes a non-empty
compact and locally connected Hausdorff topological space, and
$\p: X \rightarrow \R$ denotes a continuous function, is called a
{\em size pair}.  Moreover, the function $\p$ is  called a {\em
measuring function}.

Given a size pair $(X,\p)$,  for every $u\in \R$, we denote by
$X_u$ the lower level set $\{ p\in X: \p(p)\le u\}$.

\begin{defi}
Let $(X,\p)$ be a size pair. For every
$u\in \R$, we shall say that two points $p, q \in X$ are
$\langle\p \leq u\rangle$-{\em connected} if and only if a connected
subset  of $X_u$ exists, containing both $p$ and
$q$.
\end{defi}
The relation of being $\langle\p \leq u\rangle$-connected is an
equivalence relation. If two points $p, q \in X$ are $\langle\p
\leq u\rangle$-connected we shall write $p \sim_u q$. For the sake
of simplicity, we are going to use the same symbol $\sim_u$ to
denote the same equivalence relation on subsets of $X$ as well.

In the following, we shall denote by $\Delta^+$ the open half
plane $\{(u,v)\in \R^2: u<v\}$.

\begin{defi}\label{connectedness}
The \emph{size
function} associated with the size pair $(X,\p)$ is the function
$\szf: \Delta^+ \rightarrow \N$ such that, for every $(u, v) \in
\Delta^+$, $\szf(u, v)$ is equal to the number of equivalence
classes into which the set $X_u$ is divided by the relation of
$\langle\p \leq v\rangle$-connectedness.
\end{defi}

In other words,    $\ell _{({X} ,\varphi )}(u,v)$ is equal to the
number of connected components in $X_v$ that contain at least one
point of $X_u$. The finiteness of this number is a consequence of
the  compactness and local connectedness of $X$, and the
continuity of $\p$.

\par
\begin{figure}
\psfrag{P}{$P$} \psfrag{a}{$a$} \psfrag{b}{$b$} \psfrag{c}{$c$}
\psfrag{e}{$e$} \psfrag{x}{$u$} \psfrag{y}{$v$} \psfrag{0}{$0$}
\psfrag{1}{$1$} \psfrag{2}{$2$} \psfrag{3}{$3$}
\centerline{\includegraphics[width=4in]{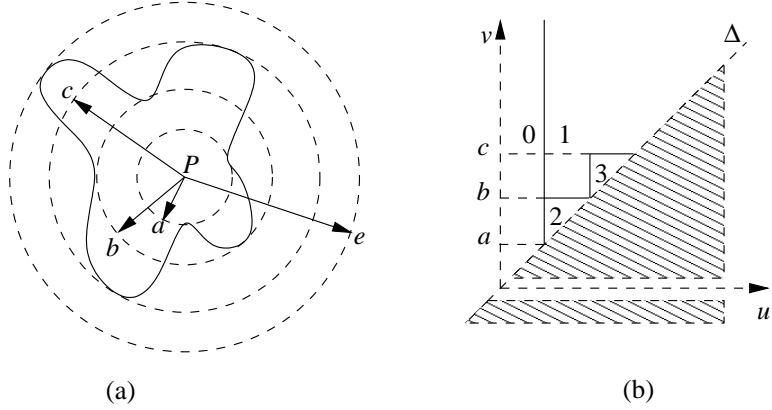}}
\caption{\footnotesize{(b) The size function of the  size pair
$(X,\p)$, where  $X$ is the curve represented by a continuous line
in (a), and $\p$ is the function ``Euclidean distance from the
point $P$''.}}\label{Fig:sf}
\end{figure}
\par

An example of size  function is illustrated in
Figure~\ref{Fig:sf}. In this  example we consider the size pair
$(X,\p)$, where $X$ is the curve of $\R^2$, represented by a
continuous line in Figure~\ref{Fig:sf}~(a), and $\p$ is the
function ``Euclidean distance from the point $P$''.  The  size
function associated with $(X,\p)$ is shown in
Figure~\ref{Fig:sf}~(b). Here, the domain of the size function,
$\Delta^+$,  is divided by solid lines, representing the
discontinuity points of the  size function. These discontinuity
points divide $\Delta^+$ into regions where the size function is
constant. The value displayed in each region is the value taken by
the size function in that region.

For instance, for $a\le u<b$, the set $X_u$ has two connected
components contained in different connected components of $X_v$,
when $u< v<b$. Therefore, $\ell_{\left(X,\varphi\right)}(u,v)=2$
for $a\le u<b$ and $u< v<b$. When $a\le u<b$ and  $v\ge b$, all
the connected components of $X_u$ are contained in the same
connected component of $X_v$. Therefore,
$\ell_{\left(X,\varphi\right)}(u,v)=1$ for $a\le u<b$ and $v\ge
b$. When $b\le u<c$ and $v\ge c$, all of the three connected
components of $X_u$ belong to the same connected component of
$X_v$, implying that in this case
$\ell_{\left(X,\varphi\right)}(u,v)=1$.

As for the values taken on the discontinuity lines, they are
easily obtained by observing that  size functions are
right-continuous, both in the variable $u$ and in the variable
$v$.

We point out that in less recent papers about size functions one
encounters a slightly different definition of size function. In
fact, the original definition of size function was based on the
relation of arcwise-connectedness. The definition used in this
paper, based on connectedness, was introduced in \cite{DAFrLa06}.
This change of definition is theoretically motivated, since it
implies
 the right-continuity of size functions, not only in the
variable $u$ but also in the variable $v$. As a consequence, many
results can be stated more neatly.

An important property of size  functions is that they can be
represented as formal series of points, called cornerpoints.
The main reference here is \cite{FrLa01}.

\begin{defi}
\label{cornerpt} For every point $p=(u,v)\in\Delta^+$, let us
define the number $\mu_X (p)$ as
$$\lim_{\eps\to 0^+}\left(\ell _{(X,\varphi )}(u+\epsilon ,v-\epsilon )-\ell_{(X,\varphi )}(u-\epsilon ,v-\epsilon )-\ell_{(X,\varphi)} (u+\epsilon ,v+\epsilon )+\ell _{(X,\varphi )}(u-\epsilon ,v+\epsilon )\right).$$
The finite number  $\mu_X (p)$ will be called {\em multiplicity
of} $p$ for $\ell_{(X,\varphi )}$. Moreover,  we shall call {\em
proper cornerpoint} for $\ell_{(X,\varphi )}$ any point
$p\in\Delta^+$ such that  the number $\mu_X (p)$ is strictly
positive.
\end{defi}

\begin{defi}
\label{cornerptinfty} For every vertical line $r$, with equation
$u=k$ in the plane $u,v$, let us identify $r$ with the pair
$(k,\infty)$, and define the number $\mu_X(r)$ as
$$\lim_{\eps\to 0^+}\left(\ell_{(X,\varphi )}(k+\epsilon,1/\epsilon)-
\ell _{(X,\varphi )}(k-\epsilon,1/\epsilon)\right).$$
When this finite number, called {\em multiplicity of} $r$ for $\ell
_{(X,\varphi )}$, is strictly positive, we call $(k,\infty)$ a {\em
cornerpoint at infinity} for the  size function.
\end{defi}

As an example of cornerpoints in  size functions, in
Figure~\ref{Fig:value} we see that the proper cornerpoints of the
depicted size function are the points $p$, $q$ and $m$ (with
multiplicity $2$, $1$ and $1$, respectively). The line $r$ is the
only cornerpoint at infinity.

\par
\begin{figure}
\psfrag{x}{$u$} \psfrag{y}{$v$} \psfrag{m}{$r$} \psfrag{p}{$p$}
\psfrag{q}{$q$} \psfrag{s}{$s$} \psfrag{r}{$m$} \psfrag{0}{$0$}
\psfrag{1}{$1$} \psfrag{3}{$3$} \psfrag{4}{$4$} \psfrag{6}{$6$}
\psfrag{7}{$7$} \psfrag{5}{$5$} \centerline{
\includegraphics[width=2in]{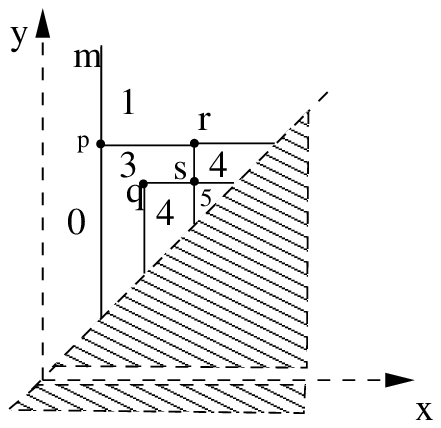}
} \caption{\footnotesize{Cornerpoints of a  size function:  in
this example, $p$, $q$ and $m$ are the only proper cornerpoints,
and have multiplicity equal to $2$ ($p$) and $1$ ($m,q$). The
point $s$ is not a cornerpoint, since its multiplicity vanishes.
The line $r$ is the only cornerpoint at infinity.}
}\label{Fig:value}
\end{figure}
\par

The importance of cornerpoints is revealed by the next  result,
showing that cornerpoints,  with their multiplicities, uniquely
determine  size functions.

The open half-plane $\Delta^+$, extended by the points at infinity
of the kind $(k,\infty)$, will be denoted by $\Delta^*$, i.e.
$$\Delta^*:=\Delta^+\cup\{(k,\infty):k\in\R\}.$$

\begin{thm}
\label{value} For every  $({\bar u},{\bar v})\in\Delta^+$ we have
\begin{eqnarray}\label{reprthm}
\ell_{(X,\varphi )}({\bar u},{\bar v})=\sum _{ (u,v)\in\Delta^*\atop u\le {\bar u}, v>\bar v }\mu_X\big((u,v)\big).
\end{eqnarray}
\end{thm}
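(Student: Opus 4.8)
The plan is to read the right-hand side of \eqref{reprthm} as a telescoping sum of the mixed second differences that define the multiplicities, and to identify the resulting boundary terms with the value $\szf(\bar u,\bar v)$ and with the cornerpoints at infinity. First I would record the elementary properties of $\szf$ that drive the argument: by Definition~\ref{connectedness} the function $\szf$ is $\N$-valued, non-decreasing in $u$, non-increasing in $v$, and (as noted after that definition) right-continuous in each variable; moreover $\szf(u,v)=0$ whenever $u$ is small enough that $X_u=\emptyset$, and for fixed $u$ the value $\szf(u,v)$ stabilizes as $v\to+\infty$ to the finite number of connected components of $X$ meeting $X_u$. I would then invoke (or, if needed, derive from these monotonicity and finiteness properties) the structural fact that in any bounded window the discontinuities of $\szf$ are confined to a locally finite family of vertical and horizontal segments, so that only finitely many proper cornerpoints lie in the region $\{u\le\bar u,\ v>\bar v\}$, and likewise only finitely many abscissae $k\le\bar u$ carry a cornerpoint at infinity.

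Next, introduce the mixed difference $D\bigl([u_1,u_2]\times[v_1,v_2]\bigr):=\szf(u_2,v_1)-\szf(u_1,v_1)-\szf(u_2,v_2)+\szf(u_1,v_2)$ for $u_1<u_2$ and $v_1<v_2$. Comparing with Definition~\ref{cornerpt}, the multiplicity $\mu_X\big((u,v)\big)$ is exactly the value of $D$ on a sufficiently small square centred at $(u,v)$; by right-continuity and the step structure the four terms stabilize for small $\eps$, so this value is attained, not merely approached. The key localisation lemma I would prove is that $D$ behaves like an atomic measure supported at the proper cornerpoints: on a rectangle crossed only by vertical (resp.\ horizontal) discontinuity segments the two $u$-jumps (resp.\ $v$-jumps) along opposite sides coincide and $D$ vanishes, whereas on a rectangle whose boundary avoids the discontinuity set one has $D\bigl(R\bigr)=\sum_{p\in\mathrm{int}\,R}\mu_X(p)$. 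Establishing this additivity---that is, showing that the support of the mixed difference is precisely the set of cornerpoints---is the main obstacle, and it is exactly where the local finiteness of the discontinuity lines is needed.

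Granting the localisation, I would fix $u_0$ with $X_{u_0}=\emptyset$ and choose a finite grid $u_0=s_0<\dots<s_M=\bar u$ and $\bar v=t_0<\dots<t_N=v_0$ whose lines avoid the discontinuity set and which isolates the finitely many proper cornerpoints in $[u_0,\bar u]\times(\bar v,v_0]$. A purely algebraic telescoping of $\sum_{i,j}D\bigl([s_{i-1},s_i]\times[t_{j-1},t_j]\bigr)$ collapses to $\szf(\bar u,\bar v)-\szf(u_0,\bar v)-\szf(\bar u,v_0)+\szf(u_0,v_0)=\szf(\bar u,\bar v)-\szf(\bar u,v_0)$, using $\szf(u_0,\cdot)=0$; by the localisation lemma the same sum equals $\sum_{u\le\bar u,\ \bar v<v\le v_0}\mu_X\big((u,v)\big)$. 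Finally I would let $v_0\to+\infty$: the non-decreasing integer step function $k\mapsto\szf(k,v_0)$ satisfies $\szf(\bar u,v_0)=\sum_{k\le\bar u}[\text{jump at }k]$, and by Definition~\ref{cornerptinfty} these jumps converge to $\mu_X\big((k,\infty)\big)$, so $\szf(\bar u,v_0)\to\sum_{k\le\bar u}\mu_X\big((k,\infty)\big)$, while the left-hand sum increases to $\sum_{u\le\bar u,\ v>\bar v}\mu_X\big((u,v)\big)$. Rearranging the resulting identity yields \eqref{reprthm}, with the right-continuity conventions fixing the treatment of any cornerpoints lying exactly on the lines $u=\bar u$ and $v=\bar v$.
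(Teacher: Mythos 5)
The paper states Theorem \ref{value} without proof, importing it from \cite{FrLa01}, so there is no in-paper argument to compare against; your proposal follows the standard route of that reference, namely telescoping the mixed second differences over a grid and identifying the boundary terms with $\szf(\bar u,\bar v)$ and with the cornerpoints at infinity. The bookkeeping in your third paragraph is correct: with the right-continuity conventions the mixed difference $D([u_1,u_2]\times[v_1,v_2])$ collects exactly the atoms in the half-open rectangle $(u_1,u_2]\times(v_1,v_2]$, so cornerpoints with $v=\bar v$ are correctly excluded, those with $u=\bar u$ correctly included, and the limit $v_0\to+\infty$ correctly produces the term $\sum_{k\le\bar u}\mu_X((k,\infty))$.

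The genuine gap is the ``localisation lemma'', which you state but do not prove, and which carries essentially all the content of the theorem --- the telescoping itself is trivial algebra. As written, your route to it passes through a structure theorem for the discontinuity set of $\szf$ (discontinuities confined to a locally finite family of vertical and horizontal segments, with jumps along a segment changing only at cornerpoints); none of this follows from monotonicity, integer-valuedness and right-continuity alone, and none of it is established in the proposal. The missing idea that makes the whole scheme work is the non-negativity of the mixed second difference on \emph{every} rectangle, i.e.\ $\szf(u_2,v_1)-\szf(u_1,v_1)\ge\szf(u_2,v_2)-\szf(u_1,v_2)$ for $u_1<u_2<v_1<v_2$. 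This is proved directly from Definition \ref{connectedness}: sending a connected component of $X_{v_1}$ to the component of $X_{v_2}$ containing it, one checks that every component of $X_{v_2}$ meeting $X_{u_2}$ but not $X_{u_1}$ is the image of some component of $X_{v_1}$ meeting $X_{u_2}$ but not $X_{u_1}$, whence the cardinality inequality. Once $D\ge 0$ is known, $D$ is a finitely additive, non-negative, integer-valued set function on half-open rectangles in $\Delta^+$, hence extends to a locally finite, purely atomic measure whose atoms are exactly the proper cornerpoints with masses $\mu_X(p)$; this yields for free the local finiteness of cornerpoints (the number of atoms in a rectangle is at most $D$ of that rectangle, which is at most $\szf(u_2,v_1)<\infty$), the vanishing of $D$ on cornerpoint-free rectangles, and the stabilisation of the four terms in Definition \ref{cornerpt} for small $\eps$ (since $\eps\mapsto D(R_\eps(p))$ is then monotone, integer-valued and bounded). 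Without $D\ge 0$ you cannot bound the number of cornerpoints in a region by a value of $\szf$, so both the finiteness claims in your first paragraph and the additivity claim in your second remain unsupported.
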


The equality~(\ref{reprthm})  can be checked in the example of
Figure~\ref{Fig:value}. The points where the  size function takes
value $0$ are exactly those for which there is no cornerpoint
(either proper or at infinity) lying to the left and above them.
Let us take a point  in the region of the domain where the  size
function takes the value  $3$. According to the above theorem, the
value of the  size function at that point must be equal to
$\mu(r)+\mu(p)=3$.

\section{The link between size functions and \Cech homology}

In this section we prove that the value of the size function can
be computed in terms of rank of \Cech homology groups. We then
analyze the links between homological critical values and size
functions.

The idea of relating size functions to homology groups is not a
new one. Already in \cite{CaFePo01}, introducing the concept of
{\em size functor}, this link was recognized, when the space $X$
is a smooth manifold and $\p$ is a Morse function. Roughly
speaking,  the size functor associated with the pair $(X, \p)$
takes a pair of real numbers $(u,v)\in \Delta^+$ to the image of
the homomorphism from $H_k(X_u)$ to $H_k(X_v)$, induced by
inclusion of $X_u$ into $X_v$. Here homology means singular
homology. This also shows a link between size functions and $0$th
persistent homology groups \cite{{EdLeZo02}}. Later, the relation
between size functions and singular homology groups of closed
manifolds endowed with Morse functions emerged again in
\cite{AlCo07}, studying the {\em Morse shape descriptor}.

The reason for further exploring  the homological interpretation
of size function in the present paper is technical. As explained
in Section 2, our definition of size function is based on the
relation of connectedness (cf. Definition \ref{connectedness}).
This implies that singular homology, whose $0$th group detects the
number of arcwise-connected components, is no longer suited to
dealing with size functions. Adding further assumptions on $X$, so
that connectedness and arcwise-connectedness coincide on $X$, such
as asking $X$ to be locally arcwise-connected, is not sufficient
to solve the problem. Indeed, we emphasize the fact that  in the
definition of $\szf$ we count the components not of the space
$X$ itself, but those of the lower level sets of $X$ with respect
to the continuous function $\p$, and  it is not guaranteed that
locally arcwise-connectedness is inherited by lower level sets.

The  tool we need for counting connected components instead of
arcwise-connected components is \Cech homology (a brief review of
this subject can be found in Appendix \ref{cech}).
 Indeed, in \cite{Wi49}  the
following result is proved, under the assumption that $X$ is a
compact Hausdorff space.

\begin{thm}[\cite{Wi49}, Thm. V 11.3a]\label{ncc}
The number of components of a space $X$ is exactly the rank of the $0$th \Cech Homology group.
\end{thm}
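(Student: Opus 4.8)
The plan is to unwind the definition of \Cech homology as an inverse limit and reduce the statement to a combinatorial fact about nerves of open covers. Since $X$ is compact, its \Cech homology is computed over the directed set of \emph{finite} open covers ordered by refinement: writing $N(\U)$ for the nerve of a finite open cover $\U$, one has $\check H_0(X)\cong\varprojlim_{\U}H_0\big(N(\U)\big)$, the bonding maps being induced by the simplicial projections attached to refinements. As $N(\U)$ is a finite simplicial complex, $H_0\big(N(\U)\big)$ is free abelian with rank equal to the number of connected components of $N(\U)$. Thus everything reduces to controlling how the number of components of the nerve behaves under refinement and in the limit.

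First I would record the dictionary between components of the nerve and clopen subsets of $X$. Each connected component of $N(\U)$ is a maximal family of members of $\U$ that can be chained together through nonempty pairwise intersections; the union of the members in one such family is open, and since a point lying in members from two different families would force those members to intersect, distinct families yield \emph{disjoint} open sets whose union is $X$. Hence the components of $N(\U)$ induce a partition of $X$ into clopen pieces, one per component, so $\rank H_0\big(N(\U)\big)$ equals the number of pieces of this clopen partition. Because each piece is clopen it is a union of connected components of $X$, so this rank is bounded above by the number of components of $X$. The same chaining argument gives the key special case: if $X$ is connected then \emph{every} finite open cover has connected nerve, since a disconnection of $N(\U)$ would exhibit two disjoint nonempty open sets covering $X$.

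Next I would use compactness together with the local connectedness of $X$ to write $X=X_1\sqcup\cdots\sqcup X_m$ as a finite disjoint union of clopen connected components (local connectedness makes the components open, hence clopen, and compactness forces them to be finite in number). Any finite open cover can be refined by intersecting its members with the sets $X_1,\dots,X_m$, so the covers \emph{subordinate} to this decomposition---those whose members each lie in a single $X_i$---are cofinal. For such a cover no member contained in $X_i$ meets a member contained in $X_j$ when $i\ne j$, while inside each connected $X_i$ the nerve is connected by the previous paragraph; hence its nerve has exactly $m$ components. Along this cofinal family the bonding maps match the $m$ components of the two nerves according to the labels $X_1,\dots,X_m$, so they are isomorphisms on $H_0$. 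Since the family is cofinal, the inverse limit over all covers coincides with its limit along the family, giving $\check H_0(X)\cong\mathbb{Z}^m$, whose rank is $m$, the number of components of $X$.

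The step I expect to be most delicate is the passage from the components of the nerves to the components of $X$ itself: one must check that refining a cover can only split the clopen pieces further (so the counts are monotone) and that no finer clopen splitting than the decomposition into the $m$ genuine components survives in the limit. Monotonicity together with the upper bound $m$ from the second paragraph shows the counts stabilize, and the connected-nerve lemma shows the stable value is exactly $m$; this is precisely where compactness and local connectedness are used, to guarantee that the clopen partitions detected by open covers converge to the honest decomposition into connected components. Once this identification is in place, the inverse system of finite free abelian groups is eventually constant and the rank computation is immediate.
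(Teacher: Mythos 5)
The paper never proves this statement: it is quoted verbatim from Wilder, and the only place its technique surfaces is in the proof of Lemma \ref{nccrel}, which runs on Proposition \ref{simplechain} (every open cover of a connected space contains a simple chain between any two given points). Your central combinatorial step --- the nerve of a finite open cover of a connected space is connected, because a disconnection of the nerve would split $X$ into two disjoint nonempty open sets --- is precisely that fact, so at its core your argument is the same as Wilder's. What you do differently is the bookkeeping: you compute $\check H_0(X)$ as an inverse limit along a cofinal family of covers subordinate to the decomposition $X=X_1\sqcup\cdots\sqcup X_m$ into components, on which the bonding maps are isomorphisms, instead of manipulating the coordinates of a \Cech cycle directly. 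For a compact, Hausdorff, \emph{locally connected} space this is correct and clean, and the dictionary between components of the nerve and clopen partitions of $X$ is handled properly.

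The substantive problem is hypothesis creep. The theorem is stated (and is needed) for compact Hausdorff spaces only: in Corollaries \ref{sfrelhom} and \ref{sfhomology} it is applied to the lower level sets $X_v$, which inherit compactness and the Hausdorff property from $X$ but \emph{not} local connectedness. Your proof invokes local connectedness twice in an essential way: to make the components open, hence clopen and finite in number, and to produce the cofinal family of subordinate covers by intersecting a given cover with the $X_i$. Without it the components need not be open, there may be infinitely many of them, and the inverse system of the $H_0$ of the nerves need not stabilize, so the argument as written does not reach the theorem in the generality in which it is stated and used. In the finite-component case the repair is to replace local connectedness by the classical fact that in a compact Hausdorff space components coincide with quasi-components, which again forces finitely many components to be clopen; the infinite case requires the cardinal version of the statement and a different limiting argument. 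Two minor points: the coefficients fixed in this section form a vector space $G$ over a field, so your conclusion should read $\check H_0(X)\cong G^m$ rather than $\mathbb{Z}^m$; and your closing paragraph on monotonicity of the component counts is already subsumed by the cofinality argument.
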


One of the main problems in the use of \Cech homology is that, in
general, the long sequence of the pair may fail to be exact.
However, the exactness of this sequence holds, provided that some
assumptions are satisfied: the space must be compact and the group
$G$ must be either a compact Abelian topological group or a vector
space over a field (see Appendix \ref{exactness}). In view of
establishing a connection between size functions and \Cech
homology, it is important to recall that when $(X,\p)$ is a size
pair, $X$ is assumed to be compact and Hausdorff and $\p$ is
continuous. Therefore, the lower level sets $X_u$ are themselves
Hausdorff and compact spaces. In order that the \Cech homology
sequence of the pair  be available, we will take $G$ to be a
vector space over a field. Therefore, from now on, we will take
the \Cech homology sequence of the pair for granted and we will
denote the \Cech homology groups of $X$ over $G$ simply by $\check
H_p(X)$, maintaining the notation $H_p(X)$ for ordinary homology.
From \cite{EiSt52} we know that $\check H_p(X)$ is a vector space
over the same field.

We shall first furnish a link between  size functions and relative
\Cech homology groups. We need the following preliminary results.

\begin{defi}[\cite{Wi49}, Def. I 12.2]
If $X$ is a space,  and $x,y\in X$, then a finite collection of
sets $X^1$, $X^2$, $\ldots$, $X^n$ will be said to form a {\em
simple chain} of sets from $x$ to $y$  if (1) $X^i$ contains $x$
if and only if $i=1$; (2)  $X^i$ contains $y$ if and only if
$i=n$; (3) $X^i\cap X^j\ne \emptyset$, $i<j$, if and only if
$j=i+1$.
\end{defi}

\begin{prop}[\cite{Wi49}, Cor. I 12.5]\label{simplechain}
A space $X$ is connected if and only if, for arbitrary $x,y\in X$
and covering $\U$ of $X$ by open sets, $\U$ contains a simple
chain from $x$ to $y$.
\end{prop}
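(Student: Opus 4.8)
The plan is to prove the two implications separately, handling the forward direction ``$\U$ always contains a simple chain $\Rightarrow X$ connected'' by contraposition, and the reverse direction ``$X$ connected $\Rightarrow$ a simple chain exists'' by first producing an arbitrary chain and then compressing it. For the contrapositive, suppose $X$ is not connected, so $X=P\cup Q$ with $P,Q$ nonempty, disjoint and open. Pick $x\in P$ and $y\in Q$, and take the open covering $\U=\{P,Q\}$. Since $P\cap Q=\emptyset$, no two distinct members of $\U$ meet, so in any finite sequence of members of $\U$ whose consecutive terms intersect the terms must all coincide; as $x\in P$, $y\in Q$ and $P\neq Q$, no such sequence can join $x$ to $y$. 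Hence $\U$ admits no simple chain from $x$ to $y$, contradicting the hypothesis and forcing $X$ to be connected.

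For the other direction, fix $x,y\in X$ and an open covering $\U$. First I would show that some chain exists, by a clopen argument. Call a point $z$ \emph{reachable} if there are members $U^1,\dots,U^k$ of $\U$ with $x\in U^1$, $z\in U^k$ and $U^i\cap U^{i+1}\neq\emptyset$ for $1\le i<k$, and let $S$ be the set of reachable points. Then $S$ is open: any $w$ lying in the last set $U^k$ of a chain witnessing $z\in S$ is itself reachable through the same chain, so $U^k\subseteq S$. And $S$ is closed: if $z\notin S$ and $z\in U\in\U$, then no point of $U$ can be reachable, since otherwise appending $U$ to a witnessing chain would reach $z$; thus $U\subseteq X\setminus S$. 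As $x\in S$, connectedness of $X$ gives $S=X$, so $y$ is reachable and a chain from $x$ to $y$ exists.

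It then remains to upgrade an arbitrary chain to a simple one. Among all chains from $x$ to $y$ in $\U$ I would select one, say $X^1,\dots,X^n$, of minimal length $n$, and verify the three defining conditions. If $x\in X^j$ for some $j>1$, then $X^j,\dots,X^n$ is a strictly shorter chain from $x$ to $y$, contradicting minimality; this yields condition (1), and symmetrically condition (2). For condition (3), consecutive sets meet by construction, whereas if $X^i\cap X^j\neq\emptyset$ for some $i<j$ with $j>i+1$, then $X^1,\dots,X^i,X^j,\dots,X^n$ is once more a strictly shorter chain, a contradiction. Hence the minimal chain is simple. I expect the main obstacle to lie in the reverse direction, specifically in the clopen argument establishing the existence of a chain; the passage from a chain to a simple chain is then a short combinatorial consequence of minimality.
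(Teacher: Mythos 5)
Your proof is correct. The paper offers no argument for this proposition at all --- it is quoted directly from Wilder (\cite{Wi49}, Cor.\ I 12.5) as a known result --- so there is nothing internal to compare against; your two-step argument (the clopen ``reachable set'' device to produce \emph{some} chain from $x$ to $y$, followed by passing to a chain of minimal length to secure the three defining conditions of a simple chain) is the standard classical proof and is essentially the one found in the cited source.
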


Following the proof  used in \cite{Wi49} to prove Theorem
\ref{ncc}, we can also interpret relative homology groups in terms
of the number of connected components.

\begin{lem}\label{nccrel}
For every  pair of spaces $(X, A)$, with $X$ a compact Hausdorff
space and $A$ a closed subset of $X$, the number of connected
components of $X$ that do not meet $A$ is equal to the rank of
$\check{H}_0(X,A)$.
\end{lem}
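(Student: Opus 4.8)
The plan is to deduce the statement from the $0$-dimensional tail of the \Cech homology exact sequence of the pair $(X,A)$, combined with the interpretation of $\check H_0$ supplied by Theorem \ref{ncc}. Since $G$ is a vector space over a field and $X$ is compact Hausdorff with $A$ closed, the long exact sequence of the pair is at our disposal, and in the lowest degrees it reads
\begin{equation*}
\cdots \to \check H_0(A) \xrightarrow{i_*} \check H_0(X) \xrightarrow{j_*} \check H_0(X,A) \to 0,
\end{equation*}
where $i_*$ is induced by the inclusion $A\hookrightarrow X$ and the sequence terminates because $\check H_{-1}(A)=0$. Exactness gives $\check H_0(X,A)\cong \operatorname{coker} i_*=\check H_0(X)/\I i_*$, so the whole problem reduces to identifying this cokernel.

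By Theorem \ref{ncc}, $\check H_0(X)$ has dimension equal to the number of components of $X$, and the natural generators are the classes carried by the components; likewise for $\check H_0(A)$. The first real step is to verify that, under these identifications, $i_*$ sends the generator of a component $D$ of $A$ to the generator of the unique component of $X$ containing $D$. This is the naturality of the identification in Theorem \ref{ncc} with respect to inclusions, which can be read off at the level of the nerves of open covers: a member of a cover of $X$ that meets $A$ restricts to a vertex of the nerve of the restricted cover lying in the matching $0$-chain class, and Proposition \ref{simplechain} guarantees that simple chains are preserved. I expect this naturality to be the only delicate point, since Theorem \ref{ncc} as quoted yields the isomorphism only abstractly; making the induced map explicit requires tracking the inverse-limit construction, but no idea beyond Proposition \ref{simplechain} enters.

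Granting this, the conclusion is purely algebraic. Every component of $X$ meeting $A$ contains a point of $A$, hence contains at least one component of $A$, so its generator lies in $\I i_*$; conversely the generators attached to components of $X$ disjoint from $A$ are never hit. Therefore $\I i_*$ is exactly the subspace spanned by the generators corresponding to components of $X$ that meet $A$, and $\check H_0(X)/\I i_*$ is free on the generators corresponding to the components of $X$ disjoint from $A$. Reading off the dimension gives the asserted equality of ranks. As a cross-check, and as an alternative self-contained route mirroring the proof of Theorem \ref{ncc} in \cite{Wi49}, one can instead show directly that for each open cover $\U$ the relative simplicial group $H_0(N(\U),N(\U_A))$ is free on the components of the nerve $N(\U)$ all of whose vertices are disjoint from $A$, and then pass to the inverse limit, again via Proposition \ref{simplechain} together with the fact that a component of the compact space $X$ disjoint from the closed set $A$ can be enclosed in a clopen set disjoint from $A$, so as to match these nerve components with the components of $X$ that avoid $A$.
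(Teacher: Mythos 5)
Your argument is correct, but it is organized quite differently from the paper's. The paper does not invoke the long exact sequence of the pair at this point: it works directly with relative \Cech cycles, showing that when $X$ is connected and $A\neq\emptyset$ every coordinate $z_0(\U)=\sum_j a_j\cdot U_j$ of a relative cycle bounds, because Proposition \ref{simplechain} supplies, for each $U_j$, a simple chain from $U_j$ to a fixed open set $\bar U$ meeting $A$, hence a $1$-chain $c_1^j$ with $\partial c_1^j=U_j-\bar U$ and therefore $z_0(\U)\sim\sum_j a_j\cdot\bar U\sim 0$ relative to $\U_A$; the general case then follows by splitting over components and quoting Theorem \ref{ncc} for the components disjoint from $A$. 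You instead reduce to $\check H_0(X,A)\cong\check H_0(X)/\mathrm{im}\,i_*$ via the tail of the exact sequence of the pair and compute the cokernel using the component bases of $\check H_0(A)$ and $\check H_0(X)$. This is a clean algebraic reduction, and it essentially anticipates the computation the paper performs later in Corollary \ref{sfhomology}; but note what it costs. First, it uses the exactness axiom, hence the standing hypotheses that $G$ is a vector space over a field and the spaces are compact, whereas the paper's cycle-level argument is independent of exactness. Second, it needs not just the rank statement of Theorem \ref{ncc} but a natural identification of $\check H_0$ with the space spanned by the components, compatible with $i_*$ --- you correctly flag this as the delicate point, and verifying it (that $i_*$ carries the class of a component $D$ of $A$ to the class of the component of $X$ containing $D$) requires essentially the same simple-chain manipulations in the nerves that the paper's direct proof uses. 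So the two routes rest on the same combinatorial core, with yours trading explicit cycle constructions for the exact sequence plus a naturality check.
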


\begin{proof}
When $A$ is empty, the claim reduces to Theorem \ref{ncc}. When
$A$ is non-empty, if $X$ is connected  then $\check{H}_0(X,A)=0$.
Indeed, under these assumptions, let $z_0=\{z_0(\U)\}$ be a  \Cech
cycle in $X$ relative to $A$, with $z_0(\U)=\sum_{j=1}^ka_j\cdot
U_j$, $a_j \neq 0$. Since $A\subseteq X$ is non-empty, there is an
open set $\bar U\in \U$ such that $\bar U \in \U_A$. Now we can
use Proposition \ref{simplechain} to show that, for every $1\le
j\le k$, there exists a sequence ${\mathcal S}_j$ of elements of
$\U$, beginning with $U_j$ and ending with $\bar U$. So,
associated with ${\mathcal S}_j$, there is a $1$-chain $c_1^j$
such that $\partial c_1^j=U_j-\bar U$. Hence, $\partial  \sum
_{j=1}^ka_j\cdot c_1^j= \sum _{j=1}^ka_j\cdot U_j-\sum
_{j=1}^ka_j\cdot \bar U=z_0(\U)-\sum _{j=1}^ka_j\cdot\bar U$,
proving that $z_0(\U)$ is homologous to $0$ in ${Z}_0(X,A)$. By
the arbitrariness of $\U$, each coordinate of $z_0$ is homologous
to $0$, implying that $\check{H}_0(X,A)=0$.

In general, if $X$ is not connected, the preceding argument shows
that only those connected components of $X$ that do not meet $A$
contain a non-trivial  \Cech cycle relative to $A$. Then the claim
follows from Theorem \ref{ncc}.
\end{proof}

As an immediate consequence of Lemma \ref{nccrel}, we have the
following link between size functions and relative \Cech homology
groups. It is analogous to the link given in \cite{AlCo07} using
singular homology for size functions, defined in terms of the
arcwise-connectedness relation.

\begin{cor}\label{sfrelhom}
For every  size pair $(X,\p)$, and every $(u,v)\in \Delta^+$, it
holds that the value $\szf(u,v)$ equals the rank  of
$\check{H}_0(X_v)$ minus the rank of $\check{H}_0(X_v, X_u)$.
\end{cor}

\begin{proof}
The claim follows from Lemma \ref{nccrel}, observing that
$\szf(u,v)$ is equal to the number of connected components of
$X_v$ that meet $X_u$.
\end{proof}

We now show that the size function can also be expressed as the
rank of the image of the homomorphism between \Cech homology
groups, induced by inclusion of $X_u$ into $X_v$. This link is
analogous to the existing one between the size functor and size
functions, defined using the arcwise-connectedness relation
\cite{CaFePo01}.

Given a size pair $(X,\p)$, and  $(u,v)\in \Delta^+$, we denote by
$\iota^{u,v}$ the inclusion of $X_u$ into $X_v$. This mapping
induces  a homomorphism of \Cech homology groups
$\iota_p^{u,v}:\check H_p(X_u)\rightarrow \check H_p(X_v)$ for
each integer $p\ge 0$.

Following \cite{EdLeZo02}, we can define the
persistent \Cech homology groups.

\begin{defi}\label{persistentcechhom}
Given a size pair $(X,\p)$ and a point $(u,v)\in \Delta^+$, the
{\em $p$th persistent
 \Cech homology group $\check{H}_p^{u, v}$} is the image of the homomorphism $\iota_p^{u,v}$
induced between the $p$th \Cech homology groups by the inclusion
mapping of $X_u$ into $X_v$: $\mathit{\check{H}_p^{u, v}(X) = \I
\iota_p^{u, v}}$.
\end{defi}

\begin{cor}\label{sfhomology}
For every  size pair $(X,\p)$, and every $(u,v)\in \Delta^+$, it
holds that the value $\szf(u,v)$ equals the rank  of the $0$th
persistent
 \Cech homology group $\check{H}_0^{u, v}(X)$.
\end{cor}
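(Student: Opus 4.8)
The plan is to prove that $\szf(u,v)$ equals $\rank \check{H}_0^{u,v}(X) = \rank \I \iota_0^{u,v}$ by relating this image to the two quantities already computed in Corollary~\ref{sfrelhom}, namely $\rank \check{H}_0(X_v)$ and $\rank \check{H}_0(X_v, X_u)$. The natural tool is the long exact sequence of the pair $(X_v, X_u)$ in \Cech homology, which is available in our setting since $X_v$ is compact Hausdorff, $X_u$ is closed in $X_v$, and the coefficient group $G$ is a vector space over a field. The relevant portion of this sequence reads
\begin{eqnarray*}
\check{H}_0(X_u) \stackrel{\iota_0^{u,v}}{\longrightarrow} \check{H}_0(X_v) \stackrel{j_0}{\longrightarrow} \check{H}_0(X_v, X_u) \longrightarrow 0,
\end{eqnarray*}
where $j_0$ is induced by inclusion of pairs and the sequence terminates at $0$ because $\check{H}_{-1}$ vanishes.

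First I would exploit exactness at $\check{H}_0(X_v, X_u)$: since the map to $0$ is surjective onto the trivial group, $j_0$ is itself surjective. Hence $\rank \check{H}_0(X_v, X_u) = \rank \I j_0 = \rank \check{H}_0(X_v) - \rank \ker j_0$, using the rank-nullity theorem, which applies because all these \Cech homology groups are vector spaces over a field (as recalled in the excerpt, citing \cite{EiSt52}). Next I would use exactness at $\check{H}_0(X_v)$, which gives $\ker j_0 = \I \iota_0^{u,v}$. Combining these two facts yields
\begin{eqnarray*}
\rank \I \iota_0^{u,v} = \rank \check{H}_0(X_v) - \rank \check{H}_0(X_v, X_u).
\end{eqnarray*}

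The right-hand side is precisely the expression given by Corollary~\ref{sfrelhom} for $\szf(u,v)$, so the claim follows immediately by comparison. By Definition~\ref{persistentcechhom} the left-hand side is exactly $\rank \check{H}_0^{u,v}(X)$, completing the argument.

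I do not anticipate a serious obstacle here, since the heavy lifting has already been done in Corollary~\ref{sfrelhom}; the remaining work is the standard linear-algebra bookkeeping of a three-term exact sequence. The one point requiring care is ensuring that the long exact sequence of the pair is genuinely at our disposal in \Cech homology---this is exactly why the excerpt fixes $G$ to be a vector space over a field and restricts to compact Hausdorff spaces (see Appendix~\ref{exactness}). Granting that, the proof is a direct splicing together of exactness and rank-nullity, and the vector-space hypothesis guarantees that ranks behave additively across exact sequences.
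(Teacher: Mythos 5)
Your proof is correct and follows essentially the same route as the paper: both arguments extract $\rank \I \iota_0^{u,v} = \rank \check{H}_0(X_v) - \rank \check{H}_0(X_v, X_u)$ from the tail of the long exact sequence of the pair $(X_v, X_u)$ and then identify the right-hand side with $\szf(u,v)$. The only cosmetic difference is that the paper invokes Theorem~\ref{ncc} and Lemma~\ref{nccrel} at the final step, while you cite Corollary~\ref{sfrelhom}, which is the same fact already packaged.
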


\begin{proof}
Let us consider the final terms of the long exact sequence of the pair $(X_v,X_u)$:
$$\ldots\rightarrow\check{H}_0(X_u)\stackrel{\iota_0^{u,v}}{\rightarrow} \check{H}_0(X_v)\rightarrow \check{H}_0(X_v,X_u)\rightarrow 0.$$
From the exactness of this sequence we deduce that
$$\rank\check{H}_0^{u, v}(X)= \rank\I
{\iota}_0^{u, v}=\rank \check{H}_0(X_v)-\rank
\check{H}_0(X_v,X_u).$$ Applying Theorem \ref{ncc} and Lemma
\ref{nccrel}, the rank of $\check{H}_0^{u, v}(X)$ turns out to be
equal to the number of connected components of $X_v$ that meet
$X_u$, that is  $\szf(u,v)$.
\end{proof}

\subsection{Some useful results}
In this section we show the link between homological critical
values and discontinuity points of size functions. Homological critical
values have been  introduced in \cite{CoEdHa05}, and intuitively
correspond to levels where the lower level sets undergo a
topological change. Discontinuity points of size functions have been
thoroughly studied in \cite{FrLa01}.

In particular, we prove that if a point $(u,v) \in \Delta^+$ is a
discontinuity point for a size function, then either $u$ or $v$ is
a level where the $0$-homology of the lower level set changes
(Proposition \ref{0-critValueSF}). Then we show that also the
converse is true when the number of homological critical values is
finite (Proposition \ref{finitecrit}). However, in general,  there
may exist homological critical values not generating
discontinuities for the size function (Remark \ref{patologic}). We
conclude the section with a result concerning the surjectivity of
the homomorphism induced by inclusion (Proposition \ref{surj}).

\begin{defi}\label{p-critValue}
Let $(X,\p)$ be a size pair. A \emph{homological $p$-critical
value} for $(X,\p)$ is a real number $w$ such that, for every
sufficiently small $\eps > 0$, the map ${\iota}_p^{w - \eps, w +
\eps}: \check{H}_p(X_{w - \eps})\rightarrow \check{H}_p(X_{w +
\eps})$ induced by inclusion is not an isomorphism.
\end{defi}

The following results show the behavior of a size function
according to whether it is calculated in correspondence with
homological 0-critical values or not.

\begin{prop}\label{0-critValueSF}
If $w \in \R$ is not a homological $0$-critical value for the size
pair $(X, \p)$, then the following statements are true:
\begin{enumerate}
\item For every $v > w$, $\underset{\eps \rightarrow
0^+}\lim\left(\szf(w + \eps, v)- \szf(w - \eps, v)\right)=0$;
\item For every $u < w$, $\underset{\eps \rightarrow
0^+}\lim\left(\szf(u, w - \eps)- \szf(u, w + \eps)\right)=0.$
\end{enumerate}
\end{prop}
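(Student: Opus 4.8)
The plan is to pass from the combinatorial description of $\szf$ to the homological one supplied by Corollary \ref{sfhomology}, namely $\szf(u,v)=\rank \I \iota_0^{u,v}$, and then to exploit the functoriality of the inclusion-induced maps on $\check{H}_0$ together with the non-criticality of $w$. Throughout I fix an $\eps>0$ small enough for the relevant level sets to be nested, and I use that, since $w$ is not a homological $0$-critical value, Definition \ref{p-critValue} furnishes arbitrarily small $\eps>0$ for which $\iota_0^{w-\eps,w+\eps}\colon \check{H}_0(X_{w-\eps})\to \check{H}_0(X_{w+\eps})$ is an isomorphism.

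For statement (i), I would fix $v>w$ and choose $\eps$ with $w+\eps<v$, so that $X_{w-\eps}\subseteq X_{w+\eps}\subseteq X_v$. Functoriality gives $\iota_0^{w-\eps,v}=\iota_0^{w+\eps,v}\circ \iota_0^{w-\eps,w+\eps}$. When $\iota_0^{w-\eps,w+\eps}$ is an isomorphism it is in particular surjective, so precomposition by it leaves the image unchanged, $\I \iota_0^{w-\eps,v}=\I \iota_0^{w+\eps,v}$, whence $\szf(w-\eps,v)=\szf(w+\eps,v)$ by Corollary \ref{sfhomology}. Statement (ii) is entirely symmetric: fixing $u<w$ and $\eps$ with $u<w-\eps$, the chain $X_u\subseteq X_{w-\eps}\subseteq X_{w+\eps}$ yields $\iota_0^{u,w+\eps}=\iota_0^{w-\eps,w+\eps}\circ \iota_0^{u,w-\eps}$; now $\iota_0^{w-\eps,w+\eps}$ is injective, so postcomposition by it preserves the rank of the image, $\rank \I \iota_0^{u,w+\eps}=\rank \I \iota_0^{u,w-\eps}$, i.e. $\szf(u,w-\eps)=\szf(u,w+\eps)$.

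The main obstacle is that the two displayed equalities have been established only for those $\eps$ at which $\iota_0^{w-\eps,w+\eps}$ happens to be an isomorphism, whereas the statement asks for the vanishing of a limit as $\eps\to 0^+$. To bridge this gap I would invoke the monotonicity of size functions: $\szf(\cdot,v)$ is non-decreasing in its first argument and $\szf(u,\cdot)$ is non-increasing in its second, since $\szf(u,v)$ counts the connected components of $X_v$ meeting $X_u$. Consequently each of the two differences is non-negative and monotone in $\eps$; a non-negative quantity that is monotone in $\eps$ and vanishes for arbitrarily small $\eps$ must vanish identically for all sufficiently small $\eps$, so both limits are $0$. I expect this quantifier bookkeeping, rather than the homological algebra, to be the delicate point, since the definition of non-critical value only guarantees isomorphisms along a sequence $\eps\to 0^+$ and it is the monotonicity of $\szf$ that upgrades the pointwise equalities to the desired limit.
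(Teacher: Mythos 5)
Your proof is correct, and it takes a genuinely different route from the paper's. The paper works with the relative groups $\check{H}_0(X_v,X_{w\pm\eps})$ (resp.\ $\check{H}_0(X_{w\pm\eps},X_u)$) via Corollary \ref{sfrelhom}: it sets up the ladder of long exact sequences of the two pairs and applies the Five Lemma to conclude that the induced map on relative homology is an isomorphism, whence the two size-function values agree. You instead stay entirely at the level of absolute groups, using Corollary \ref{sfhomology} and the factorizations $\iota_0^{w-\eps,v}=\iota_0^{w+\eps,v}\circ\iota_0^{w-\eps,w+\eps}$ and $\iota_0^{u,w+\eps}=\iota_0^{w-\eps,w+\eps}\circ\iota_0^{u,w-\eps}$, observing that precomposition with a surjection preserves the image and postcomposition with an injection preserves the rank of the image. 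This is more elementary (no Five Lemma, no relative homology) and it isolates exactly which half of ``isomorphism'' each statement needs --- surjectivity for \emph{(i)}, injectivity for \emph{(ii)} --- a decomposition that the paper only exploits later, in Proposition \ref{finitecrit}. Your final step, upgrading the equalities at a sequence of good $\eps$'s to the vanishing of the limit via monotonicity of $\szf$ in each variable, is the same bookkeeping the paper performs (it sandwiches $\szf(w-\eps,v)\le\szf(w+\eps,v)$ between the two equal values at $\bar\eps$), and your monotonicity-in-$\eps$ formulation of it is sound.
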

\begin{proof}
We begin by proving \emph{(i)}. Let $v > w$. For every $\eps > 0$
such that $v > w + \eps$, we can consider the commutative diagram:
\begin{eqnarray}
\begin{array}{c}\label{PairSequencev}
\begin{centering}
\hfill \xymatrix {\cdots \ar[r]&\check{H}_0(X_{w -
\eps})\ar[r]^(.6){{\iota}_0^{w -\eps, v}} \ar[d]^{\iota_0^{w -
\eps, w + \eps}}&\check{H}_0(X_v)\ar[r]^(.4){h}
\ar[d]^{\iota_0^{v,v}} &\check{H}_0(X_v, X_{w - \eps})\ar[r]
\ar[d]^{j}&0\ar[d]_{0}
\\\cdots \ar[r]&\check{H}_0(X_{w+ \eps})\ar[r]^(.6){{\iota}_0^{w +\eps, v}}&\check{H}_0(X_v)\ar[r]^(.4){k}&\check{H}_0(X_v, X_{w + \eps})\ar[r]&0}\hfill
\end{centering}
\end{array}
\end{eqnarray}
where the two horizontal lines are exact homology sequences of the
pairs $(X_v, X_{w - \eps})$ and $(X_v, X_{w + \eps})$,
respectively, and the vertical maps are homomorphisms induced by
inclusions. By the assumption that $w$ is not a homological
0-critical value, there exists an arbitrarily small
$\overline{\eps}>0$ such that $\iota_0^{w - \overline{\eps},w +
\overline{\eps}}$ is an isomorphism. Therefore, by applying the
Five Lemma in diagram (\ref{PairSequencev}) with $\eps =
\overline{\eps}$, we deduce that $j$ is an isomorphism. Thus,
$\rank \check{H}_0(X_v, X_{w - \overline{\eps}})= \rank
\check{H}_0(X_v, X_{w + \overline{\eps}})$, and consequently, by
Corollary \ref{sfrelhom}, $\szf(w + \overline{\eps}, v)= \szf(w -
\overline{\eps}, v).$ Hence, since size functions are
non-decreasing in the first variable, it may be concluded that
$\underset{\eps \rightarrow 0^+}\lim\left(\szf(w + \eps, v)-
\szf(w - \eps, v)\right)=0$.

Now, let us proceed by proving \emph{(ii)}. Let $u < w$. For every
$\eps> 0$ such that $u < w - \eps$, let us consider the following
commutative diagram:
\begin{eqnarray}
\begin{array}{c}\label{PairSequenceu}
\begin{centering}
\hfill \xymatrix {\cdots
\ar[r]&\check{H}_0(X_u)\ar[r]^(.55){\iota_0^{u,w- \eps}}
\ar[d]^{\iota_0^{u,u}}&\check{H}_0(X_{w - \eps})\ar[r]^(.45){h}
\ar[d]^{\iota_0^{w - \eps,w + \eps}} &\check{H}_0(X_{w -
\eps},X_u)\ar[r] \ar[d]^{j}&0\ar[d]_{0}
\\\cdots \ar[r]&\check{H}_0(X_u)\ar[r]^(.55){\iota_0^{u,w+ \eps}}&\check{H}_0(X_{w + \eps})\ar[r]^(.45){k}&\check{H}_0(X_{w + \eps},X_u)\ar[r]&0}\hfill
\end{centering}
\end{array}
\end{eqnarray}
where the vertical maps are homomorphisms induced by inclusions
and the two horizontal lines are exact homology sequences of the
pairs $(X_{w - \eps}, X_u)$ and $(X_{w + \eps}, X_u)$,
respectively.  By the assumption that $w$ is not a homological
0-critical value, there exists an arbitrarily small
$\overline{\eps}>0$, for which ${\iota}_0^{w - \overline{\eps}, w
+ \overline{\eps}}: \check{H}_0(X_{w -
\overline{\eps}})\rightarrow \check{H}_0(X_{w + \overline{\eps}})$
is an isomorphism. Therefore, by applying the Five Lemma in
diagram (\ref{PairSequenceu}) with $\eps = \overline{\eps}$, we
deduce that $j$ is an isomorphism. Thus, $\rank \check{H}_0(X_{w -
\overline{\eps}}, X_u)= \rank \check{H}_0(X_{w + \overline{\eps}},
X_u)$, implying $\szf(u, w- \overline{\eps})= \szf(u, w+
\overline{\eps})$. Hence, since size functions are non-increasing
in the second variable, the desired claim follows.
\end{proof}

Assuming the existence of at most a finite number of homological
critical values, we can say that homological critical values give
rise to discontinuities in size functions.

\begin{prop}\label{finitecrit}
Let $(X, \p)$ be a size pair with at most a finite number of
homological 0-critical values. Let $w \in \R$ be a homological
0-critical value. The following statements hold:
\begin{enumerate}
\item If ${\iota}_0^{w - \eps, w + \eps}$ is not surjective for
any sufficiently small positive real number $\eps$, then there
exists $v>w$ such that $w$ is a discontinuity point for
$\szf(\cdot, v)$; \item If ${\iota}_0^{w - \eps, w + \eps}$ is
surjective for every sufficiently small positive real number
$\eps$, then there exists $u<w$ such that $w$ is a discontinuity
point for $\szf(u, \cdot)$.
\end{enumerate}
\end{prop}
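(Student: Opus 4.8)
The plan is to prove the contrapositive-flavored statement directly by exploiting the finiteness hypothesis together with Proposition \ref{0-critValueSF}. Since there are only finitely many homological $0$-critical values, I can choose $\eps_0>0$ so small that the interval $(w-\eps_0, w+\eps_0)$ contains no homological $0$-critical value other than $w$ itself. This is the key structural consequence of finiteness: for every $0<\eps\le\eps_0$ the only obstruction to $\iota_0^{w-\eps,w+\eps}$ being an isomorphism is concentrated exactly at the level $w$, so in particular $\iota_0^{w-\eps,w+\eps}$ is \emph{not} an isomorphism for all such $\eps$ (otherwise $w$ would fail to be a homological $0$-critical value). A non-isomorphism of vector spaces (recall $G$ is a field, so all these $\check H_0$ are vector spaces) fails either to be surjective or to be injective, and the dichotomy in the statement is precisely surjective versus non-surjective; so I split into the two cases of the enumerate.

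For part \emph{(i)}, I assume $\iota_0^{w-\eps,w+\eps}$ is not surjective for all small $\eps$. First I would translate non-surjectivity into a statement about relative homology via the exact sequence of the pair: in
\[
\check{H}_0(X_{w-\eps})\stackrel{\iota_0^{w-\eps,w+\eps}}{\longrightarrow}\check{H}_0(X_{w+\eps})\longrightarrow\check{H}_0(X_{w+\eps},X_{w-\eps})\longrightarrow 0,
\]
failure of surjectivity means $\check{H}_0(X_{w+\eps},X_{w-\eps})\neq 0$, so by Lemma \ref{nccrel} there is a connected component of $X_{w+\eps}$ not meeting $X_{w-\eps}$. The plan is to take $v=w+\eps$ for a suitable small $\eps$ and then compare $\szf(w-\eps',v)$ with $\szf(w+\eps',v)$ as $\eps'\to 0^+$. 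Using Corollary \ref{sfrelhom}, $\szf(u,v)=\rank\check H_0(X_v)-\rank\check H_0(X_v,X_u)$, the difference of the two values is governed by $\rank\check H_0(X_v,X_{w-\eps'})-\rank\check H_0(X_v,X_{w+\eps'})$, i.e. by the number of components of $X_v$ that meet $X_{w+\eps'}$ but not $X_{w-\eps'}$. Non-surjectivity at level $w$ guarantees this count is strictly positive for a choice of $v>w$, forcing a jump and hence a discontinuity of $\szf(\cdot,v)$ at $w$.

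For part \emph{(ii)}, I assume $\iota_0^{w-\eps,w+\eps}$ \emph{is} surjective for all small $\eps$; combined with the fact that it is not an isomorphism, it must fail to be injective. A non-injective map of vector spaces has nontrivial kernel, meaning two generators of $\check H_0(X_{w-\eps})$ (two components of $X_{w-\eps}$) become identified in $X_{w+\eps}$ — they merge into a single component. This is exactly the kind of event that produces a drop as $v$ crosses $w$ from below. The plan is to fix $u<w-\eps_0$ and examine $\szf(u,w-\eps')-\szf(u,w+\eps')$ as $\eps'\to 0^+$; via Corollary \ref{sfrelhom} and Lemma \ref{nccrel} this difference counts components of $X_{w-\eps'}$ meeting $X_u$ that get merged in $X_{w+\eps'}$. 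I would argue that for some choice of $u<w$ the merging detected by the kernel actually involves components that already meet $X_u$, so that the count strictly drops and $w$ is a discontinuity point of $\szf(u,\cdot)$.

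The main obstacle I anticipate is the last step of part \emph{(ii)}: translating the purely homological fact ``$\iota_0^{w-\eps,w+\eps}$ is non-injective'' into the geometric guarantee that one can choose $u<w$ small enough so that the two merging components of $X_{w-\eps}$ \emph{both} already contain points of $X_u$, which is what a genuine drop in $\szf(u,\cdot)$ requires. Non-injectivity only tells me merging happens among components of $X_{w-\eps}$; I must rule out the degenerate possibility that every such merging event only ever involves newly-created components that no sufficiently low $X_u$ can see. Here the finiteness hypothesis should again do the work: with only finitely many critical values, the components of $X_{w-\eps}$ stabilize for $u$ in a small left-neighborhood of $w$, and by pushing $u$ down past the next lower critical value I can ensure the relevant components persist down to $X_u$. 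Making this stabilization argument precise — essentially a right-continuity and local-constancy argument away from critical values, leaning on Proposition \ref{0-critValueSF} applied at the intermediate non-critical levels — is the delicate part and where I would spend most of the care.
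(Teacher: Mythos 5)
Your overall strategy is the paper's: isolate $w$ using finiteness, pass to the exact sequence of the pair, and read off the jump via Corollary \ref{sfrelhom} and Lemma \ref{nccrel}. But in part \emph{(i)} the pivotal step is asserted rather than proved, and as stated it is false. You claim that ``non-surjectivity at level $w$ guarantees this count is strictly positive for a choice of $v>w$''; the paper's own Remark \ref{patologic} (Figure \ref{patologicEx}(a)) is a counterexample: there $\iota_0^{w-\eps,w+\eps}$ fails to be surjective for every small $\eps$, yet no $v>w$ makes $w$ a discontinuity point of $\szf(\cdot,v)$. The implication only holds because finiteness lets you choose $v$ with no homological $0$-critical values in $(w,v]$, so that $\iota_0^{w+\eps',v}$ is an isomorphism and hence $\check H_0(X_v,X_{w+\eps'})=0$; then one argues that $\check H_0(X_v,X_{w-\eps'})\neq 0$, because otherwise $\iota_0^{w-\eps',v}=\iota_0^{w+\eps',v}\circ\iota_0^{w-\eps',w+\eps'}$ would be surjective while $\iota_0^{w+\eps',v}$ is injective, forcing $\iota_0^{w-\eps',w+\eps'}$ to be surjective — a contradiction. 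You set up $\eps_0$ at the start but never route it through this composition argument, and without it the component you exhibit in $X_{w+\eps}$ need not sit inside a component of $X_v$ that avoids $X_{w-\eps'}$ (it may be absorbed by a component of $X_v$ reaching below $w-\eps'$). This is exactly the content of the paper's proof of \emph{(i)} and it is missing from yours.

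In part \emph{(ii)} you correctly identify the crux — guaranteeing that the two merging components of $X_{w-\eps}$ are both visible from $X_u$ — but your prescription for $u$ goes the wrong way. Pushing $u$ \emph{below} the next lower critical value shrinks $X_u$ and can detach it from components of $X_{w-\eps}$ born between the two critical values, so $\check H_0(X_{w-\eps},X_u)$ need not vanish and the drop can be cancelled. The correct choice (the paper's) is $u$ strictly \emph{between} the largest homological $0$-critical value below $w$ and $w$ itself, so that $(u,w)$ is critical-value-free, $\iota_0^{u,w-\eps}$ is an isomorphism, and hence $\check H_0(X_{w-\eps},X_u)=\check H_0(X_{w+\eps},X_u)=0$; the inequality $\rank\check H_0(X_{w-\eps})>\rank\check H_0(X_{w+\eps})$ (which you derive correctly from surjectivity plus non-injectivity) then transfers verbatim to $\szf(u,w-\eps)>\szf(u,w+\eps)$. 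With these two repairs your outline matches the paper's proof; as written, both halves stop just short of the step where the finiteness hypothesis actually does its work.
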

\begin{proof}
Let us prove \emph{(i)}, always referring to diagram
(\ref{PairSequencev}) in the proof of Proposition
\ref{0-critValueSF}. Let $v>w$. For every $\eps>0$ such that $v>
w+ \eps$, the map $j$ of diagram (\ref{PairSequencev}) is
surjective. Indeed, $h$, $k$ and $\iota_0^{v, v}$ are surjective.

If we prove that there exists $v>w$ for which, for every $\eps>0$
such that $v>w + \eps$, $j$ is not injective, then, since $j$ is
surjective, it necessarily holds that $\rank \check{H}_0(X_v, X_{w
- \eps})> \rank \check{H}_0(X_v, X_{w + \eps})$, for every
$\eps>0$ such that $v>w + \eps$. From this we obtain $\szf(w -
\eps, v) = \rank \check{H}_0(X_v)-\rank \check{H}_0(X_v, X_{w -
\eps})< \rank \check{H}_0(X_v)-\rank \check{H}_0(X_v, X_{w +
\eps})= \szf(w + \eps, v)$, for every $\eps>0$ such that $v>w +
\eps$. Therefore, $\underset{\eps \rightarrow 0^+}\lim\left(\szf(w
+ \eps, v)- \szf(w - \eps, v)\right)>0$, that is, $w$ is a
discontinuity point for $\szf(\cdot, v)$.

We now show that there exists $v>w$ for which, for every $\eps>0$
such that $v>w + \eps$, $j$ is not injective. \\Since we have
hypothesized the presence of at most a finite number of
homological 0-critical values for $(X, \p)$, there surely exists
$v>w$ such that, for every sufficiently small $\eps>0$, $v>w+
\eps$ and ${\iota}_0^{w +\eps, v}: \check{H}_0(X_{w
+\eps})\rightarrow \check{H}_0(X_{v})$ is an isomorphism. Hence,
from the exactness of the second row in diagram
(\ref{PairSequencev}), taking such a $v$, $\check{H}_0(X_{v},X_{w
+ \eps})$ is trivial. Now, if $j$ were injective, from the
triviality of $\check{H}_0(X_{v},X_{w + \eps})$, it would follow
that $\check{H}_0(X_v, X_{w - \eps})$ is also trivial, and
consequently ${\iota}_0^{w -\eps, v}$ surjective. This is a
contradiction, since we are assuming $\iota_0^{w - \eps, w +
\eps}$ not surjective, and it implies that ${\iota}_0^{w -\eps,
v}$ is not surjective because $\iota_0^{w + \eps, v}$ and
$\iota_0^{v, v}$ are isomorphisms.

As for \emph{(ii)}, we will always refer to diagram
(\ref{PairSequenceu}) in the proof of Proposition
\ref{0-critValueSF}. In this case, by combining the hypothesis
that, for any sufficiently small $\eps>0$, ${\iota}_0^{w - \eps, w
+ \eps}$ is not an isomorphism and ${\iota}_0^{w - \eps, w +
\eps}$ is surjective, it necessarily follows that ${\iota}_0^{w -
\eps, w + \eps}$ is not injective. Hence, $\rank \check{H}_0(X_{w
- \eps})> \rank \check{H}_0(X_{w + \eps})$, for every sufficiently
small $\eps>0$. Let $u<w$. For every $\eps>0$ such that $u+
\eps<w$, the map $j$ of diagram (\ref{PairSequenceu}) is
surjective. Indeed, $h$, $k$ and $\iota_0^{w - \eps, w + \eps}$
are surjective.

Now, if we prove the existence of $u<w$, for which, for every
$\eps>0$ such that $u+ \eps<w$, $j$ is an isomorphism, it
necessarily holds that $\rank \check{H}_0(X_{w - \eps},X_u) =
\rank \check{H}_0(X_{w + \eps},X_u)$, for every $\eps>0$ such that
$u+ \eps<w$. Thus, it follows that $\szf(u, w - \eps) = \rank
\check{H}_0(X_{w - \eps})-\rank \check{H}_0(X_{w - \eps},X_u)>
\rank \check{H}_0(X_{w + \eps})-\rank \check{H}_0(X_{w +
\eps},X_u)= \szf(u, w + \eps)$, for every $\eps>0$ such that $u+
\eps<w$, implying $\underset{\eps \rightarrow
0^+}\lim\left(\szf(u, w - \eps)- \szf(u, w + \eps)\right)>0$, that
is, $w$ is a discontinuity point for $\szf(u, \cdot)$.

Recalling that $j$ is surjective, let us prove that there exists
$u<w$ for which $j$ is injective for every $\eps>0$ with $u+
\eps<w$.
\\Since we have assumed the presence of at most a finite number of
homological 0-critical values for $(X, \p)$, there surely exists
$u<w$ such that, for every sufficiently small $\eps>0$, $u<w-
\eps$ and ${\iota}_0^{u, w -\eps}: \check{H}_0(X_{u})\rightarrow
\check{H}_0(X_{w - \eps})$ is an isomorphism. Hence, for such a
$u$, $\check{H}_0(X_{w - \eps},X_u)$ is trivial, implying $j$
injective.
\end{proof}

Dropping the assumption that the number of homological 0-critical
values for $(X, \p)$ is finite, the converse of Proposition
\ref{0-critValueSF} is false, as the following remark states.

\begin{rem}\label{patologic}
From the condition that $w$ is a homological 0-critical value, it
does not follow that $w$ is a discontinuity point for the function
$\szf(\cdot, v)$, $v>w$, or for the function $\szf(u, \cdot)$,
$u<w$.

In particular, the hypothesis $\rank \check{H}_0(X_{w - \eps})\neq
\rank \check{H}_0(X_{w + \eps})$, for every sufficiently small
$\eps > 0$, does not imply that there exists either $v > w$ such
that $\underset{\eps \rightarrow 0^+}\lim\left(\szf(w + \eps, v)-
\szf(w - \eps, v)\right) \neq 0$ or $u < w$ such that
\\$\underset{\eps \rightarrow 0^+}\lim\left(\szf(u, w - \eps)-
\szf(u, w + \eps)\right)\neq 0.$
\end{rem}

\begin{figure}[htbp]
\begin{center}
\begin{tabular}{cc}
\psfrag{a}{$\scriptstyle{1}$} \psfrag{b}{$\scriptstyle{9/8}$}
\psfrag{c}{$\scriptstyle{5/4}$} \psfrag{d}{$\scriptstyle{3/2}$}
\psfrag{u}{$\scriptstyle{u}$} \psfrag{v}{$\scriptstyle{v}$}
\psfrag{0}{$\scriptstyle{0}$} \psfrag{1}{$\scriptstyle{1}$}
\psfrag{2}{$\scriptstyle{2}$} \psfrag{e}{$\scriptstyle{2}$}
\includegraphics[height=4cm]{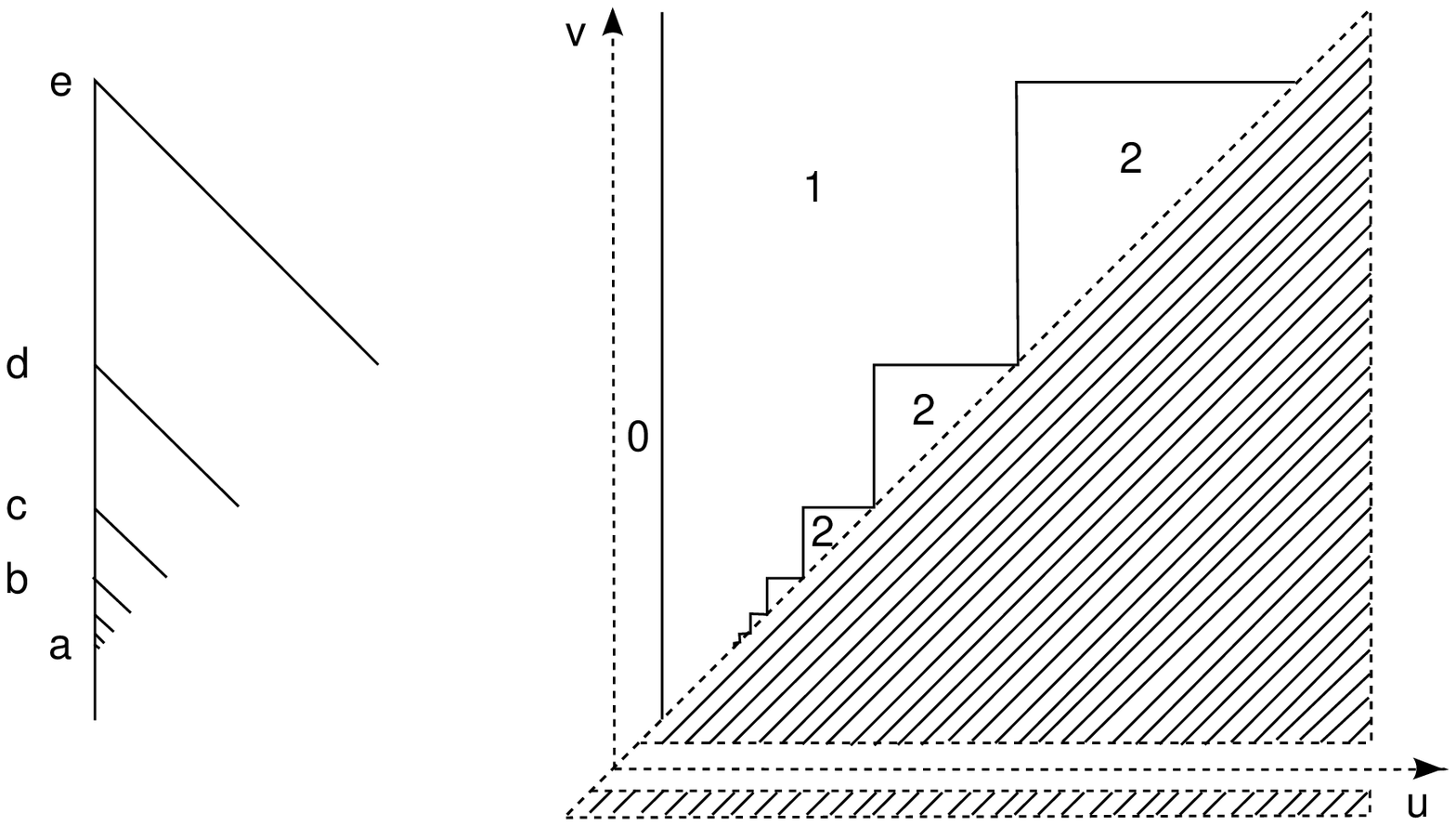}&
\psfrag{a}{$\scriptstyle{1}$} \psfrag{b}{$\scriptstyle{3/2}$}
\psfrag{c}{$\scriptstyle{7/4}$} \psfrag{d}{$\scriptstyle{15/8}$}
\psfrag{u}{$\scriptstyle{u}$} \psfrag{v}{$\scriptstyle{v}$}
\psfrag{0}{$\scriptstyle{0}$} \psfrag{1}{$\scriptstyle{1}$}
\psfrag{2}{$\scriptstyle{2}$} \psfrag{e}{$\scriptstyle{2}$}
\includegraphics[height=4cm]{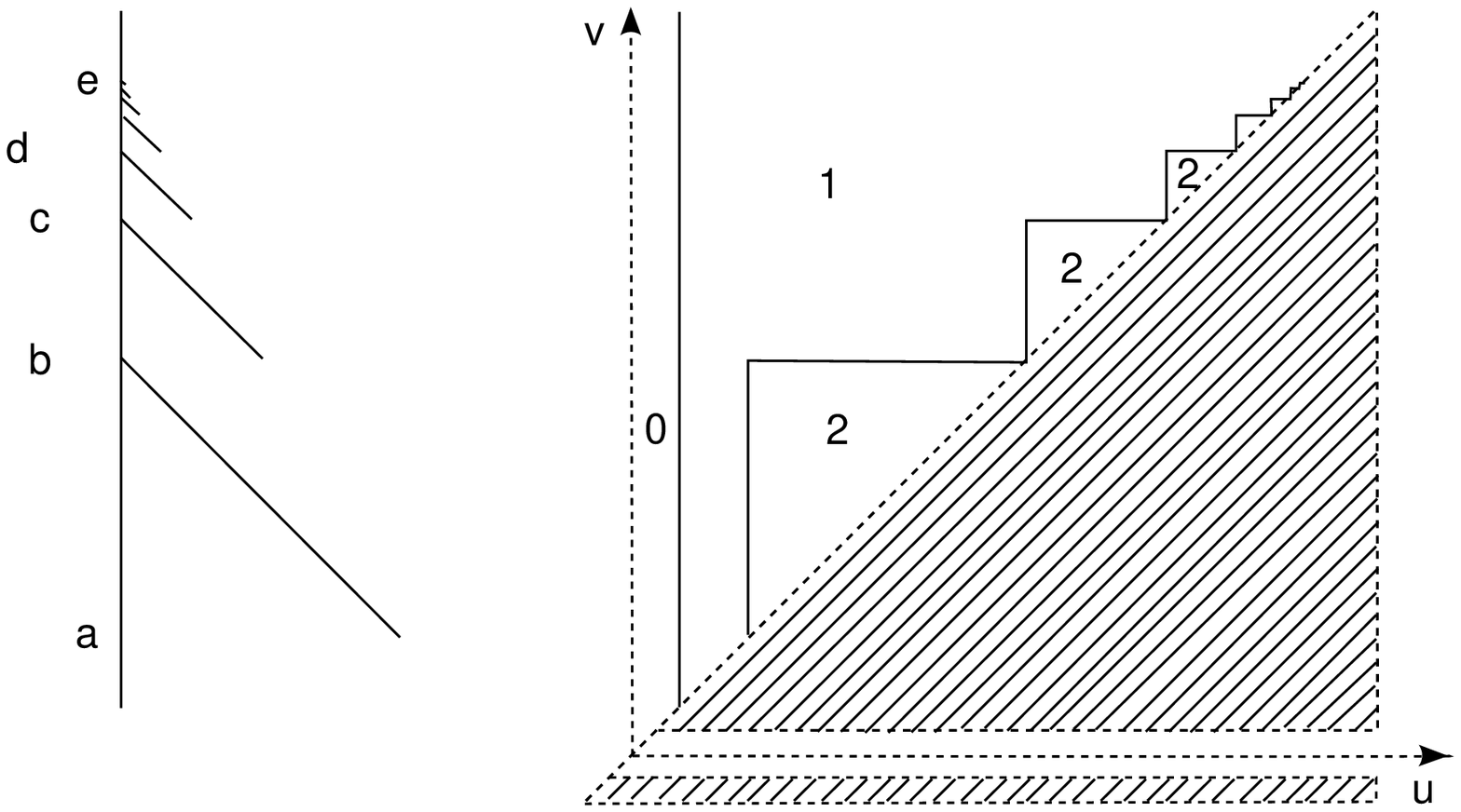}\\
$(a)$&$(b)$
\end{tabular}
\caption{\footnotesize{Two examples showing the existence of a
real number $w$ that is a homological 0-critical value for $(X,
\p)$ but not a discontinuity point for $\szf(\cdot, v)$ or
$\szf(u, \cdot)$.}} \label{patologicEx}
\end{center}
\end{figure}
Two different examples, shown in Figure \ref{patologicEx}, support
our claim.

Let us describe the first example (see Figure \ref{patologicEx},
(a)). Let $(X, \p)$ be the size pair where $X$ is the topological
space obtained by adding an infinite number of branches to a
vertical segment, each one sprouting at the height where the
previous expires. These heights are chosen according to the
sequence $(1 + \frac{1}{2^n})_{n \in \N}$, converging to $1$. The
measuring function $\p$ is the height function. The size function
associated with $(X, \p)$ is displayed on the right side of $X$.
In this case, $w = 1$ is a homological 0-critical value. Indeed,
for $w=1$, it holds that $\rank \check{H}_0(X_{w - \eps}) = 1$
while $\rank \check{H}_0(X_{w + \eps}) = 2$, for every
sufficiently small $\eps
> 0$. On the other hand, for every $v > w$,
and for every small enough $\eps > 0$, it holds that $\szf(w +
\eps, v) = \szf(w - \eps, v) = 1$. Therefore, $\underset{\eps
\rightarrow 0^+}\lim\left(\szf(w + \eps, v)- \szf(w - \eps,
v)\right)= 0$, for every $v>w$. Moreover, it is immediately
verifiable that, for every $u<w$, $\underset{\eps \rightarrow
0^+}\lim\left(\szf(u, w - \eps)- \szf(u, w + \eps)\right)= 0.$

The second example, shown in Figure \ref{patologicEx} (b), is
built in a similar way. In the chosen size pair $(X, \p)$, $\p$ is
again the height function, and $X$ is again obtained by adding an
infinite number of branches to a vertical segment, but this time,
the sequence of heights of their endpoints is $(2 -
\frac{1}{2^n})_{n \in \N}$, converging to $2$. In this case, $w =
2$ is a homological 0-critical value for $(X, \p)$. Indeed, for
every sufficiently small $\eps>0$, $\rank \check{H}_0(X_{w -
\eps}) = 2$ while $\rank \check{H}_0(X_{w + \eps}) = 1$. On the
other hand, for every $u<w$, and for every small enough $\eps >
0$, it holds that $\szf(u, w + \eps) = \szf(u, w - \eps) = 1$ or
$\szf(u, w + \eps) = \szf(u, w - \eps) = 0$. Therefore,
$\underset{\eps \rightarrow 0^+}\lim\left(\szf(u, w - \eps)-
\szf(u, w + \eps)\right)= 0$, for every $u<w$, in both cases.
Moreover, we can immediately verify that, for every $v>w$,
$\underset{\eps \rightarrow 0^+}\lim\left(\szf(w + \eps, v)-
\szf(w - \eps, v)\right)= 0.$

Before concluding this section, we investigate a condition for the
surjetivity of the homomorphism between the $0$th \Cech homology
groups induced by the inclusion map of  $X_u$ into $X_v$,
$\iota_0^{u,v}: \check{H}_0(X_u) \rightarrow \check{H}_0(X_v)$,
because it will be needed in Subsection \ref{suffCond}.
\begin{prop}\label{surj}
Let $(X, \p)$ be a size pair. For every $(u, v) \in\Delta^+ $,
$\iota_0^{u,v}$ is surjective if and only if $\ell_{(X,\p)}(u, v')
= \ell_{(X,\p)}(v, v')$, for every $v'>v$.
\end{prop}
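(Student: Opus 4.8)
The plan is to reformulate both sides of the claimed equivalence as statements about which connected components of the lower level sets meet $X_u$. By Corollary~\ref{sfhomology} and Theorem~\ref{ncc}, $\rank\I\iota_0^{u,v}=\szf(u,v)$ equals the number of components of $X_v$ that meet $X_u$, whereas $\rank\check H_0(X_v)$ equals the total number of components of $X_v$; since both are finite-dimensional vector spaces over the chosen coefficient field, $\iota_0^{u,v}$ is surjective if and only if these two numbers coincide, that is, if and only if \emph{every} component of $X_v$ meets $X_u$. On the other hand, since $X_u\subseteq X_v$, the identity $\szf(u,v')=\szf(v,v')$ says exactly that every component of $X_{v'}$ meeting $X_v$ also meets $X_u$. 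Thus the proposition reduces to the equivalence: every component of $X_v$ meets $X_u$ if and only if, for every $v'>v$, every component of $X_{v'}$ meeting $X_v$ meets $X_u$.

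The forward implication is direct. Fixing $v'>v$ and a component $C$ of $X_{v'}$ that meets $X_v$, I would pick $p\in C\cap X_v$ and let $D$ be the component of $p$ in $X_v$; as $D$ is connected and $D\subseteq X_v\subseteq X_{v'}$ meets $C$ at $p$, maximality of $C$ gives $D\subseteq C$. By hypothesis $D$ meets $X_u$, hence so does $C$. This shows $\szf(v,v')\le\szf(u,v')$, and the opposite inequality holds automatically because $X_u\subseteq X_v$ (equivalently, by monotonicity of $\szf$ in its first variable); so $\szf(u,v')=\szf(v,v')$ for every $v'>v$.

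For the converse I would argue by contraposition, so suppose some component $D$ of $X_v$ satisfies $D\cap X_u=\emptyset$, and fix $x_0\in D$. Denoting by $C(v')$ the component of $x_0$ in $X_{v'}$, these sets decrease as $v'\downarrow v$, each is a continuum (a component of the compact Hausdorff space $X_{v'}$), and $\bigcap_{v'>v}X_{v'}=X_v$. If the sought $v'$ with $\szf(u,v')<\szf(v,v')$ did not exist, then for every $v'>v$ the component $C(v')$, which meets $X_v$ since it contains $x_0$, would also meet $X_u$, so $\{C(v')\cap X_u\}_{v'>v}$ would be a nested family of nonempty compact sets. The crux of the argument---and the step I expect to be the main obstacle---is to turn this family back into a single point of $X_v$ lying in the same component as $x_0$. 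I plan to handle it by combining the finite intersection property, which yields a point $y\in\bigcap_{v'>v}\big(C(v')\cap X_u\big)$, with the classical fact that the intersection of a nested chain of continua is again a continuum: $\bigcap_{v'>v}C(v')$ is then a connected subset of $X_v$ containing both $x_0$ and $y$, whence $y\in D$, contradicting $y\in X_u$ and $D\cap X_u=\emptyset$. It is precisely here that compactness and the inverse-limit behaviour of connected components---the feature for which \Cech homology was introduced in this setting---are indispensable, since connectedness inside every $X_{v'}$ need not by itself persist down to $X_v$.
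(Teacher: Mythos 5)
Your proof is correct, and the forward implication coincides with the paper's. The converse, however, is organized differently. The paper fixes $p\in X_v$, uses the surjectivity of each quotient map $F_{v_n}$ along a decreasing sequence $v_n\downarrow v$ to produce witnesses $q_n\in X_u$ with $p\sim_{v_n}q_n$, extracts a convergent subsequence $q_n\to q\in X_u$, shows by a neighborhood argument that $q$ lies in every component $C_n$ of $X_{v_n}$ containing $p$, and finally invokes Rem.~3 of \cite{DAFrLa06} to pass from $p\sim_{v_n}q$ for all $n$ to $p\sim_v q$. You instead argue by contraposition: a component $D$ of $X_v$ missing $X_u$ gives a nested chain of continua $C(v')$, the finite intersection property yields a point $y\in X_u$ in all of them, and the classical theorem that a nested chain of continua intersects in a continuum forces $y\in D$, a contradiction. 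The underlying topological content is the same --- persistence of connectivity through $\bigcap_{v'>v}X_{v'}=X_v$ via compactness --- but your packaging buys two things: it is self-contained modulo one standard continuum-theory theorem rather than an external remark, and by working with the finite intersection property instead of convergent subsequences it avoids any reliance on sequential compactness, which is not automatic in a general compact Hausdorff space (the setting the paper actually assumes). The paper's route, on the other hand, stays closer to the quotient-map formulation of the size function and is the one that generalizes verbatim to the surjectivity statement for $f_0$ used later in Theorem \ref{suff0}.
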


\begin{proof}
For every $v' > v$, let $\frac{X_u}{\sim_{v'}}$ (respectively,
$\frac{X_v}{\sim_{v'}}$) be the space obtained quotienting $X_u$
(respectively, $X_v$) by the relation of $\langle\p\le
v'\rangle$-connectedness. Let us define the map $F_{v'}:
\frac{X_u}{\sim_{v'}}\rightarrow\frac{X_v}{\sim_{v'}}$, such that
$F_{v'}$ takes the class of $p$ in $\frac{X_u}{\sim_{v'}}$ into
the class of $p$ in $\frac{X_v}{\sim_{v'}}$. $F_{v'}$ is well
defined and injective, since $u<v<v'$. The condition that
$\ell_{(X,\p)}(u, v') = \ell_{(X,\p)}(v, v')$  is equivalent to
the bijectivity of $F_{v'}$.

Let $\iota_0^{u, v}: \check{H}_0(X_u) \rightarrow
\check{H}_0(X_v)$ be surjective.  By Corollary \ref{sfrelhom} and
Corollary \ref{sfhomology}, this is equivalent to saying that, for
every $p\in X_v$, there is $q\in X_u$ with $p\sim_v q$. Since
$v<v'$, it also holds that $p\sim_{v'} q$ and this implies
$F_{v'}([q]) = [p]$, for all $v' > v$. So, $F_{v'}$ is bijective
and $\ell_{(X,\p)}(u, v') = \ell_{(X,\p)}(v, v')$, for every
$v'>v$.

Conversely, let $F_{v'}:
\frac{X_u}{\sim_{v'}}\rightarrow\frac{X_v}{\sim_{v'}}$ be a
surjective map, for all $v' > v$. Let $p \in X_v$. Let $(v_n)$ be
a strictly decreasing sequence of real numbers converging to $v$.
The surjectivety of $F_{v_n}$ implies that $q_n\in {X_u}$ exists,
such that $F_{v_n}([q_n]) = [p]$, for all $n \in \N$. Thus $p
\sim_{v_n} q_n$, for all $n \in \N$. Since $X$ is compact and
$X_u$ is closed in $X$, there is a subsequence of $(q_n)$, still
denoted by $(q_n)$, converging in $X_u$. Let $q = {\underset{n
\rightarrow \infty}\lim} q_n\in X_u$. Then, necessarily, $p
\sim_{v_n} q$, for all $n$. In fact, let us call $C_n$ the
connected component of $X_{v_n}$ containing $p$. Since $(v_n)$ is
decreasing, we have $C_{n} \supseteq C_{n+1}$ for every $n\in\N$.
Let us assume that there exists $N \in \N$ such that $p
\nsim_{v_N} q$. Since $C_N$ is closed, if $q\notin C_N$, there
exists  an open neighborhood $U(q)$ of $q$, such that $U(q) \cap
C_N = \emptyset$. Thus, surely, there exists at least one point
$q_n \in U(q)$, with $n > N$ and $q_n \not\in C_N$. This is a
contradiction, because $q_n \in C_n \subseteq C_N$, for all $n >
N$.

Therefore, $p \sim_{v_n} q$ for all $n$, and this implies that $p
\sim_{v} q$, because of Rem. 3 in \cite{DAFrLa06}. Hence,
$\iota_0^{u, v}: \check{H}_0(X_u) \rightarrow \check{H}_0(X_v)$ is
surjective.
\end{proof}

\begin{rem}
The condition  that $\ell_{(X,\p)}(u, v') = \ell_{(X,\p)}(v, v')$,
for every $v'>v$, can be restated saying that $\ell_{(X,\p)}$ has
no points of horizontal discontinuity in the region $\{(x,y)\in
\Delta^+: u< x\le v,\  y>v\}$. In other words, the set $\{(x,y)\in
\Delta^+: u< x\le v,\  y>v\}$ does not contain any cornerpoint
(either proper or at infinity) for $\ell_{(X,\p)}$.
\end{rem}

\section{The Mayer-Vietoris sequence of persistent \Cech homology groups}

In this section, we look for a relation expressing the size
function associated with the size pair $(X,\p)$ in terms of size
functions associated with size pairs $(A,\p_{|A})$ and $(B,
\p_{|B})$, where $A$ and $B$  are closed locally connected subsets
of $X$, such that $X= int(A) \cup int(B)$, and $A\cap B$ is
locally connected. The notations $int(A)$ and $int(B)$ stand for
the interior of the sets $A$ and $B$ in $X$, respectively. The
previous assumptions on $A$, $B$ and $A\cap B$, together with the
fact that the functions $ \varphi_{|A\cap B}$, $\varphi_{|A}$, and
$\varphi_{|B}$ are continuous, as restrictions of the continuous
function $\p:X\rightarrow \R$ to spaces endowed with the topology
induced from $X$, ensure that $(A,\p_{|A})$, $(B, \p_{|B})$, and
$(A\cap B,\p_{|A\cap B})$ are themselves size pairs. These
hypotheses on $X$, $A$, $B$ and $A \cap B$ will be maintained
throughout the paper.

We  find a homological condition guaranteeing a Mayer-Vietoris
formula between size functions evaluated at a point $(u,v)\in
\Delta^+$, that is, $\ell_{(X, \p)}(u, v) = \ell_{(A,
\varphi_{|A})}(u, v) + \ell_{(B, \varphi_{|B})}(u, v) -
\ell_{(A\cap B, \varphi_{|A\cap B})}(u, v)$ (see Corollary
\ref{szrelation}). We shall apply this relation in the next
section in order to show that it is possible to match a subset of
the cornerpoints for $\ell_{(X, \p)}$ to cornerpoints for either
$\ell_{(A, \varphi_{|A})}$ or $\ell_{(B, \varphi_{|B})}$.

Our main tools are the Mayer-Vietoris sequence and the homology
sequence of the pair, applied to the lower level sets of $X$, $A$,
$B$, and $A\cap B$.

Using the same tools, we show that there exists a Mayer-Vietoris
sequence for persistent \Cech homology groups that is of order
$2$. This implies that, under proper assumptions, there is a short
exact sequence involving the $0$th persistent \Cech homology
groups of $X$, $A$, $B$, and $A\cap B$ (see Proposition
\ref{persist}).

We begin by underlining some simple properties of the lower level
sets of $X$, $A$, $B$, and $A\cap B$.

\begin{lem}\label{cupcap}
Let $u \in \R$.  Let us endow $X_u$ with the topology induced by
$X$. Then $A_u$ and $B_u$ are closed sets in $X_u$. Moreover, $X_u =
int(A_u) \cup int(B_u)$ and $A_u\cap B_u=(A\cap B)_u$.
\end{lem}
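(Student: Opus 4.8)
The plan is to reduce all three assertions to elementary point-set observations, the crucial preliminary being the identifications $A_u=A\cap X_u$ and $B_u=B\cap X_u$. These hold because $\p_{|A}$ and $\p_{|B}$ are the honest restrictions of $\p$, so that $A_u=\{p\in A:\p(p)\le u\}=A\cap X_u$ and likewise for $B$. Once this is in hand, everything else is a matter of manipulating subspace topologies carefully.

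For the closedness claim I would argue that, since $A$ is closed in $X$, the set $A\cap X_u$ is closed in the subspace $X_u$ by the very definition of the induced topology; the same reasoning gives that $B_u$ is closed in $X_u$. The identity $A_u\cap B_u=(A\cap B)_u$ is then a direct set computation: $A_u\cap B_u=(A\cap X_u)\cap(B\cap X_u)=(A\cap B)\cap X_u=(A\cap B)_u$, where the last equality again uses that $\p_{|A\cap B}$ is the restriction of $\p$.

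The substantive part is the equality $X_u=int(A_u)\cup int(B_u)$, with the interiors understood in $X_u$. First I would observe that $A_u\cup B_u=(A\cup B)\cap X_u=X\cap X_u=X_u$, using $X=A\cup B$ (a consequence of $X=int(A)\cup int(B)$). It then remains to prove $X_u\subseteq int(A_u)\cup int(B_u)$. Given $p\in X_u$, the hypothesis $X=int(A)\cup int(B)$ places $p$ in, say, the interior of $A$ computed in $X$; hence there is a set $U$ open in $X$ with $p\in U\subseteq A$. Then $U\cap X_u$ is open in $X_u$, contains $p$, and satisfies $U\cap X_u\subseteq A\cap X_u=A_u$, so $p\in int(A_u)$. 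The symmetric argument handles the case $p\in int(B)$, and combining the two cases yields the claimed equality.

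The main obstacle---really the only point requiring care---is to keep straight the space in which each interior is taken: in the statement, $int(A_u)$ and $int(B_u)$ denote interiors in $X_u$, not in $X$. The argument above works precisely because intersecting an $X$-open neighborhood with $X_u$ produces an $X_u$-open set, i.e. because $int_X(A)\cap X_u\subseteq int_{X_u}(A_u)$; this passage from the ambient interior to the relative interior is the single step where the subspace topology must be invoked explicitly, and it is what makes the decomposition $X=int(A)\cup int(B)$ descend to the level set $X_u$.
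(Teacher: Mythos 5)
Your proof is correct and follows essentially the same route as the paper's: closedness of $A_u$ and $B_u$ via the identification $A_u=A\cap X_u$ with $A$ closed in $X$, the intersection identity by direct set computation, and the interior decomposition by pushing an $X$-open neighborhood $U\subseteq A$ down to the $X_u$-open neighborhood $U\cap X_u\subseteq A_u$. The only cosmetic difference is that you spell out the preliminary identifications and the inclusion $int(A_u)\cup int(B_u)\subseteq X_u$ that the paper dismisses as trivial.
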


\begin{proof}
$A_u$ is closed in $X_u$ if there exists a set $C \subseteq
X$, closed in the topology of $X$, such that $C \cap X_u =
A_u$. It is sufficient to take  $C = A$. Analogously for
$B_u$.

About the second statement, the proof that $X_u \supseteq int(A_u)
\cup int(B_u)$ is trivial. Let us prove that $X_u \subseteq
int(A_u) \cup int(B_u)$. If $x \in X_u$ then $x \in int(A)$ or
$x\in  int(B)$.  Let us suppose that $x \in int(A)$. Then there
exists an open neighborhood of $x$ in $X$ contained in $A$, say
$U(x)$. Clearly, $U(x) \cap X_u$ is an open neighborhood of $x$ in
$X_u$ and is contained in $A_u$. Hence $x \in int(A_u)$. The proof
is analogous if $x \in int(B)$. The proof that $A_u\cap B_u=(A\cap
B)_u$ is trivial.
\end{proof}
Lemma \ref{cupcap} ensures that, for $(u,v)\in \Delta^+$, we can
consider the following diagram:
\begin{eqnarray}\label{MVdiag}
\begin{array}{cccccccccc}
&\vdots&&\vdots&&\vdots&&\vdots&&\\
&\downarrow&&\downarrow&&\downarrow&&\downarrow&&\\
&&&&&&&&&\\
\cdots\rightarrow\!\!\!&\!\!\!\check{H}_{p+1}(X_u)\!\!\!&\!\!\!\stackrel{\Delta_u}{\rightarrow}\!\!\!&\!\!\!\check{H}_p((A\cap B)_u)\!\!\!&\!\!\!\stackrel{\alpha_u}{\rightarrow}\!\!\!&\!\!\!\check{H}_p(A_u)\oplus \check{H}_p(B_u)\!\!\!&\!\!\!\stackrel{\beta_u}{\rightarrow}\!\!\!&\!\!\!\check{H}_{p}(X_u)\!\!\!&\!\!\!\rightarrow\!\!\!&\!\!\! \cdots\\
&&&&&&&&&\\
&\downarrow h_{p+1}&&\downarrow f_p&&\downarrow g_p&&\downarrow h_p&&\\
&&&&&&&&&\\
 \cdots\rightarrow\!\!\!&\!\!\!\check{H}_{p+1}(X_v)\!\!\!&\!\!\!
\stackrel{\Delta_v}{\rightarrow}\!\!\!&\!\!\!\check{H}_p((A\cap
B)_v)\!\!\!&\!\!\!
\stackrel{\alpha_v}{\rightarrow}\!\!\!&\!\!\!\check{H}_p(A_v)\oplus
\check{H}_p(B_v)\!\!\!&\!\!\!\stackrel{\beta_v}{\rightarrow}\!\!\!&\!\!\!\check{H}_p(X_v)\!\!\!&\!\!\!\rightarrow\!\!\!&\!\!\!\cdots\\
&&&&&&&&&\\
&\downarrow h'_{p+1}&&\downarrow f'_p&&\downarrow g'_p&&\downarrow h'_p&&\\
&&&&&&&&&\\
 \cdots\rightarrow\!\!\!&\!\!\!\check{H}_{p+1}(X_v,X_u)\!\!\!&\!\!\!
\stackrel{\Delta_{v,u}}{\rightarrow}\!\!\!&\!\!\!\check{H}_p((A\cap
B)_v,(A\cap B)_u)\!\!\!&\!\!\!
\stackrel{\alpha_{v,u}}{\rightarrow}\!\!\!&\!\!\!\check{H}_p(A_v,A_u)\oplus
\check{H}_p(B_v,B_u)\!\!\!&\!\!\!\stackrel{\beta_{v,u}}{\rightarrow}\!\!\!&\!\!\!\check{H}_p(X_v,X_u)\!\!\!&\!\!\!\rightarrow\!\!\!&\!\!\!\cdots\\
&&&&&&&&&\\
&\downarrow&&\downarrow&&\downarrow&&\downarrow&&\\
&\vdots&&\vdots&&\vdots&&\vdots&&
\end{array}
\end{eqnarray}
where the top line belongs to the Mayer-Vietoris  sequence of the
triad $(X_u, A_u, B_u)$, the second line belongs to the
Mayer-Vietoris sequence of the triad $(X_v, A_v, B_v)$, and the
bottom line belongs to the relative Mayer-Vietoris sequence of the
triad $((X_v, X_u), (A_v, A_u), (B_v, B_u))$. For every $p\ge 0$,
the vertical maps $f_p, g_p$, and $ h_p$ are induced by
inclusions of $(A\cap B)_u$ into $(A\cap B)_v$, $(A_u, B_u)$ into
$(A_v, B_v)$, and $X_u$ into $X_v$, respectively. Moreover, $f'_p,
g'_p$ and $ h'_p$ are induced by inclusions of $((A\cap
B)_v,\emptyset)$ into $((A\cap B)_v,(A\cap B)_u)$,
$((A_v,\emptyset),(B_v,\emptyset))$ into $((A_v,A_u),(B_v,B_u))$,
and $(X_v,\emptyset)$ into $(X_v,X_u)$, respectively.

\begin{lem}\label{commut}
Each vertical and horizontal line in diagram $\mathrm{(\ref{MVdiag})}$ is exact. Moreover,
each square in the same diagram  is commutative.
\end{lem}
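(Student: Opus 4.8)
The plan is to recognize the three rows and the four visible columns of diagram $\mathrm{(\ref{MVdiag})}$ as standard exact sequences of \Cech homology, and then to reduce the commutativity of each square either to functoriality of inclusion-induced maps or to naturality of the connecting homomorphisms. Since in our setting (compact Hausdorff spaces, coefficients in a vector space over a field) \Cech homology satisfies all the Eilenberg--Steenrod axioms (cf.\ Appendix \ref{exactness}), the usual axiomatic derivations of the long exact sequence of a pair and of the Mayer--Vietoris sequence apply verbatim, together with their naturality.

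First I would establish horizontal exactness. By Lemma \ref{cupcap}, $A_u$ and $B_u$ are closed in $X_u$, one has $X_u=int(A_u)\cup int(B_u)$, and $A_u\cap B_u=(A\cap B)_u$; the same holds at level $v$. Hence $(X_u,A_u,B_u)$ and $(X_v,A_v,B_v)$ are excisive triads, and the top and middle rows are precisely their Mayer--Vietoris sequences, which are exact because the Mayer--Vietoris sequence is obtained from the exactness and excision axioms. The bottom row is the relative Mayer--Vietoris sequence of the triad of pairs $((X_v,X_u),(A_v,A_u),(B_v,B_u))$, constructed in the same axiomatic fashion from the excisiveness provided again by Lemma \ref{cupcap}, and is therefore exact as well.

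Next I would establish vertical exactness. The first and fourth columns are the long exact sequences of the pair $(X_v,X_u)$, the second column is that of $((A\cap B)_v,(A\cap B)_u)$, and the third column is the direct sum of the long exact sequences of the pairs $(A_v,A_u)$ and $(B_v,B_u)$. Each is exact by the exactness axiom, a direct sum of exact sequences being exact; the maps $f_p,g_p,h_p$ and $f'_p,g'_p,h'_p$ are exactly the first two inclusion-induced maps of these sequences.

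Finally I would verify commutativity of the squares. The squares whose horizontal arrows are the inclusion-induced maps $\alpha_u,\alpha_v,\alpha_{v,u}$ and $\beta_u,\beta_v,\beta_{v,u}$, and whose vertical arrows are the inclusion-induced maps $f_\bullet,g_\bullet,h_\bullet$ (and their primed analogues), commute by pure functoriality: both composites are induced by the same composition of inclusions. The delicate squares are the ones whose horizontal arrow is a Mayer--Vietoris connecting map $\Delta_u,\Delta_v$ (or $\Delta_{v,u}$ in the relative row); here commutativity, namely $f_{p}\circ\Delta_u=\Delta_v\circ h_{p+1}$ and $f'_{p}\circ\Delta_v=\Delta_{v,u}\circ h'_{p+1}$, amounts to the naturality of the Mayer--Vietoris connecting homomorphism with respect to the maps of (pairs of) triads induced by the inclusions of level $u$ into level $v$ and of the absolute triad into the relative one. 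This is the step I expect to require the most care: since each $\Delta$ is built, in the axiomatic construction, as the composition of the boundary homomorphism of a pair with an excision isomorphism, and both of these are natural, the required corner squares commute. This completes the verification of all exactness and commutativity properties asserted in the statement.
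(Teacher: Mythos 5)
Your proposal is correct and follows essentially the same route as the paper: Lemma \ref{cupcap} supplies the hypotheses needed to invoke the exactness of the long sequences of the pairs (Theorem \ref{pair}) for the columns and of the absolute and relative Mayer--Vietoris sequences (Theorems \ref{MV} and \ref{MVrel}) for the rows, while commutativity of all squares is the naturality of these sequences under the inclusion-induced maps of (pairs of) triads, which is exactly what the paper cites as Theorems \ref{MVhom} and \ref{MVrelhom}. Your extra care in separating the functoriality squares from the connecting-homomorphism squares is a finer-grained version of the same argument, not a different one.
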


\begin{proof}
We recall that we are assuming that $X$ is compact and $\p$
continuous, therefore $X_u$ and $X_v$ are compact, as are $A_u$,
$A_v$, $B_u$ and $B_v$ by Lemma \ref{cupcap}. Therefore, since we
are also assuming that the coefficient group $G$ is  a vector
space over a field,  it holds that the homology sequences of the
pairs $(X_v,X_u)$, $((A\cap B)_v,(A\cap B)_u)$, $(A_v,A_u)$,
$(B_v,B_u)$ (vertical lines) are exact (cf. Theorem \ref{pair} in
Appendix \ref{exactness}).

Analogously, the Mayer-Vietoris sequences of $(X_u, A_u, B_u)$ and
$(X_v, A_v, B_v)$, and the relative Mayer-Vietoris sequence of
$((X_v, X_u), (A_v, A_u), (B_v, B_u))$ (horizontal lines) are
exact (cf. Theorems \ref{MV} and \ref{MVrel} in Appendix
\ref{exactness}).

About the commutativity of the top squares, it is sufficient to
apply Theorem \ref{MVhom} in Appendix \ref{exactness}. The same
conclusion can be drawn for the commutativity of the bottom
squares, with $X_v$ replaced by $(X_v,\emptyset)$, $A_v$ by
$(A_v,\emptyset)$ and $B_v$ by $(B_v,\emptyset)$, respectively,
applying Theorem \ref{MVrelhom}.
\end{proof}

The image of the maps $f_p$, $g_p$, and $h_p$ of diagram
(\ref{MVdiag}) are related to the $p$th persistent \Cech homology
groups. In particular, when $p=0$, they are related to size
functions, as the following lemma formally states.

\begin{lem}\label{persCechhom}
For $(u,v)\in\Delta^+$, let $f_p, g_p, h_p$ be the maps induced by inclusions of $(A\cap B)_u$ into $(A\cap B)_v$, $(A_u, B_u)$
into $(A_v, B_v)$, and $X_u$ into $X_v$, respectively. Then $\I
f_p=\check{H}_p^{u, v}(A \cap B)$, $ \I g_p=\check{H}_p^{u, v}(A)
\oplus \check{H}_p^{u, v}(B)$, and $\I h_p=\check{H}_p^{u, v}(X)$.
In
particular, $\rank \I f_0 = \ell_{(A\cap B, \varphi_{|A\cap
B})}(u, v)$, $\rank \I g_0 = \ell_{(A, \varphi_{|A})}(u, v) +
\ell_{(B, \varphi_{|B})}(u, v)$ and  $\rank \I h_0 = \ell_{(X,
\varphi)}(u, v)$.
\end{lem}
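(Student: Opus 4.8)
The plan is to read off the first three identities directly from Definition \ref{persistentcechhom} and then specialize to $p=0$ using Corollary \ref{sfhomology}. First I would observe that $h_p$ is, by its very construction in diagram (\ref{MVdiag}), the homomorphism $\iota_p^{u,v}:\check{H}_p(X_u)\to\check{H}_p(X_v)$ induced by the inclusion $X_u\hookrightarrow X_v$; hence $\I h_p=\check{H}_p^{u,v}(X)$ is nothing but the definition of the $p$th persistent \Cech homology group of $X$. The same reasoning applies verbatim to $f_p$: since by Lemma \ref{cupcap} the sets $(A\cap B)_u=A_u\cap B_u$ are exactly the lower level sets of the size pair $(A\cap B,\p_{|A\cap B})$, the map $f_p$ is precisely the inclusion-induced map for this size pair, so $\I f_p=\check{H}_p^{u,v}(A\cap B)$.

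The only point requiring a word is $g_p$. Because the middle term of the Mayer--Vietoris sequence is the direct sum $\check{H}_p(A_u)\oplus\check{H}_p(B_u)$, and the inclusion of the pair $(A_u,B_u)$ into $(A_v,B_v)$ acts componentwise, $g_p$ splits as the direct sum of the two inclusion-induced maps $\check{H}_p(A_u)\to\check{H}_p(A_v)$ and $\check{H}_p(B_u)\to\check{H}_p(B_v)$. Since the image of a direct sum of homomorphisms is the direct sum of their images, and these images are by definition $\check{H}_p^{u,v}(A)$ and $\check{H}_p^{u,v}(B)$, we obtain $\I g_p=\check{H}_p^{u,v}(A)\oplus\check{H}_p^{u,v}(B)$, as claimed.

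For the $p=0$ statements I would invoke Corollary \ref{sfhomology}, which is available because $(X,\p)$, $(A,\p_{|A})$, $(B,\p_{|B})$ and $(A\cap B,\p_{|A\cap B})$ are all size pairs under the standing hypotheses of this section. Applying it four times gives $\rank\check{H}_0^{u,v}(X)=\szf(u,v)$, $\rank\check{H}_0^{u,v}(A)=\sza(u,v)$, $\rank\check{H}_0^{u,v}(B)=\szb(u,v)$, and $\rank\check{H}_0^{u,v}(A\cap B)=\szab(u,v)$. Combining these with the three image identities, and using that the rank is additive over direct sums to handle $g_0$, yields $\rank\I h_0=\szf(u,v)$, $\rank\I f_0=\szab(u,v)$, and $\rank\I g_0=\sza(u,v)+\szb(u,v)$.

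This lemma is essentially bookkeeping, so I do not expect a genuine obstacle. The one step I would be careful to justify explicitly is the componentwise splitting of $g_p$ and the resulting decomposition of $\I g_p$ as a direct sum, since this is the only place where the direct-sum structure of the Mayer--Vietoris middle term is actually used; everything else is an immediate translation between the notation of diagram (\ref{MVdiag}) and Definition \ref{persistentcechhom}.
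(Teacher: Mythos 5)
Your proposal is correct and follows exactly the route of the paper, which simply observes that the claim is immediate from Definition \ref{persistentcechhom} and Corollary \ref{sfhomology}; you merely make explicit the componentwise splitting of $g_p$ and the additivity of rank over direct sums, which the paper leaves tacit. No discrepancy to report.
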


\begin{proof}
The proof trivially follows from the definition of $p$th
persistent \Cech homology group (Definition
\ref{persistentcechhom}) and from Corollary \ref{sfhomology}.
\end{proof}

The following proposition proves that the commutativity of squares in
diagram (\ref{MVdiag}) induces a   sequence of Mayer-Vietoris of order $2$
involving the $p$th persistent \Cech homology groups of $X$, $A$,
$B$, and $A\cap B$, for every integer $p \geq 0$.

\begin{prop}\label{order2}
Let us consider the sequence of homomorphisms of persistent \Cech
homology groups
$$\begin{array}{cccccccccccc}
\cdots \rightarrow& \check{H}_{p+1}^{u, v}(X)  &
\stackrel{\Delta}{\rightarrow}& \check{H}_p^{u, v}(A \cap B)&
\stackrel{\alpha}{\rightarrow} & \check{H}_p^{u, v}(A) \oplus
\check{H}_p^{u, v}(B)& \stackrel{\beta}{\rightarrow} &
\check{H}_p^{u, v}(X)&\rightarrow \cdots
\rightarrow&\check{H}_{0}^{u, v}(X)&\rightarrow&0
\end{array}$$
where $\Delta = {\Delta_v}_{|\I h_{p+1}}$, $ \alpha =
{\alpha_v}_{|\I f_p}$, and $ \beta = {\beta_v}_{|\I g_p}$. For every integer $p\ge 0$, the
following statements hold:
\begin{enumerate}
\item $\I \Delta \subseteq \ker \alpha$; \item $\I \alpha
\subseteq \ker \beta$; \item $\I \beta \subseteq \ker \Delta$,
\end{enumerate}
that is, the sequence is of order $2$.
\end{prop}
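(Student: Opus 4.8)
The plan is to prove that the composite of consecutive maps in the persistent sequence is zero, which is equivalent to the three inclusions $\I\Delta\subseteq\ker\alpha$, $\I\alpha\subseteq\ker\beta$, and $\I\beta\subseteq\ker\Delta$. The whole argument rests on two facts already established in diagram~$(\ref{MVdiag})$: first, by Lemma~\ref{commut}, each horizontal line is an \emph{exact} Mayer-Vietoris sequence and each square \emph{commutes}; second, by Lemma~\ref{persCechhom}, the persistent groups $\check{H}_p^{u,v}(\cdot)$ are precisely the images $\I f_p$, $\I g_p$, $\I h_p$ of the vertical maps. The restricted maps $\Delta$, $\alpha$, $\beta$ are, by definition, the middle-row maps $\Delta_v$, $\alpha_v$, $\beta_v$ restricted to these image subgroups, so the key is to check that these restrictions actually land in the correct target subgroups, and then to exploit the exactness of the middle Mayer-Vietoris row.

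First I would verify that the maps are well defined, i.e. that $\Delta$, $\alpha$, $\beta$ send their domains into the claimed codomains; this is where commutativity of the squares enters. For instance, to see that $\alpha={\alpha_v}_{|\I f_p}$ maps $\I f_p=\check{H}_p^{u,v}(A\cap B)$ into $\I g_p=\check{H}_p^{u,v}(A)\oplus\check{H}_p^{u,v}(B)$, take $x\in\I f_p$, so $x=f_p(y)$ for some $y\in\check{H}_p((A\cap B)_u)$; by the commutativity of the relevant square in $(\ref{MVdiag})$ we have $\alpha_v(f_p(y))=g_p(\alpha_u(y))\in\I g_p$. The analogous commutativity arguments handle $\beta$ (using the square relating $g_p,h_p$ via $\beta_u,\beta_v$) and $\Delta$ (using the square relating $h_{p+1},f_p$ via $\Delta_u,\Delta_v$). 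Thus each restricted map is genuinely a homomorphism between the stated persistent groups.

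Next I would prove the three order-$2$ inclusions. These follow almost immediately once well-definedness is in place: since $\Delta$, $\alpha$, $\beta$ are restrictions of $\Delta_v$, $\alpha_v$, $\beta_v$, and the middle row of $(\ref{MVdiag})$ is exact (hence of order $2$), we have $\alpha_v\circ\Delta_v=0$, $\beta_v\circ\alpha_v=0$, and $\Delta_v\circ\beta_v=0$ on the full middle-row groups. Restricting to the subgroups $\I f_p$, $\I g_p$, $\I h_{p+1}$ cannot enlarge these composites, so on the persistent groups we still get $\alpha\circ\Delta=0$, $\beta\circ\alpha=0$, and $\Delta\circ\beta=0$, which is exactly the content of statements~\emph{(i)}, \emph{(ii)}, \emph{(iii)}. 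Concretely, for \emph{(ii)} take any element of $\I\alpha$; it has the form $\alpha_v(z)$ with $z\in\I f_p\subseteq\check{H}_p((A\cap B)_v)$, and then $\beta(\alpha_v(z))=\beta_v(\alpha_v(z))=0$ by exactness of the middle row, so $\I\alpha\subseteq\ker\beta$. The other two inclusions are identical in structure.

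The main obstacle, and the only step requiring genuine care, is the well-definedness verification: one must confirm that the tail of each restricted map really lands inside the \emph{image} subgroup of the appropriate vertical map, rather than merely inside the ambient middle-row group. This is not automatic from exactness alone and relies essentially on the commutativity of the squares in $(\ref{MVdiag})$ as guaranteed by Lemma~\ref{commut}; the order-$2$ property then comes for free from the exactness of the middle row. I would therefore organize the write-up so that the commutativity chase establishing well-definedness is done first and carefully for one representative map, with the remaining cases noted as analogous, after which the order-$2$ inclusions drop out by restriction of the already-known composites to the subgroups.
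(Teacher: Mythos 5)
Your proposal is correct and follows essentially the same route as the paper's proof: well-definedness of the restricted maps $\Delta$, $\alpha$, $\beta$ is obtained from the commutativity of the squares in diagram~(\ref{MVdiag}) (Lemma~\ref{commut}), and the order-$2$ inclusions then drop out of the exactness of the middle Mayer--Vietoris row, since each restricted composite agrees with the corresponding vanishing composite $\alpha_v\circ\Delta_v$, $\beta_v\circ\alpha_v$, $\Delta_v\circ\beta_v$. The only difference is presentational: the paper phrases the inclusion $(i)$ element-wise via $\I\Delta\subseteq\I\Delta_v=\ker\alpha_v$, while you phrase it as restriction of zero composites, which is the same argument.
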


\begin{proof}
First of all, we observe that, by Lemma \ref{commut}, $ \I \Delta
\subseteq \I f_p$, $\I\alpha\subseteq\I g_p$ and
$\I\beta\subseteq\I h_p$. Now we prove only claim {\em (i)},
considering that {\em (ii)} and {\em (iii)} can be deduced
analogously.

{\em (i)}  Let $c \in \I \Delta$. Then $c \in \I f_p$ and $c \in
\I \Delta_v = \ker \alpha_v$ in diagram (\ref{MVdiag}). Therefore
$c \in \ker \alpha$.
\end{proof}

\subsection{The size function of the union of two spaces}

In the  rest of the section we focus on the ending part of diagram
(\ref{MVdiag}):
\begin{eqnarray}\label{MVend}
\begin{array}{cccccccccc}
&\vdots&&\vdots&&\vdots&&\vdots&&\\
&\downarrow&&\downarrow&&\downarrow&&\downarrow&&\\
&&&&&&&&&\\
\cdots\rightarrow\!\!\!&\!\!\!\check{H}_1(X_u)\!\!\!&\!\!\!\stackrel{\Delta_u}{\rightarrow}\!\!\!&\!\!\!\check{H}_0((A\cap B)_u)\!\!\!&\!\!\!\stackrel{\alpha_u}{\rightarrow}\!\!\!&\!\!\!\check{H}_0(A_u)\oplus \check{H}_0(B_u)\!\!\!&\!\!\!\stackrel{\beta_u}{\rightarrow}\!\!\!&\!\!\!\check{H}_0(X_u)\!\!\!&\!\!\!\rightarrow\!\!\!&\!\!\!0\\
&&&&&&&&&\\
&\downarrow h_1&&\downarrow f_0&&\downarrow g_0&&\downarrow h_0&&\\
&&&&&&&&&\\
 \cdots\rightarrow\!\!\!&\!\!\!\check{H}_1(X_v)\!\!\!&\!\!\!
\stackrel{\Delta_v}{\rightarrow}\!\!\!&\!\!\!\check{H}_0((A\cap
B)_v)\!\!\!&\!\!\!
\stackrel{\alpha_v}{\rightarrow}\!\!\!&\!\!\!\check{H}_0(A_v)\oplus
\check{H}_0(B_v)\!\!\!&\!\!\!\stackrel{\beta_v}{\rightarrow}\!\!\!&\!\!\!\check{H}_0(X_v)\!\!\!&\!\!\!\rightarrow\!\!\!&\!\!\!0\\
&&&&&&&&&\\
&\downarrow h'_{1}&&\downarrow f'_0&&\downarrow g'_0&&\downarrow h'_0&&\\
&&&&&&&&&\\
 \cdots\rightarrow\!\!\!&\!\!\!\check{H}_{1}(X_v,X_u)\!\!\!&\!\!\!
\stackrel{\Delta{v,u}}{\rightarrow}\!\!\!&\!\!\!\check{H}_0((A\cap
B)_v,(A\cap B)_u)\!\!\!&\!\!\!
\stackrel{\alpha_{v,u}}{\rightarrow}\!\!\!&\!\!\!\check{H}_0(A_v,A_u)\oplus
\check{H}_0(B_v,B_u)\!\!\!&\!\!\!\stackrel{\beta_{v,u}}{\rightarrow}\!\!\!&\!\!\!\check{H}_0(X_v,X_u)\!\!\!&\!\!\!\rightarrow\!\!\!&\!\!\!0\\
&&&&&&&&&\\
&\downarrow&&\downarrow&&\downarrow&&\downarrow&&\\
&\vdots&&0&&0&&0&&
\end{array}
\end{eqnarray}
and, in the rest of the paper, the notations we use always refer
to diagram (\ref{MVend}).

We are now ready to deduce the relation between $\ell_{(X, \p)}$ and $ \ell_{(A, \varphi_{|A})}$,
$\ell_{(B, \varphi_{|B})}$.

\begin{thm}\label{szrelationcompl}
 For every $(u, v) \in
\Delta^+$, it holds that
\begin{eqnarray*}
\ell_{(X, \p)}(u, v) = \ell_{(A, \varphi_{|A})}(u, v) + \ell_{(B,
\varphi_{|B})}(u, v) - \ell_{(A\cap B, \varphi_{|A\cap B})}(u, v)
+\rank \ker \alpha_v - \rank \ker \alpha_{v,u}.
\end{eqnarray*}
\end{thm}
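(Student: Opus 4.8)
The plan is to reduce the statement to rank computations on the two horizontal exact rows of diagram (\ref{MVend}) that live at level $v$ --- the absolute Mayer--Vietoris sequence of the triad $(X_v,A_v,B_v)$ and the relative one of $((X_v,X_u),(A_v,A_u),(B_v,B_u))$ --- and then to recognize the resulting combinations of $\check H_0$-ranks as size-function values via Corollary \ref{sfrelhom}. First I would record, applying Corollary \ref{sfrelhom} to each of the four size pairs $(X,\p)$, $(A,\p_{|A})$, $(B,\p_{|B})$, $(A\cap B,\p_{|A\cap B})$, the identities
$$\szf(u,v)=\rank\check H_0(X_v)-\rank\check H_0(X_v,X_u),\qquad \sza(u,v)=\rank\check H_0(A_v)-\rank\check H_0(A_v,A_u),$$
and likewise for $B$ and for $A\cap B$. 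This turns the claim into a purely homological statement about ranks.

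Next I would extract from the exactness of the middle (absolute) row the identity
$$\rank\check H_0(X_v)=\rank\check H_0(A_v)+\rank\check H_0(B_v)-\rank\check H_0((A\cap B)_v)+\rank\ker\alpha_v.$$
This follows from three facts, all guaranteed by Lemma \ref{commut} together with the fact that the row terminates with $\check H_0(X_v)\to 0$ in diagram (\ref{MVend}): $\beta_v$ is surjective, so rank--nullity applied to $\beta_v$ gives $\rank\check H_0(X_v)=\rank\check H_0(A_v)+\rank\check H_0(B_v)-\rank\ker\beta_v$; exactness at the direct sum gives $\ker\beta_v=\I\alpha_v$; and a second application of rank--nullity, this time to $\alpha_v$, yields $\rank\I\alpha_v=\rank\check H_0((A\cap B)_v)-\rank\ker\alpha_v$. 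Here all the groups entering a rank--nullity step are $\check H_0$'s of compact, locally connected spaces (and $\ker\alpha_v$ is a subspace of one of them), hence of finite rank by Theorem \ref{ncc} and Lemma \ref{nccrel}, so the rank arithmetic is legitimate. Repeating the identical argument on the bottom (relative) row produces
$$\rank\check H_0(X_v,X_u)=\rank\check H_0(A_v,A_u)+\rank\check H_0(B_v,B_u)-\rank\check H_0((A\cap B)_v,(A\cap B)_u)+\rank\ker\alpha_{v,u}.$$

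Finally I would subtract the relative identity from the absolute one and regroup the eight $\check H_0$-ranks into the three differences (absolute minus relative) associated with $A$, $B$, and $A\cap B$. By the identities recorded in the first step these differences are exactly $\sza(u,v)$, $\szb(u,v)$, and $\szab(u,v)$, while the left-hand side is $\szf(u,v)$; the two leftover terms combine into $\rank\ker\alpha_v-\rank\ker\alpha_{v,u}$, giving precisely the asserted formula. I do not expect a genuine obstacle: the argument is pure exact-sequence bookkeeping. The only points needing care are verifying that each Mayer--Vietoris row is exact and ends in $0$ at degree $0$ (supplied by Lemma \ref{commut} and diagram (\ref{MVend})) and checking the finiteness of every rank appearing in a rank--nullity step, so that the final cancellation of ranks is valid.
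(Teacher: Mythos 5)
Your proposal is correct and follows essentially the same route as the paper's own proof: both derive the rank identity for the absolute Mayer--Vietoris row at level $v$ and for the relative row of $((X_v,X_u),(A_v,A_u),(B_v,B_u))$ from the surjectivity of $\beta_v$, $\beta_{v,u}$ and rank--nullity, then subtract and translate the resulting differences of absolute and relative $\check H_0$-ranks into size-function values. Your explicit attention to the finiteness of the ranks, and your use of Corollary \ref{sfrelhom} (rather than the paper's citation of Corollary \ref{sfhomology}) for the final translation, are if anything slightly more careful than the published argument.
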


\begin{proof}
By the exactness of the second horizontal line of diagram
(\ref{MVend}) and by the surjectivity of the homomorphism
$\beta_v$, repeatedly using the dimensional relation between the
domain of a homomorphism, its kernel and its image, we obtain
\begin{eqnarray}\label{secondrow}
\rank \check{H}_0(X_v) = \rank \I \beta_v &=& \rank
\check{H}_0(A_v)\oplus\check{H}_0(B_v)- \rank \ker \beta_v\nonumber\\
&=&\rank \check{H}_0(A_v)\oplus \check{H}_0(B_v)- \rank
\I\alpha_v\nonumber\\
&=&\rank \check{H}_0(A_v)+ \rank \check{H}_0(B_v)- \rank
\check{H}_0((A\cap B)_v) + \rank \ker \alpha_v.
\end{eqnarray}
Similarly, by the exactness of the third horizontal line of the
same diagram and by the surjectivity of $\beta_{v,u}$, it holds
that
\begin{eqnarray}\label{thirdrow}
\rank \check{H}_0(X_v, X_u)=\rank \check{H}_0(A_v,A_u)+ \rank
\check{H}_0(B_v,B_u)- \rank \check{H}_0((A\cap B)_v,(A\cap B)_u) +
\rank \ker \alpha_{v,u}.
\end{eqnarray}
Now, subtracting equality (\ref{thirdrow}) from equality
(\ref{secondrow}), we have
\begin{eqnarray*}
\rank \check{H}_0(X_v) - \rank \check{H}_0(X_v, X_u)&=&\rank
\check{H}_0(A_v)- \rank \check{H}_0(A_v,A_u)+ \rank
\check{H}_0(B_v)- \rank \check{H}_0(B_v,B_u)\\&&- \rank
\check{H}_0((A\cap B)_v)+ \rank \check{H}_0((A\cap B)_v,(A\cap
B)_u) + \rank \ker \alpha_v - \rank \ker \alpha_{v,u},
\end{eqnarray*}
which is equivalent, in terms of size functions, to the relation
claimed, because of  Corollary \ref{sfhomology}.
\end{proof}

\begin{cor}\label{szrelation}
 For every $(u, v) \in
\Delta^+$, it holds that
\begin{eqnarray*}
\ell_{(X, \p)}(u, v) = \ell_{(A, \varphi_{|A})}(u, v) +
\ell_{(B, \varphi_{|B})}(u, v) - \ell_{(A\cap B, \varphi_{|A\cap
B})}(u, v)
\end{eqnarray*}
if and only if $\rank \ker \alpha_v = \rank \ker \alpha_{v,u}$.
\end{cor}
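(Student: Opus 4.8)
The plan is to derive the corollary directly from Theorem~\ref{szrelationcompl}, which has already been established. That theorem states that for every $(u,v)\in\Delta^+$,
\[
\ell_{(X, \p)}(u, v) = \ell_{(A, \varphi_{|A})}(u, v) + \ell_{(B, \varphi_{|B})}(u, v) - \ell_{(A\cap B, \varphi_{|A\cap B})}(u, v) + \rank \ker \alpha_v - \rank \ker \alpha_{v,u}.
\]
The corollary claims the Mayer-Vietoris equality (\ref{grassmann1}) holds precisely when $\rank \ker \alpha_v = \rank \ker \alpha_{v,u}$, so the entire content is the observation that the correction term $\rank \ker \alpha_v - \rank \ker \alpha_{v,u}$ vanishes if and only if the two ranks coincide.

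First I would fix an arbitrary point $(u,v)\in\Delta^+$ and subtract the three size-function terms on the right-hand side from both sides of the displayed equation in Theorem~\ref{szrelationcompl}. This isolates the identity
\[
\ell_{(X, \p)}(u, v) - \ell_{(A, \varphi_{|A})}(u, v) - \ell_{(B, \varphi_{|B})}(u, v) + \ell_{(A\cap B, \varphi_{|A\cap B})}(u, v) = \rank \ker \alpha_v - \rank \ker \alpha_{v,u}.
\]
The desired equality (\ref{grassmann1}) is exactly the statement that the left-hand side equals zero, so it holds if and only if the right-hand side equals zero, i.e. if and only if $\rank \ker \alpha_v = \rank \ker \alpha_{v,u}$. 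Both directions of the biconditional follow at once from this single algebraic equivalence, so I would present the argument symmetrically rather than splitting into two separate implications.

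This is a genuinely routine deduction: there is no real obstacle, since Theorem~\ref{szrelationcompl} already carries all the homological work (the Mayer-Vietoris and pair sequences, the surjectivity of $\beta_v$ and $\beta_{v,u}$, and the dimension-counting). The only thing to be careful about is that all ranks appearing are finite, which is guaranteed because the coefficient group $G$ is a vector space over a field and the spaces involved are compact, so that by Theorem~\ref{ncc} and Lemma~\ref{nccrel} the relevant \Cech homology groups are finite-dimensional; hence the subtraction of ranks is legitimate and the equivalence ``difference is zero iff the two quantities are equal'' is valid. With that remark in place, the proof reduces to a single sentence invoking Theorem~\ref{szrelationcompl}.
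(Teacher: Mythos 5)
Your proposal is correct and follows exactly the route the paper takes: the corollary is deduced immediately from Theorem~\ref{szrelationcompl} by noting that the correction term $\rank \ker \alpha_v - \rank \ker \alpha_{v,u}$ vanishes precisely when the two ranks agree. The extra remark about finiteness of the ranks is a reasonable bit of care but does not change the argument.
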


\begin{proof}
Immediate from Theorem \ref{szrelationcompl}.
\end{proof}

We now show that combining  the assumption that $\alpha_v$ and
$\alpha_{v,u}$ are both injective with Proposition \ref{order2},
there is a short exact sequence involving the $0$th persistent
\Cech homology groups of $X$, $A$, $B$, and $A\cap B$.

\begin{prop}\label{persist}
 For every $(u, v) \in
\Delta^+$, such that the maps $\alpha_v$ and $\alpha_{v,u}$ are
injective, the sequence of maps
\begin{eqnarray}\label{exactsequence}
\begin{array}{ccccccc}
0& {\rightarrow}& \check H_0^{u,v}(A\cap B)&
\stackrel{\alpha}{\rightarrow} & \check H_0^{u,v}(A)\oplus\check
H_0^{u,v}(B)& \stackrel{\beta}{\rightarrow} & \check
H_0^{u,v}(X)\rightarrow 0,
\end{array}
\end{eqnarray}
where $ \alpha = {\alpha_v}_{|\I f_0}$ and $ \beta =
{\beta_v}_{|\I g_0}$, is exact.
\end{prop}

\begin{proof}
By Proposition \ref{order2}, $\I \alpha \subseteq \ker \beta$, so
we only have to show that $\beta$ is surjective, $\alpha$ is
injective, and $\rank\I \alpha=\rank\ker \beta$.

We recall that $\check H_0^{u,v}(A\cap B)=\I f_0$, $\check
H_0^{u,v}(A)\oplus\check H_0^{u,v}(B)=\I g_0$, and $\check
H_0^{u,v}(X)=\I h_0$ (Lemma \ref{persCechhom}).

We begin by showing that $\beta$ is surjective. Let $c \in \I
h_0$. There exists $d \in \check{H}_0(X_u)$ such that $h_0(d) =
c$. Since $\beta_u$ is surjective, there exists $d' \in
\check{H}_0(A_u)\oplus \check{H}_0(B_u)$ such that $h_0\circ
\beta_u(d') = c$. By Lemma \ref{commut}, $\beta_v\circ g_0(d') =
c$. Thus, taking $c' = g_0(d')$, we immediately have $\beta(c') =
c$.

As for the injectivity of $\alpha$, the claim is immediate because
$\ker\alpha\subseteq \ker\alpha_{v}$ and we are assuming
$\alpha_v$ injective.

Now we have to show that  $\rank\I \alpha=\rank \ker \beta$. In
order to do so, we observe that for every $(u,v)\in \Delta^+$ it
holds that
\begin{eqnarray}\label{generalRelation}
\ell_{(X, \p)}(u, v)=\rank \check H_0^{u,v}(X)=\rank\I \beta=\rank
H_0^{u,v}(A)\oplus\check H_0^{u,v}(B)-\rank\ker\beta \nonumber\\
=\ell_{(A, \varphi_{|A})}(u, v) + \ell_{(B, \varphi_{|B})}(u,
v)-\rank\ker\beta.
\end{eqnarray}
On the other hand,  by Corollary \ref{szrelation}, when
$\rank\ker\alpha_v=\rank\ker\alpha_{v,u}$ it holds that
$$\ell_{(X, \p)}(u, v) = \ell_{(A, \varphi_{|A})}(u, v) + \ell_{(B,
\varphi_{|B})}(u, v) - \ell_{(A\cap B, \varphi_{|A\cap B})}(u,
v).$$ Hence, if $\rank\ker\alpha_v=\rank\ker\alpha_{v,u}$, then
$\rank\ker\beta=\ell_{(A\cap B, \varphi_{|A\cap B})}(u, v)$.
Moreover, since $\ell_{(A\cap B, \varphi_{|A\cap B})}(u, v)=\rank
\check H_0^{u,v}(A\cap B)=\rank\ker\alpha+\rank\I\alpha$, when
$\rank\ker\alpha_v=\rank\ker\alpha_{v,u}$, we have
$\rank\ker\beta=\rank\ker\alpha+\rank\I\alpha$. Therefore, when
$\rank\ker\alpha_v=\rank\ker\alpha_{v,u}$, $\alpha$ is injective
if and only if  $\rank\I \alpha=\rank\ker \beta$.
\end{proof}

The condition $\rank\ker\alpha_v=\rank\ker\alpha_{v,u}=0$ in the
previous Proposition \ref{persist} cannot be weakened, in fact:

\begin{rem}\label{controes}
The equality $\rank\ker\alpha_v=\rank\ker\alpha_{v,u}$ does not
imply the injectivity of $\alpha$.
\end{rem}
\begin{figure}[htbp]
\begin{center}
\includegraphics[height=4cm]{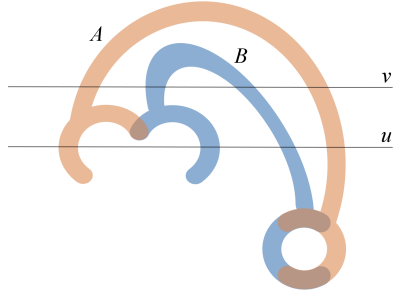}
\end{center}
\caption{\footnotesize{The sets $A$ and $B$ used in Remark
\ref{controes}.}} \label{alphaNoInjec}
\end{figure}
Indeed, Figure \ref{alphaNoInjec} shows an example of a
topological space $X = A \cup B$ on which, taking $\p$ equal to
the height function and $u, v \in \R$ as displayed, it holds that
$\rank\ker\alpha_v=\rank\ker\alpha_{v,u}\ne 0$, but $\rank \ker
\alpha >0$, making the sequence (\ref{exactsequence}) not exact.
To see this, we note that the equalities (\ref{secondrow}) and
(\ref{thirdrow}) imply $\rank \ker \alpha_{v}=\rank \ker
\alpha_{v,u}=1$.

As far as the homomorphism $\alpha$ is concerned, let us consider
the homology sequence of the pair $(X_v, X_u)$
\begin{eqnarray*}
\begin{array}{ccccccccccc}
\cdots& {\rightarrow}& \check H_2(X_v, X_u)&
\stackrel{h}{\rightarrow} &\check H_1(X_u)&
\stackrel{h_1}{\rightarrow}& \check H_1(X_v)&
\stackrel{h'_1}{\rightarrow} & \check H_1(X_v, X_u)& {\rightarrow}
& \cdots
\end{array}
\end{eqnarray*}
that is, the leftmost vertical line in diagram (\ref{MVend}). In
this instance, $\check H_2(X_v,X_u)=0$, so it follows that $h_1$
is injective. Now, recalling that, by Proposition \ref{order2},
$\I\Delta\subseteq\ker\alpha$, where $\Delta=\Delta_{v|\I h_1}$,
the triviality of $\check H_1(A_v)\oplus \check H_1(B_v)$, implies
that $\Delta_v$ is surjective. So, since $\rank \check
H_1(X_u)=1$, it follows that $\rank \I\Delta=1$, and hence
$\ker\alpha = 1$ because $\ker\alpha\subseteq\ker\alpha_v$.

As shown in the proof of Proposition \ref{persist}, it holds that
$\ell_{(X, \p)}(u, v)=\ell_{(A, \varphi_{|A})}(u, v) + \ell_{(B,
\varphi_{|B})}(u, v)-\rank\ker\beta$ for every $(u, v) \in
\Delta^+$ (see equality (\ref{generalRelation})). So, as an
immediate consequence, we observe that
\begin{rem}
For every $(u, v) \in \Delta^+$, it holds that $\ell_{(X, \p)}(u,
v) \le  \ell_{(A, \varphi_{|A})}(u, v) + \ell_{(B,
\varphi_{|B})}(u, v)$.
\end{rem}

\subsection{Examples}\label{examples}
In this section, we give two examples illustrating the previous results.

In both these examples, we consider a ``double open-end wrench''
shape $A$, partially occluded   by another shape $B$, resulting in
different shapes $X = A \cup B \subset \R^2$. The size functions
$\ell_{(A, \varphi_{|A})}$, $\ell_{(B, \varphi_{|B})}$,
$\ell_{(A\cap B, \varphi_{|A\cap B})}$, $\ell_{(X, \p)}$ are
computed taking $\p:X\rightarrow \R$, $\p(P)=-\|P-H\|$, with $H$ a
fixed point in $\R^2$.

\begin{figure}[htbp]
\begin{center}
\psfrag{A}{$A$} \psfrag{B}{$B$} \psfrag{AuB}{$A\cup B$}
\psfrag{AnB}{$A\cap B$} \psfrag{AnB}{$A\cap B$} \psfrag{u}{$u$}
\psfrag{v}{$v$} \psfrag{0}{$0$} \psfrag{1}{$1$} \psfrag{2}{$2$}
\psfrag{3}{$3$} \psfrag{4}{$4$} \psfrag{-a}{$-a$}
\psfrag{-b}{$-b$} \psfrag{-c}{$-c$} \psfrag{-d}{$-d$}
\psfrag{a}{$a$} \psfrag{b}{$b$} \psfrag{c}{$c$} \psfrag{d}{$d$}
\psfrag{H}{$H$} \psfrag{D+}{$\Delta^+$}
\psfrag{la}{$\ell_{(A,\p_{|A})}$}
\psfrag{lb}{$\ell_{(B,\p_{|B})}$} \psfrag{laub}{$\ell_{(A\cup
B,\p)}$} \psfrag{lanb}{$\ell_{(A\cap B,\p_{|A\cap B})}$}
\begin{tabular}{cc}
\includegraphics[height=4.2cm]{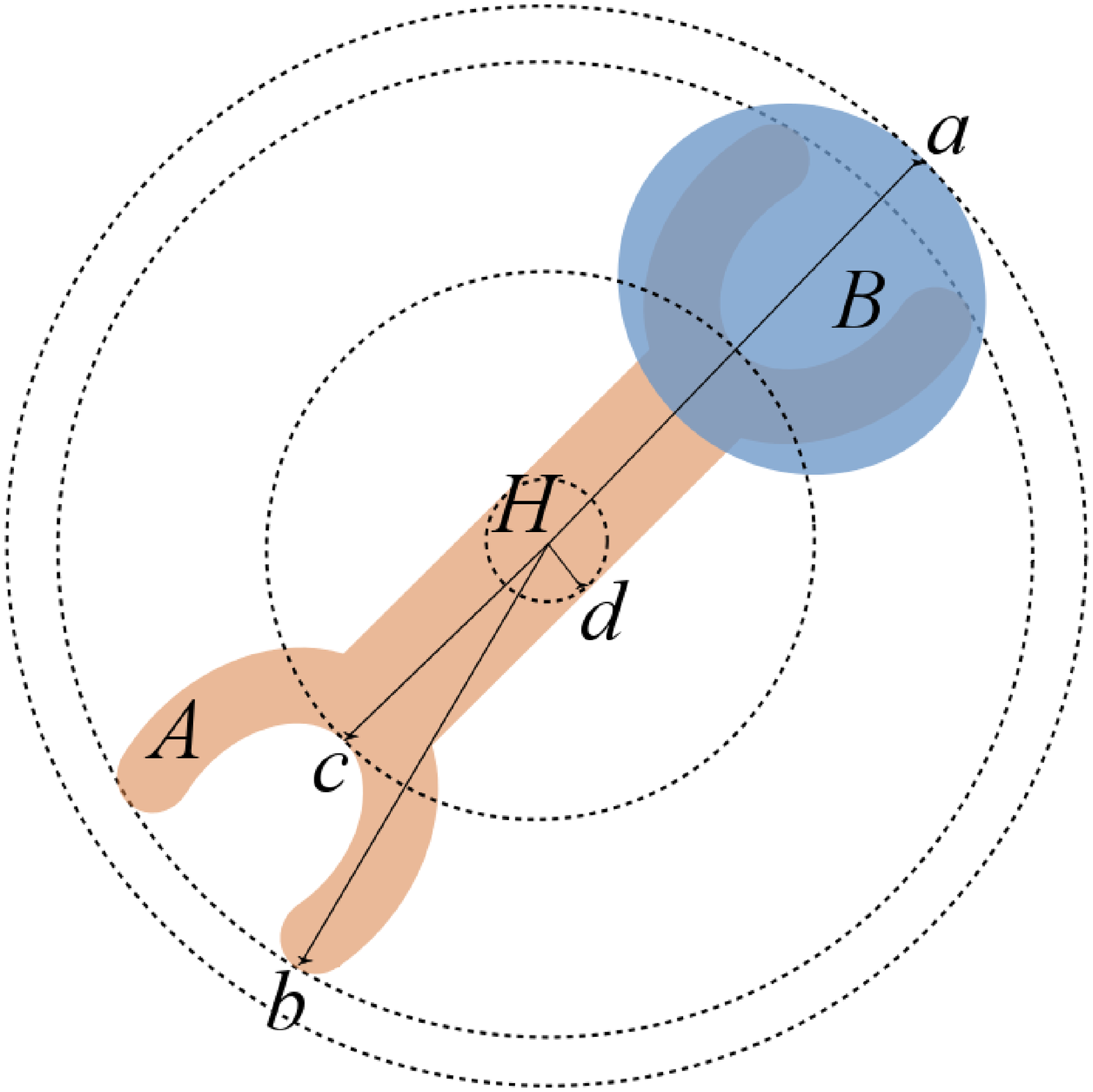}&
\includegraphics[height=4cm]{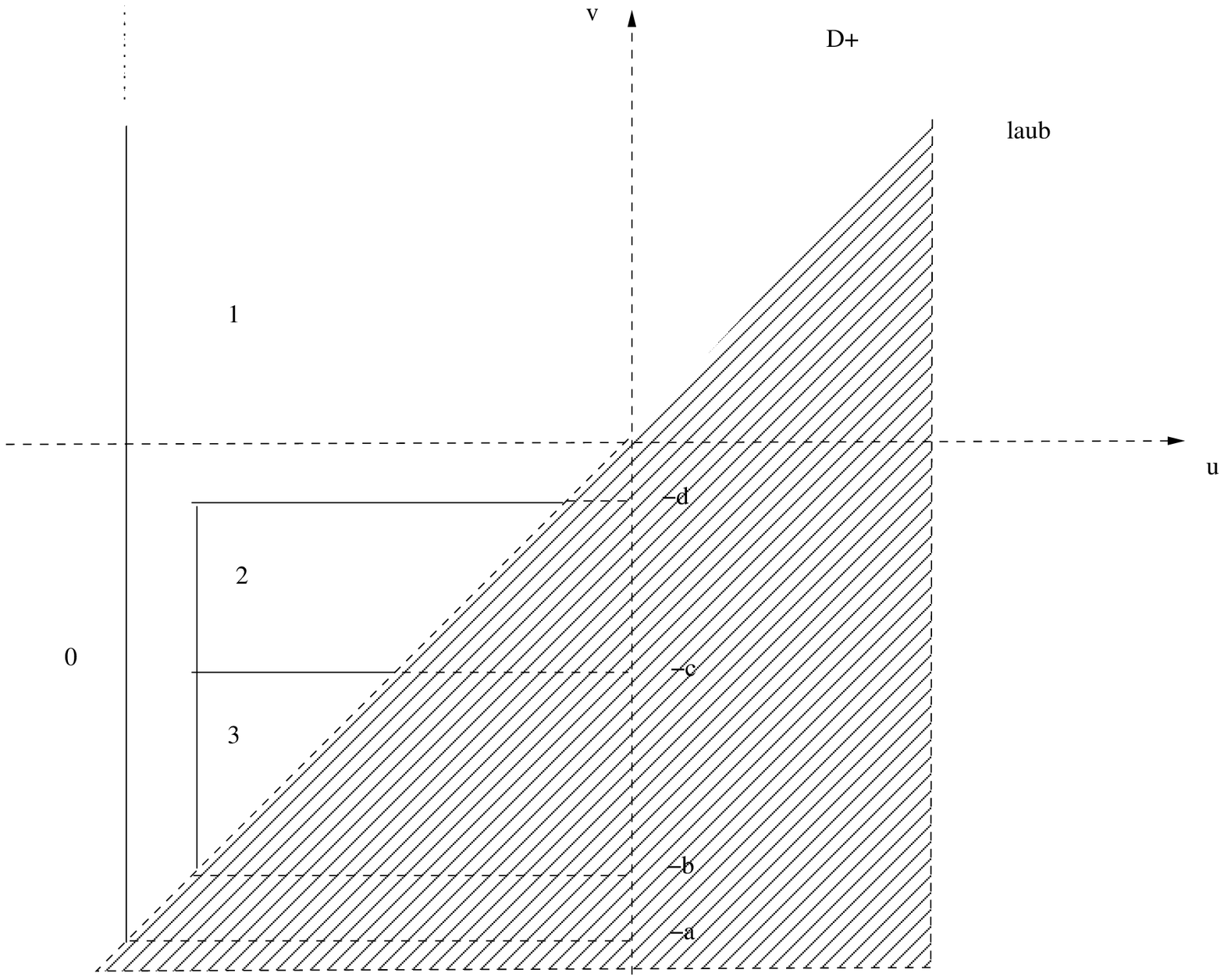}\\
$(a)$&$(b)$\\
\end{tabular}
\begin{tabular}{ccc}
\includegraphics[height=4cm]{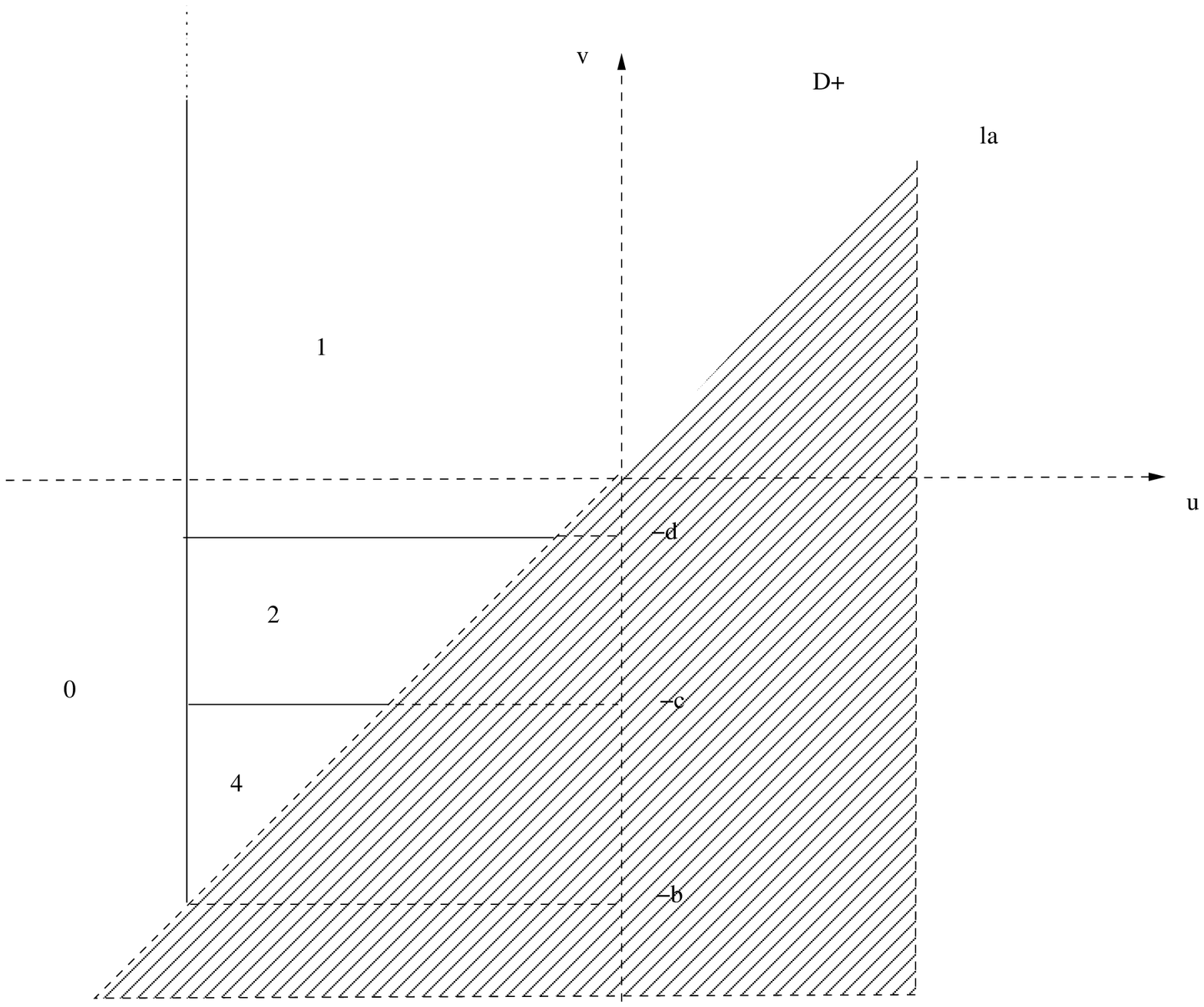}&
\includegraphics[height=4cm]{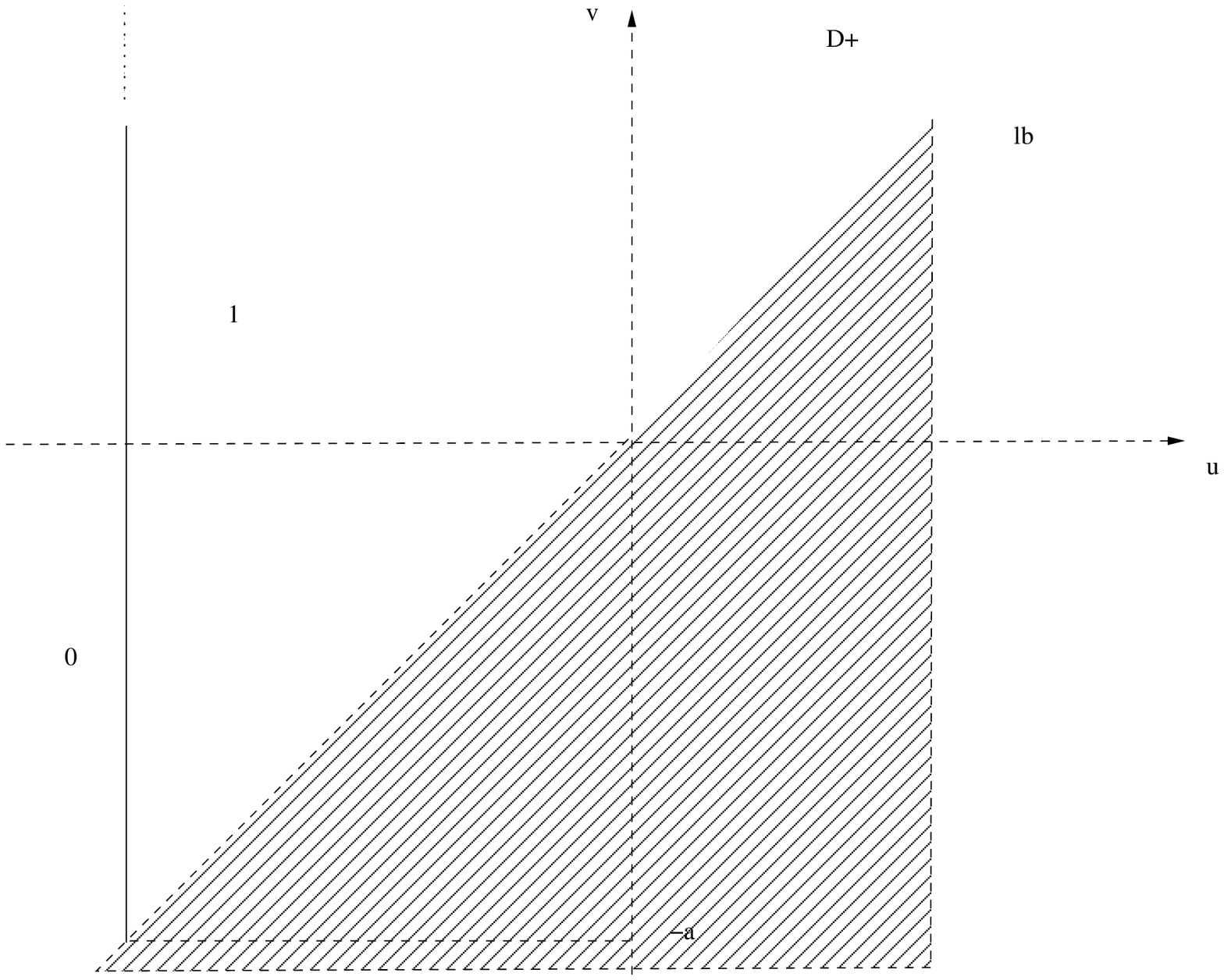}&
\includegraphics[height=4cm]{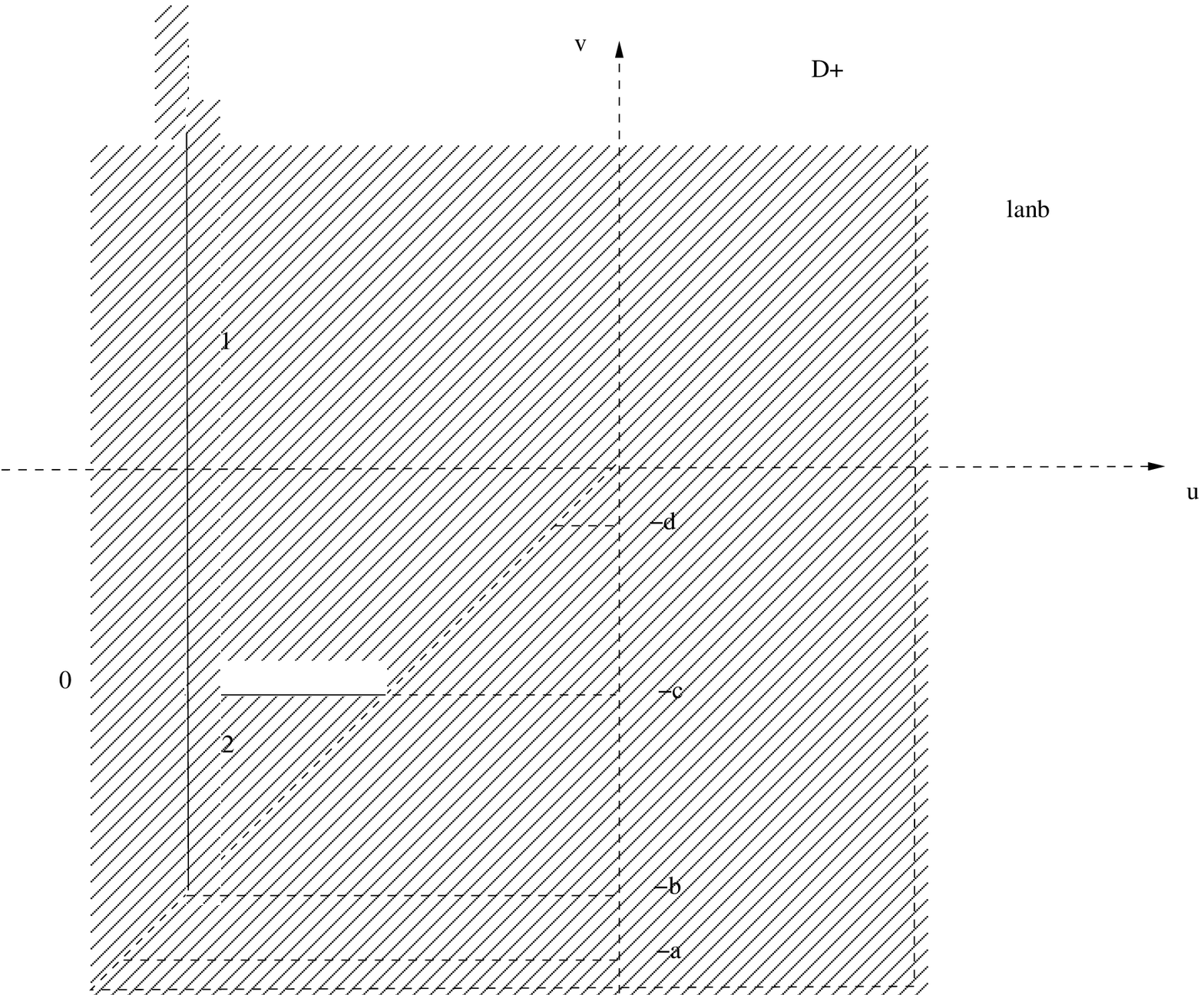}\\
$(c)$&$(d)$&$(e)$\\
\end{tabular}
\caption{\footnotesize{In $(a)$ a ``double open-end wrench'' shape
$A$ is occluded by another shape $B$. In $(b)$, $(c)$, $(d)$ and
$(e)$ we show the size functions of $(A \cup B, \p)$,
$(A,\p_{|A})$, $(B,\p_{|B})$, and $(A\cap B,\p_{|A\cap B})$,
respectively, computed taking $\p:X\rightarrow \R$,
$\p(P)=-\|P-H\|$. In this example the relation $\ell_{(X, \p)}=
\ell_{(A, \varphi_{|A})}+ \ell_{(B, \varphi_{|B})} - \ell_{(A\cap
B, \varphi_{|A\cap B})}$ of Corollary \ref{szrelation} holds
everywhere in $\Delta^+$.}} \label{figura:osso1}
\end{center}
\end{figure}
In the first example, shown in Figure \ref{figura:osso1}, the
relation $\ell_{(X, \p)}(u, v) = \ell_{(A, \varphi_{|A})}(u, v) +
\ell_{(B, \varphi_{|B})}(u, v) - \ell_{(A\cap B, \varphi_{|A\cap
B})}(u, v)$, given in Corollary \ref{szrelation}, holds for every
$(u, v) \in \Delta^+$.
\begin{figure}[htbp]
\begin{center}
\psfrag{A}{$A$} \psfrag{B}{$B$} \psfrag{AuB}{$A\cup B$}
\psfrag{AnB}{$A\cap B$} \psfrag{u}{$u$} \psfrag{v}{$v$}
\psfrag{0}{$0$} \psfrag{1}{$1$} \psfrag{2}{$2$} \psfrag{3}{$3$}
\psfrag{4}{$4$} \psfrag{-a}{$-a$} \psfrag{-b}{$-b$}
\psfrag{-c}{$-c$} \psfrag{-d}{$-d$} \psfrag{a}{$a$}
\psfrag{b}{$b$} \psfrag{c}{$c$} \psfrag{d}{$d$} \psfrag{H}{$H$}
\psfrag{D+}{$\Delta^+$} \psfrag{la}{$\ell_{(A,\p_{|A})}$}
\psfrag{lb}{$\ell_{(B,\p_{|B})}$} \psfrag{laub}{$\ell_{(A\cup
B,\p)}$} \psfrag{lanb}{$\ell_{(A\cap B,\p_{|A\cap B})}$}
\begin{tabular}{ccc}
\includegraphics[height=4.2cm]{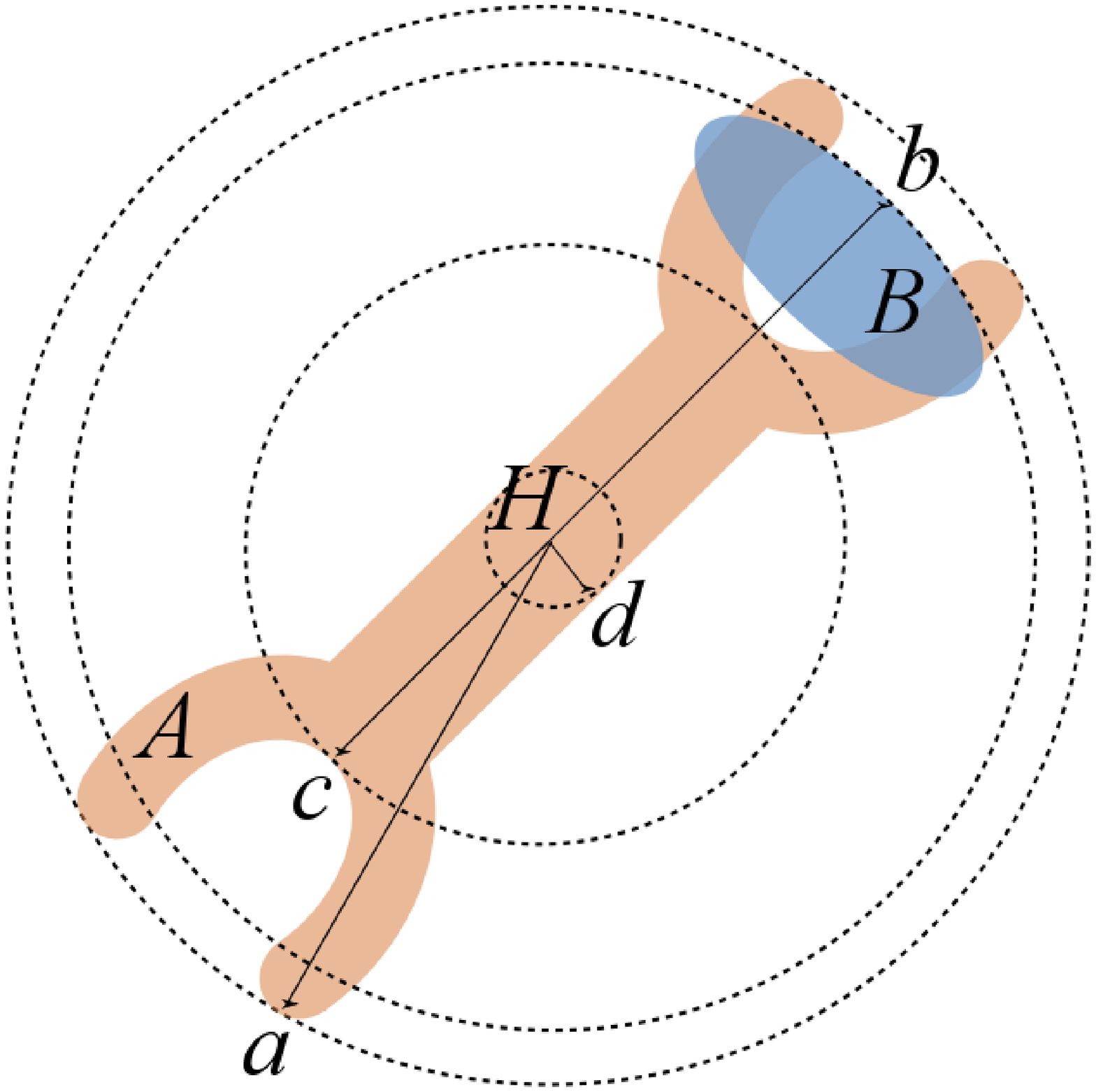}&
\includegraphics[height=4cm]{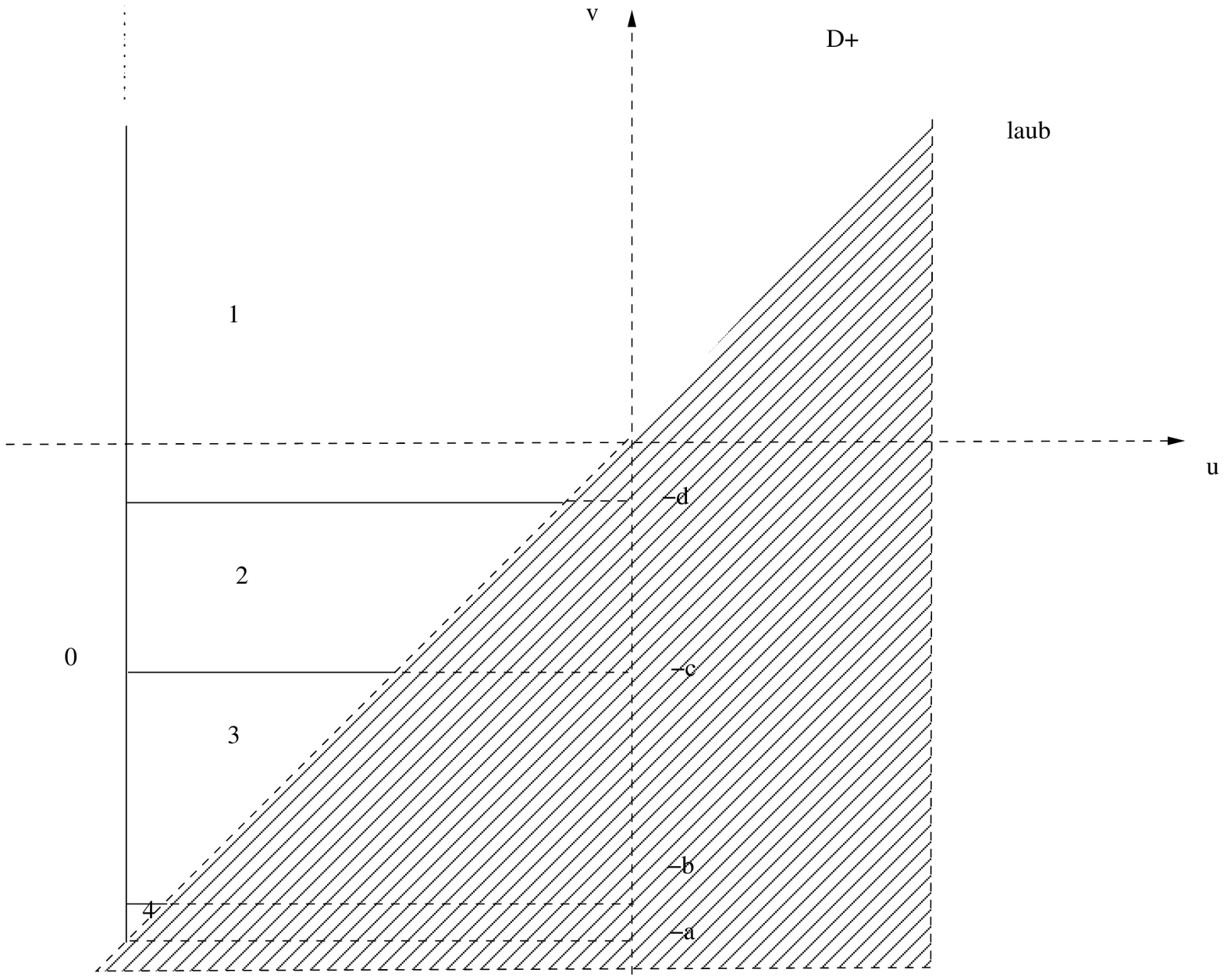}&
\includegraphics[height=4cm]{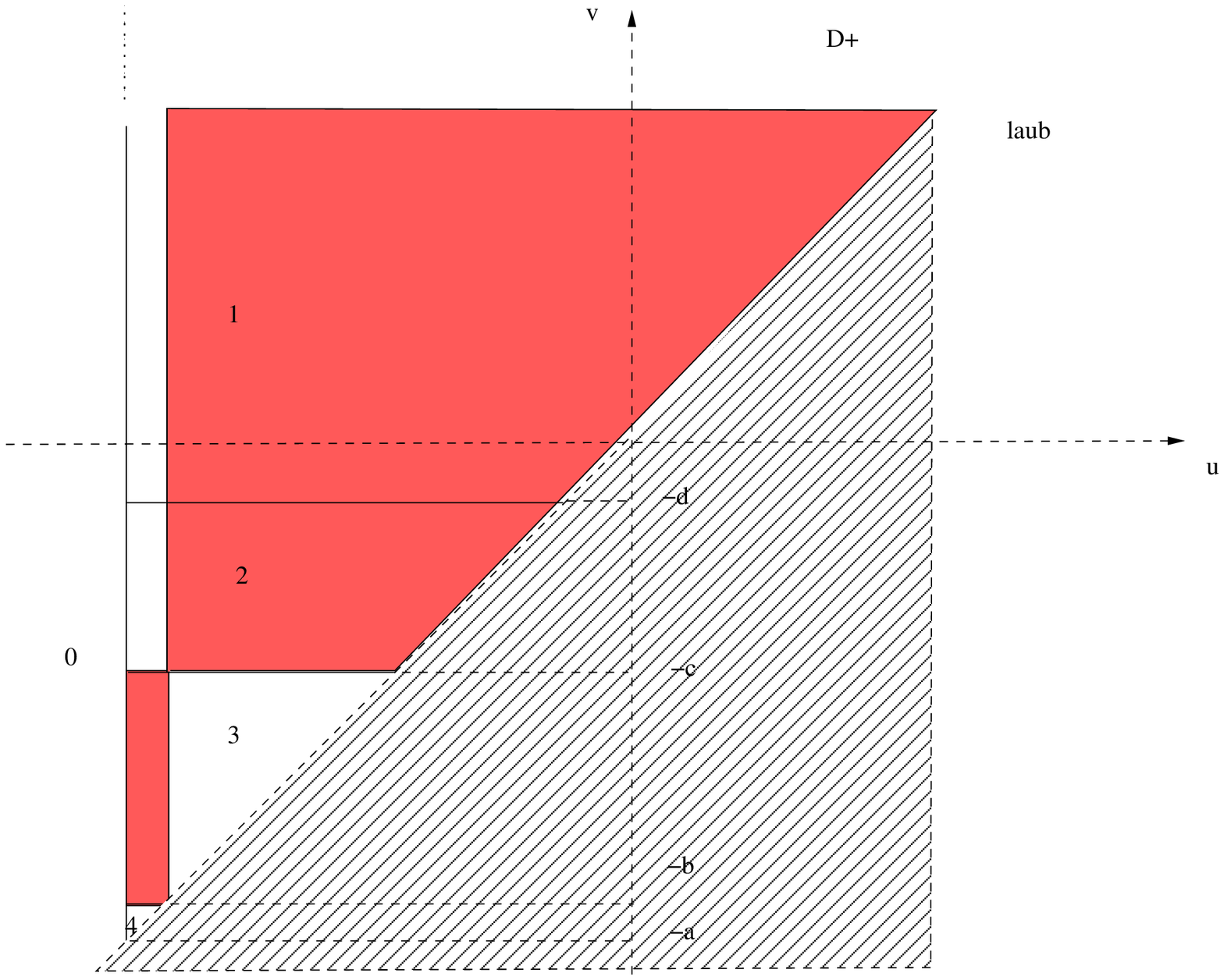}\\
$(a)$&$(b)$&$(c)$\\
\end{tabular}
\begin{tabular}{ccc}
\includegraphics[height=4cm]{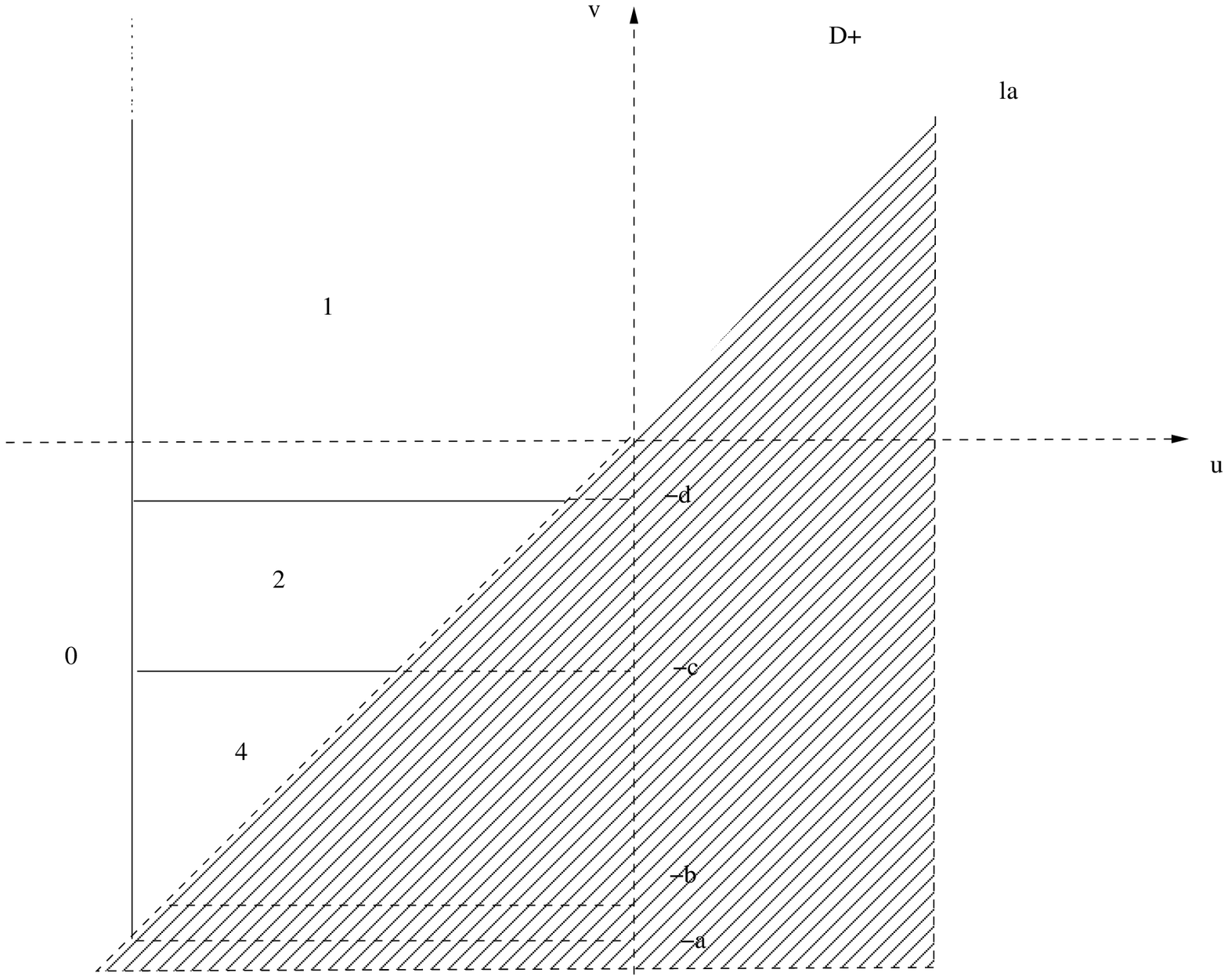}&
\includegraphics[height=4cm]{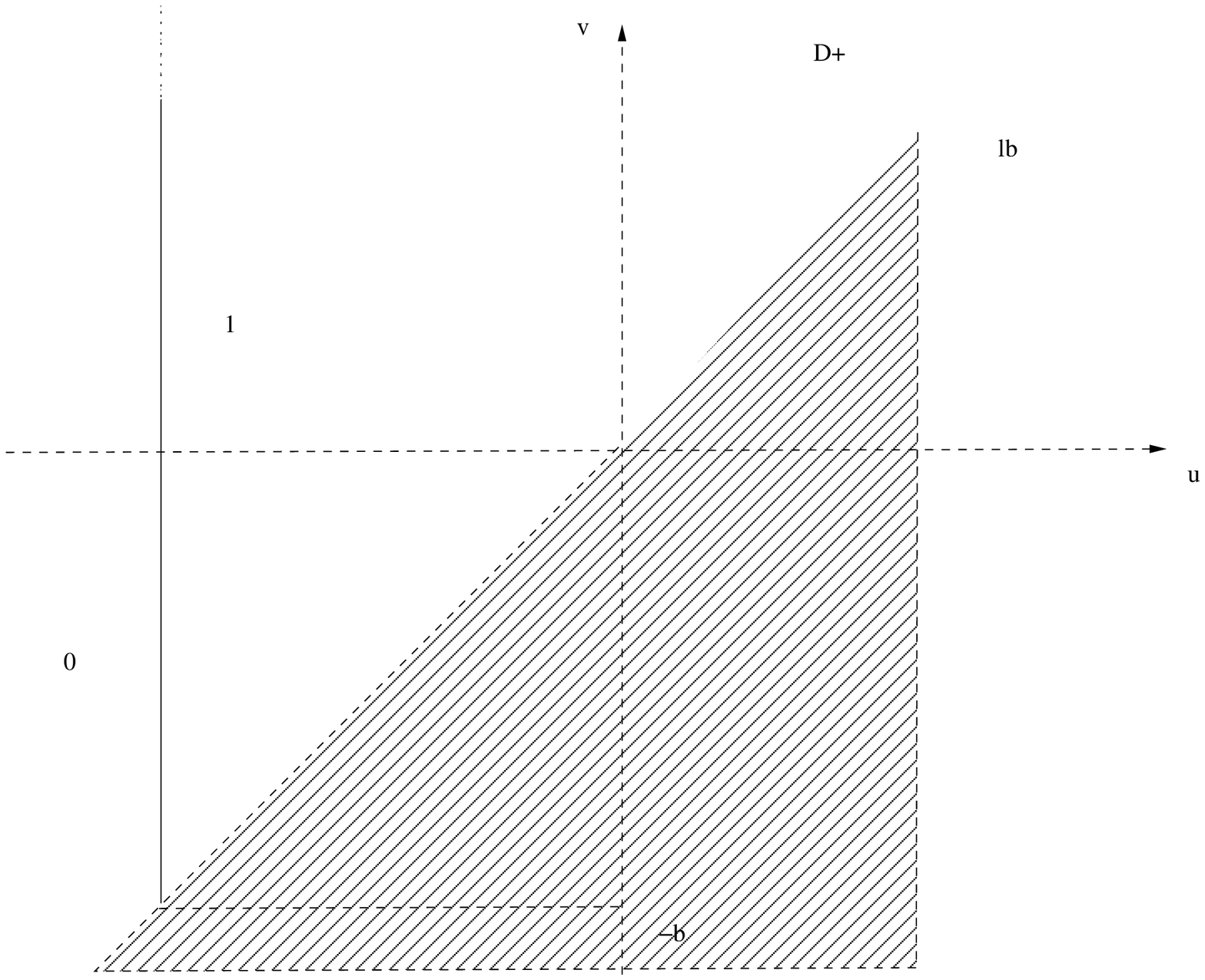}&
\includegraphics[height=4cm]{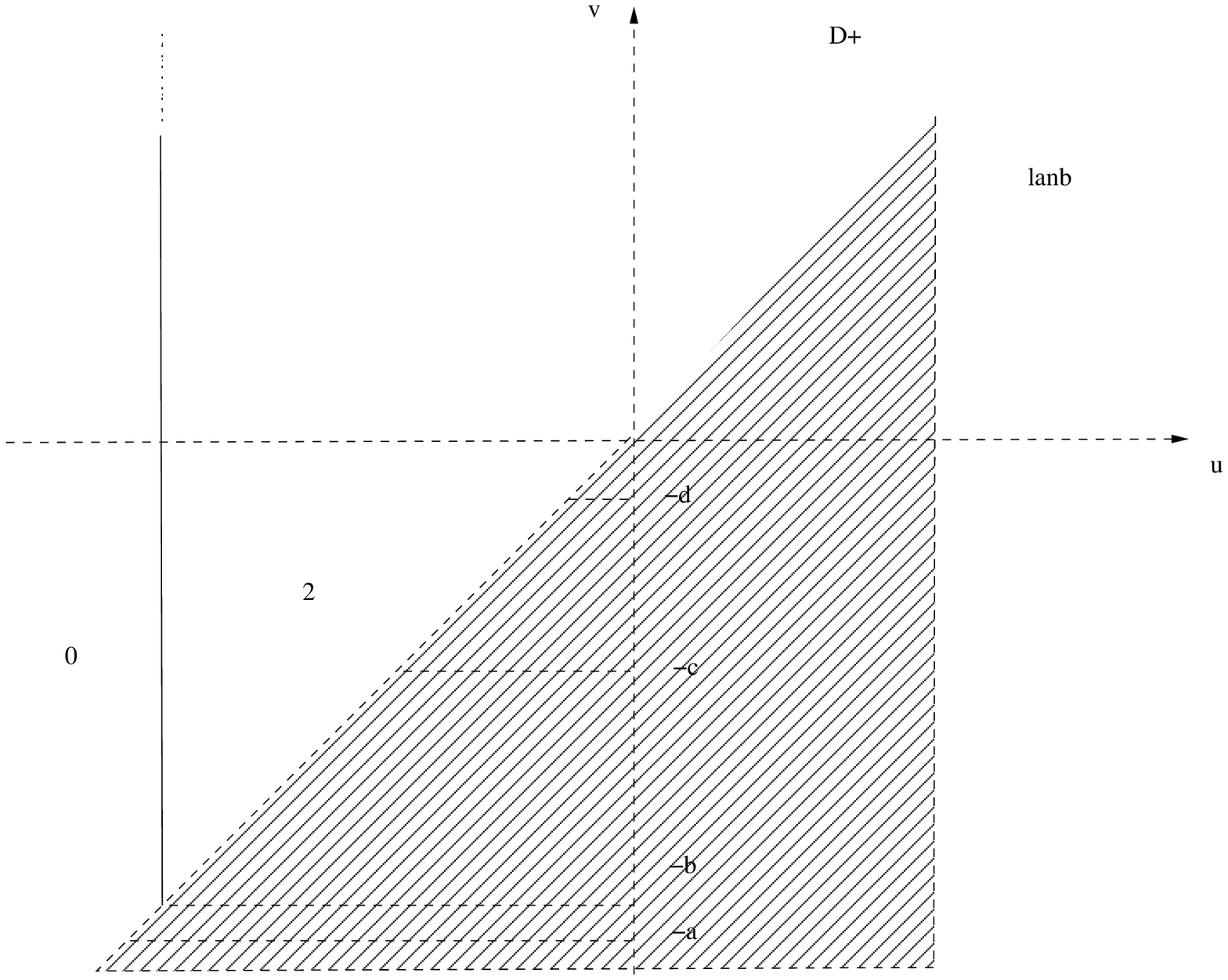}\\
$(d)$&$(e)$&$(f)$\\
\end{tabular}
\caption{\footnotesize{In $(a)$ the same ``double open-end
wrench'' shape $A$ as in Figure \ref{figura:osso1} is considered
together with  a different occluding shape $B$.  In $(b)$, $(d)$,
$(e)$, $(f)$ we display the size functions of $(A \cup B, \p)$,
$(A,\p_{|A})$, $(B,\p_{|B})$, and $(A\cap B,\p_{|A\cap B})$,
respectively, computed taking $\p:X\rightarrow \R$,
$\p(P)=-\|P-H\|$. In this case the relation $\ell_{(X, \p)}=
\ell_{(A, \varphi_{|A})}+ \ell_{(B, \varphi_{|B})} - \ell_{(A\cap
B, \varphi_{|A\cap B})}$ of Corollary \ref{szrelation} does not
hold everywhere in $\Delta^+$. In $(c)$ we underline the regions
of $\Delta^+$ where the equality is not valid by coloring them.}}
\label{figura:osso4}
\end{center}
\end{figure}
In the second example, shown in Figure \ref{figura:osso4}, a
deformation of occluding shape $B$ in Figure \ref{figura:osso1}
makes the relation given in Corollary \ref{szrelation} not valid
everywhere in $\Delta^+$.

More precisely, the condition
$\rank\ker\alpha_v=\rank\ker\alpha_{v,u}=1$ holds for every $(u,
v) \in  \Delta^+$, with $-a\le u < -b$ and $-c\le v$, whereas the
condition $\rank\ker\alpha_v=\rank\ker\alpha_{v,u}=0$ holds for
every $(u, v) \in \Delta^+$ with $u<-a$, for every $(u, v) \in
\Delta^+$ with $-a\le u<v<-b$, and  for every $(u, v) \in
\Delta^+$ with $-b\le u<v<-c$. Therefore, in these regions,
$\ell_{(X, \p)}(u,v)= \ell_{(A, \varphi_{|A})}(u,v)+ \ell_{(B,
\varphi_{|B})}(u,v) - \ell_{(A\cap B, \varphi_{|A\cap B})}(u,v)$.
In the remaining regions of $\Delta^+$,  this relation does not
hold.  In particular, for every $(u, v) \in  \Delta^+$ with $-a\le
u<-b$ and $-b\le v<-c$ we have $\rank \ker \alpha_v=0$ and
$\rank\ker\alpha_{v,u}=1$, yielding $\ell_{(X, \p)}(u,v)<
\ell_{(A, \varphi_{|A})}(u,v)+ \ell_{(B, \varphi_{|B})}(u,v) -
\ell_{(A\cap B, \varphi_{|A\cap B})}(u,v)$, while for every $(u,
v) \in \Delta^+$ with $-b\le u$ and $-c\le v$ we have $\rank
\ker\alpha_v=1$ and $\rank\ker\alpha_{v,u}=0$, yielding $\ell_{(X,
\p)}(u,v)> \ell_{(A, \varphi_{|A})}(u,v)+ \ell_{(B,
\varphi_{|B})}(u,v) - \ell_{(A\cap B, \varphi_{|A\cap B})}(u,v)$.
To simplify the visualization of the regions of $\Delta^+$ in
which the equality holds, the reader can refer to Figure
\ref{figura:osso4} $(c)$, where $\szf$ is displayed using white
for points $(u,v) \in \Delta^+$ that verify $\ell_{(X, \p)}(u,v)=
\ell_{(A, \varphi_{|A})}(u,v)+ \ell_{(B, \varphi_{|B})}(u,v) -
\ell_{(A\cap B, \varphi_{|A\cap B})}(u,v)$ and red for the other
ones.

\subsection{Conditions for the exactness of   $0 \rightarrow \check H_0^{u,v}(A\cap B)
\rightarrow \check H_0^{u,v}(A)\oplus\check H_0^{u,v}(B)
\rightarrow \check H_0^{u,v}(X)\rightarrow 0$}\label{suffCond}

In this section we look for sufficient conditions in order that
$ \alpha_v$ and $\alpha_{v,u}$ are injective, so that  the sequence
$$\begin{array}{ccccccc}
0& {\rightarrow}& \check H_0^{u,v}(A\cap B)&
\stackrel{\alpha}{\rightarrow} & \check H_0^{u,v}(A)\oplus\check
H_0^{u,v}(B)& \stackrel{\beta}{\rightarrow} & \check
H_0^{u,v}(X)\rightarrow 0
\end{array}$$
is exact (cf. Proposition \ref{persist}), and the relation
$\ell_{(X, \p)}(u,v)= \ell_{(A, \varphi_{|A})}(u,v)+ \ell_{(B,
\varphi_{|B})}(u,v) - \ell_{(A\cap B, \varphi_{|A\cap B})}(u,v)$
of Corollary \ref{szrelation} is satisfied.

The reason for identifying these conditions lies in the fact that
they can be used as a guidance in choosing the most appropriate
measuring function in order to study the shape of a partially
occluded object.

The first condition we exhibit (Theorem \ref{suff0}), relates the
exactness of the above sequence to the values taken by the size
function $\ell_{(A\cap B,\p_{|A\cap B})}$. Roughly speaking, it
indicates that the fewer the number of cornerpoints  in the size
function of $A\cap B$, the larger the region of $\Delta^+$ where
the above sequence is necessarily exact. We underline that this is
only a sufficient condition, as the examples in Section
\ref{examples} easily show.

The sketch of proof is the following. We begin by showing that the
surjectivity of $f_0$ is a sufficient condition, ensuring that
$\alpha_{v,u}$ is injective. Then we note that, for points $(
u,v)\in\Delta^+$ where the size function of $A\cap B$  has  no
cornerpoints  in the upper right region  $\{(u',v')\in \Delta^+:\
u\le u'\le v, v'>v\}$,  $f_0$ is necessarily surjective. So we
obtain a condition on $\ell_{(A\cap B,\p_{|A\cap B})}(u, v)$ such
that $\alpha_{v,u}$ is injective.  Finally, showing that if
$\ell_{(A\cap B,\p_{|A\cap B})}(u, v)\le 1$, then $\alpha_v$ is
injective, we prove the claim of Theorem \ref{suff0}.

\begin{lem}\label{f_0}
 Let $ \alpha = {\alpha_v}_{|\I f_0}$ and $ \beta =
{\beta_v}_{|\I g_0}$. If $f_0$ is surjective, then
  $\I \alpha = \ker
\beta$ and $\alpha_{v,u}=0$.
\end{lem}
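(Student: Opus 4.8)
The plan is to derive both conclusions purely from the exactness of the rows and columns of diagram (\ref{MVend}) (Lemma \ref{commut}), together with Proposition \ref{order2} and the surjectivity hypothesis on $f_0$; no ingredient beyond elementary diagram chasing should be required.

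First I would dispose of the claim $\alpha_{v,u}=0$ by showing that its source is trivial. The column through $(A\cap B)_u$, $(A\cap B)_v$ and the relative term is the tail of the exact homology sequence of the pair $((A\cap B)_v,(A\cap B)_u)$,
$$\check{H}_0((A\cap B)_u)\stackrel{f_0}{\rightarrow}\check{H}_0((A\cap B)_v)\stackrel{f'_0}{\rightarrow}\check{H}_0((A\cap B)_v,(A\cap B)_u)\rightarrow 0,$$
and this column ends in $0$ in diagram (\ref{MVend}), so $f'_0$ is surjective. Exactness at $\check{H}_0((A\cap B)_v)$ gives $\ker f'_0=\I f_0$, which by the surjectivity of $f_0$ is all of $\check{H}_0((A\cap B)_v)$; hence $f'_0=0$. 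A surjective zero map forces its target $\check{H}_0((A\cap B)_v,(A\cap B)_u)$ to vanish, and since this group is precisely the source of $\alpha_{v,u}$, we conclude $\alpha_{v,u}=0$.

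Next I would prove $\I \alpha=\ker\beta$. The inclusion $\I\alpha\subseteq\ker\beta$ is already Proposition \ref{order2}, so only the reverse inclusion needs work. The decisive observation is that surjectivity of $f_0$ makes $\I f_0=\check{H}_0((A\cap B)_v)$ the whole group, so the restricted map $\alpha=\alpha_v|_{\I f_0}$ coincides with $\alpha_v$ itself and $\I\alpha=\I\alpha_v$. Exactness of the middle Mayer--Vietoris row at $\check{H}_0(A_v)\oplus\check{H}_0(B_v)$ then gives $\I\alpha_v=\ker\beta_v$. To finish, one checks that the kernel of the restricted map $\beta=\beta_v|_{\I g_0}$ equals $\ker\beta_v$: indeed $\ker\beta_v=\I\alpha_v=\I\alpha\subseteq\I g_0$ by Proposition \ref{order2}, so $\ker\beta=\I g_0\cap\ker\beta_v=\ker\beta_v=\I\alpha$.

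The only delicate point --- and the step I would watch most carefully --- is this last matching of kernels and images, which is forced on us because $\alpha$ and $\beta$ are the restrictions $\alpha_v|_{\I f_0}$ and $\beta_v|_{\I g_0}$ rather than the full Mayer--Vietoris maps; everything hinges on the inclusions $\I\alpha\subseteq\I g_0$ and $\I\alpha_v=\ker\beta_v$ supplied by Proposition \ref{order2} and row exactness. Should one prefer to bypass the kernel identification altogether, the reverse inclusion also admits a direct chase: given $c\in\ker\beta\subseteq\I g_0$ with $\beta_v(c)=0$, exactness of the middle row produces $a\in\check{H}_0((A\cap B)_v)$ with $\alpha_v(a)=c$, and surjectivity of $f_0$ guarantees $a\in\I f_0$, the domain of $\alpha$, whence $c=\alpha(a)\in\I\alpha$.
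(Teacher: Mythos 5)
Your proposal is correct and follows essentially the same route as the paper: the vanishing of $\alpha_{v,u}$ is obtained exactly as in the paper from the tail of the exact sequence of the pair $((A\cap B)_v,(A\cap B)_u)$, and your identification $\I\alpha=\ker\beta$ (in particular the direct chase you sketch at the end, lifting $c\in\ker\beta\subseteq\ker\beta_v=\I\alpha_v$ to a preimage that lies in $\I f_0$ by surjectivity of $f_0$) is precisely the paper's argument. The only cosmetic difference is that your primary route packages this as the equalities $\I\alpha=\I\alpha_v$ and $\ker\beta=\ker\beta_v$, which is a harmless reformulation.
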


\begin{proof}
By Proposition \ref{order2}{\em (ii)}, $\I\alpha\subseteq
\ker\beta$, so we need to prove that $\ker
\beta\subseteq\I\alpha$. Let $c \in \ker \beta \subseteq \ker
\beta_v$. Since $\I \alpha_v = \ker \beta_v$, there exists $d \in
\check{H}_0((A\cap B)_v)$ such that $\alpha_v (d) = c$. By
hypothesis, $f_0$ is surjective, so $\check{H}_0((A\cap B)_v) = \I
f_0$. Hence $d \in \I f_0$, implying $\alpha(d) = c$. Thus, $c \in
\I \alpha$, and hence $\I \alpha = \ker \beta$.

Let us now show that  $\alpha_{v,u}$ is trivial. By observing
diagram (\ref{MVend}), we see that $f_0$ is surjective if and only
if $f_0'$ is trivial. Since $f_0'$ is surjective, it holds that
$f_0$ is surjective if and only if $\check H_0((A\cap B)_v,(A\cap
B)_u)=0$. Therefore, if $f_0$ is surjective, then
$\alpha_{v,u}=0$.
\end{proof}

\begin{thm}\label{suff0}
Let $(u, v) \in \Delta^+$. If $\ell_{(A\cap B,\p_{|A\cap B})}(u, v') = \ell_{(A\cap B,\p_{|A\cap
B})}(v, v')\le 1$, for every $(v,v')\in \Delta^+$, then $ \ker \alpha_v=\ker\alpha_{v,u}=0$.
\end{thm}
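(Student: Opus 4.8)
The plan is to translate the size-function hypothesis into a single homological fact about diagram (\ref{MVend})—namely the surjectivity of the vertical map $f_0$—and then read off both kernel statements from the structure of the degree-zero \Cech homology. The crucial observation is that the assumed equality $\ell_{(A\cap B,\p_{|A\cap B})}(u, v') = \ell_{(A\cap B,\p_{|A\cap B})}(v, v')$ for every $v'>v$ is precisely the condition of Proposition \ref{surj}, applied to the size pair $(A\cap B,\p_{|A\cap B})$. Hence the inclusion-induced map $\iota_0^{u,v}\colon \check H_0((A\cap B)_u)\rightarrow \check H_0((A\cap B)_v)$, which is exactly the map $f_0$ of diagram (\ref{MVend}), is surjective. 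Everything else follows from this together with the bound $\le 1$.

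First I would dispose of $\ker\alpha_{v,u}$. As shown in the proof of Lemma \ref{f_0}, the surjectivity of $f_0$ is equivalent to the vanishing of the relative group $\check H_0((A\cap B)_v,(A\cap B)_u)$. Since this group is exactly the \emph{domain} of $\alpha_{v,u}$ in diagram (\ref{MVend}), it follows at once that $\ker\alpha_{v,u}=0$, with no further work.

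For $\ker\alpha_v$ I would first bound the size of $\check H_0((A\cap B)_v)$. Because $f_0$ is surjective, Lemma \ref{persCechhom} gives $\rank\check H_0((A\cap B)_v)=\rank\I f_0=\ell_{(A\cap B,\p_{|A\cap B})}(u,v)$. The hypothesis $\ell_{(A\cap B,\p_{|A\cap B})}(u,v')\le 1$ for every $v'>v$, combined with the right-continuity of size functions in the variable $v$, yields $\ell_{(A\cap B,\p_{|A\cap B})}(u,v)\le 1$; hence $(A\cap B)_v$ has at most one connected component. If $(A\cap B)_v$ is empty, then $\check H_0((A\cap B)_v)=0$ and $\ker\alpha_v=0$ trivially. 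Otherwise $\check H_0((A\cap B)_v)$ has rank $1$, generated by the class $[C]$ of its single component; since $(A\cap B)_v\subseteq A_v$ is non-empty, the inclusion-induced image of $[C]$ in $\check H_0(A_v)$ is a non-trivial basis element, so the first coordinate of $\alpha_v([C])$ is non-zero. Thus $\alpha_v$ is injective on this one-dimensional group, giving $\ker\alpha_v=0$.

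The step that requires care is the passage from the hypothesis, phrased in terms of the \emph{values} of $\ell_{(A\cap B,\p_{|A\cap B})}$ at points $(u,v')$ and $(v,v')$ with $v'>v$, to a statement about the group $\check H_0((A\cap B)_v)$ itself. This needs both Proposition \ref{surj}, so that surjectivity of $f_0$ lets the image $\I f_0$ fill all of $\check H_0((A\cap B)_v)$, and right-continuity in $v$, to push the inequality $\le 1$ down to the limiting value at $v$. Once these two reductions are in place, the remaining argument is routine bookkeeping with the degree-zero Mayer--Vietoris map, where injectivity on a group of rank at most one is immediate from the fact that a non-empty intersection forces the relevant inclusion-induced class to be non-zero.
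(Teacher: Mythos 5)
Your handling of $\ker\alpha_{v,u}$ coincides with the paper's: both routes pass through Proposition \ref{surj} to get the surjectivity of $f_0$ and then through (the proof of) Lemma \ref{f_0} to conclude that $\check{H}_0((A\cap B)_v,(A\cap B)_u)$, the domain of $\alpha_{v,u}$, vanishes. For $\ker\alpha_v$ you take a genuinely different route. The paper argues at the chain level: it takes $z_0\in\ker\alpha_v=\I\Delta_v$, uses the explicit description of the Mayer--Vietoris connecting homomorphism to write each coordinate as $\partial c_1({\mathcal U}_{A_v})=-\partial c_1({\mathcal U}_{B_v})$, and then invokes Proposition \ref{simplechain} (simple chains in the connected set $(A\cap B)_v$) to show this boundary already bounds on $(A\cap B)_v$. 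You instead bound $\rank\check{H}_0((A\cap B)_v)\le 1$ via surjectivity of $f_0$, Lemma \ref{persCechhom} and right-continuity in the second variable --- which is fine, and parallel to the paper's observation that $(A\cap B)_v$ is empty or connected --- and then argue that the first component of $\alpha_v$, i.e.\ the inclusion-induced map $i_*:\check{H}_0((A\cap B)_v)\rightarrow\check{H}_0(A_v)$, is already injective. Your version is shorter and avoids the chain manipulations, but it leans on one assertion you state without proof: that the generator of $\check{H}_0$ of the non-empty connected compact set $(A\cap B)_v$ has non-zero image in $\check{H}_0(A_v)$. In singular homology this is immediate; in \Cech homology it still requires an argument, e.g.\ naturality of the augmentation, or, staying within the paper's toolkit, the exact sequence of the pair $(A_v,(A\cap B)_v)$ combined with Lemma \ref{nccrel}: since $(A\cap B)_v$ is non-empty and connected it meets exactly one component of $A_v$, so $\rank\check{H}_0(A_v,(A\cap B)_v)=\rank\check{H}_0(A_v)-1$, and exactness forces $\rank\I i_*=1=\rank\check{H}_0((A\cap B)_v)$, i.e.\ $i_*$ injective. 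With that one sentence supplied, your proof is complete and arguably cleaner than the paper's.
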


\begin{proof}
From  $\ell_{(A\cap B,\p_{|A\cap B})}(u, v') = \ell_{(A\cap
B,\p_{|A\cap B})}(v, v')$, applying Proposition \ref{surj} with
$A\cap B$ in place of $X$ and $f_0$ in place of $\iota_0^{u,v}$,
it follows that $f_0$ is surjective. Hence, by Lemma \ref{f_0}, we
have that $\alpha_{v,u}$ is trivial.

Let us now prove that $\alpha_v$ is injective. From the assumption
$\ell_{(A\cap B,\p_{|A\cap B})}(v, v')\le 1$, for every $(v,v')\in
\Delta^+$, we deduce that either $(A\cap B)_v$ is empty or $(A\cap
B)_v$ is non-empty and connected. If $(A\cap B)_v$ is empty, then
$\check H_0((A\cap B)_v)$ is trivial and the claim is proved. Let
us consider the case when $(A\cap B)_v$ is non-empty and
connected. Let $z_0=\{z_0({\mathcal U}_{(A\cap B)_v})\}\in \check
H_0((A\cap B)_v)$. If $z_0\in \ker\alpha_v = \I \Delta_v$, for
each $z_0({\mathcal U}_{(A\cap B)_v})\in H_0({\mathcal U}_{(A\cap
B)_v})$ there is a $1$-chain $c_1({\mathcal U}_{A_v})$ on $A_v$
and a $1$-chain $c_1({\mathcal U}_{B_v})$ on $B_v$, such that the
homology class of $\partial c_1({\mathcal U}_{A_v})=-\partial
c_1({\mathcal U}_{B_v})$ is  equal to $z_0({\mathcal U}_{(A\cap
B)_v})$, up to  homomorphisms induced by inclusion. We now show
that   $\partial c_1({\mathcal U}_{A_v})$ is a boundary on $(A\cap
B)_v$. This will prove that $z_0({\mathcal U}_{(A\cap B)_v})$ is
trivial, yielding the injectivity of $\alpha_v$. If $c_1({\mathcal
U}_{A_v})=\sum_{i=1}^n a_i \cdot <U_i^0,U_i^1>$, then $\partial
c_1({\mathcal U}_{A_v})=\sum_{i=1}^n a_i\cdot U_i^1-\sum_{i=1}^n
a_i \cdot U_i^0$. From  $\partial c_1({\mathcal
U}_{A_v})=-\partial c_1({\mathcal U}_{B_v})$, we deduce that, for
$i=1,\ldots, n$, both $U_i^0$ and $U_i^1$ intersect $(A\cap B)_v$.
By Proposition \ref{simplechain}, the connectedness of $(A\cap
B)_v$ implies that there is a simple chain on $(A\cap B)_v$
connecting  $U_i^0$ and $U_i^1$, for $i=1,\ldots, n$. Therefore
$\partial c_1({\mathcal U}_{A_v})$ is a boundary on $(A\cap B)_v$.
\end{proof}

We conclude by observing that other sufficient conditions exist,
implying that both $\alpha_v$ and $\alpha_{v,u}$ are injective. An
example is given by the following result.

\begin{prop}\label{suff3}
If $\rank \check H_1(X_v)=0$ and $\rank \check H_0(X_u)= \ell_{(X,
\varphi)}(u, v)$, then $\ker \alpha_v=\ker \alpha_{v,u}=0$.
\end{prop}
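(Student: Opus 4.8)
The plan is to handle the two kernels separately, because $\ker\alpha_v$ falls out of the first hypothesis alone, while $\ker\alpha_{v,u}$ requires us to convert the second hypothesis into a statement about the map $h_0$ and then propagate it through the homology sequence of the pair $(X_v,X_u)$. Throughout I work inside diagram $(\ref{MVend})$, whose rows and columns are exact by Lemma \ref{commut}.

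First I would dispose of $\ker\alpha_v$. By the exactness of the middle horizontal line of $(\ref{MVend})$ we have $\ker\alpha_v=\I\Delta_v$, where $\Delta_v$ is defined on $\check H_1(X_v)$. Since the coefficient group is a vector space over a field, the hypothesis $\rank\check H_1(X_v)=0$ forces $\check H_1(X_v)=0$; hence $\Delta_v$ has trivial domain, so $\I\Delta_v=0$ and $\ker\alpha_v=0$.

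The key step for the second kernel is to reinterpret the hypothesis $\rank\check H_0(X_u)=\ell_{(X,\varphi)}(u,v)$. By Lemma \ref{persCechhom} we have $\ell_{(X,\varphi)}(u,v)=\rank\I h_0$, where $h_0=\iota_0^{u,v}$ is the rightmost top-to-middle vertical map. Comparing this with the identity $\rank\check H_0(X_u)=\rank\ker h_0+\rank\I h_0$ shows that the hypothesis is precisely $\rank\ker h_0=0$, i.e.\ $h_0$ is injective. I would then run the homology sequence of the pair $(X_v,X_u)$, that is, the leftmost vertical line of $(\ref{MVend})$ continued down to $\check H_0(X_u)\stackrel{h_0}{\rightarrow}\check H_0(X_v)$ exactly as in Remark \ref{controes}, and focus on the segment $\check H_1(X_v)\stackrel{h'_1}{\rightarrow}\check H_1(X_v,X_u)\stackrel{\partial}{\rightarrow}\check H_0(X_u)\stackrel{h_0}{\rightarrow}\check H_0(X_v)$. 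Exactness at $\check H_0(X_u)$ together with the injectivity of $h_0$ gives $\I\partial=\ker h_0=0$, so the connecting homomorphism $\partial$ is zero; exactness at $\check H_1(X_v,X_u)$ then yields $\check H_1(X_v,X_u)=\ker\partial=\I h'_1$, and since $h'_1$ is defined on $\check H_1(X_v)=0$ this image is trivial. Hence $\check H_1(X_v,X_u)=0$.

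Finally, with $\check H_1(X_v,X_u)=0$ in hand, I would invoke the exactness of the bottom horizontal line of $(\ref{MVend})$: $\ker\alpha_{v,u}=\I\Delta_{v,u}$, and $\Delta_{v,u}$ is defined on $\check H_1(X_v,X_u)=0$, so $\ker\alpha_{v,u}=0$. I do not expect a genuine obstacle here; the only point requiring care is the bookkeeping that identifies the second hypothesis with the injectivity of $h_0$, and then the simultaneous use of the two exactness statements to sandwich the relative group $\check H_1(X_v,X_u)$ between $\I h'_1$ (trivial because $\check H_1(X_v)=0$) and $\ker\partial$ (everything, because $\ker h_0=0$). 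Everything else is a routine diagram chase within $(\ref{MVend})$, and the conclusion $\ker\alpha_v=\ker\alpha_{v,u}=0$ follows.
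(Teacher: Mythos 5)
Your proof is correct and follows essentially the same route as the paper: both dispose of $\ker\alpha_v$ via the triviality of $\check H_1(X_v)$, translate the hypothesis $\rank\check H_0(X_u)=\ell_{(X,\varphi)}(u,v)$ into $\rank\ker h_0=0$, and use the exact homology sequence of the pair $(X_v,X_u)$ to force $\check H_1(X_v,X_u)=0$, whence $\ker\alpha_{v,u}=\I\Delta_{v,u}=0$. The only cosmetic difference is that the paper deduces $\rank\check H_1(X_v,X_u)=\rank\I h=\rank\ker h_0=0$ from the injectivity of the connecting homomorphism, while you sandwich $\check H_1(X_v,X_u)$ between $\ker\partial$ and $\I h'_1$; the diagram chase is the same.
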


\begin{proof}
The condition $\rank \check H_1(X_v)=0$ trivially implies $\ker
\alpha_v=0$. On the other hand, it implies the injectivity of the
homomorphism $h$ in the following exact sequence:
$$\begin{array}{ccccccccccc}
\cdots& {\rightarrow}& \check H_1(X_v)&
\stackrel{h'_1}{\rightarrow} & \check H_1(X_v, X_u)&
\stackrel{h}{\rightarrow} & \check
H_0(X_u)&\stackrel{h_0}{\rightarrow} & \check H_0(X_v)&
\stackrel{h'_0}{\rightarrow} & \check H_0(X_v, X_u)\rightarrow 0,
\end{array}$$
which is the leftmost vertical sequence in diagram (\ref{MVend}).
Therefore, by the assumption $\rank \check H_0(X_u)= \ell_{(X,
\varphi)}(u, v)$, it follows that
$$\rank \check H_1(X_v, X_u)= \rank \I h = \rank \ker h_0 = \rank \check H_0(X_u)- \ell_{(X,
\varphi)}(u, v) = 0,$$ and, consequently, the triviality of $\ker
\alpha_{v,u}$ has been proved.
\end{proof}

\section{Partial matching of cornerpoints in size functions}

As recalled in Section \ref{background}, in an earlier paper
\cite{FrLa01}, it was shown that size functions can be concisely
represented by collections of points, called cornerpoints, with
multiplicities.

This representation by cornerpoints has the important property of
being stable against shape continuous deformations. For this
reason, in dealing with the shape comparison problem, via size
functions, one actually compares the sets of cornerpoints using
either the Hausdorff distance or the matching distance (see e.g.
\cite{CoEdHa05,DAFrLa,DAFrLa06,FrLa97}). The Hausdorff distance
and the matching distance differ in that the former does not take
into account the multiplicities of cornerpoints, while the latter
does.

The aim of this section is to show what happens to cornerpoints in
the presence of occlusions. We prove that each cornerpoint for the
size function of an occluded shape $X$ is a cornerpoint for the
size function of the original shape $A$, or the occluding shape
$B$, or their intersection $A \cap B$, providing that one
condition holds (Corollary \ref{mult}). However, even when this
condition is not verified, it holds that the coordinates of
cornerpoints of $\szf$ are always related to the cornerpoints of
$\sza$ or $\szb$ or $\szab$ (Theorems \ref{uabscissa} and
\ref{vordinate}).

We begin by proving a relation between multiplicities of points
for the size functions associated with $X$, $A$ and $B$. Since
cornerpoints are points with positive multiplicity (cf.
Definitions \ref{cornerpt} and \ref{cornerptinfty}), we obtain
conditions  for  cornerpoints of the size functions of $A$ and $B$
to persist in $A\cup B$. This fact suggests that in size theory
the partial matching of an occluded shape with the original shape
can be translated into the partial matching of cornerpoints of the
corresponding size functions. This intuition will be developed in
the experimental Section \ref{exp}.

In the next proposition we obtain a relation involving the
multiplicities of points in the size functions associated with
$X$, $A$ and $B$.

\begin{prop}\label{multiplicityp}
For every  $p = (\overline{u},\overline{v}) \in \Delta^+$, it holds that
$$\mu_X(p) - \mu_A(p) - \mu_B(p) + \mu_{A \cap B}(p)= \lim_{\epsilon \rightarrow 0^+}\left(\rank\ker\alpha_{\bar v-\eps,\bar u-\eps}-\rank\ker\alpha_{\bar v-\eps,\bar u+\eps}+\rank\ker\alpha_{\bar v+\eps,\bar u+\eps}-\rank\ker\alpha_{\bar v+\eps,\bar u-\eps}\right).$$
\end{prop}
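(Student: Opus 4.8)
The plan is to reduce the identity to elementary bookkeeping, using Theorem \ref{szrelationcompl} as the bridge between size functions and the kernels of the Mayer--Vietoris maps. First I would set
$$L(u,v):=\szf(u,v)-\sza(u,v)-\szb(u,v)+\szab(u,v),$$
so that Theorem \ref{szrelationcompl} reads, for every $(u,v)\in\Delta^+$,
$$L(u,v)=\rank\ker\alpha_v-\rank\ker\alpha_{v,u}.$$
The structural point I would emphasize is that the first summand depends on $v$ alone, while only the second summand feels the variable $u$; this asymmetry is what drives the cancellation below.

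Next I would expand the left-hand side directly from Definition \ref{cornerpt}. Each of $\mu_X(p),\mu_A(p),\mu_B(p),\mu_{A\cap B}(p)$ is the limit of the same alternating four-corner difference of its size function evaluated at the points $(\bar u+\eps,\bar v-\eps)$, $(\bar u-\eps,\bar v-\eps)$, $(\bar u+\eps,\bar v+\eps)$, $(\bar u-\eps,\bar v+\eps)$. Since $(A,\p|_A)$, $(B,\p|_B)$ and $(A\cap B,\p|_{A\cap B})$ are size pairs under the standing assumptions of Section 4, each of these four limits exists and is finite, so they may be combined into one; by linearity of the four-corner difference I obtain
$$\mu_X(p)-\mu_A(p)-\mu_B(p)+\mu_{A\cap B}(p)=\lim_{\eps\to 0^+}\left(L(\bar u+\eps,\bar v-\eps)-L(\bar u-\eps,\bar v-\eps)-L(\bar u+\eps,\bar v+\eps)+L(\bar u-\eps,\bar v+\eps)\right).$$

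I would then substitute the expression $L(u,v)=\rank\ker\alpha_v-\rank\ker\alpha_{v,u}$ into each of the four terms. The common term $\rank\ker\alpha_{\bar v-\eps}$ occurs in the first two summands with opposite signs and cancels, and likewise $\rank\ker\alpha_{\bar v+\eps}$ cancels between the last two summands. What remains is exactly
$$\rank\ker\alpha_{\bar v-\eps,\bar u-\eps}-\rank\ker\alpha_{\bar v-\eps,\bar u+\eps}+\rank\ker\alpha_{\bar v+\eps,\bar u+\eps}-\rank\ker\alpha_{\bar v+\eps,\bar u-\eps},$$
which is precisely the expression under the limit on the right-hand side of the claim; passing to the limit concludes the argument.

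I do not anticipate a serious obstacle, since everything is linear once Theorem \ref{szrelationcompl} is available. The only point that deserves explicit care is the passage from the four separate limits defining the individual multiplicities to a single combined limit: this is legitimate precisely because each multiplicity is a finite number, which in turn is guaranteed by the compactness and local connectedness of $X$, $A$, $B$, $A\cap B$ and the continuity of the measuring functions, the same hypotheses that make each of the four objects a bona fide size function in the sense of Definition \ref{connectedness}.
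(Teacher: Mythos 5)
Your proposal is correct and follows essentially the same route as the paper: both apply Theorem \ref{szrelationcompl} at the four points $(\bar u\pm\eps,\bar v\pm\eps)$, observe that the $\rank\ker\alpha_{\bar v\mp\eps}$ terms cancel because they do not depend on $u$, and pass to the limit via Definition \ref{cornerpt}. Your introduction of the shorthand $L(u,v)$ and the explicit remark that the split into four separate limits is justified by the finiteness of each multiplicity are only organizational refinements of the paper's computation.
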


\begin{proof}
Applying Theorem \ref{szrelationcompl} four times with
$(u,v)=(\overline{u}+\epsilon ,\overline{v}-\epsilon)$,
$(u,v)=(\overline{u}-\epsilon ,\overline{v}-\epsilon)$,
$(u,v)=(\overline{u}+\epsilon ,\overline{v}+\epsilon)$,
$(u,v)=(\overline{u}-\epsilon ,\overline{v}+\epsilon)$, $\eps$
being a positive real number so small that $\overline{u}+\epsilon
<\overline{v}-\epsilon$, we get

\begin{eqnarray*}
\lefteqn{\ell _{(X,\varphi
)}(\overline{u}+\epsilon ,\overline{v}-\epsilon)-\ell _{(X,\varphi
)}(\overline{u}-\epsilon ,\overline{v}-\epsilon )-\ell
_{(X,\varphi)} (\overline{u}+\epsilon ,\overline{v}+\epsilon
)+\ell _{(X,\varphi )}(\overline{u}-\epsilon
,\overline{v}+\epsilon )}\\
&=&\ell_{(A,
\varphi_{|A})}(\overline{u}+\epsilon ,\overline{v}-\epsilon) +
\ell_{(B, \varphi_{|B})}(\overline{u}+\epsilon
,\overline{v}-\epsilon) - \ell_{(A\cap B,
\varphi_{|A\cap B})}(\overline{u}+\epsilon ,\overline{v}-\epsilon)+\rank\ker\alpha_{\bar v-\eps}-\rank\ker\alpha_{\bar v-\eps,\bar u+\eps}\\
&& -\left(\ell_{(A,
\varphi_{|A})}(\overline{u}-\epsilon ,\overline{v}-\epsilon) +
\ell_{(B, \varphi_{|B})}(\overline{u}-\epsilon
,\overline{v}-\epsilon) - \ell_{(A\cap B,
\varphi_{|A\cap B})}(\overline{u}-\epsilon ,\overline{v}-\epsilon)+\rank\ker\alpha_{\bar v-\eps}-\rank\ker\alpha_{\bar v-\eps,\bar u-\eps}\right)\\
&&  -\left(\ell_{(A,
\varphi_{|A})}(\overline{u}+\epsilon ,\overline{v}+\epsilon) +
\ell_{(B, \varphi_{|B})}(\overline{u}+\epsilon
,\overline{v}+\epsilon) - \ell_{(A\cap B,
\varphi_{|A\cap B})}(\overline{u}+\epsilon ,\overline{v}+\epsilon)+\rank\ker\alpha_{\bar v+\eps}-\rank\ker\alpha_{\bar v+\eps,\bar u+\eps}\right)\\
&&  + \ell_{(A,
\varphi_{|A})}(\overline{u}-\epsilon ,\overline{v}+\epsilon) +
\ell_{(B, \varphi_{|B})}(\overline{u}-\epsilon
,\overline{v}+\epsilon) - \ell_{(A\cap B,
\varphi_{|A\cap B})}(\overline{u}-\epsilon ,\overline{v}+\epsilon)+\rank\ker\alpha_{\bar v+\eps}-\rank\ker\alpha_{\bar v+\eps,\bar u-\eps}\\
&=&\ell _{(A,\varphi
)}(\overline{u}+\epsilon ,\overline{v}-\epsilon)-\ell _{(A,\varphi
)}(\overline{u}-\epsilon ,\overline{v}-\epsilon )-\ell
_{(A,\varphi)} (\overline{u}+\epsilon ,\overline{v}+\epsilon
)+\ell
_{(A,\varphi )}(\overline{u}-\epsilon ,\overline{v}+\epsilon )\\
&&  +\ell _{(B,\varphi
)}(\overline{u}+\epsilon ,\overline{v}-\epsilon)-\ell _{(B,\varphi
)}(\overline{u}-\epsilon ,\overline{v}-\epsilon )-\ell
_{(B,\varphi)} (\overline{u}+\epsilon ,\overline{v}+\epsilon
)+\ell
_{(B,\varphi )}(\overline{u}-\epsilon ,\overline{v}+\epsilon )\\
&& -\ell _{(A\cap B,\varphi
)}(\overline{u}+\epsilon ,\overline{v}-\epsilon)+\ell _{(A\cap
B,\varphi )}(\overline{u}-\epsilon ,\overline{v}-\epsilon )+\ell
_{(A\cap B,\varphi)} (\overline{u}+\epsilon ,\overline{v}+\epsilon
)-\ell _{(A\cap B,\varphi )}(\overline{u}-\epsilon ,\overline{v}+\epsilon )\\
&& +\rank\ker\alpha_{\bar v-\eps,\bar u-\eps}-\rank\ker\alpha_{\bar v-\eps,\bar u+\eps}+\rank\ker\alpha_{\bar v+\eps,\bar u+\eps}-\rank\ker\alpha_{\bar v+\eps,\bar u-\eps}.
\end{eqnarray*}
Hence, by definition of multiplicity of a point of $\Delta^+$
(Definition \ref{cornerpt}), we have that
\begin{eqnarray*}
\lefteqn{\lim_{\epsilon \rightarrow 0^+}\left(\rank\ker\alpha_{\bar v-\eps,\bar u-\eps}-\rank\ker\alpha_{\bar v-\eps,\bar u+\eps}+\rank\ker\alpha_{\bar v+\eps,\bar u+\eps}-\rank\ker\alpha_{\bar v+\eps,\bar u-\eps}\right)}\\
&=&\lim_{\epsilon \rightarrow 0^+}\left(\ell _{(X,\varphi
)}(\overline{u}+\epsilon ,\overline{v}-\epsilon)-\ell _{(X,\varphi
)}(\overline{u}-\epsilon ,\overline{v}-\epsilon )-\ell
_{(X,\varphi)} (\overline{u}+\epsilon ,\overline{v}+\epsilon
)+\ell _{(X,\varphi )}(\overline{u}-\epsilon
,\overline{v}+\epsilon )\right)\\
&&-\lim_{\epsilon \rightarrow 0^+}\left(\ell _{(A,\varphi
)}(\overline{u}+\epsilon ,\overline{v}-\epsilon)-\ell _{(A,\varphi
)}(\overline{u}-\epsilon ,\overline{v}-\epsilon )-\ell
_{(A,\varphi)} (\overline{u}+\epsilon ,\overline{v}+\epsilon
)+\ell
_{(A,\varphi )}(\overline{u}-\epsilon ,\overline{v}+\epsilon )\right)\\
&& -\lim_{\epsilon \rightarrow 0^+}\left(\ell _{(B,\varphi
)}(\overline{u}+\epsilon ,\overline{v}-\epsilon)-\ell _{(B,\varphi
)}(\overline{u}-\epsilon ,\overline{v}-\epsilon )-\ell
_{(B,\varphi)} (\overline{u}+\epsilon ,\overline{v}+\epsilon
)+\ell
_{(B,\varphi )}(\overline{u}-\epsilon ,\overline{v}+\epsilon )\right)\\
&&+\lim_{\epsilon \rightarrow 0^+}\left(\ell _{(A\cap B,\varphi
)}(\overline{u}+\epsilon ,\overline{v}-\epsilon)-\ell _{(A\cap
B,\varphi )}(\overline{u}-\epsilon ,\overline{v}-\epsilon )-\ell
_{(A\cap B,\varphi)} (\overline{u}+\epsilon ,\overline{v}+\epsilon
)+\ell _{(A\cap B,\varphi )}(\overline{u}-\epsilon ,\overline{v}+\epsilon )\right)\\
&=&\mu_X(p)-\mu_A(p) - \mu_B(p) + \mu_{A \cap B}(p).
\end{eqnarray*}
\end{proof}
Using the previous Proposition \ref{multiplicityp}, we find a
condition ensuring that proper cornerpoints for the size function
of $X$ are also proper cornerpoints for the size function of $A$
or $B$.

\begin{cor}\label{mult}
Let $p=(\overline u,\overline v)$ be a proper cornerpoint  for
$\ell _{(X,\varphi)}$ and $$\lim_{\epsilon \rightarrow
0^+}\left(\rank\ker\alpha_{\bar v-\eps,\bar
u-\eps}-\rank\ker\alpha_{\bar v-\eps,\bar
u+\eps}+\rank\ker\alpha_{\bar v+\eps,\bar
u+\eps}-\rank\ker\alpha_{\bar v+\eps,\bar u-\eps}\right)\le 0.$$
Then $p$ is a proper cornerpoint  for either $\ell_{(A,\p|_A)}$ or
$\ell_{(B,\p|_B)}$ or both.
\end{cor}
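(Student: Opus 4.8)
The plan is to read the conclusion off directly from the multiplicity identity of Proposition \ref{multiplicityp}, combined with the elementary fact that multiplicities of size functions are never negative. Writing $L$ for the limit appearing in the hypothesis, Proposition \ref{multiplicityp} supplies the identity
$$\mu_X(p)-\mu_A(p)-\mu_B(p)+\mu_{A\cap B}(p)=L,$$
which I would immediately rearrange into
$$\mu_A(p)+\mu_B(p)=\mu_X(p)+\mu_{A\cap B}(p)-L.$$

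The hypotheses and known facts then control the right-hand side term by term. By assumption $L\le 0$, so $-L\ge 0$. Since $p$ is a proper cornerpoint for $\ell_{(X,\varphi)}$, Definition \ref{cornerpt} gives $\mu_X(p)>0$. Finally, I would invoke the nonnegativity of the multiplicity function for size functions (a property established in \cite{FrLa01}, and implicit in the representation Theorem \ref{value}) to conclude $\mu_{A\cap B}(p)\ge 0$. Combining these three contributions yields
$$\mu_A(p)+\mu_B(p)=\mu_X(p)+\mu_{A\cap B}(p)-L\ge \mu_X(p)>0.$$

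It then remains only to pass from strict positivity of the sum to positivity of one of the summands. Since both $\mu_A(p)$ and $\mu_B(p)$ are nonnegative (again by nonnegativity of multiplicities) and their sum is strictly positive, at least one of them must be strictly positive. By Definition \ref{cornerpt} this means $p$ is a proper cornerpoint for $\sza$ or for $\szb$ (and possibly for both), which is exactly the assertion of the corollary.

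I do not anticipate a genuine obstacle here: the whole content has been front-loaded into Proposition \ref{multiplicityp}, and the corollary reduces to a short sign argument. The only point requiring care—and the one I would make explicit—is the nonnegativity of $\mu$, since it is used twice: once to discard the term $\mu_{A\cap B}(p)$ as a harmless nonnegative contribution, and once to split the strictly positive sum $\mu_A(p)+\mu_B(p)$ into at least one strictly positive summand. Were the inequality $\mu\ge 0$ not already available, this step would have to be justified separately through the cornerpoint representation theorem.
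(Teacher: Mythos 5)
Your argument is correct and coincides with the paper's own proof: both apply Proposition \ref{multiplicityp} to turn the hypothesis into the inequality $\mu_X(p)\le \mu_A(p)+\mu_B(p)-\mu_{A\cap B}(p)$, then use $\mu_X(p)>0$ together with the non-negativity of multiplicities to force $\mu_A(p)>0$ or $\mu_B(p)>0$. Your version merely makes explicit the two places where non-negativity of $\mu$ is used, which the paper compresses into one sentence.
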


\begin{proof}
Let $\underset{\eps\rightarrow0^+}\lim\left(\rank\ker\alpha_{\bar
v-\eps,\bar u-\eps}-\rank\ker\alpha_{\bar v-\eps,\bar
u+\eps}+\rank\ker\alpha_{\bar v+\eps,\bar
u+\eps}\right.-\left.\rank\ker\alpha_{\bar v+\eps,\bar
u-\eps}\right)\le 0$. From Proposition \ref{multiplicityp}, we
deduce that $\mu_X(p)\le \mu_A(p) + \mu_B(p) - \mu_{A \cap B}(p)$.
Since $p$ is a cornerpoint for $\ell _{(X,\varphi)}$, it holds
that $\mu_X(p)>0$. Since multiplicities are always non-negative,
this easily implies that either $\mu_A(p)>0$ or $\mu_B(p)>0$ (or
both), proving the statement.
\end{proof}

\begin{rem}
If $p=(\overline u,\overline v)$ is  a proper cornerpoint  for
$\ell _{(X,\varphi)}$ and $\ell_{(A\cap B),\p_{|A\cap
B}}(\overline v,v')\le 1$ for every $v'>\overline v$, then it is a
proper cornerpoint  for either $\ell_{(A,\p|_A)}$ or
$\ell_{(B,\p|_B)}$ or both.

This is easily seen by combining Lemma \ref{f_0} with Proposition
\ref{surj} so that, by the right-continuity of size functions and
the fact that they are non-decreasing in the first variable, for a
sufficiently small $\eps$ it holds that $\ker\alpha_{\bar
v-\eps,\bar u-\eps}=0$, $\ker\alpha_{\bar v-\eps,\bar u+\eps}=0$,
$\ker\alpha_{\bar v+\eps,\bar u+\eps}=0$, $\ker\alpha_{\bar
v+\eps,\bar u-\eps}=0$.
\end{rem}

The following two theorems state that the abscissas of the
cornerpoints for $\szf$ are abscissas of cornerpoints for $\sza$
or $\szb$ or $\szab$; the ordinates of the cornerpoints for $\szf$
are, in general, homological 0-critical values for $(A, \p_{|A})$
or $(B, \p_{|B})$ or $(A \cap B, \p_{|A\cap B})$, and, under
restrictive conditions, abscissas or ordinates of cornerpoints for
$\sza$ or $\szb$ or $\szab$, respectively.

These facts can easily be seen in the examples illustrated in
Figures \ref{figura:osso1}-\ref{figura:osso4}. In particular, in
Figure \ref{figura:osso4}, the size function $\szf$ presents the
proper cornerpoint $(-a, -b)$, which is neither a cornerpoint for
$\sza$ nor $\szb$ nor $\szab$. Nevertheless, its abscissa $-a$ is
the abscissa of all cornerpoints for $\sza$, while its ordinate
$-b$ is the abscissa of the cornerpoint at infinity for both
$\szb$ and $\szab$.
\begin{thm}\label{uabscissa}
If $p = (\overline{u},\overline{v}) \in \Delta^+$ is a proper
cornerpoint for $\szf$, then there exists at least one proper
cornerpoint for $\sza$ or $\szb$ or $\szab$ having $\overline{u}$
as abscissa. Moreover, if $(\overline{u}, \infty) \in \Delta^*$ is
a cornerpoint at infinity for $\szf$, then it is a cornerpoint at
infinity for $\sza$ or $\szb$.
\end{thm}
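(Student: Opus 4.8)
I would split the statement into its two parts and prove them separately, since the proper-cornerpoint claim rests on the multiplicity relation of Proposition~\ref{multiplicityp} while the cornerpoint-at-infinity claim needs a separate argument via the multiplicity $\mu_X(r)$ of Definition~\ref{cornerptinfty}. The guiding idea throughout is that the abscissa $\overline u$ of a cornerpoint of $\szf$ records a level where the $0$th \Cech homology of the lower level sets changes in the first variable, and this information must be inherited by at least one of $A$, $B$, $A\cap B$ because of the Mayer--Vietoris relation.

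\textbf{First part (proper cornerpoints).} Let $p=(\overline u,\overline v)\in\Delta^+$ be a proper cornerpoint for $\szf$, so $\mu_X(p)>0$. First I would argue by contradiction: suppose that none of $\sza$, $\szb$, $\szab$ has a proper cornerpoint with abscissa $\overline u$. I would then show that this forces the four limits of $\rank\ker\alpha$ appearing in Proposition~\ref{multiplicityp} to combine in a way incompatible with $\mu_X(p)>0$. The cleanest route is probably to fix $\overline v$ and study the size functions as functions of the first variable near $u=\overline u$: the hypothesis that no cornerpoint of $\sza,\szb,\szab$ has abscissa $\overline u$ means, by the representation Theorem~\ref{value}, that each of $\sza(\cdot,v'),\szb(\cdot,v'),\szab(\cdot,v')$ is locally constant across $u=\overline u$ for all relevant $v'$ near $\overline v$. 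Using Proposition~\ref{multiplicityp} together with the definition of $\mu_A,\mu_B,\mu_{A\cap B}$ (Definition~\ref{cornerpt}), the vanishing of these three multiplicities at $p$ would reduce $\mu_X(p)$ to the limit of the $\rank\ker\alpha$ combination alone; I would then need to show this residual term cannot be strictly positive when the individual size functions have no abscissa-$\overline u$ cornerpoint. The main obstacle is precisely controlling the sign and size of that $\rank\ker\alpha$ combination; the safest path is to avoid estimating it directly and instead argue that if all three of $A,B,A\cap B$ were locally constant in the first variable across $\overline u$, then by Corollary~\ref{szrelation}-type reasoning (applied consistently at the four corners) $\szf(\cdot,v')$ would also be locally constant across $\overline u$, contradicting $\mu_X(p)>0$. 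This is the delicate step and will require carefully tracking the four evaluation points and the right-continuity of size functions.

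\textbf{Second part (cornerpoints at infinity).} For a cornerpoint at infinity $(\overline u,\infty)$ of $\szf$, I would work directly with the multiplicity $\mu_X(r)$ of Definition~\ref{cornerptinfty}, which measures the jump of $\szf(\cdot,1/\eps)$ across $u=\overline u$ for large $1/\eps$. A cornerpoint at infinity corresponds to a connected component of $X_v$ (for every large $v$) that first acquires a point of the lower level set at level $\overline u$; equivalently, by Corollary~\ref{sfhomology}, to a generator of $\check H_0^{\overline u,v}(X)$ that is not in the image from just below $\overline u$. Using the short exact sequence of Proposition~\ref{persist}, or more robustly the order-$2$ Mayer--Vietoris sequence of Proposition~\ref{order2} together with the surjectivity of $\beta$, I would show that such a persistent generator for $X$ must lift to a persistent generator for $A$ or for $B$: concretely, $\check H_0^{\overline u,v}(X)=\I h_0$ is the image under $\beta_v$ of $\I g_0=\check H_0^{\overline u,v}(A)\oplus\check H_0^{\overline u,v}(B)$, so any component of $X_v$ meeting $X_{\overline u}$ comes from a component of $A_v$ or $B_v$ meeting the corresponding lower level set. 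Passing to the $1/\eps\to\infty$ limit and comparing the jumps across $\overline u$ then yields that $\overline u$ is the abscissa of a cornerpoint at infinity for $\sza$ or $\szb$. Here I expect the bookkeeping to be easier than in the first part, because $A\cap B$ does not appear in the image of $\beta_v$, so the intersection term drops out and only $A$ and $B$ remain — which matches the asymmetry in the theorem's statement (the infinity case omits $\szab$).
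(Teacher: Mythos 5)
Your strategy for the first part (contrapositive plus Proposition~\ref{multiplicityp}) is the paper's, but there is a genuine gap exactly at the step you yourself flag as ``delicate'': you never show that the combination $\lim_{\eps\to 0^+}\left(\rank\ker\alpha_{\bar v-\eps,\bar u-\eps}-\rank\ker\alpha_{\bar v-\eps,\bar u+\eps}+\rank\ker\alpha_{\bar v+\eps,\bar u+\eps}-\rank\ker\alpha_{\bar v+\eps,\bar u-\eps}\right)$ vanishes, and the workaround you propose does not close the gap. Invoking ``Corollary~\ref{szrelation}-type reasoning'' is circular: Corollary~\ref{szrelation} yields the Mayer--Vietoris equality only under the hypothesis $\rank\ker\alpha_v=\rank\ker\alpha_{v,u}$, which is precisely the kernel information you are trying to avoid; indeed, by Theorem~\ref{szrelationcompl}, once $\sza,\szb,\szab$ are locally constant across $\overline u$ the difference $\szf(\overline u+\eta,v)-\szf(\overline u-\eta,v)$ \emph{equals} $\rank\ker\alpha_{v,\overline u-\eta}-\rank\ker\alpha_{v,\overline u+\eta}$, so local constancy of $\szf$ is equivalent to, not a substitute for, controlling those kernels. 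The paper closes this step as follows: the absence of cornerpoints with abscissa $\overline u$ gives $\rank\check{H}_0((A\cap B)_v,(A\cap B)_{\overline u+\eta})=\rank\check{H}_0((A\cap B)_v,(A\cap B)_{\overline u-\eta})$ (and likewise for $A$ and $B$); the inclusion-induced maps between these relative groups are surjective, hence isomorphisms between vector spaces of equal finite rank; and the commutativity of the resulting square with $\alpha_{v,\overline u-\eta}$ and $\alpha_{v,\overline u+\eta}$ then forces $\ker\alpha_{v,\overline u-\eta}\simeq\ker\alpha_{v,\overline u+\eta}$ at the four relevant corners. Without this (or an equivalent direct argument that every component of $X_v$ meeting $X_{\overline u+\eta}$ already meets $X_{\overline u-\eta}$, deduced from the corresponding property of $A_v$ and $B_v$), your first part does not go through.

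For the second part your route genuinely differs from the paper's: you argue from the surjectivity of $\beta_v$ that a persistent class of $X$ lifts to one of $A$ or $B$, whereas the paper simply quotes the characterization of cornerpoints at infinity as values $\overline u=\min_{p\in C}\p(p)$ over connected components $C$ of $X$ (Prop.~9 of \cite{FrLa01}) and observes that this minimum is attained on $C\cap A$ or on $C\cap B$. Your sketch is workable, and you correctly note that the intersection term drops out, but ``passing to the limit and comparing jumps'' still requires checking that the differences $\sza(\overline u+\eps,1/\eps)-\sza(\overline u-\eps,1/\eps)$ (or the analogous ones for $B$) are at least $1$ for arbitrarily small $\eps$ and hence have positive limit; the paper's citation-based argument avoids this bookkeeping entirely.
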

\begin{proof}
As for the first assertion, we prove the contrapositive statement.

Let $\overline{u} \in \R$, and let us suppose that there are no
proper cornerpoints for $\sza$, $\szb$ and $\szab$ having
$\overline{u}$ as abscissa. Then it follows that, for every $v >
\overline{u}$:
$$
\begin{array}{l}
\underset{\eps\rightarrow0^+}\lim\left(\szab(\overline{u}+\eps,v)-\szab(\overline{u}-\eps,v)\right)=0,\\
\underset{\eps\rightarrow0^+}\lim\left(\sza(\overline{u}+\eps,v)-\sza(\overline{u}-\eps,v)\right)=0,\\
\underset{\eps\rightarrow0^+}\lim\left(\szb(\overline{u}+\eps,v)-\szb(\overline{u}-\eps,v)\right)=0.
\end{array}
$$
Indeed, if there exists $v > \overline{u}$, such that
$$\underset{\eps\rightarrow0^+}\lim\left(\szab(\overline{u}+\eps,v)-\szab(\overline{u}-\eps,v)\right)\neq0,$$
then $\overline{u}$ is a discontinuity point for $\szab(\cdot,v)$,
implying the presence of at least one proper cornerpoint having
$\overline{u}$ as abscissa (\cite{FrLa01}, Lemma 3). Analogously
for $\sza$ and $\szb$.

Moreover, since size functions are natural valued functions and
are non-decreasing in the first variable, for every $v >
\overline{u}$, there exists $\overline{\eps}
> 0$ small enough such that $v - \overline{\eps}> \overline{u}+
\overline{\eps}$, and
$$0=\underset{\eps\rightarrow0^+}\lim\left(\szab(\overline{u}+\eps,v)-\szab(\overline{u}-\eps,v)\right)=\szab(\overline{u}+\overline{\eps},v)-\szab(\overline{u}-\overline{\eps},v).$$
So, for every $\eta< \overline{\eps}$, we have
$\szab(\overline{u}+\eta,v)=\szab(\overline{u}-\eta,v)$. This is
equivalent to saying that $\rank \check{H}_0((A\cap B)_v) - \rank
\check{H}_0((A\cap B)_v, (A\cap B)_{\overline{u}+\eta})=\rank
\check{H}_0((A\cap B)_v) - \rank \check{H}_0((A\cap B)_v, (A\cap
B)_{\overline{u}-\eta})$, that is, $\rank \check{H}_0((A\cap B)_v,
(A\cap B)_{\overline{u}+\eta})=\rank \check{H}_0((A\cap B)_v,
(A\cap B)_{\overline{u}-\eta}).$ Thus, in a similar way, for
$\sza$ and $\szb$, we obtain $ \rank \check{H}_0(A_v,
A_{\overline{u}+\eta})=\rank \check{H}_0(A_v,
A_{\overline{u}-\eta})$ and $ \rank \check{H}_0(B_v,
B_{\overline{u}+\eta})=\rank \check{H}_0(B_v,
B_{\overline{u}-\eta}).$ Now, let us consider the following
diagram:

\begin{centering}
\hfill \xymatrix {\check{H}_0((A\cap B)_{v-\eta},(A\cap
B)_{\overline{u}-\eta})\ar[rr]^-{\alpha_{v-\eta,\overline{u}-\eta}}
\ar[d]^{j_1}&&\check{H}_0(A_{v-\eta},A_{\overline{u}-\eta})\oplus
\check{H}_0(B_{v-\eta},B_{\overline{u}-\eta})\ar[d]_{j_2}
\\ \check{H}_0((A\cap B)_{v-\eta},(A\cap
B)_{\overline{u}+\eta})\ar[rr]^-{\alpha_{v-\eta,\overline{u}+\eta}}&&\check{H}_0(A_{v-\eta},A_{\overline{u}+\eta})\oplus
\check{H}_0(B_{v-\eta},B_{\overline{u}+\eta}),}\hfill
\end{centering}
\\where the homomorphisms $j_1$ and $j_2$ are induced by
inclusions. Since they are surjective and their respective domain
and codomain have the same rank, we deduce that $j_1$ and $j_2$
are isomorphisms. So, we obtain that $\ker
\alpha_{v-\eta,\overline{u}-\eta}\simeq \ker
\alpha_{v-\eta,\overline{u}+\eta}.$

Analogously, from the diagram

\begin{centering}
\hfill \xymatrix {\check{H}_0((A\cap B)_{v+\eta},(A\cap
B)_{\overline{u}-\eta})\ar[rr]^-{\alpha_{v+\eta,\overline{u}-\eta}}
\ar[d]^{k_1}&&\check{H}_0(A_{v+\eta},A_{\overline{u}-\eta})\oplus
\check{H}_0(B_{v+\eta},B_{\overline{u}-\eta})\ar[d]_{k_2}
\\ \check{H}_0((A\cap B)_{v+\eta},(A\cap
B)_{\overline{u}+\eta})\ar[rr]^-{\alpha_{v+\eta,\overline{u}+\eta}}&&\check{H}_0(A_{v+\eta},A_{\overline{u}+\eta})\oplus
\check{H}_0(B_{v+\eta},B_{\overline{u}+\eta}),}\hfill
\end{centering}
\\we can deduce that $\ker
\alpha_{v+\eta,\overline{u}-\eta}\simeq \ker
\alpha_{v+\eta,\overline{u}+\eta}$. Thus, since $\eta$ can be
chosen arbitrarily small, it holds that
$$
\begin{array}{l}
\underset{\eta\rightarrow0^+}\lim(\ker
\alpha_{v-\eta,\overline{u}-\eta}- \ker
\alpha_{v-\eta,\overline{u}+\eta})=0,\\
\underset{\eta\rightarrow0^+}\lim(\ker
\alpha_{v+\eta,\overline{u}-\eta}- \ker
\alpha_{v+\eta,\overline{u}+\eta})=0.
\end{array}
$$
Therefore, applying Proposition \ref{multiplicityp}, it follows
that
$$\mu_X(p) - \mu_A(p) - \mu_B(p) + \mu_{A \cap B}(p)=0$$
and, in particular, by the hypothesis that $p = (\overline{u}, v)$
is not a proper cornerpoint for $\szab$, or $\sza$, or $\szb$, for
any $v
> \overline{u}$, it holds that $\mu_X(p)=0.$

In the case of cornerpoints at infinity, we observe that, if
$(\overline{u}, \infty)$ is a cornerpoint at infinity for $\szf$,
then $\overline{u}= \underset{p \in C}\min \p(p)$, for at least
one connected component $C$ of $X$ (\cite{FrLa01}, Prop. 9).
Furthermore, since $X = A \cup B$, it follows that $\overline{u}=
\underset{p \in C \cap A}\min \p_{|A}(p)$ or $\overline{u}=
\underset{p \in C \cap B}\min \p_{|B}(p)$, from which (by
\cite{FrLa01}, Prop. 9), $(\overline{u}, \infty)$ is shown to be a
cornerpoint at infinity for $\sza$ or $\szb$.
\end{proof}

\begin{thm}\label{vordinate}
If $p = (\overline{u},\overline{v}) \in \Delta^+$ is a proper
cornerpoint for $\szf$, then $\overline{v}$ is a homological
0-critical value for $(A, \p_{|A})$ or $(B, \p_{|B})$ or $(A \cap
B, \p_{|A\cap B})$. Furthermore, if there exists at most a finite
number of homological 0-critical values for $(A, \p_{|A})$, $(B,
\p_{|B})$, and $(A \cap B, \p_{|A\cap B})$, then $\overline{v}$ is
the abscissa of a cornerpoint (proper or at infinity) or the
ordinate of a proper cornerpoint for $\sza$ or $\szb$ or $\szab$.
\end{thm}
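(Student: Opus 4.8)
The plan is to prove the two assertions separately, in both cases using Proposition \ref{multiplicityp} as the master identity linking $\mu_X(p)$ to the multiplicities of $A$, $B$, $A\cap B$ and to the four-corner limit of the relative kernels $\rank\ker\alpha_{v,u}$. For the first assertion I would argue by contraposition: assuming that $\bar v$ is \emph{not} a homological $0$-critical value for any of $(A,\p|_A)$, $(B,\p|_B)$, $(A\cap B,\p|_{A\cap B})$, I would show $\mu_X(p)=0$, contradicting that $p$ is a proper cornerpoint. This mirrors the strategy of Theorem \ref{uabscissa}, but with the roles of the two coordinates interchanged: there one controls horizontal jumps at abscissa $\bar u$, whereas here one controls vertical jumps at ordinate $\bar v$.

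First I would dispose of the multiplicity terms. By Proposition \ref{0-critValueSF}(ii), non-criticality of $\bar v$ for each space forces $\lim_{\eps\to 0^+}\big(\ell(u,\bar v-\eps)-\ell(u,\bar v+\eps)\big)=0$ for every $u<\bar v$, for each of $\sza,\szb,\szab$. Expanding this limit through the representation Theorem \ref{value}, it equals the total multiplicity of proper cornerpoints lying on the horizontal line of ordinate $\bar v$ with abscissa at most $u$; since multiplicities are non-negative and this sum vanishes for all $u<\bar v$, every such cornerpoint has zero multiplicity. In particular $\mu_A(p)=\mu_B(p)=\mu_{A\cap B}(p)=0$.

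The core of the argument, and the step I expect to be the main obstacle, is showing that the four-corner limit of $\rank\ker\alpha_{v,u}$ in Proposition \ref{multiplicityp} also vanishes. Here I would imitate the five-lemma device of Proposition \ref{0-critValueSF}: for suitably small $\eps$ the inclusion $(\cdot)_{\bar v-\eps}\hookrightarrow(\cdot)_{\bar v+\eps}$ induces isomorphisms on $\check H_0$ for each of $A$, $B$, $A\cap B$ (precisely because $\bar v$ is non-critical), and comparing the two homology sequences of the pairs $(\,(\cdot)_{\bar v\mp\eps},(\cdot)_u)$ upgrades these to isomorphisms on the relative groups $\check H_0((\cdot)_{\bar v\mp\eps},(\cdot)_u)$. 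Feeding this into the commutative square relating $\alpha_{\bar v-\eps,u}$ to $\alpha_{\bar v+\eps,u}$ (whose commutativity is the naturality of the relative Mayer--Vietoris map, Theorem \ref{MVrelhom}, as already exploited in Lemma \ref{commut}), the vertical maps become isomorphisms, so $\rank\ker\alpha_{\bar v-\eps,u}=\rank\ker\alpha_{\bar v+\eps,u}$. Applying this once with $u=\bar u-\eps$ and once with $u=\bar u+\eps$ and pairing the four terms of Proposition \ref{multiplicityp} as $(\rank\ker\alpha_{\bar v-\eps,\bar u-\eps}-\rank\ker\alpha_{\bar v+\eps,\bar u-\eps})+(\rank\ker\alpha_{\bar v+\eps,\bar u+\eps}-\rank\ker\alpha_{\bar v-\eps,\bar u+\eps})$ makes the whole expression collapse to $0$. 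The delicate point is the bookkeeping in $\eps$: non-criticality only guarantees the isomorphism for arbitrarily small $\eps$, so I must argue---as in Theorem \ref{uabscissa}, using that the quantities are integer-valued and monotone and that the limit in Proposition \ref{multiplicityp} exists---that a single sequence $\eps_n\to 0^+$ can be chosen along which all three inclusion-induced maps are simultaneously isomorphisms. Granting this, Proposition \ref{multiplicityp} gives $\mu_X(p)=\mu_A(p)+\mu_B(p)-\mu_{A\cap B}(p)+0=0$, the desired contradiction.

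For the second assertion I would invoke the finiteness hypothesis to turn criticality into a genuine discontinuity. By the first part, $\bar v$ is a homological $0$-critical value for at least one of the three pairs, say $(A,\p|_A)$; since there are only finitely many such values, Proposition \ref{finitecrit} applies. If $\iota_0^{\bar v-\eps,\bar v+\eps}$ fails to be surjective for small $\eps$, case (i) yields a $v'>\bar v$ for which $\bar v$ is a discontinuity point of $\sza(\cdot,v')$, so by \cite{FrLa01} the value $\bar v$ is the abscissa of a cornerpoint of $\sza$; if instead it is surjective, case (ii) yields a $u'<\bar v$ for which $\bar v$ is a discontinuity point of $\sza(u',\cdot)$, so $\bar v$ is the ordinate of a proper cornerpoint of $\sza$. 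In either case $\bar v$ is the abscissa of a cornerpoint or the ordinate of a proper cornerpoint of $\sza$, and symmetrically for $\szb$ or $\szab$, which is exactly the claim.
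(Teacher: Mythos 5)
Your argument is correct, and the second assertion is handled exactly as in the paper (first part plus Proposition \ref{finitecrit}, cases (i) and (ii), translated into cornerpoint language via Lemma 3 of \cite{FrLa01}). For the first assertion, however, you take a genuinely different route. The paper never touches Proposition \ref{multiplicityp} here: it places the Mayer--Vietoris sequences of the triads at levels $\overline{v}-\eps$ and $\overline{v}+\eps$ one above the other, applies the Five Lemma to conclude that $\iota_0^{\overline{v}-\eps,\overline{v}+\eps}:\check{H}_0(X_{\overline{v}-\eps})\to\check{H}_0(X_{\overline{v}+\eps})$ is an isomorphism --- i.e.\ that non-criticality of $\overline{v}$ for $A$, $B$, $A\cap B$ propagates to $X$ itself --- and then invokes Proposition \ref{0-critValueSF}(ii) directly for $(X,\p)$ to get $\mu_X(p)=0$. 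You instead stay inside the identity of Proposition \ref{multiplicityp}, killing $\mu_A(p)$, $\mu_B(p)$, $\mu_{A\cap B}(p)$ via Proposition \ref{0-critValueSF}(ii) and the representation theorem, and killing the four-corner kernel term by upgrading the absolute isomorphisms to isomorphisms of the relative groups and transporting kernels through the commutative square given by Theorem \ref{MVrelhom}; this is a heavier computation (four kernel terms instead of one Five-Lemma application) but it is sound, and it mirrors the mechanics of Theorem \ref{uabscissa} with the roles of the coordinates exchanged, which makes the two proofs pleasantly symmetric. The $\eps$-uniformity issue you flag --- that non-criticality only yields isomorphisms along some sequence $\eps_n\to 0^+$, possibly different for $A$, $B$ and $A\cap B$, while one needs a common sequence --- is real, but the paper's own proof makes exactly the same tacit assumption when it asserts a single $\eps$ making both $h$ and $k$ isomorphisms; since the limit in Proposition \ref{multiplicityp} exists, evaluating it along any one admissible sequence suffices, so your treatment is, if anything, the more explicit of the two. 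In short: correct, same skeleton for the second claim, a more computational but legitimate alternative for the first.
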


\begin{proof}
Regarding the first assertion, we prove the contrapositive
statement.

Let $\overline{v} \in \R$, and let us suppose that $\overline{v}$
is not a homological 0-critical value for the size pairs $(A,
\p_{|A})$, $(B, \p_{|B})$ and $(A \cap B, \p_{|A\cap B})$. Then,
by Definition \ref{p-critValue}, for every $\overline{\eps}>0$,
there exists $\eps$ with $0< \eps < \overline{\eps}$, such that
the vertical homomorphisms $h$ and $k$ induced by inclusions in
the following commutative diagram

\begin{centering}
\hfill \xymatrix {\cdots\ar[r]&\check{H}_0((A\cap
B)_{\overline{v}-\eps})\ar[r]
\ar[d]^{h}&\check{H}_0(A_{\overline{v}-\eps})\oplus
\check{H}_0(B_{\overline{v}-\eps})\ar[r]
\ar[d]^{k}&\check{H}_0(X_{\overline{v}-\eps})\ar[r]
\ar[d]^{\iota_0^{\overline{v}-\eps,
\overline{v}+\eps}}&0\ar[d]_{0}
\\\cdots \ar[r]&\check{H}_0((A\cap B)_{\overline{v}+\eps})\ar[r]&\check{H}_0(A_{\overline{v}+\eps})\oplus
\check{H}_0(B_{\overline{v}+\eps})\ar[r]&\check{H}_0(X_{\overline{v}+\eps})\ar[r]&0}\hfill
\end{centering}
\\are isomorphisms. Hence, using the Five Lemma, we can deduce
that $\iota_0^{\overline{v}-\eps, \overline{v}+\eps}$ is an
isomorphism, implying that $\overline{v}$ is not a homological
0-critical value for $(X, \p)$. Consequently, applying Proposition
\ref{0-critValueSF}, it holds that, for every $u < \overline{v}$,
$\underset{\eps \rightarrow 0^+}\lim\left(\szf(u, \overline{v} -
\eps)- \szf(u, \overline{v} + \eps)\right)=0.$ Hence, it follows
that $\underset{\eps \rightarrow 0^+}\lim\left(\szf(\overline{u}-
\eps, \overline{v} - \eps)- \szf(\overline{u}- \eps, \overline{v}
+ \eps)\right)=0$, choosing $u = \overline{u}- \eps$ and
$\underset{\eps \rightarrow 0^+}\lim[\szf(\overline{u}+ \eps,
\overline{v} - \eps)- \szf(\overline{u}+ \eps, \overline{v} +
\eps)]=0$, choosing $u = \overline{u}+ \eps$. Therefore, by
Definition \ref{cornerpt}, we obtain $\mu_X(p)=0$.

Now, let us proceed with the proof of the second statement,
assuming that $\overline{v}$ is a homological 0-critical value for
$(A, \p_{|A})$. It is analogous for $(B, \p_{|B})$ and $(A \cap B,
\p_{|A\cap B})$. For such a $\overline{v}$, by Definition
\ref{p-critValue}, it holds that, for every sufficiently small
$\eps > 0$, ${\iota}_0^{\overline{v} - \eps, \overline{v} + \eps}:
\check{H}_0(A_{\overline{v} - \eps})\rightarrow
\check{H}_0(A_{\overline{v} + \eps})$ is not an isomorphism. In
particular, by Proposition \ref{finitecrit} \emph{(i)}, if
${\iota}_0^{\overline{v} - \eps, \overline{v} + \eps}$ is not
surjective for any sufficiently small $\eps>0$, then there exists
$v> \overline{v}$, such that $\overline{v}$ is a discontinuity
point for $\sza(\cdot, v)$. This condition necessarily implies the
existence of a cornerpoint (proper or at infinity) for $\sza$,
having $\overline{v}$ as abscissa (\cite{FrLa01}, Lemma 3).

On the other hand, by Proposition \ref{finitecrit} \emph{(ii)}, if
${\iota}_0^{\overline{v} - \eps, \overline{v} + \eps}$ is
surjective for every sufficiently small $\eps>0$, then there
exists $u< \overline{v}$ such that $\overline{v}$ is a
discontinuity point for $\sza(u, \cdot)$. This condition
necessarily implies the existence of a proper cornerpoint for
$\sza$, having $\overline{v}$ as ordinate (\cite{FrLa01}, Lemma
3).
\end{proof}

\section{Experimental results}\label{exp}

In this section we present two experiments demonstrating the
robustness of size functions under partial occlusions.

Psychophysical observations indicate that human and monkey
perception of partially occluded shapes changes according to
whether, or not, the occluding pattern is visible to the observer,
and whether the occluded shape is a filled figure or an outline
\cite{KoVoOr95}. In particular, discrimination performance is
higher for filled shapes than for outlines, and in both cases it
significantly improves when shapes are occluded by a visible
rather than invisible object.

In computer vision experiments, researcher usually work with
invisible  occluding patterns, both on  outlines (see, e.g.,
\cite{ChCh05,GhPe05,MoBeMa01,SuSu05,TaVe05}) and on filled shapes
(see, e.g., \cite{HoOh03}).

To test size function performance under occlusions, we work with
70 filled images, each chosen from a different class of the MPEG-7
dataset \cite{MPEG-7}. The two experiments differ in  the
visibility of the occluding pattern. Since in the first
experiments the occluding pattern is visible, we aim at finding a
fingerprint of the original shape in the size function of the
occluded shape. In the second experiment, where the occluding
pattern is invisible, we perform a direct comparison between the
occluded shape and the original shape. In both experiments, the
occluding pattern is a  rectangular shape occluding from the top,
or the left,
 by an area we increasingly vary from
$20\%$ to $60\%$ of the height or width of the bounding box of the
original shape. We compute  size functions for both the original
shapes and the occluded ones, choosing a family of eight measuring
functions having only the set of black pixels as domain. They are
defined as follows: four of them as the distance from the line
passing through the origin (top left point of the bounding box),
rotated by an angle of $0$, $\frac{\pi}{4}$, $\frac{\pi}{2}$ and
$\frac{3\pi}{4}$ radians, respectively, with respect to the
horizontal position; the other
 four  as minus the distance from the same lines,
respectively. This family of measuring functions is chosen only
for demonstrative purposes, since the associated size functions
are simple  in terms of the number of cornerpoints, but, at the
same time, non-trivial in terms of shape information.

The first experiment aims to show how a trace of the size function
describing the shape of an object is contained in the size
function related to the occluded shape when the occluding pattern
is visible  (see first column of Tables
\ref{camelHV}--\ref{pocketHV}). With reference to the notation
used in our theoretical setting, we are considering $A$ as the
original shape, $B$ as the black rectangle, and $X$ as the
occluded shape generated by their union.

In Table \ref{cprecognition}, for some different levels of
occlusion, each 3D bar chart displays, along the z-axis, the
percentage of common cornerpoints between the set of size
functions associated with the 70 occluded shapes (x-axis), and the
set of size functions associated with the 70 original ones
(y-axis). We see that, for each occluded shape, the highest bar is
always on the diagonal, that is, where the occluded object is
compared with the corresponding original one.

\begin{table}[htbp]
\begin{tabular}{ccc}
\includegraphics[width=0.32\linewidth]{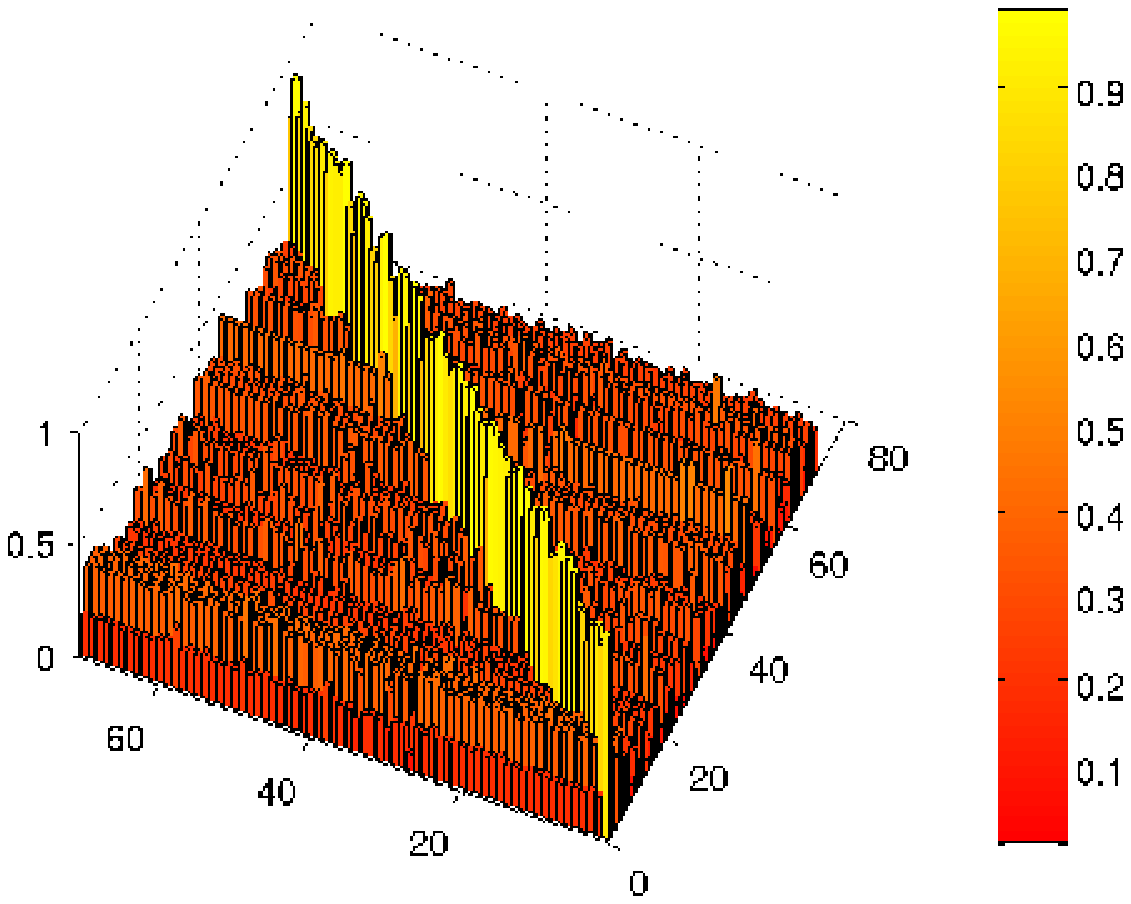} &
\includegraphics[width=0.31\linewidth]{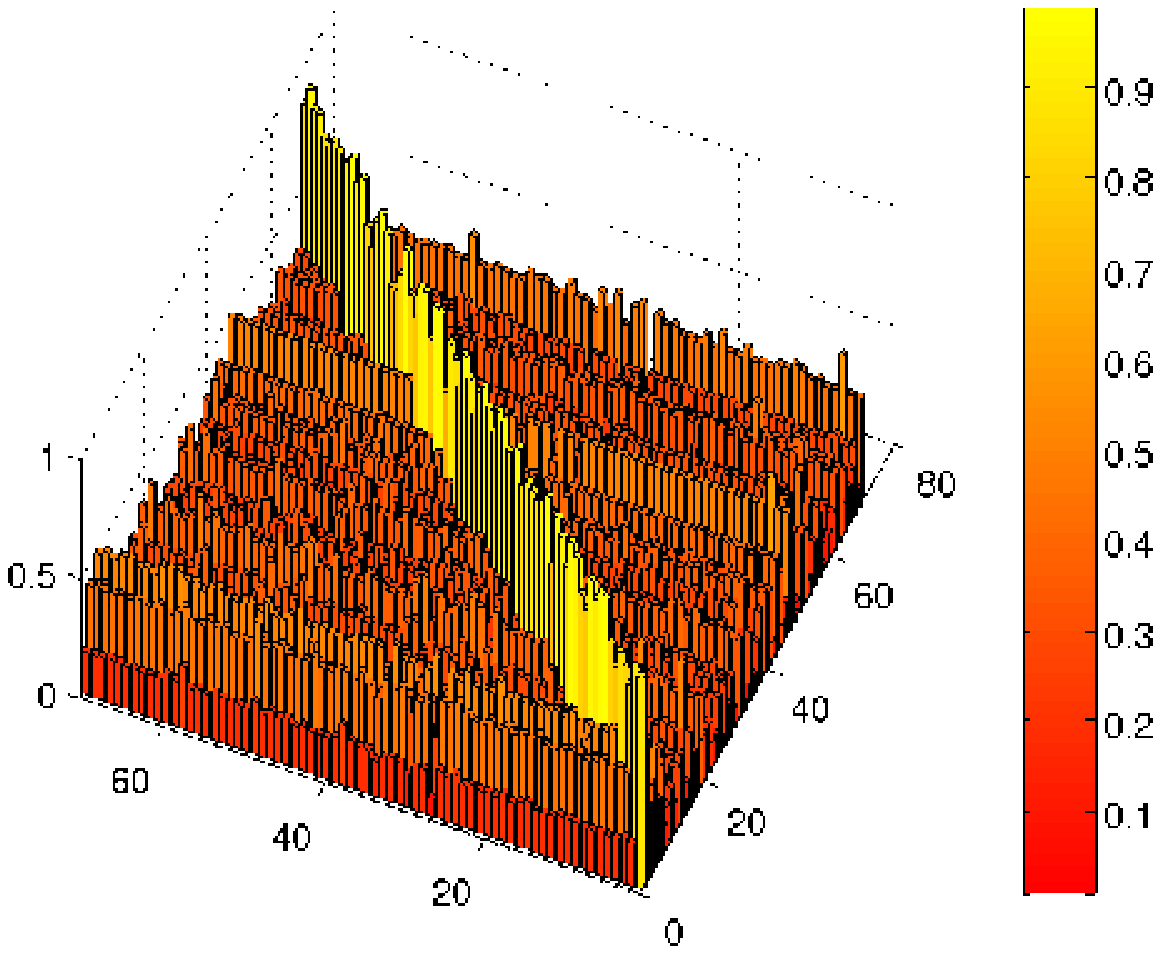} &
\includegraphics[width=0.31\linewidth]{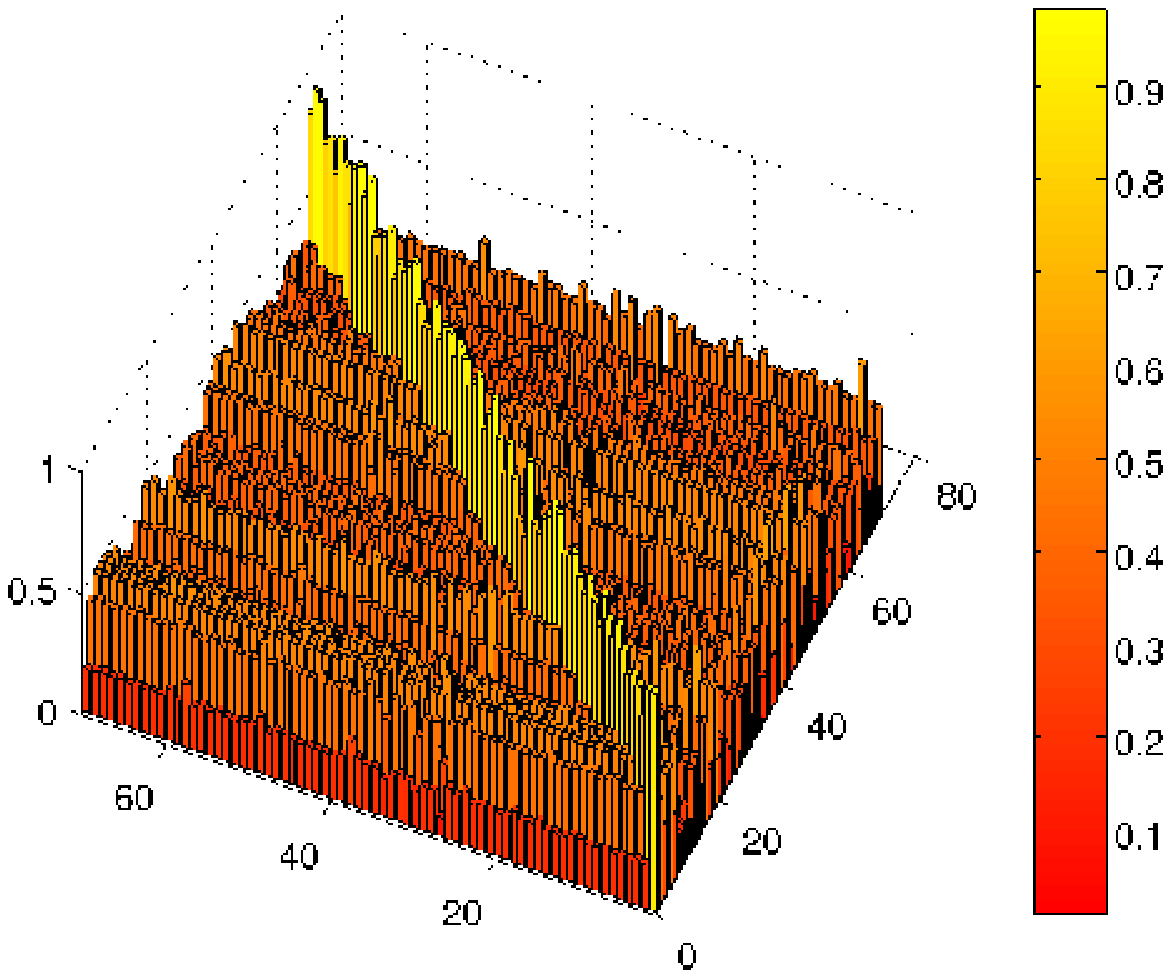} \\
\includegraphics[width=0.32\linewidth]{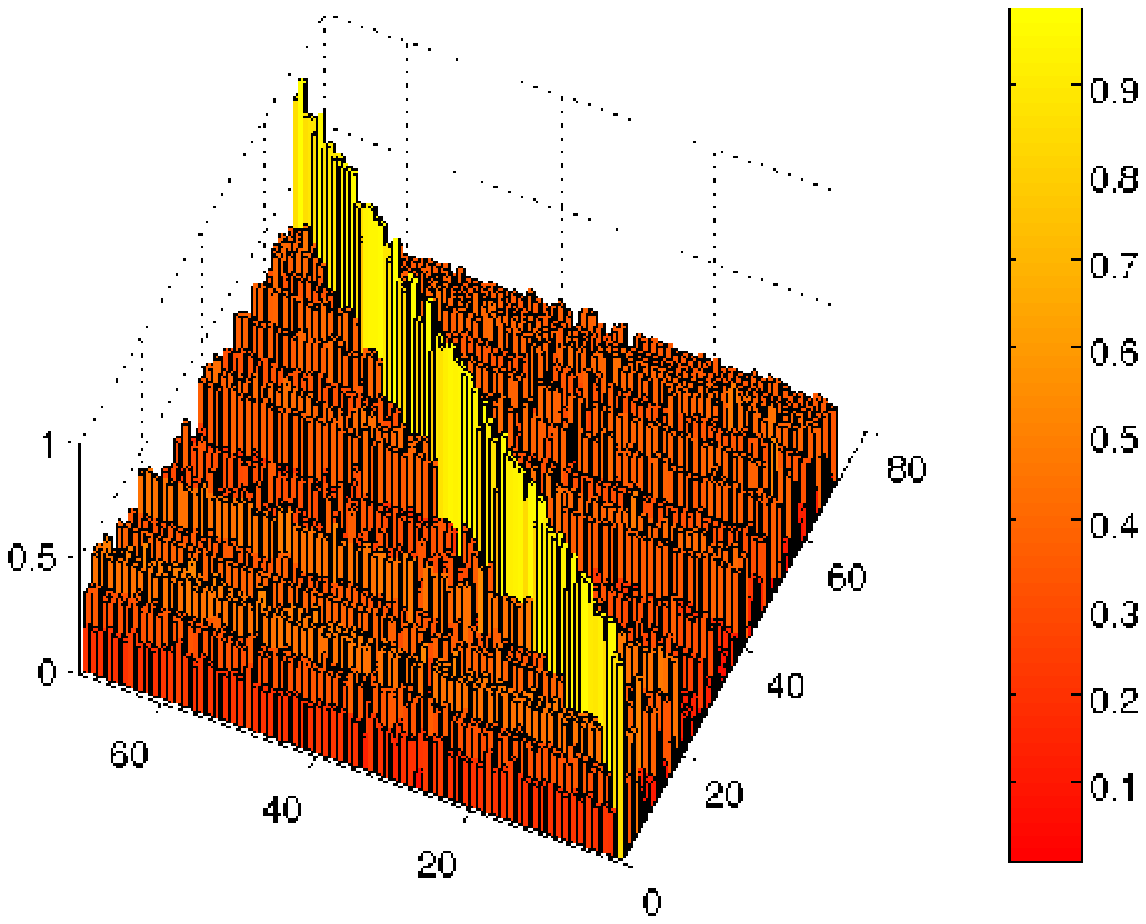} &
\includegraphics[width=0.31\linewidth]{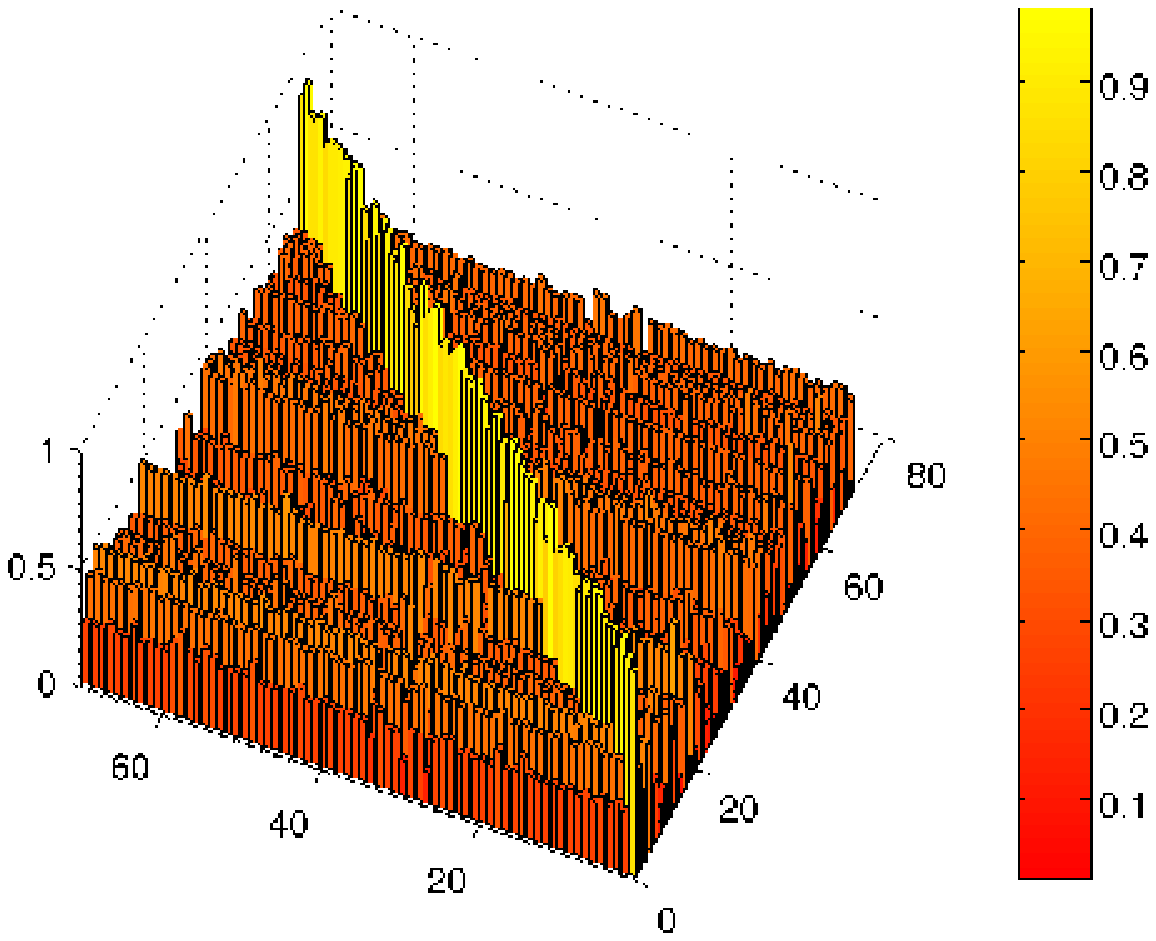} &
\includegraphics[width=0.31\linewidth]{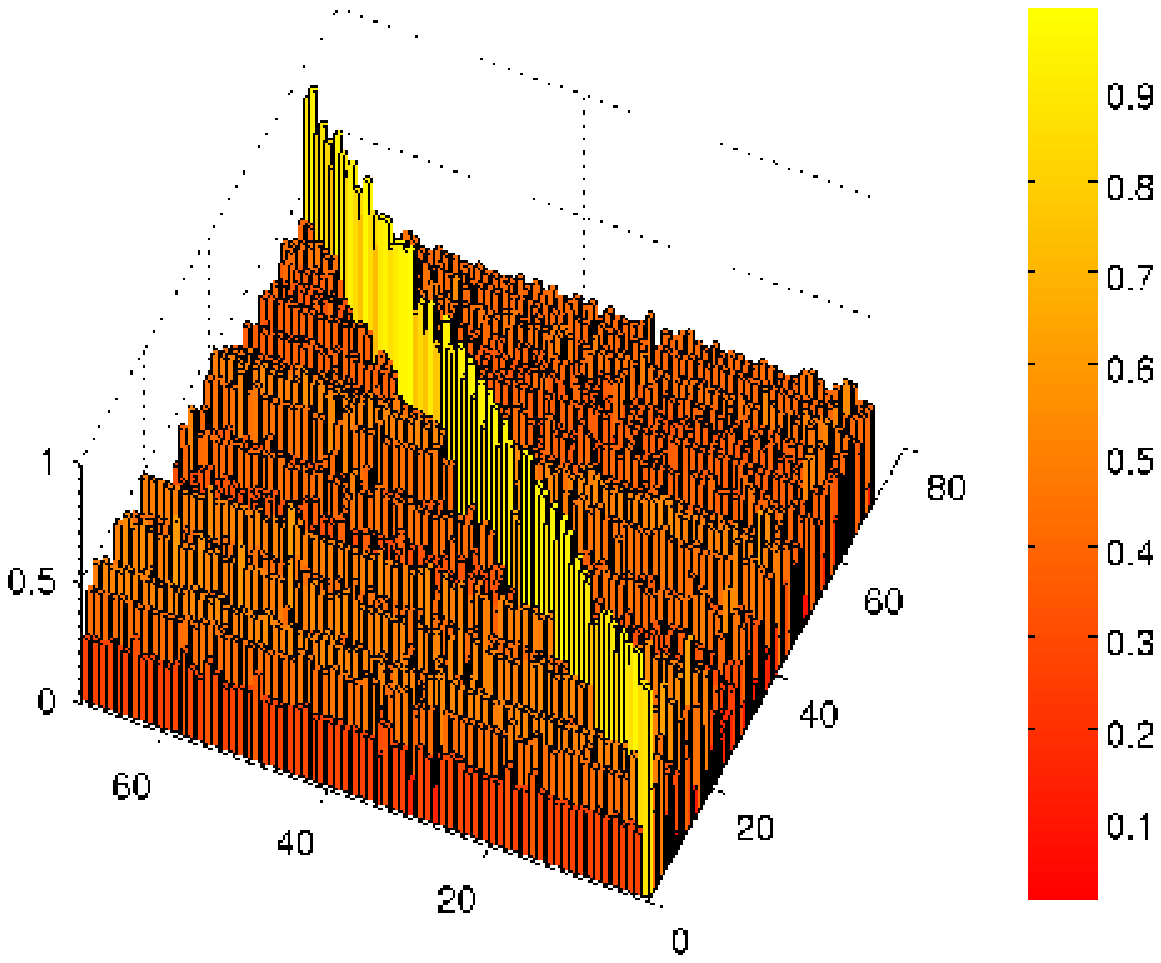} \\
\end{tabular}
\caption{\footnotesize{3D bar charts displaying the percentage of
common cornerpoints (z-axis) between the 70 occluded shapes
(x-axis) and the 70 original ones (y-axis) correspondingly ordered. First row: Shapes are occluded from top  by $20\%$ (column 1),  by $40\%$ (column 2),
 by $60\%$ (column 3). Second row: Shapes are occluded from the left  by $20\%$ (column 1),  by $40\%$ (column 2),
 by $60\%$ (column 3).}}\label{cprecognition}
\end{table}

Moreover, to display the robustness of cornerpoints under
occlusion, three particular instances of our dataset images are
shown in Tables \ref{camelHV}--\ref{pocketHV} (first column) with
their size functions with respect to the second group of four
measuring functions (the next-to-last column). The chosen images
are characterized by different homological features, which will be
changed in presence of occlusion. For example, the ``camel'' in
Table \ref{camelHV} is a connected shape without holes, but it may
happen that the occlusion makes the first homological group
non-trivial (see second row, first column).
\begin{table}[htbp]
\begin{tabular}{cccccc}
\includegraphics[width=0.145\linewidth]{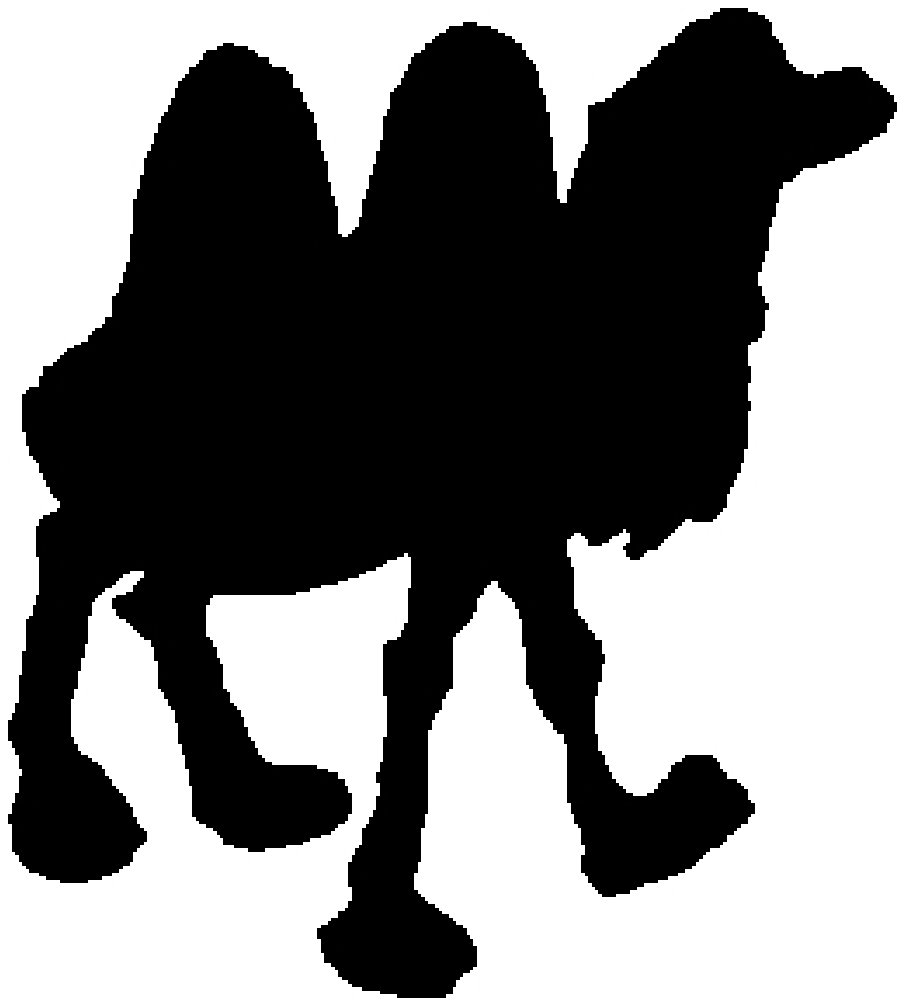} &
\includegraphics[width=0.15\linewidth]{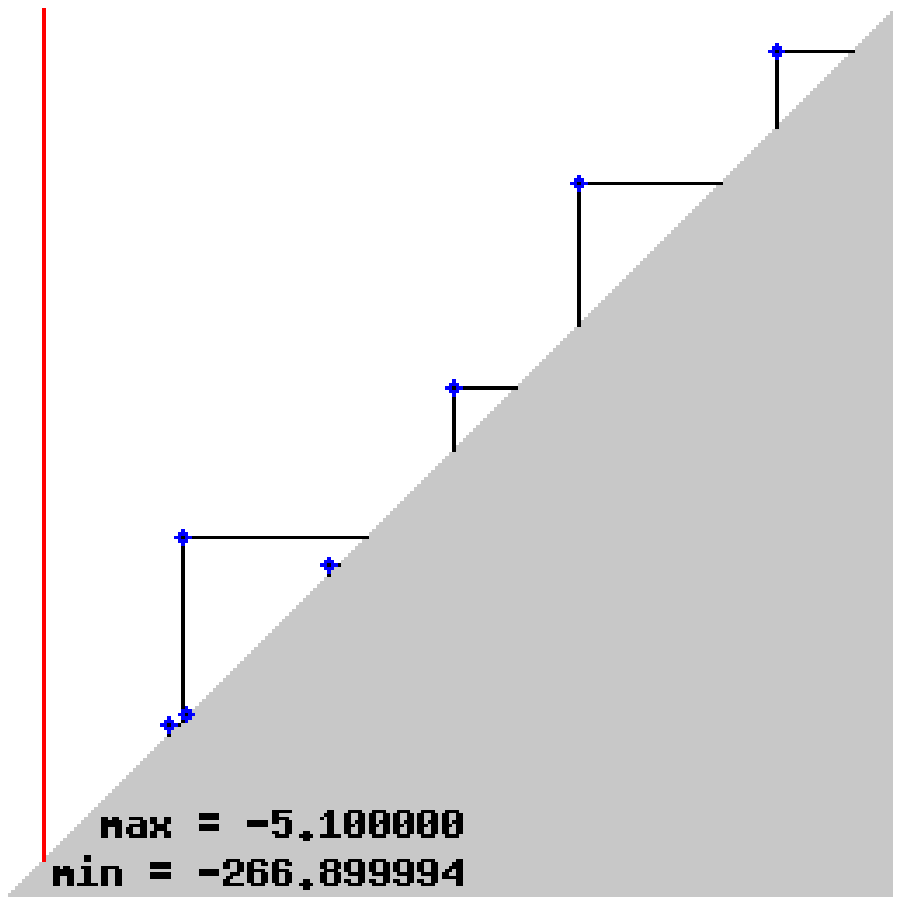} &
\includegraphics[width=0.15\linewidth]{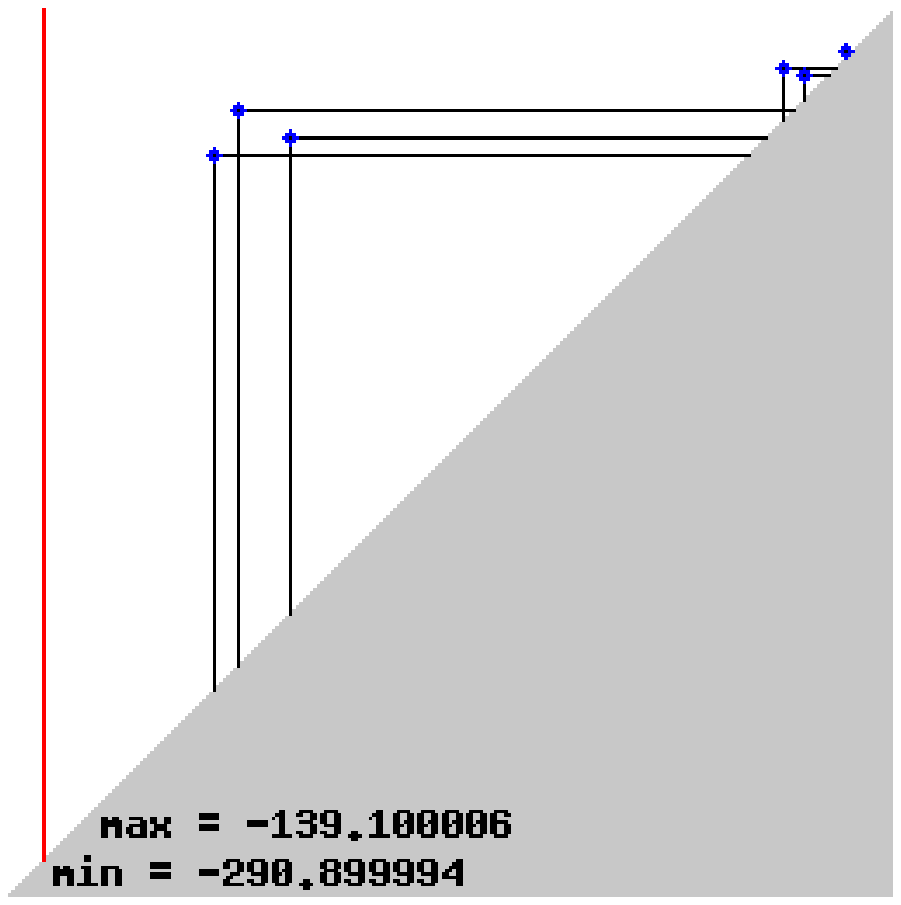} &
\includegraphics[width=0.15\linewidth]{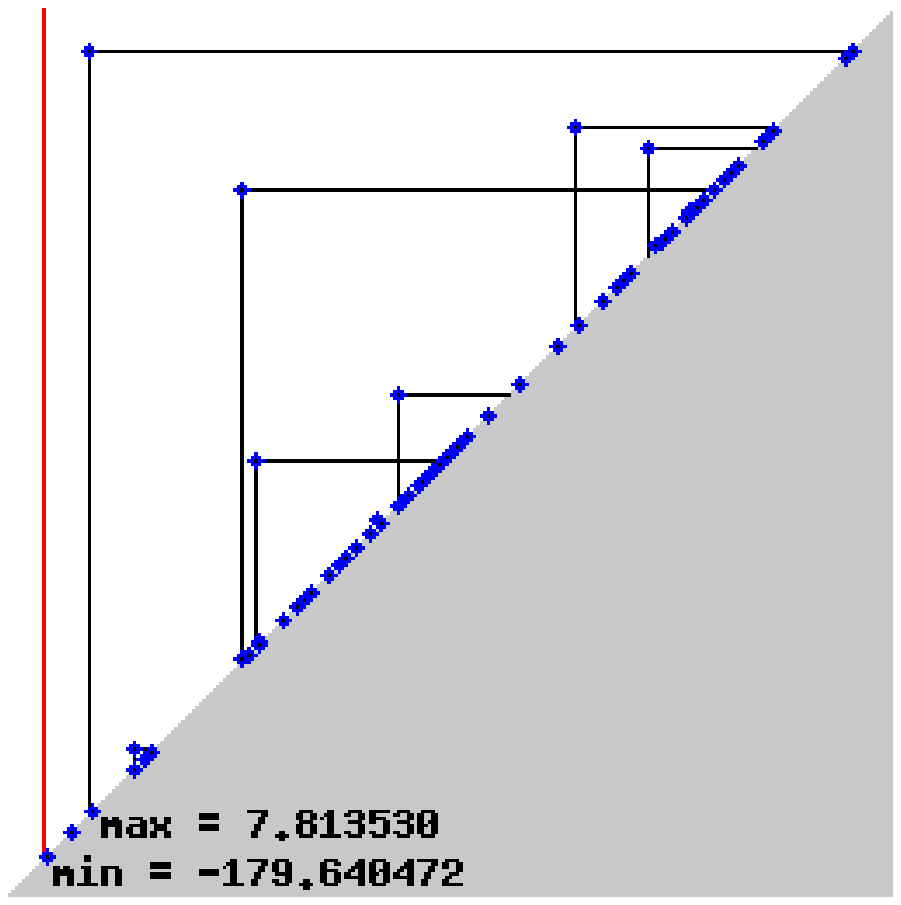} &
\includegraphics[width=0.15\linewidth]{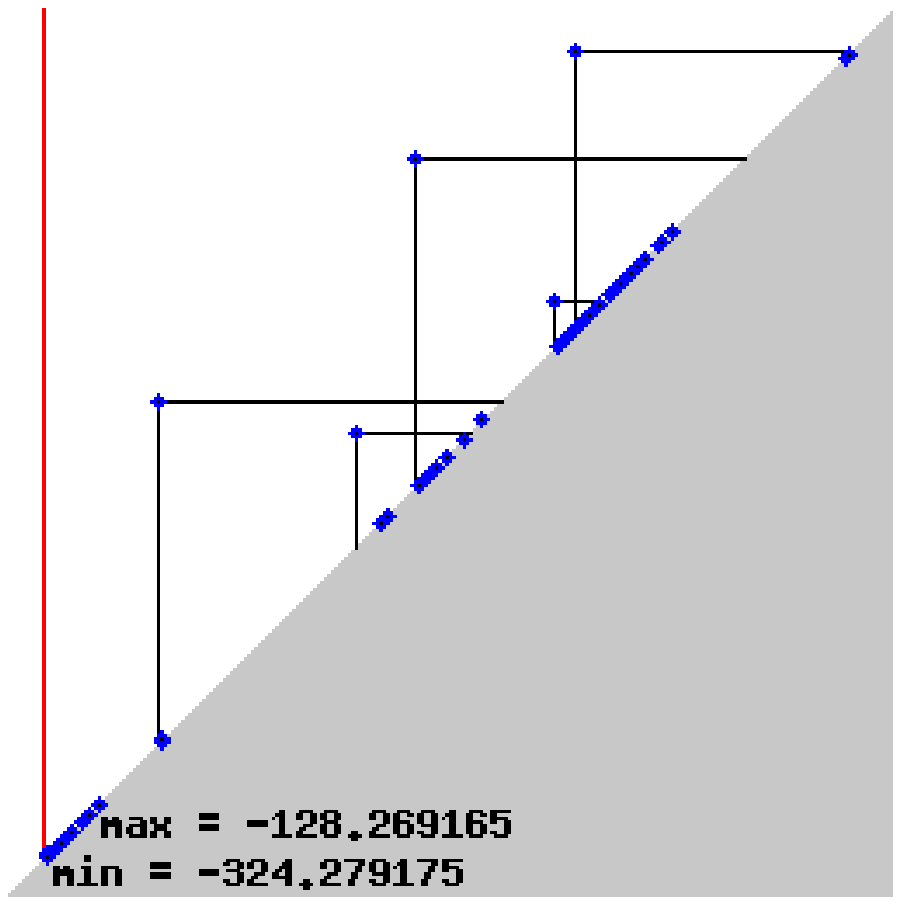} \\
\includegraphics[width=0.145\linewidth]{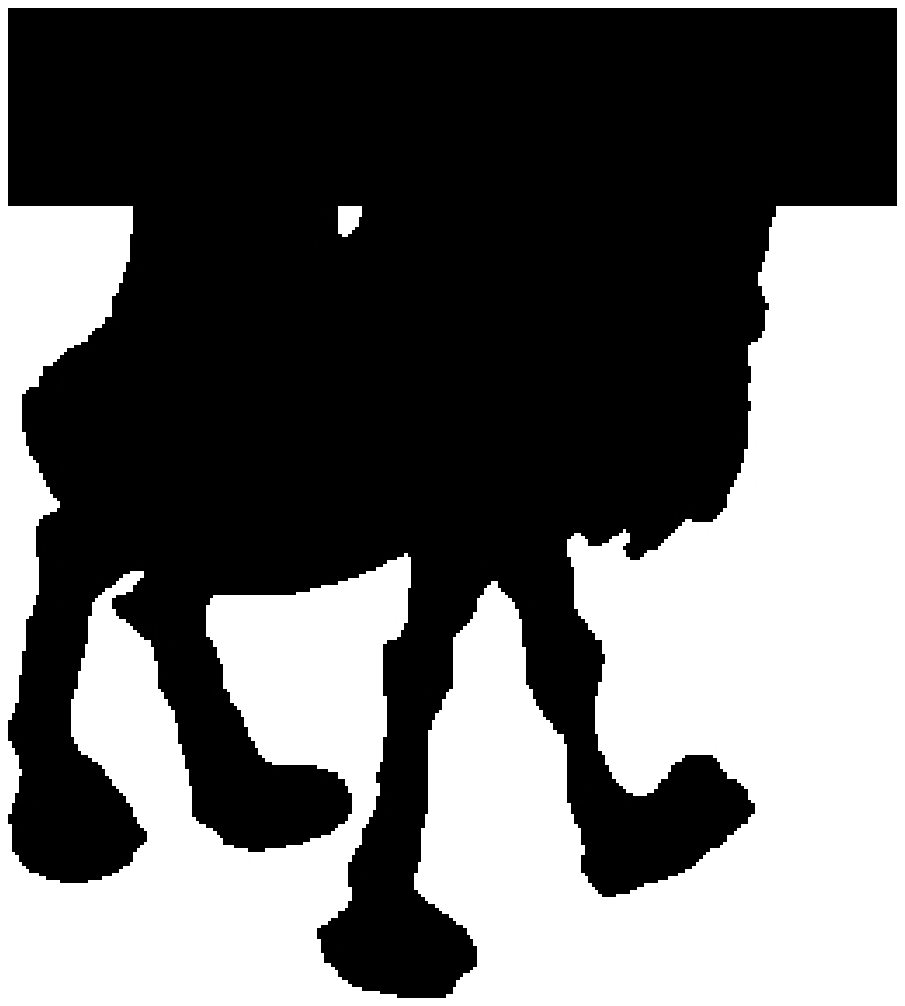} &
\includegraphics[width=0.15\linewidth]{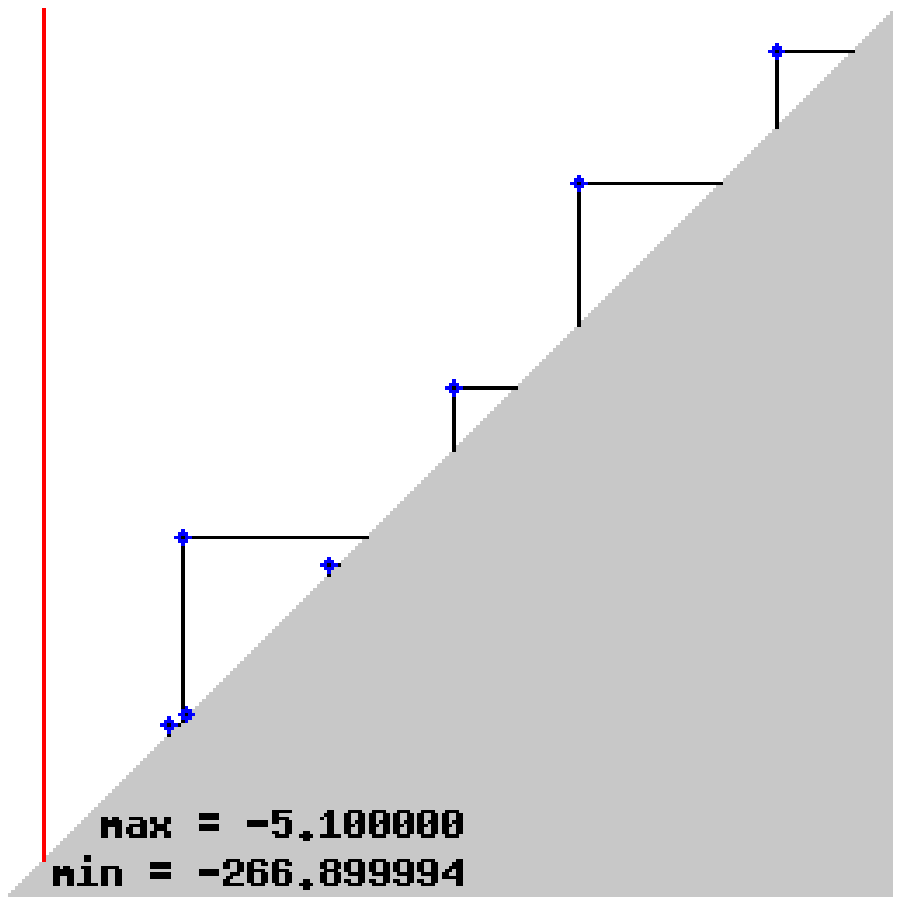} &
\includegraphics[width=0.15\linewidth]{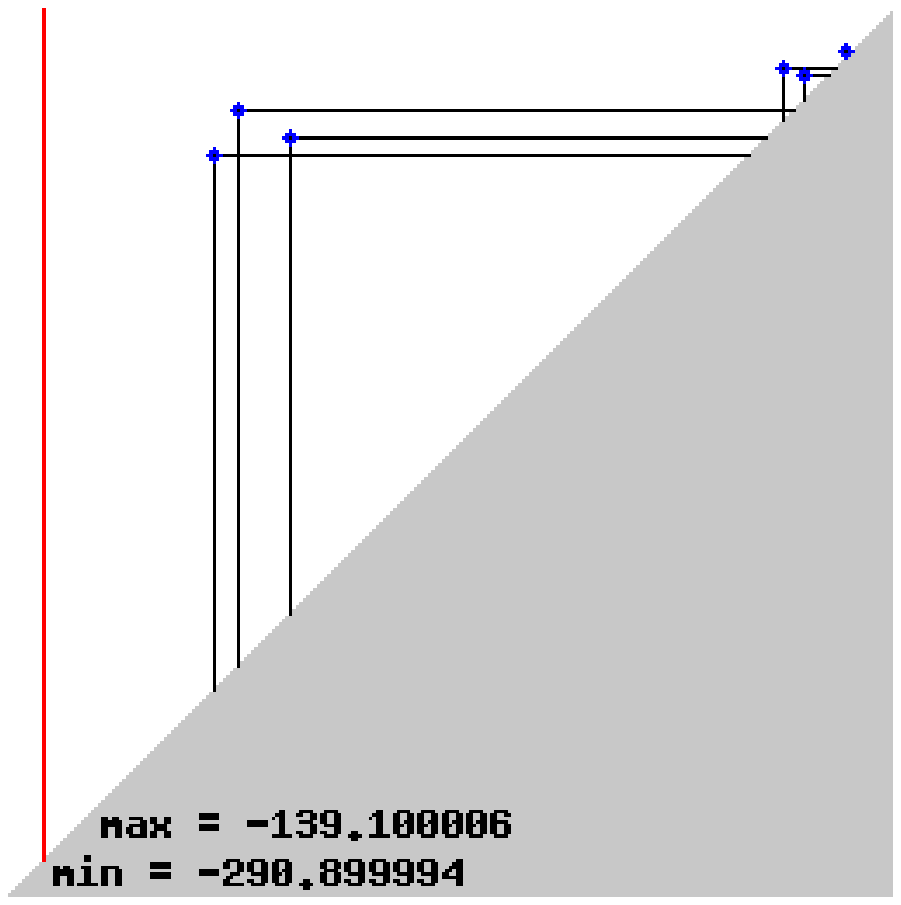} &
\includegraphics[width=0.15\linewidth]{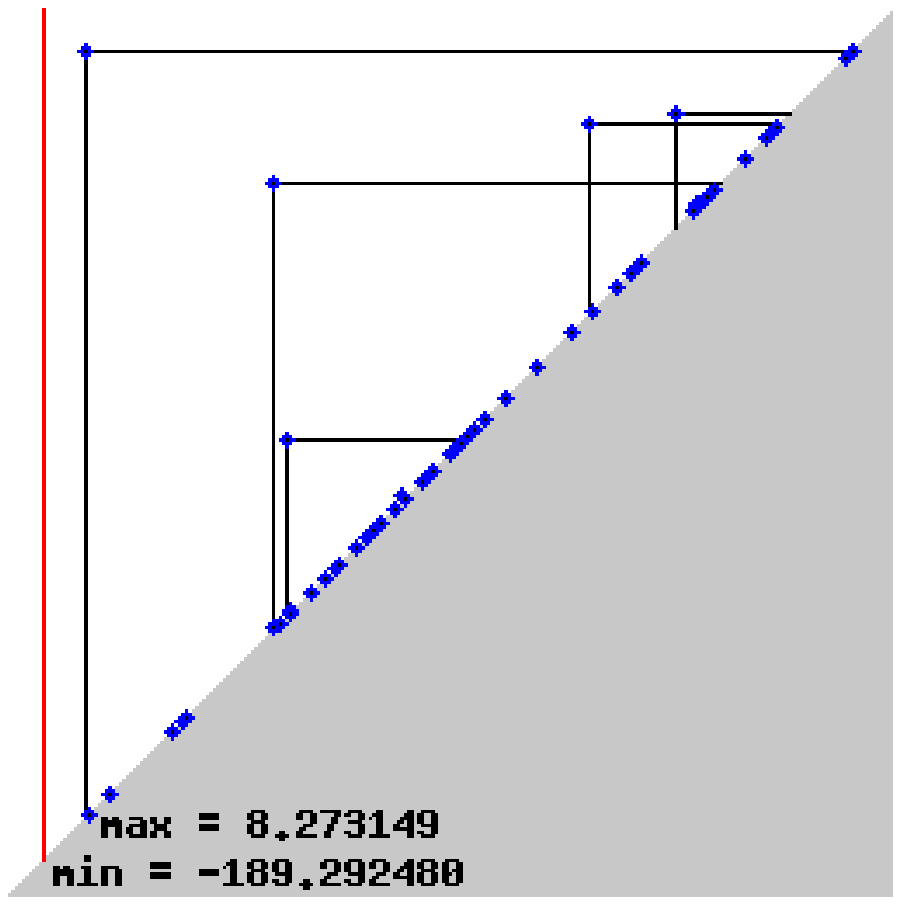} &
\includegraphics[width=0.15\linewidth]{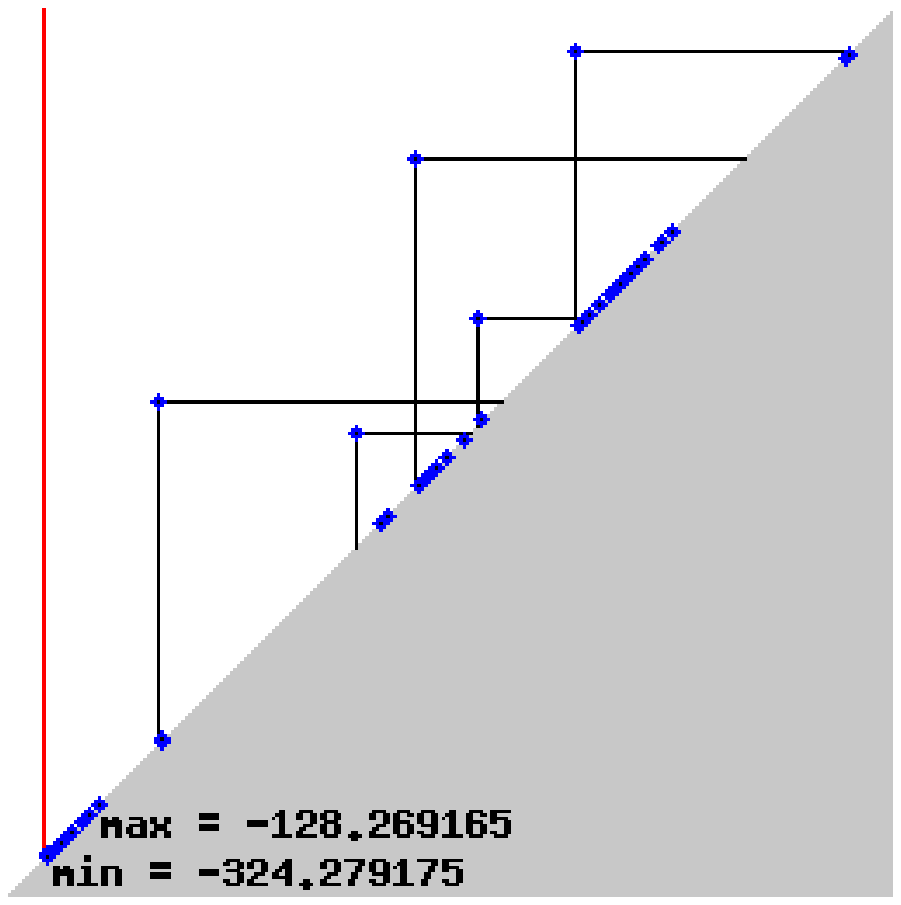} \\
\includegraphics[width=0.145\linewidth]{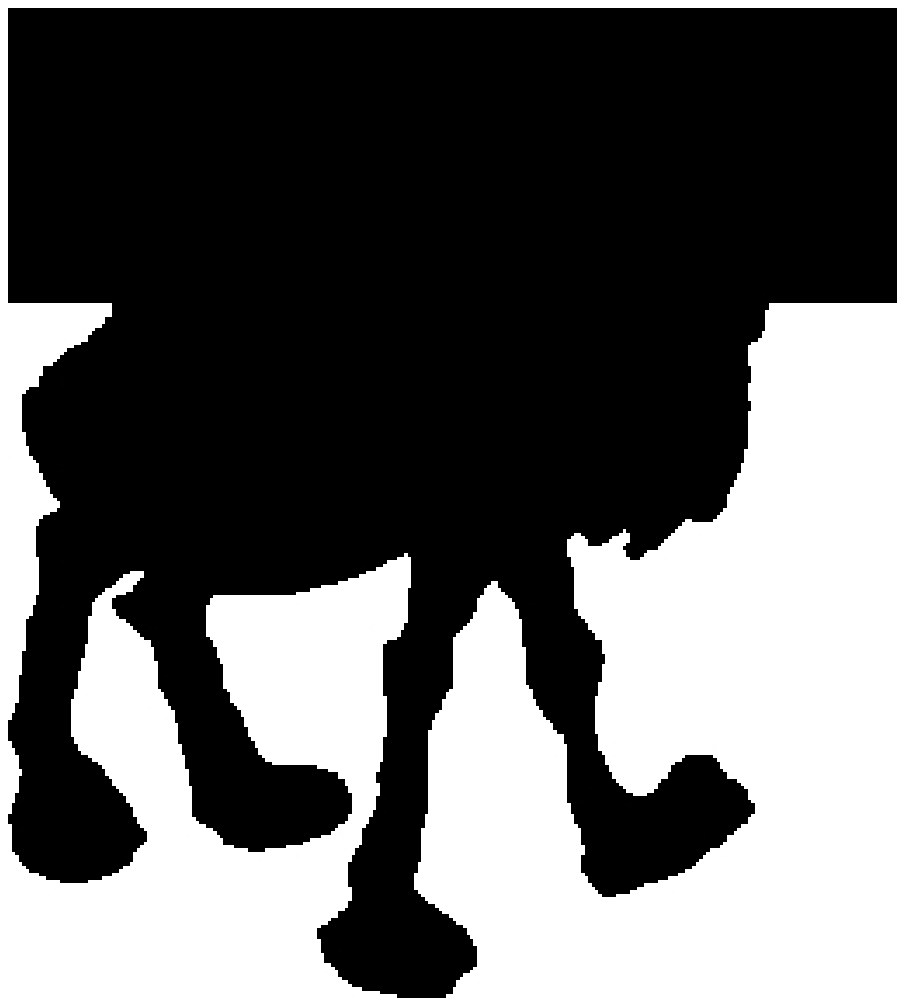} &
\includegraphics[width=0.15\linewidth]{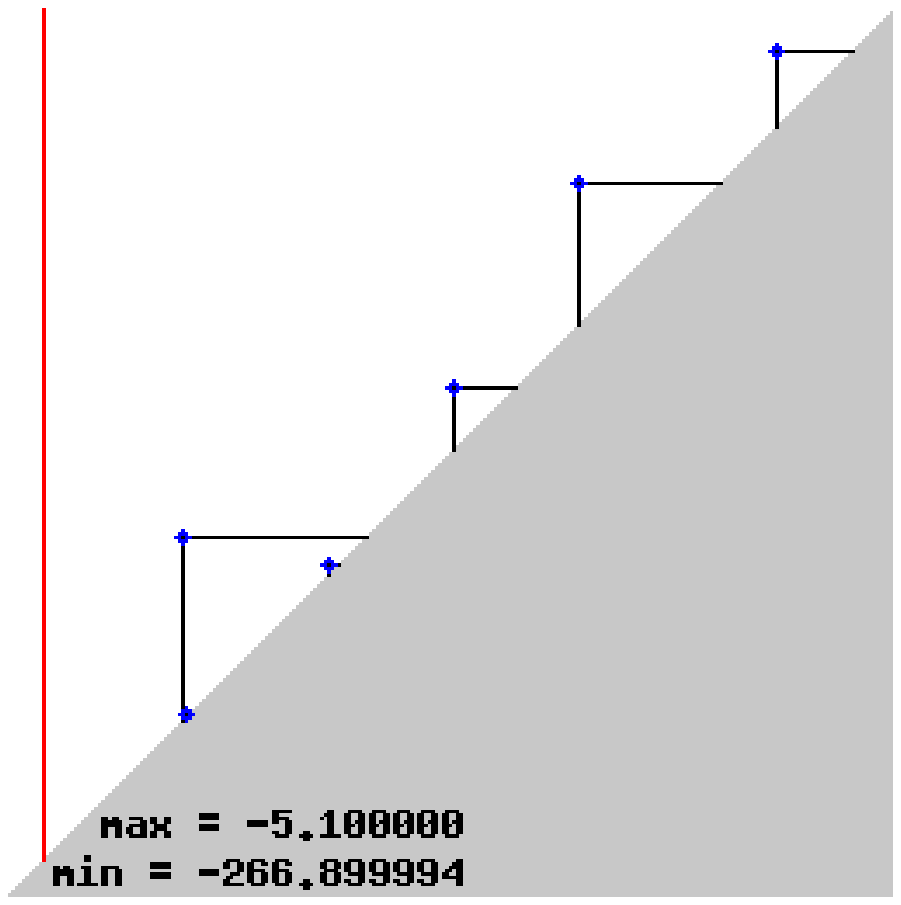} &
\includegraphics[width=0.15\linewidth]{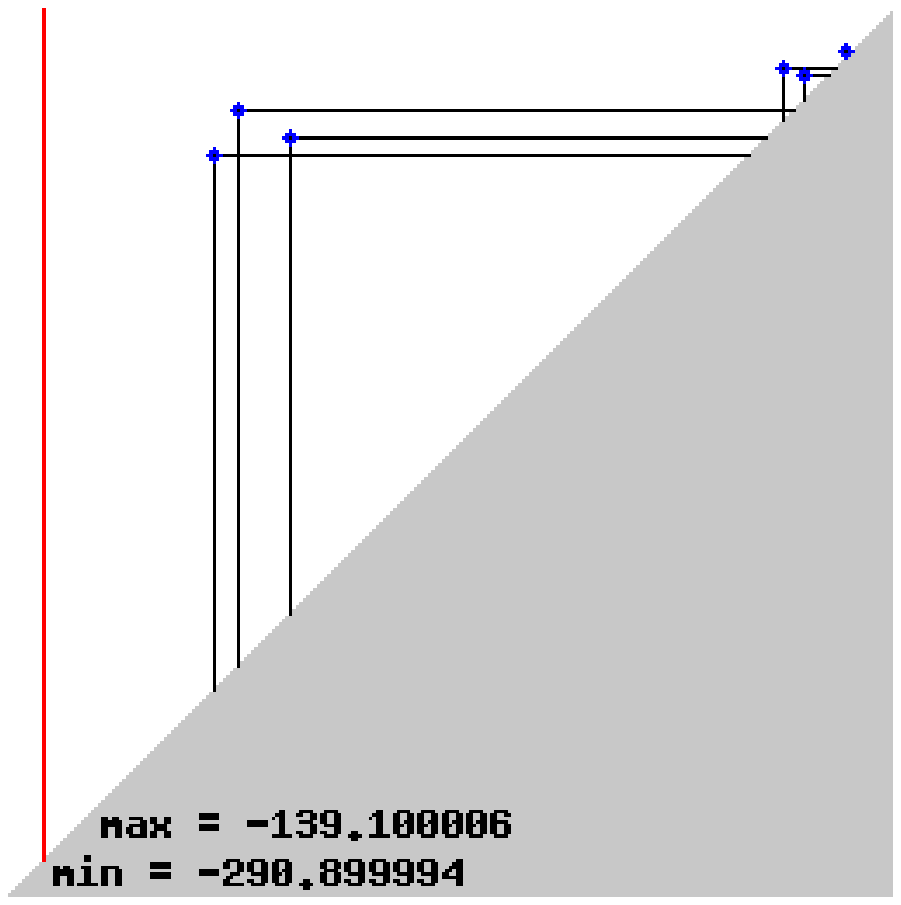} &
\includegraphics[width=0.15\linewidth]{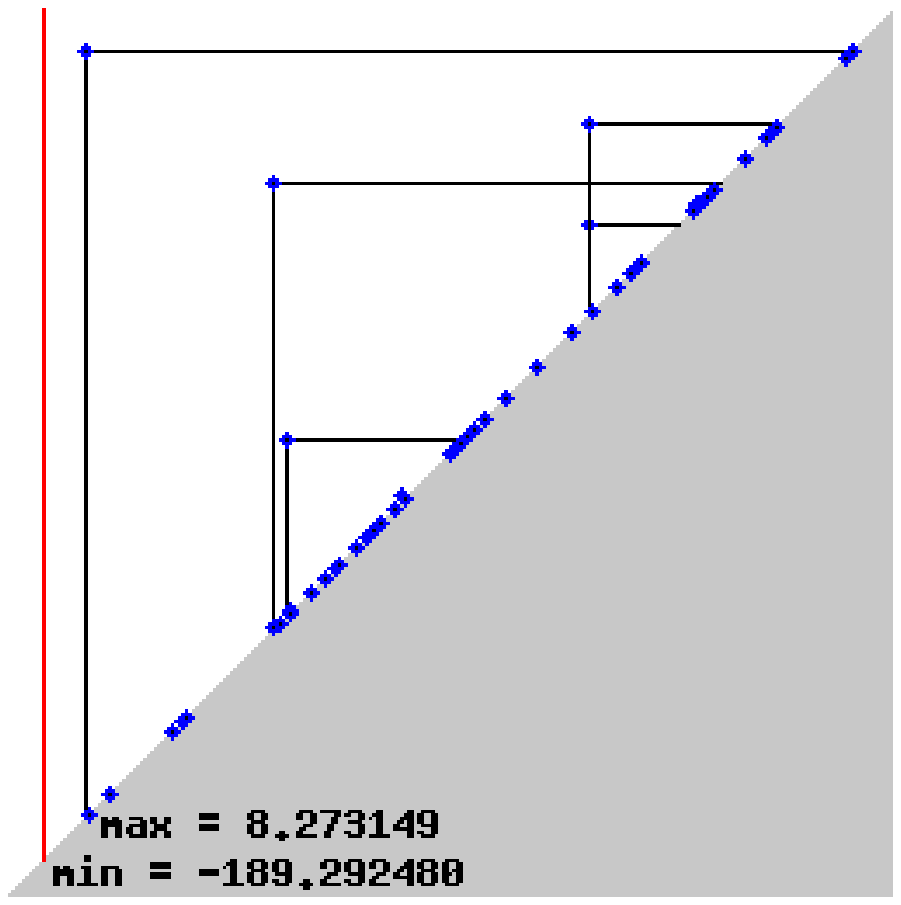} &
\includegraphics[width=0.15\linewidth]{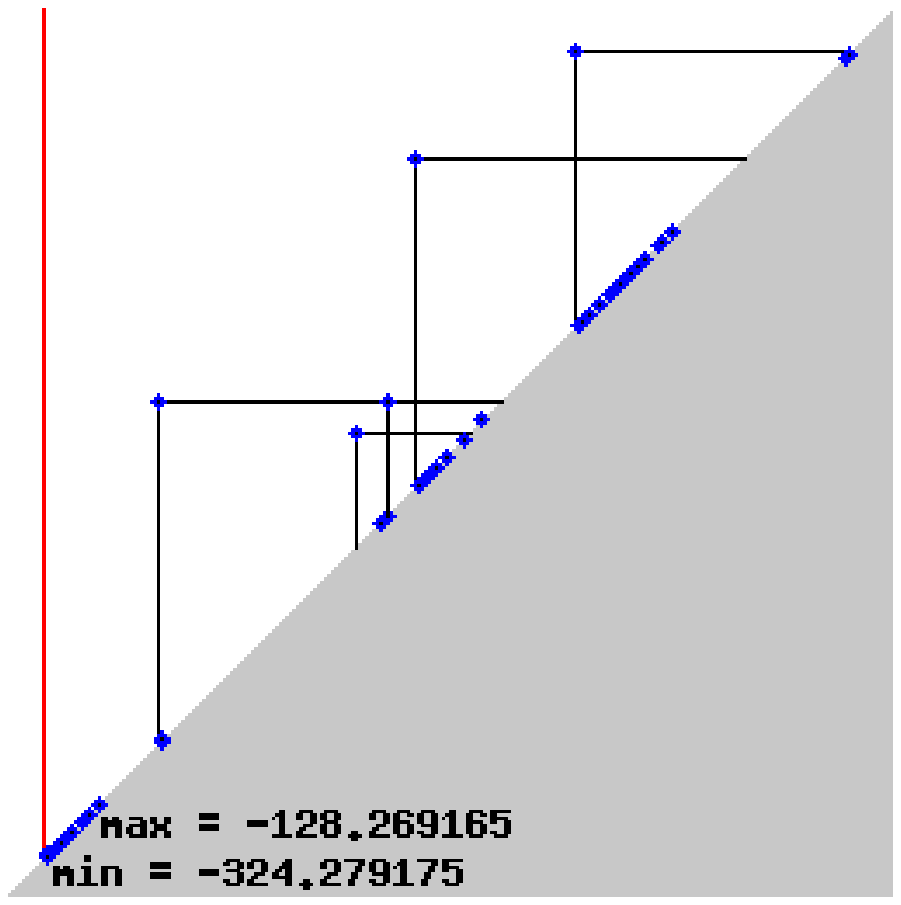} \\
\includegraphics[width=0.145\linewidth]{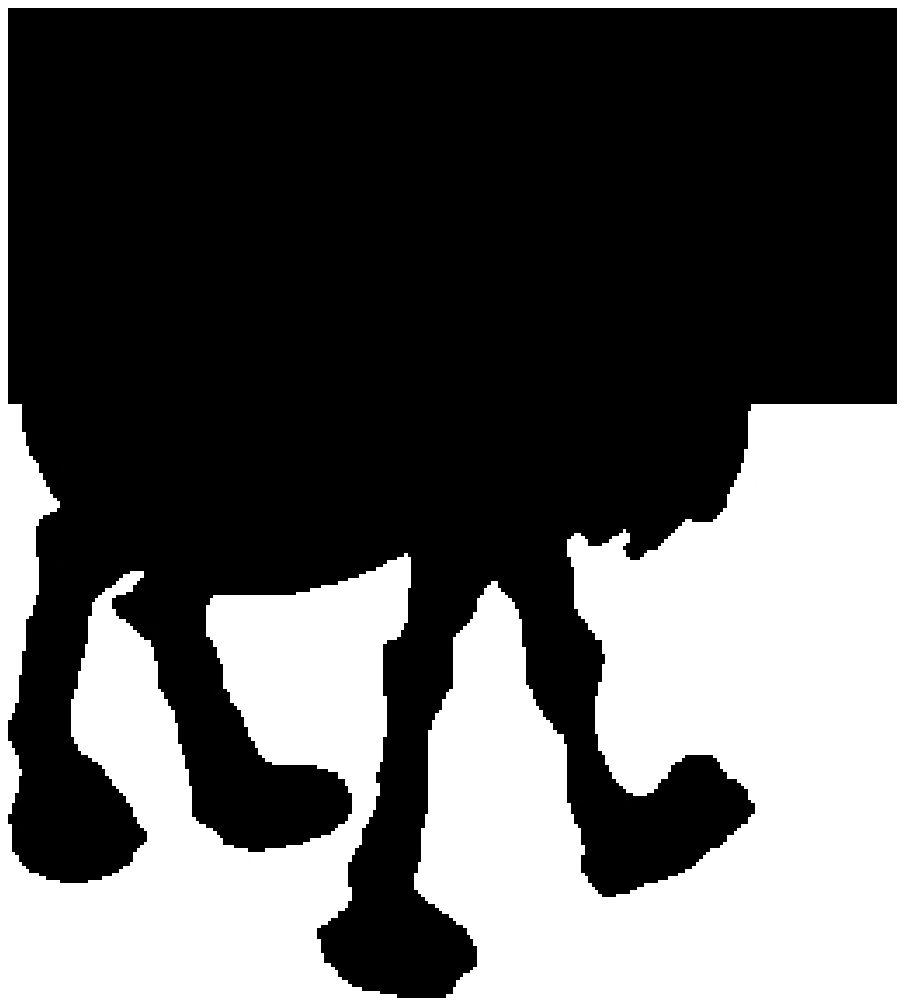} &
\includegraphics[width=0.15\linewidth]{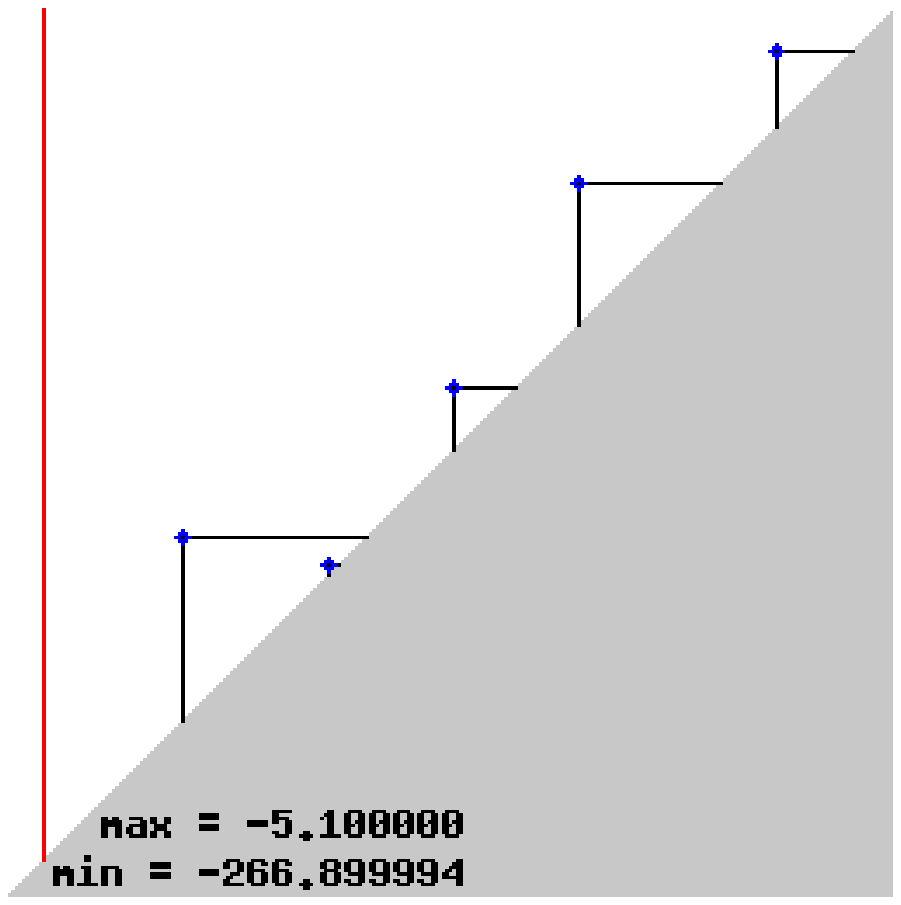} &
\includegraphics[width=0.15\linewidth]{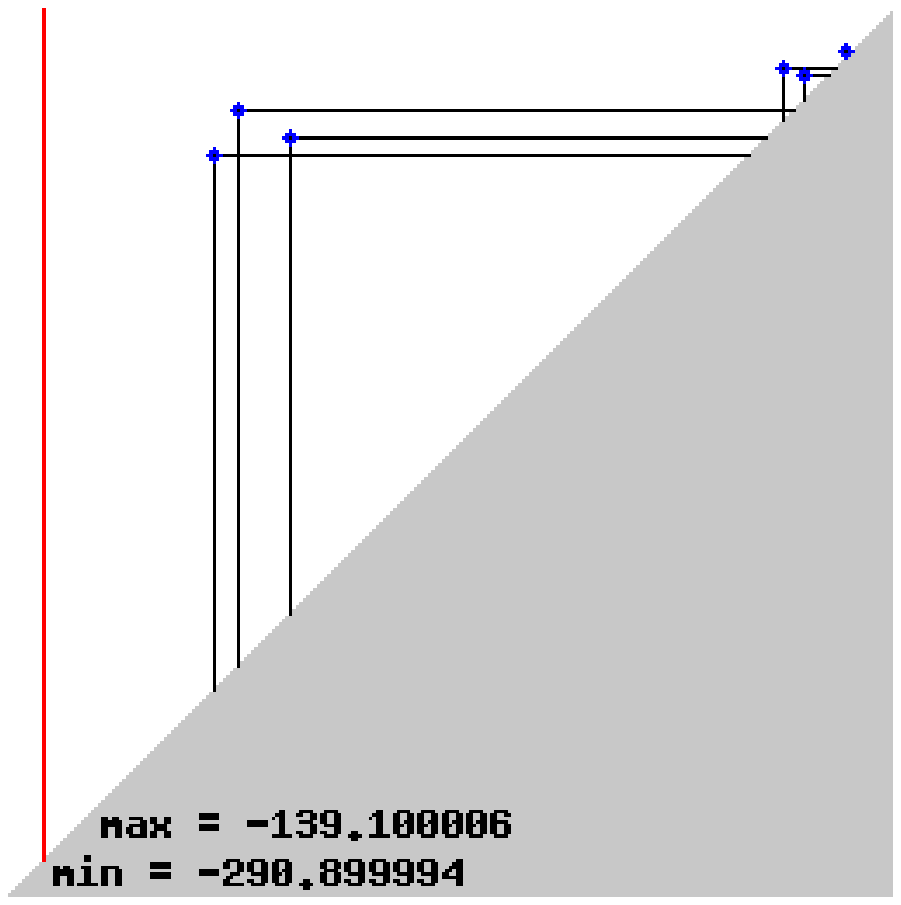} &
\includegraphics[width=0.15\linewidth]{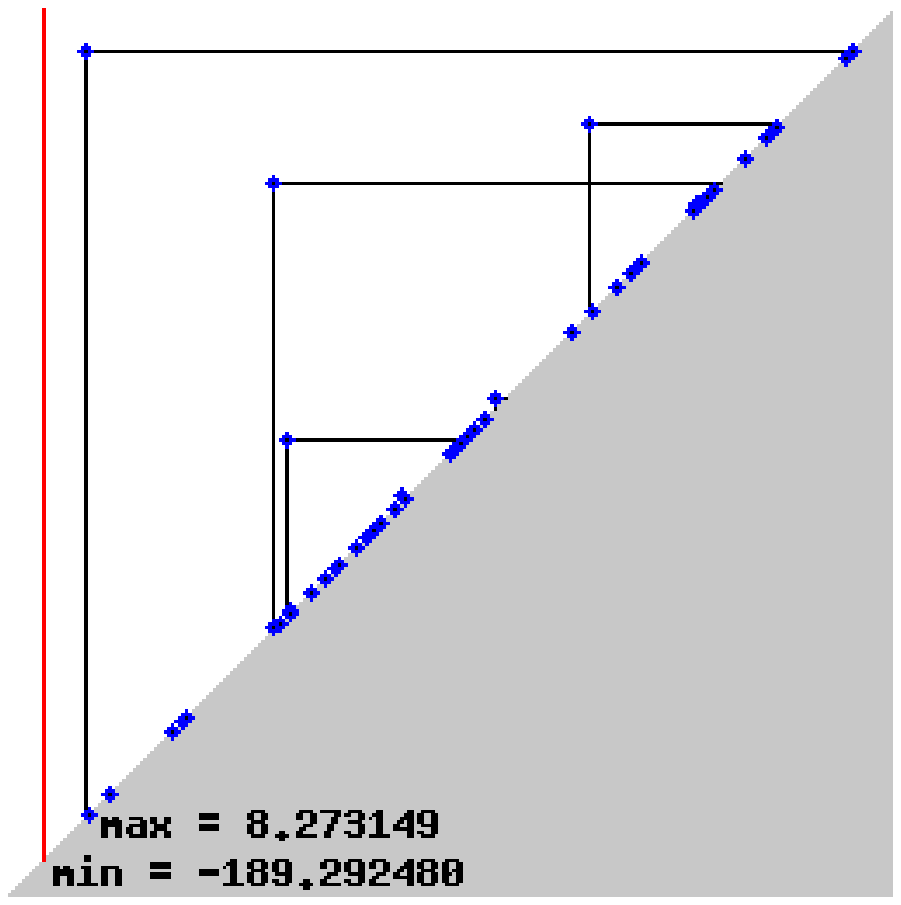} &
\includegraphics[width=0.15\linewidth]{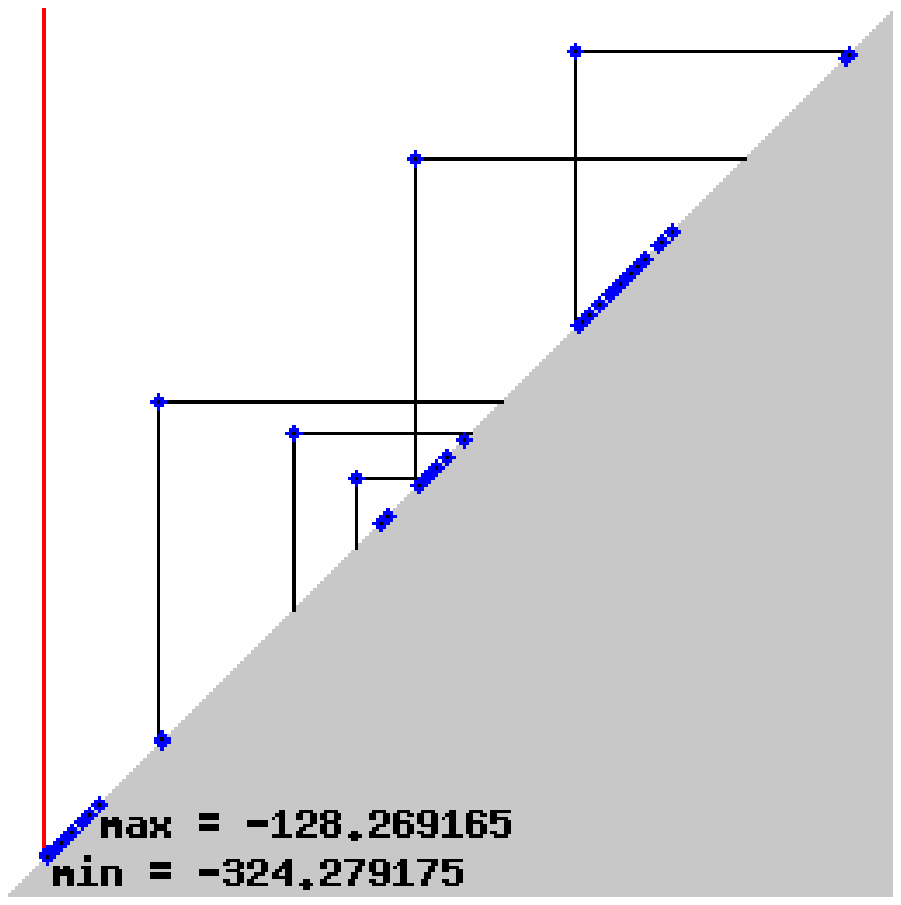} \\
\includegraphics[width=0.145\linewidth]{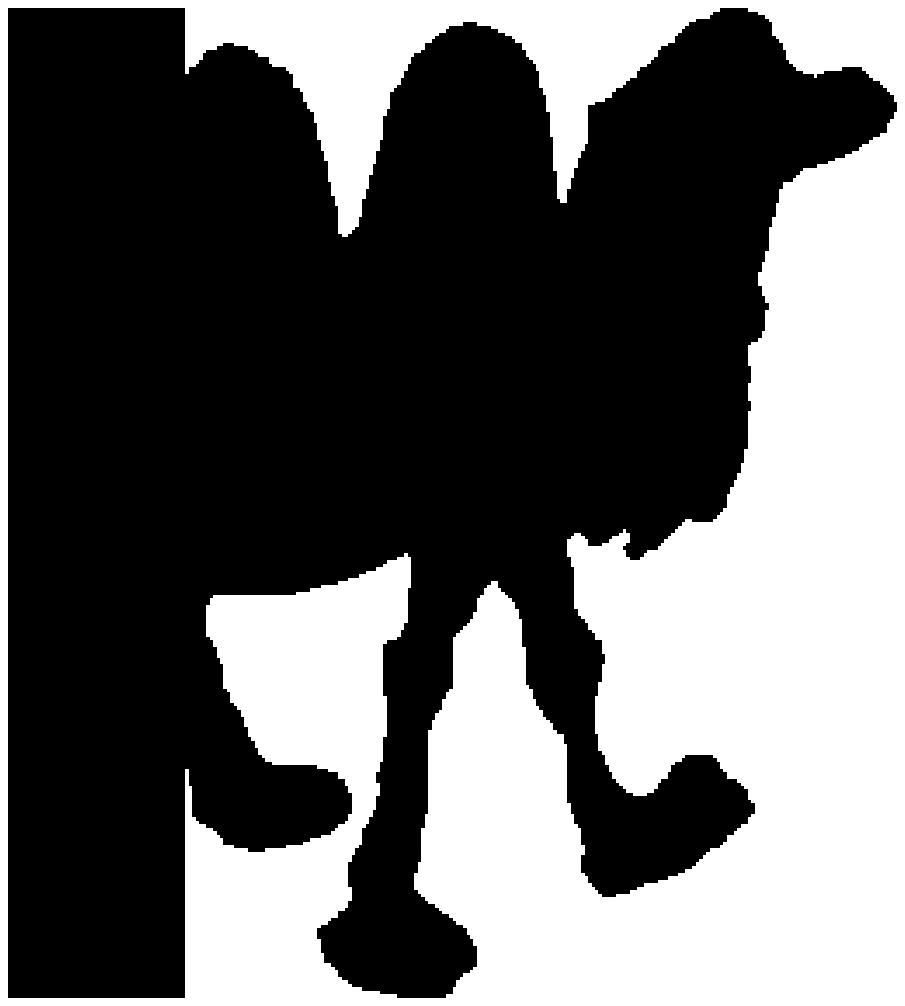} &
\includegraphics[width=0.15\linewidth]{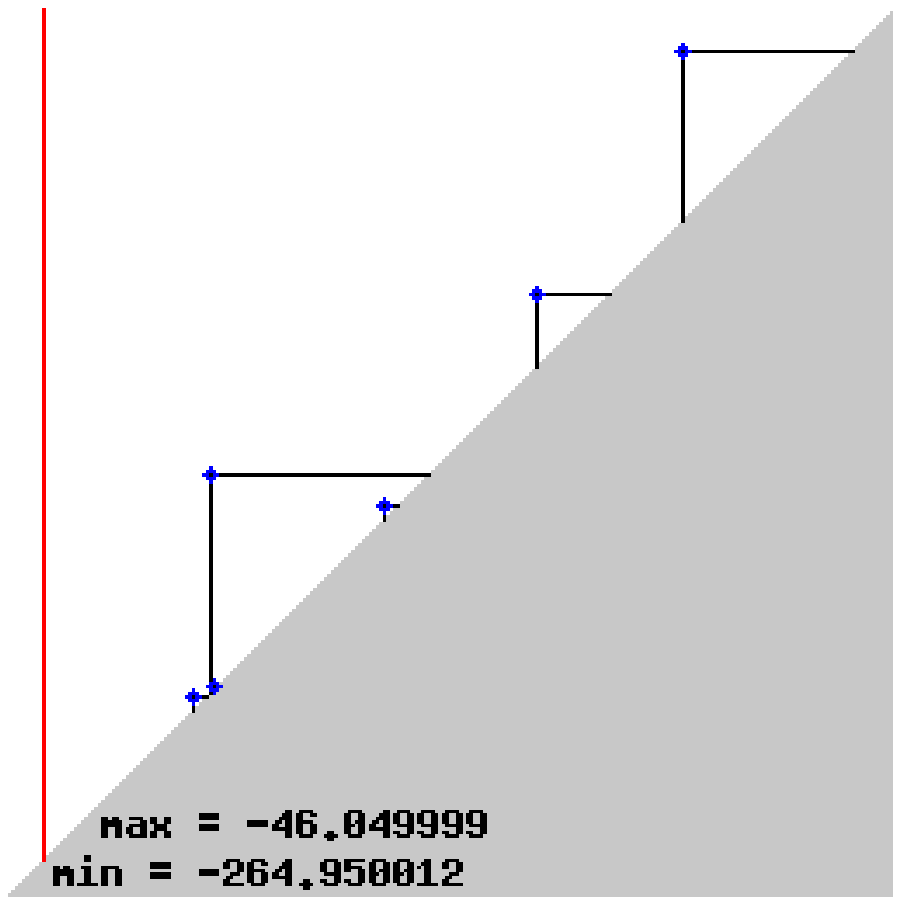} &
\includegraphics[width=0.15\linewidth]{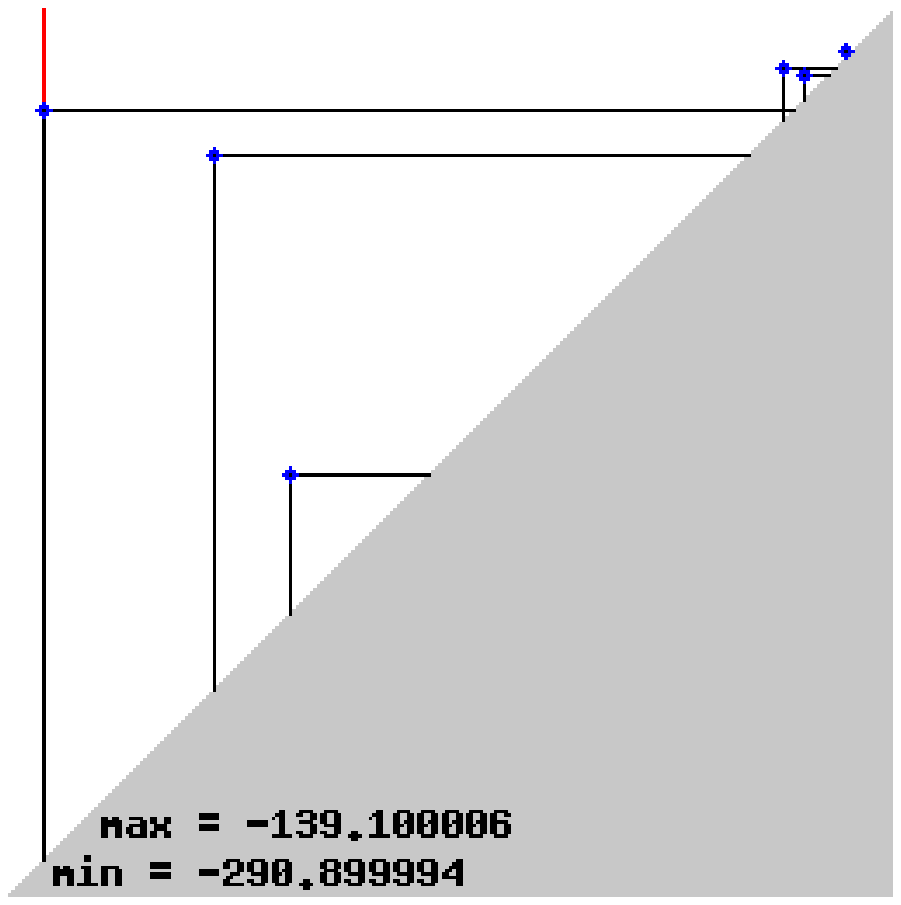} &
\includegraphics[width=0.15\linewidth]{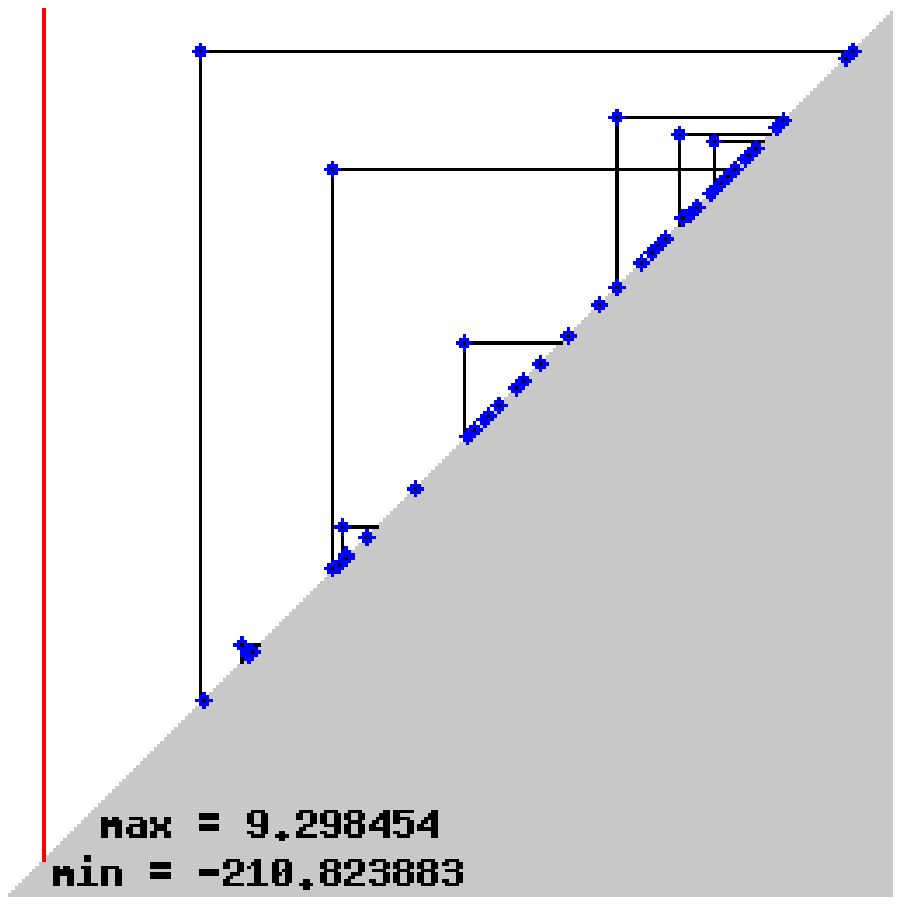} &
\includegraphics[width=0.15\linewidth]{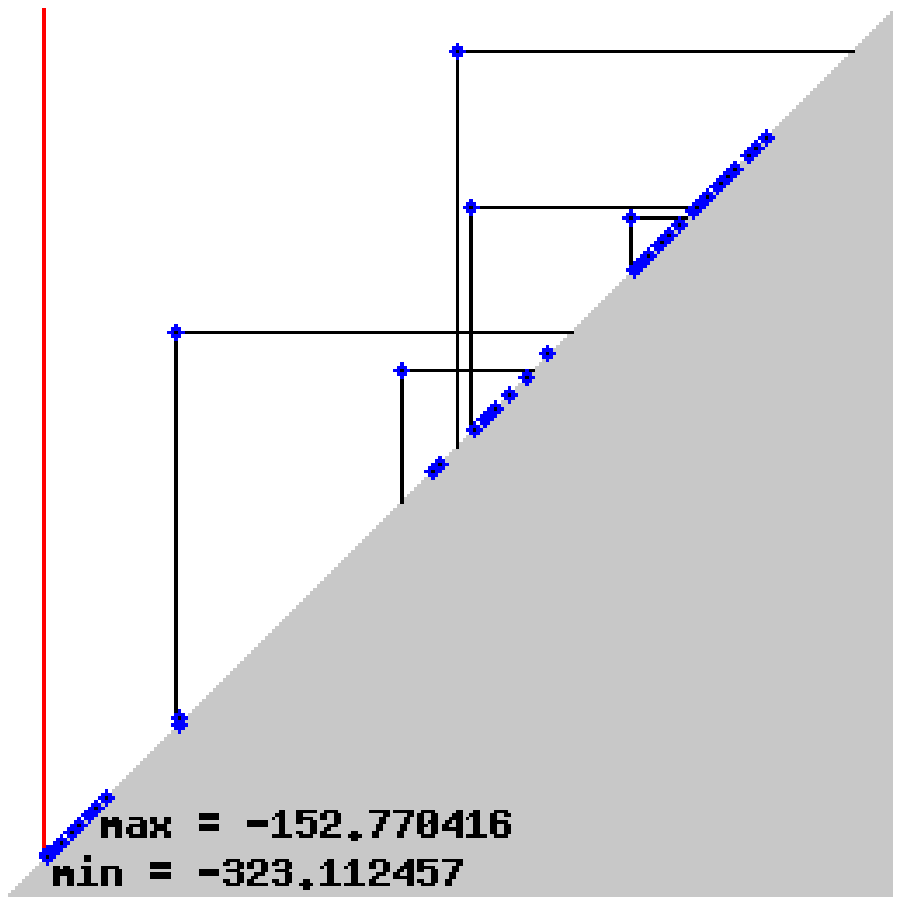} \\
\includegraphics[width=0.145\linewidth]{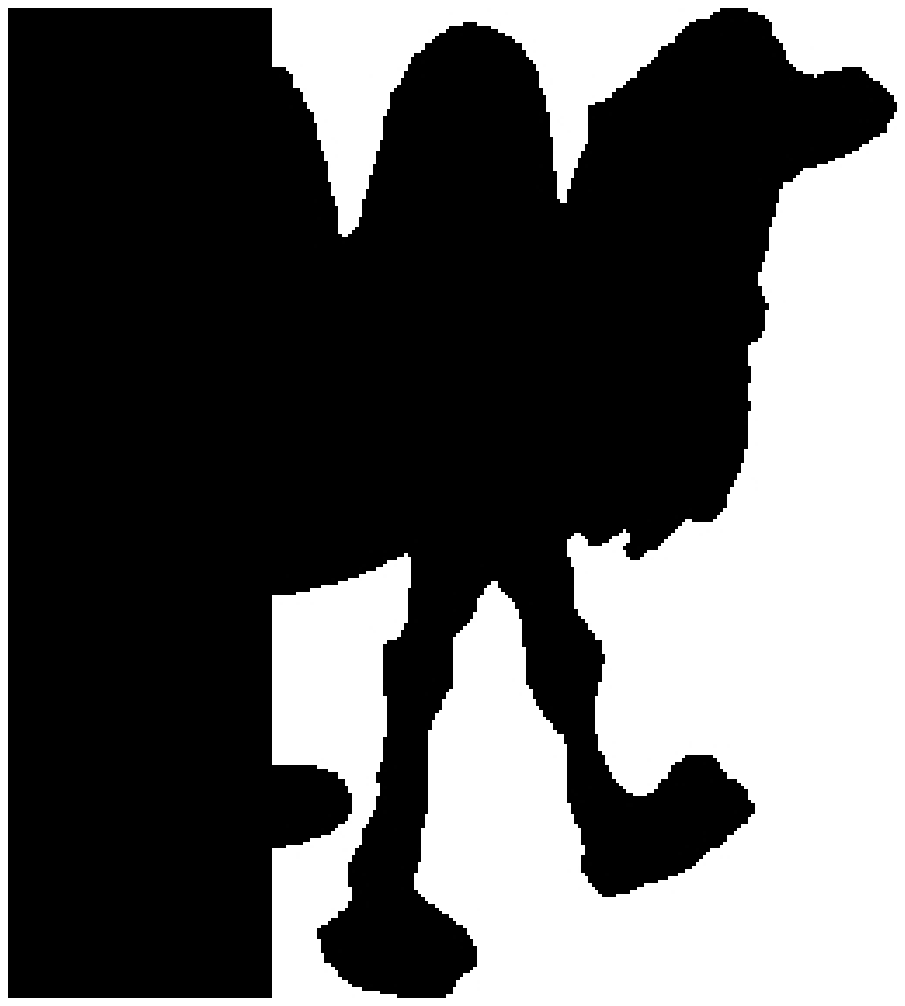} &
\includegraphics[width=0.15\linewidth]{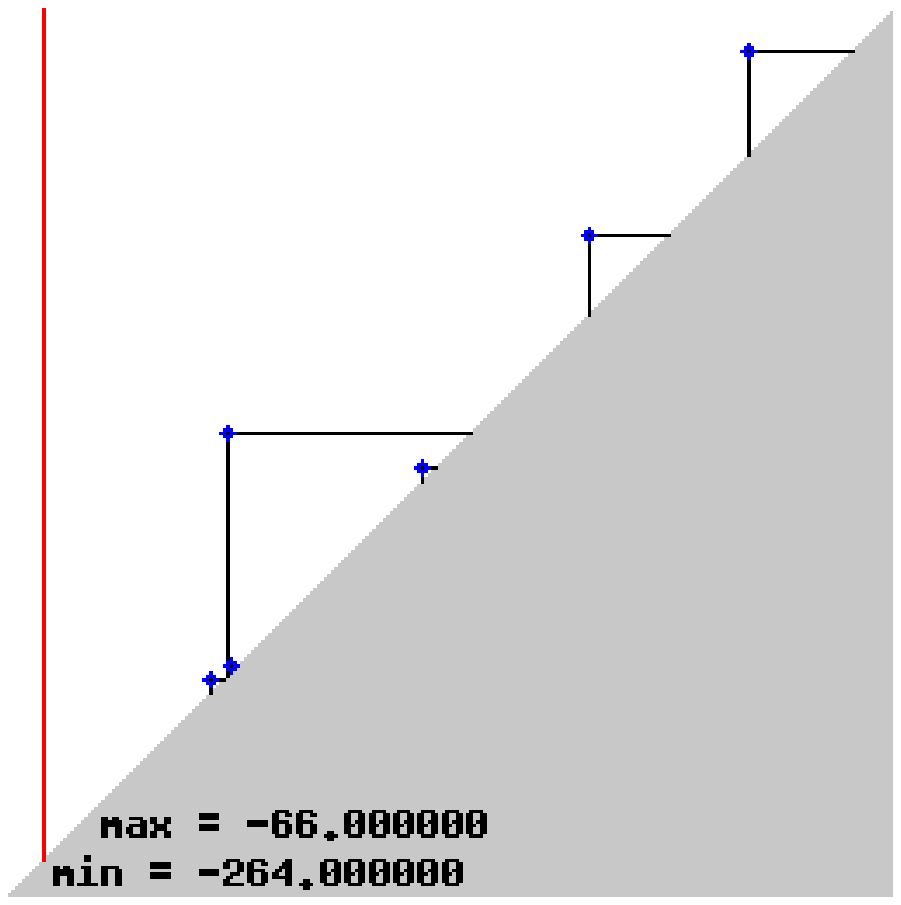} &
\includegraphics[width=0.15\linewidth]{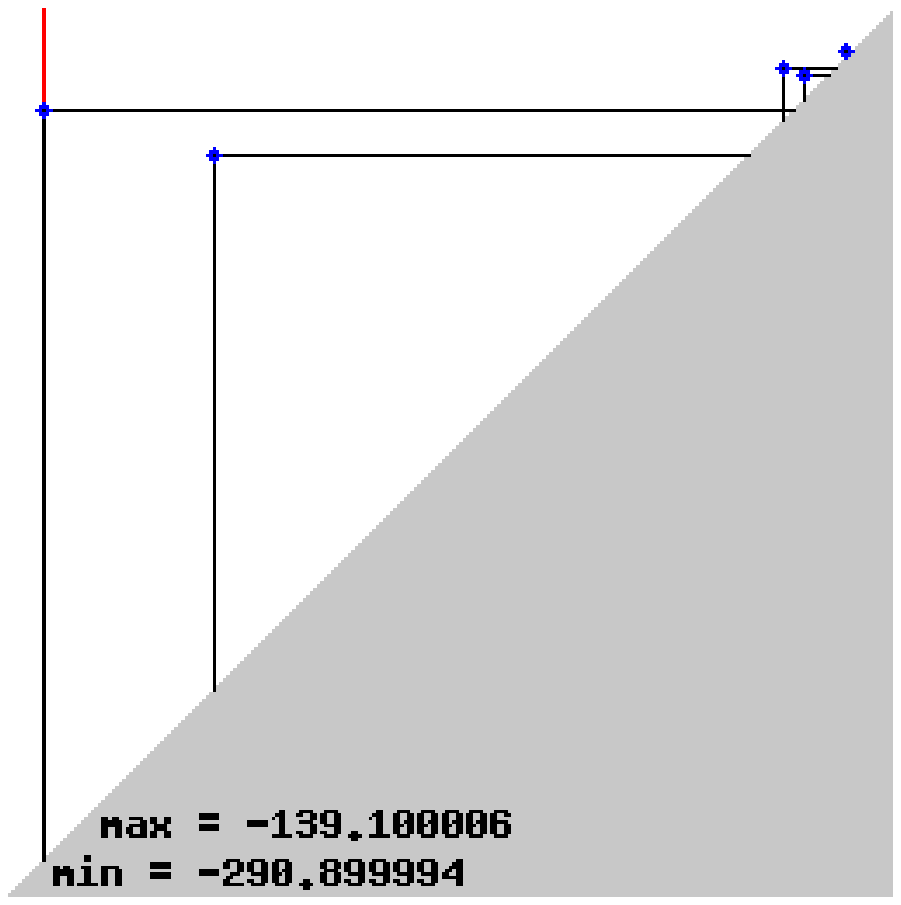} &
\includegraphics[width=0.15\linewidth]{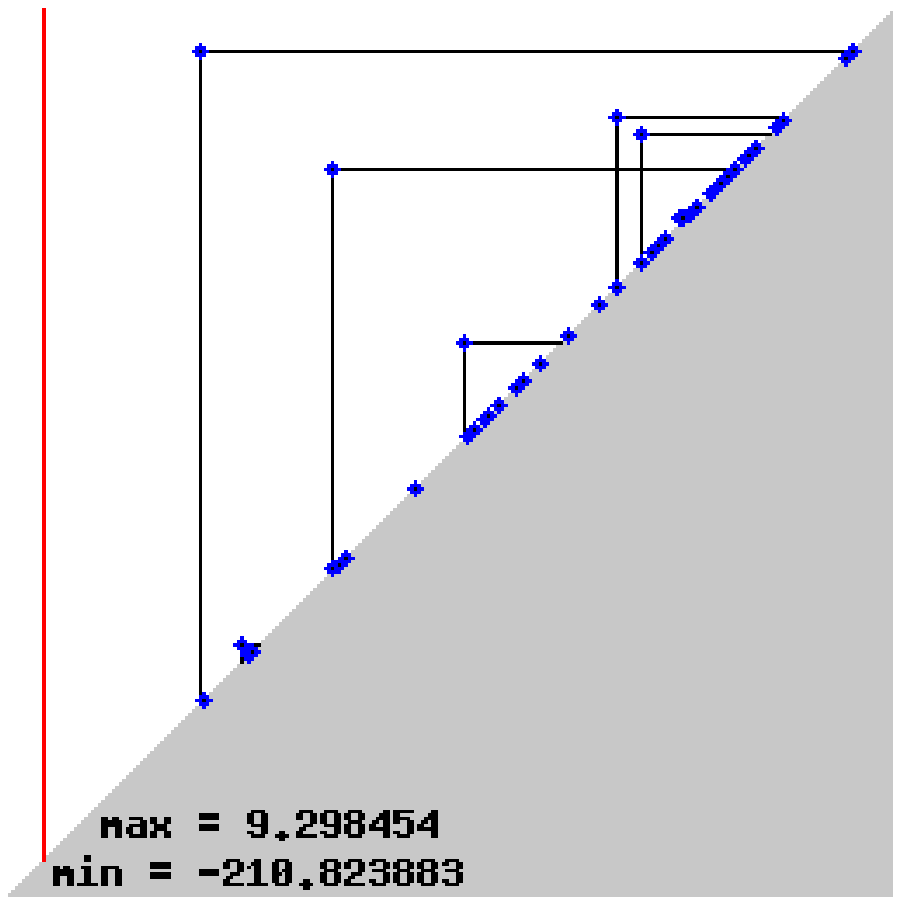} &
\includegraphics[width=0.15\linewidth]{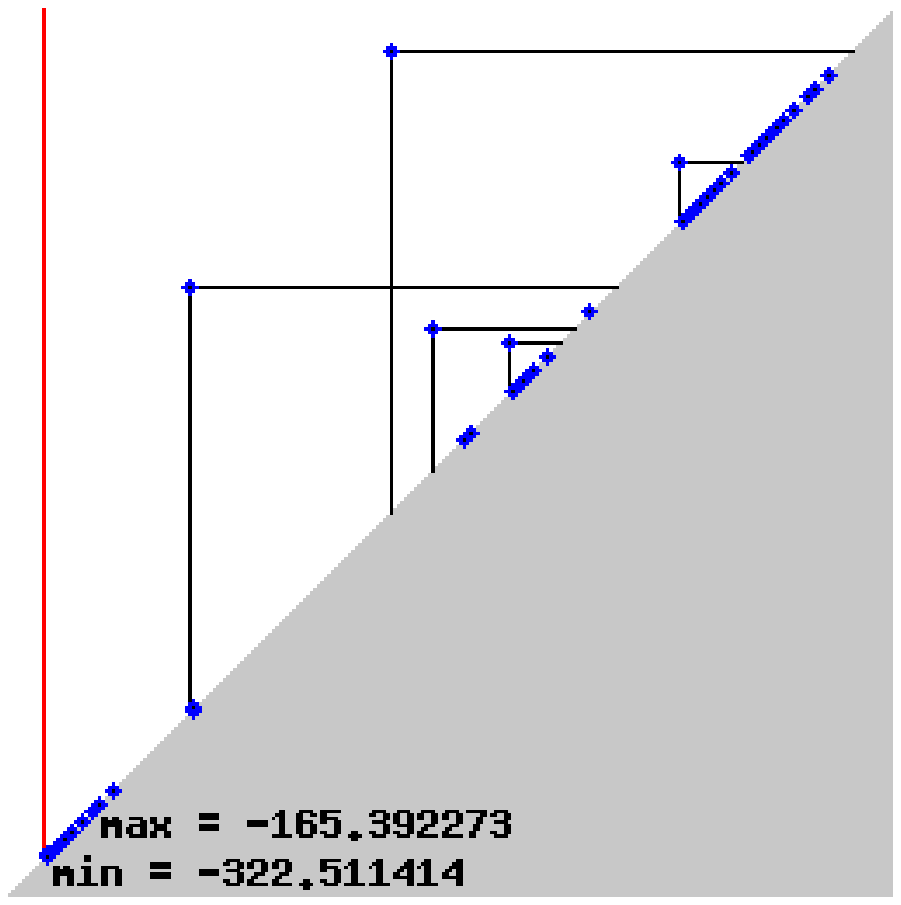} \\
\includegraphics[width=0.145\linewidth]{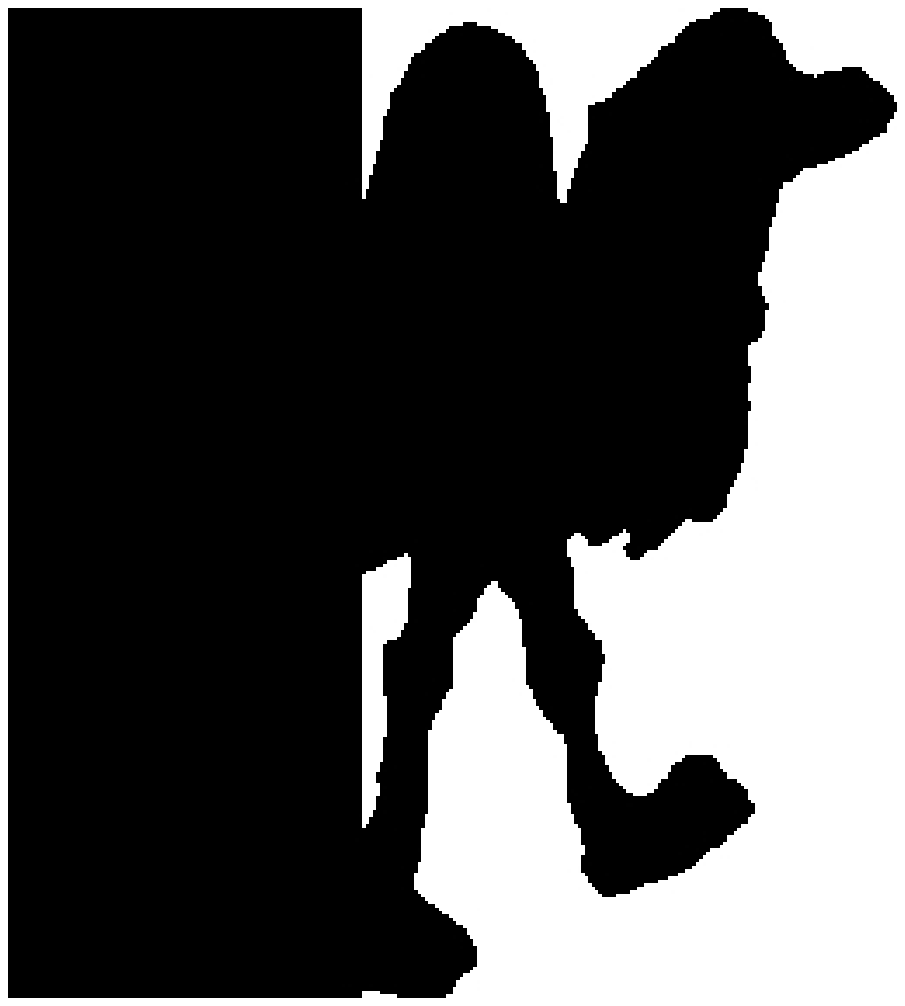} &
\includegraphics[width=0.15\linewidth]{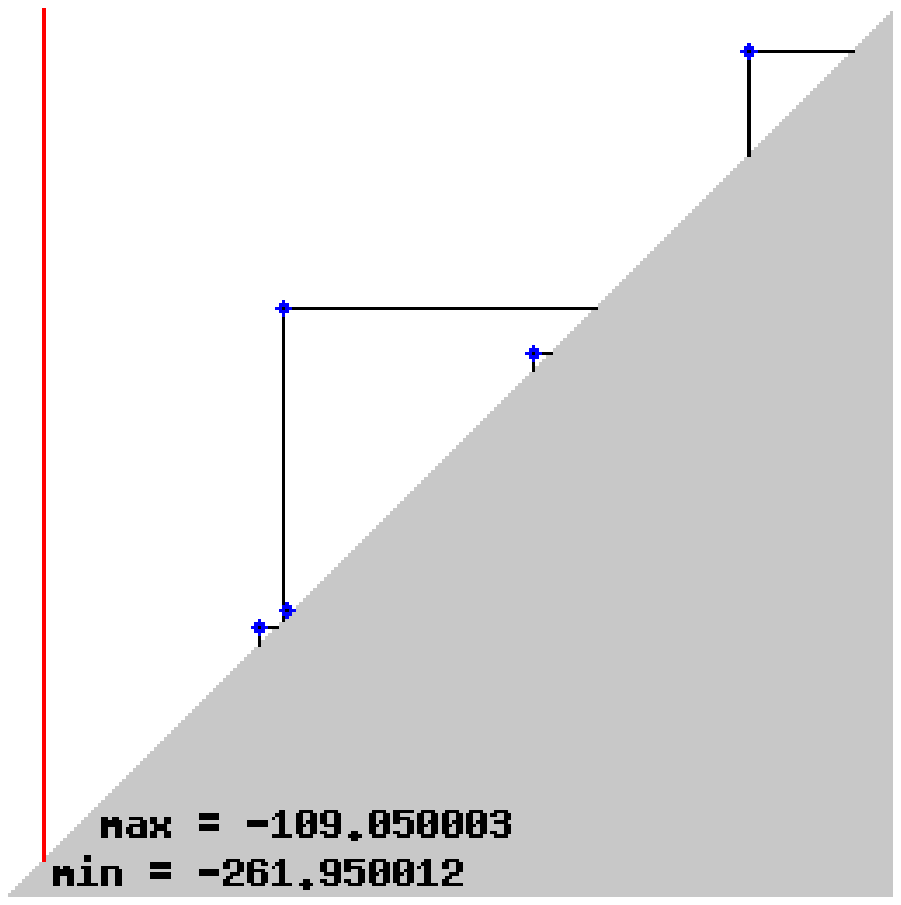} &
\includegraphics[width=0.15\linewidth]{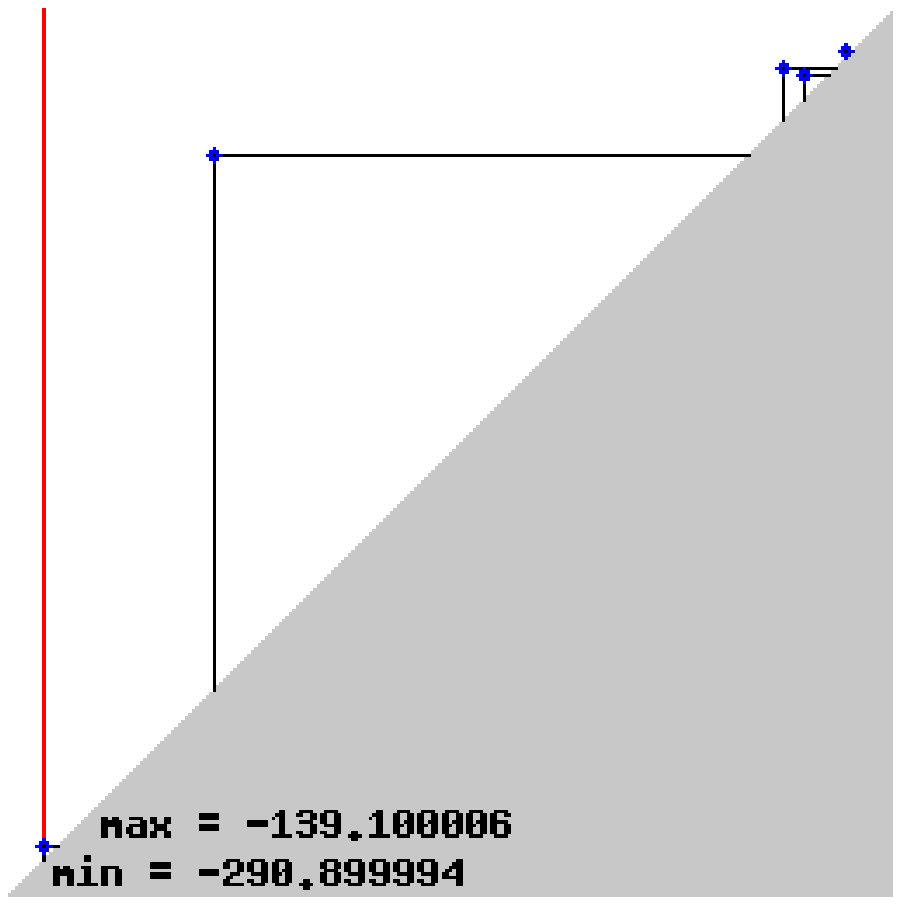} &
\includegraphics[width=0.15\linewidth]{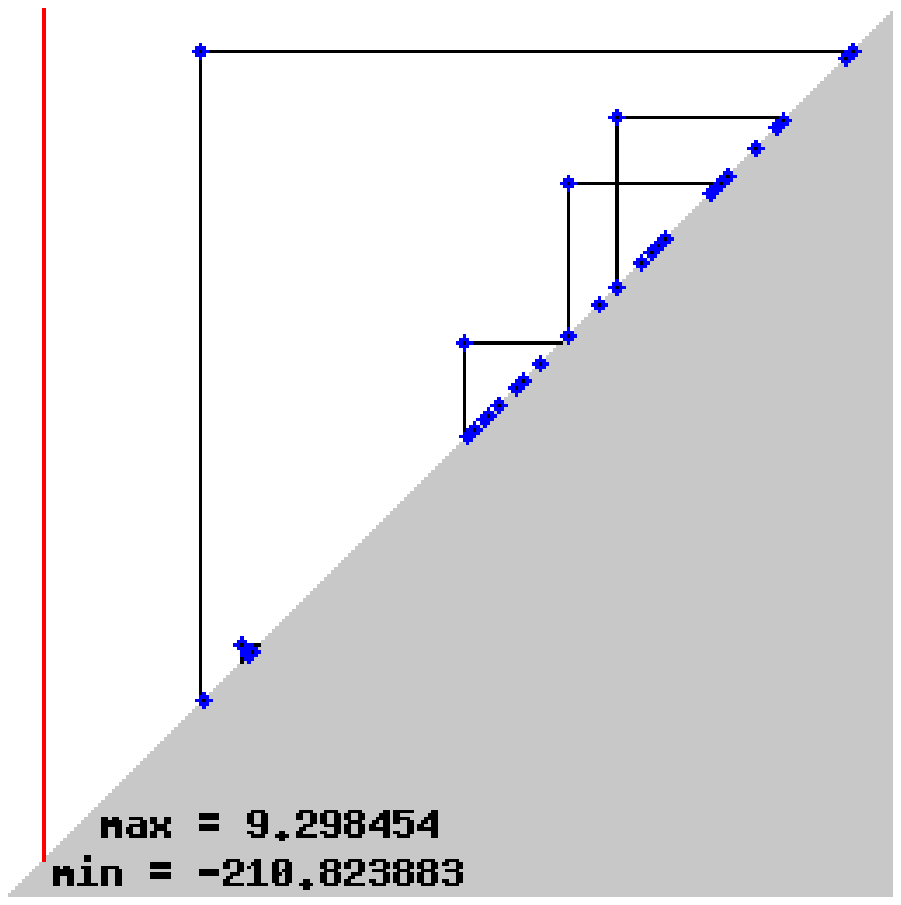} &
\includegraphics[width=0.15\linewidth]{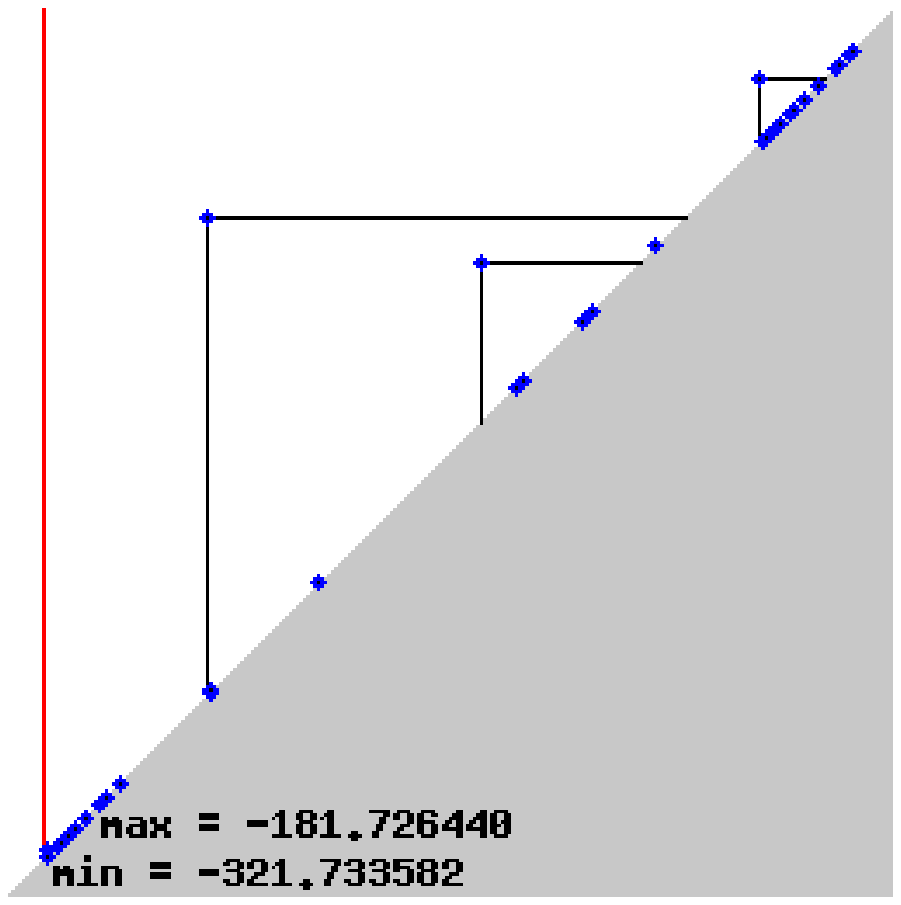} \\
\end{tabular}
\caption{\footnotesize{The first column: (row 1) original
``camel'' shape, (rows 2--4) occluded from top by $20\%$,  $30\%$,
$40\%$,  (row 5--7) occluded from left by $20\%$,  $30\%$,
$40\%$. From second column onwards: corresponding size functions
related to measuring functions defined as minus distances from
four lines rotated by $0$, $\pi/4$, $\pi/2$, $3\pi/4$, with
respect to the horizontal position.}} \label{camelHV}
\end{table}
On the other hand,  Table \ref{frogHV} shows a ``frog'', which is
a connected shape with several holes. The different percentages of
occlusion can create some new holes or destroy them (see rows
3--4).
\begin{table}[htbp]
\begin{tabular}{cccccc}
\includegraphics[width=0.15\linewidth]{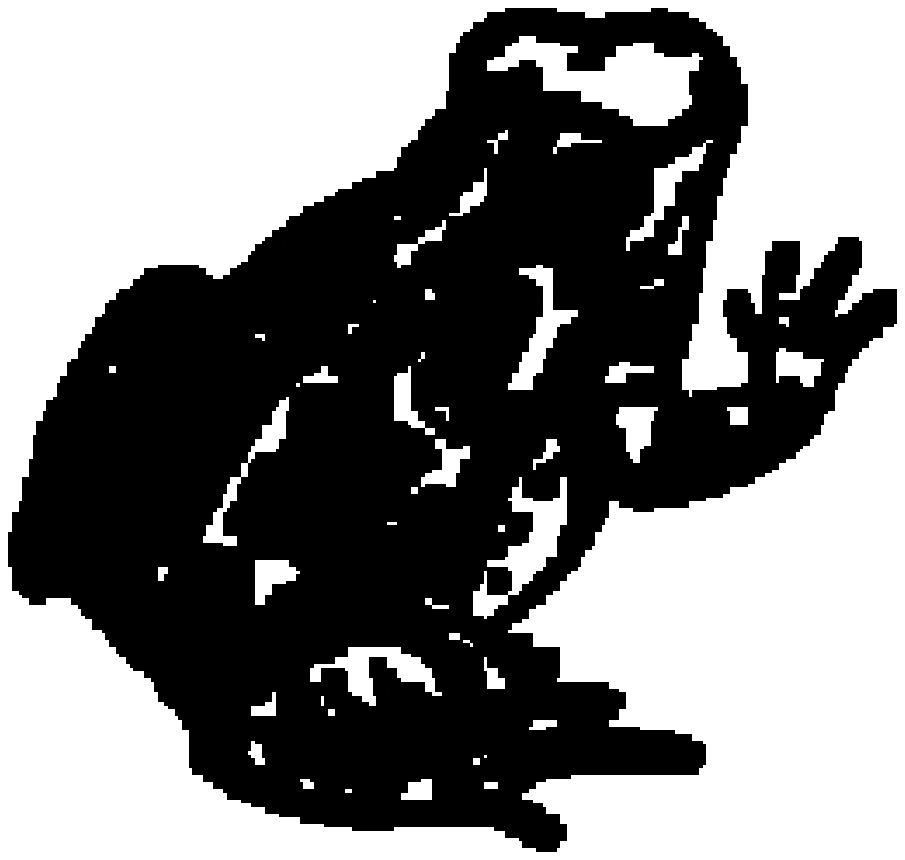} &
\includegraphics[width=0.15\linewidth]{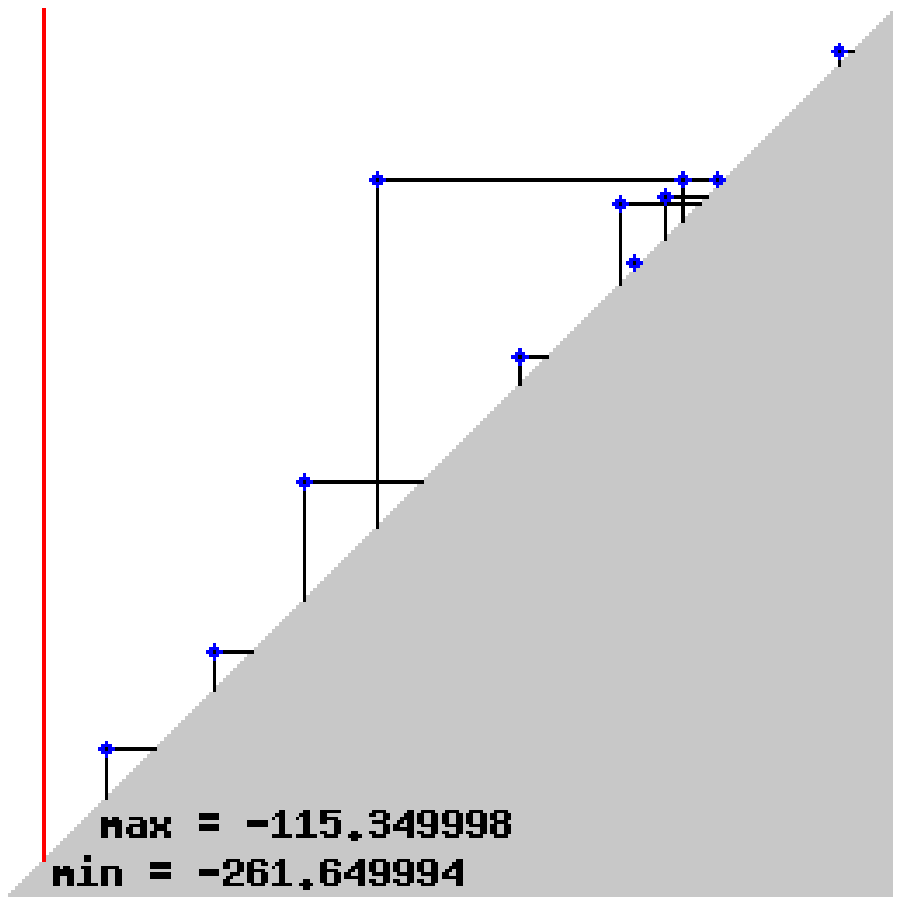} &
\includegraphics[width=0.15\linewidth]{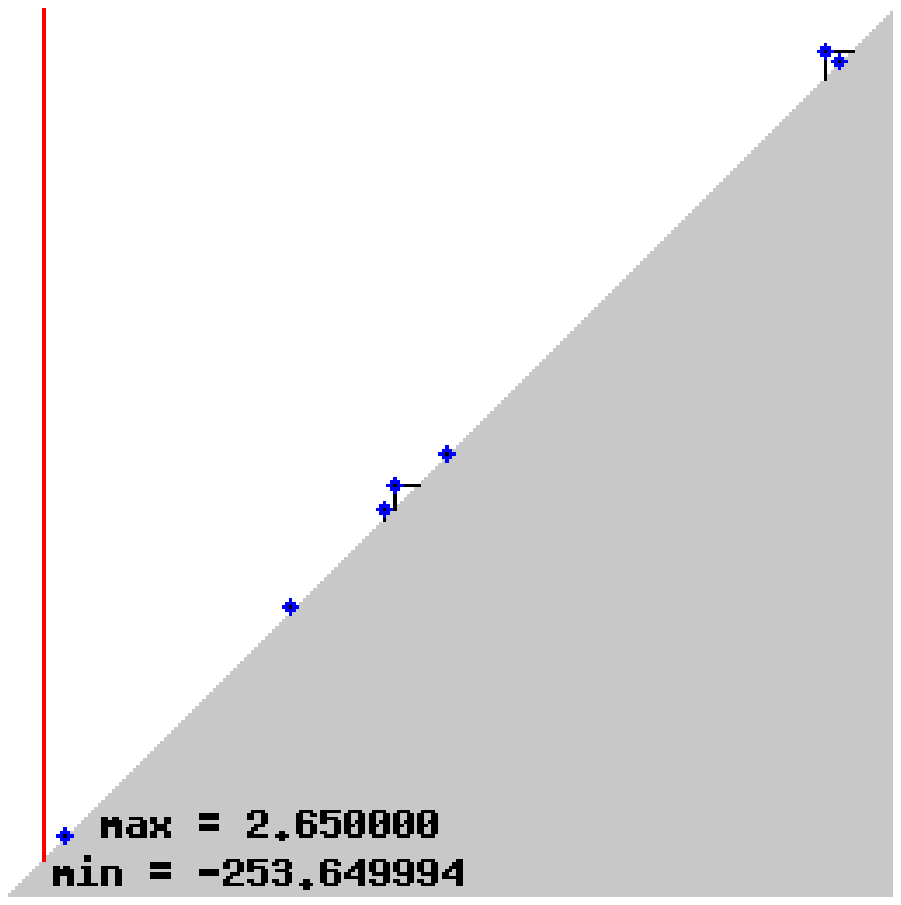} &
\includegraphics[width=0.15\linewidth]{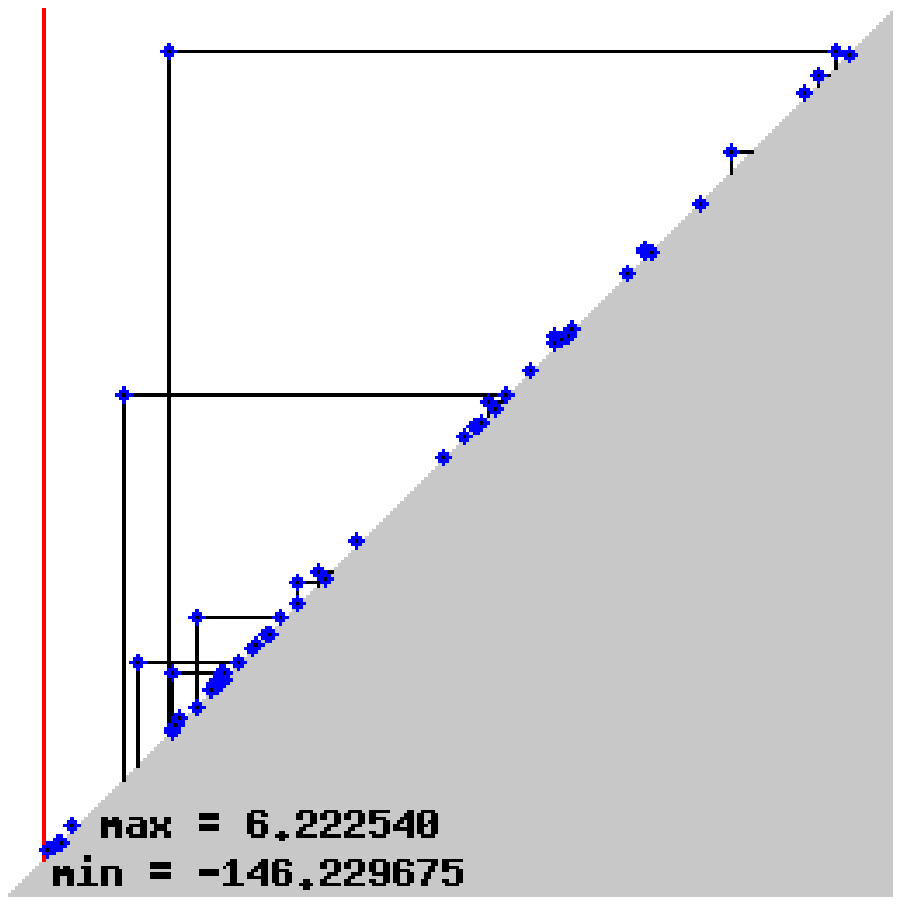} &
\includegraphics[width=0.15\linewidth]{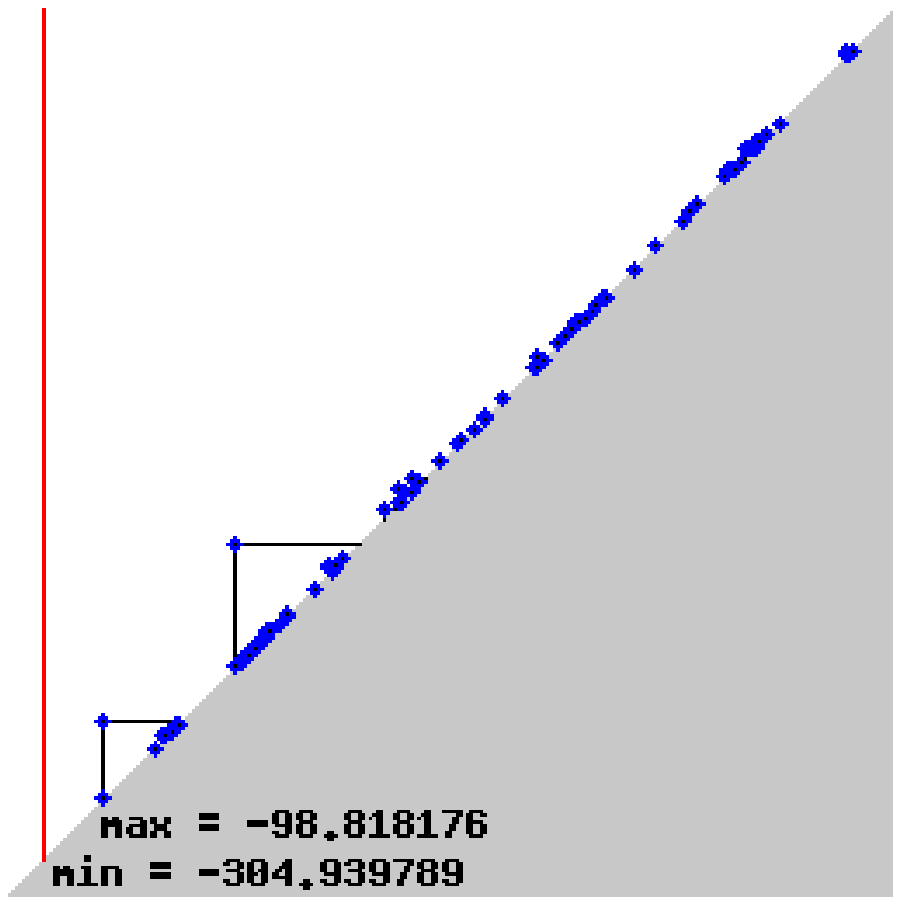} \\
\includegraphics[width=0.15\linewidth]{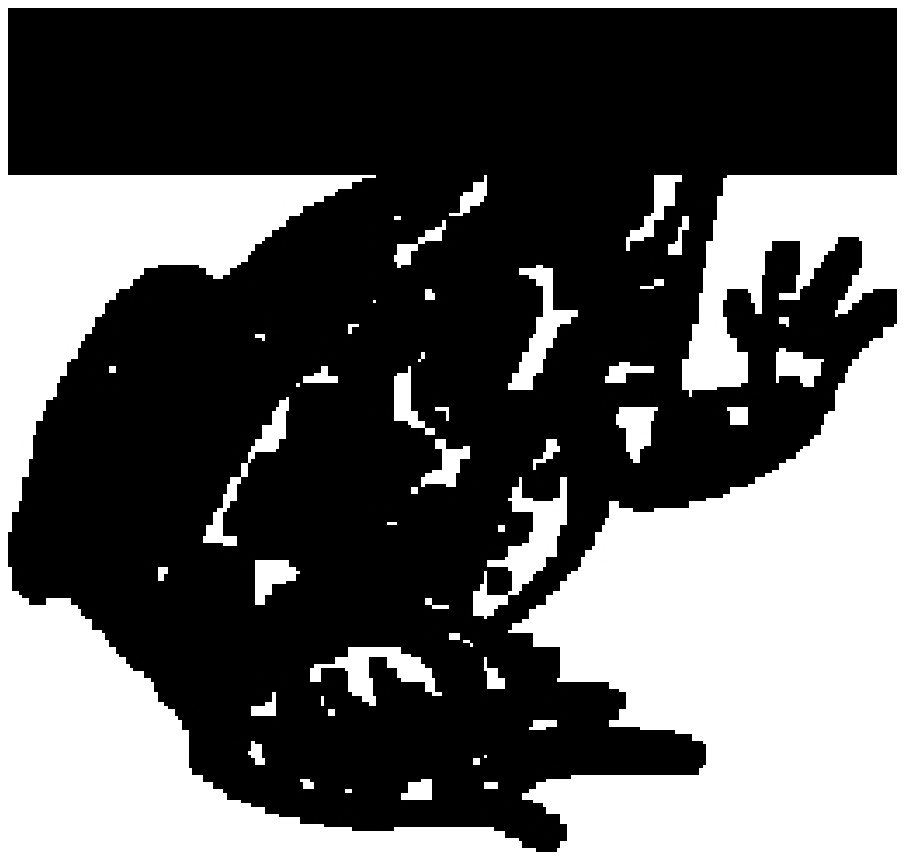} &
\includegraphics[width=0.15\linewidth]{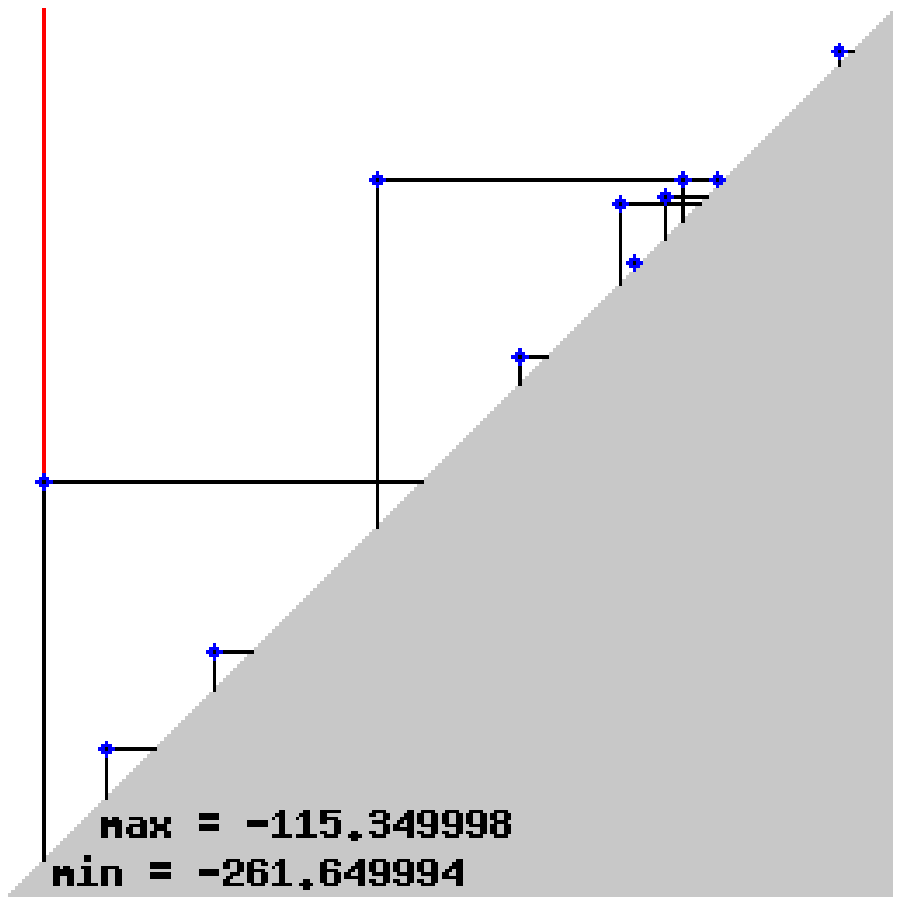} &
\includegraphics[width=0.15\linewidth]{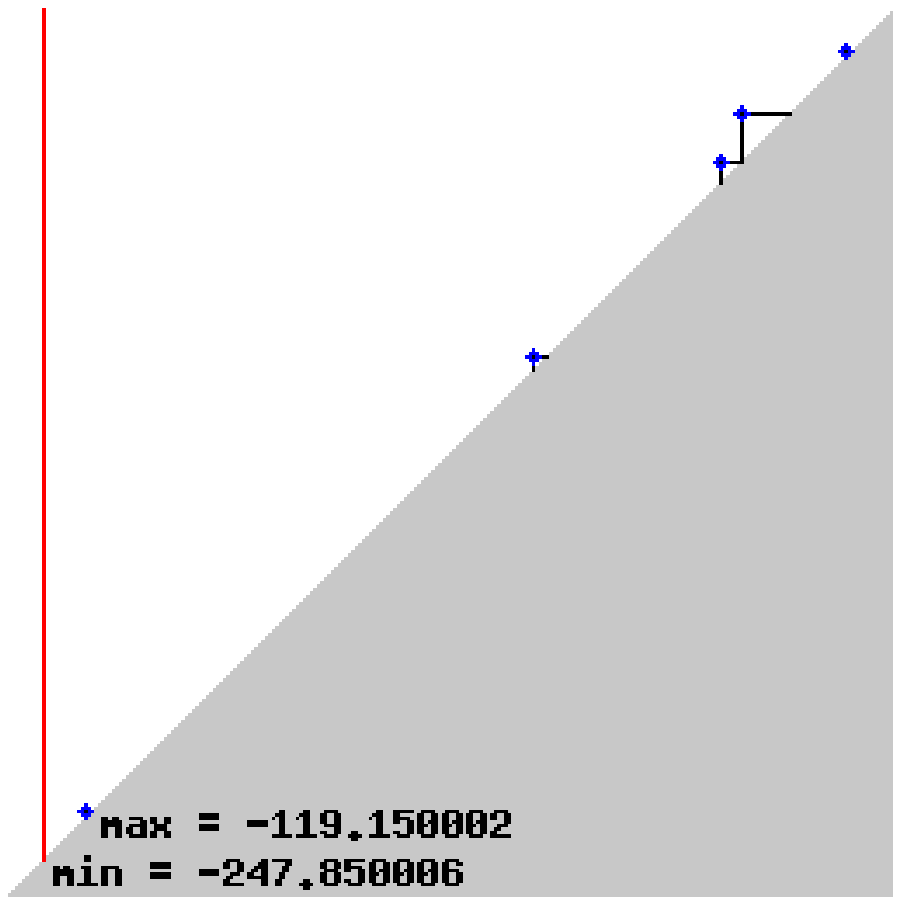} &
\includegraphics[width=0.15\linewidth]{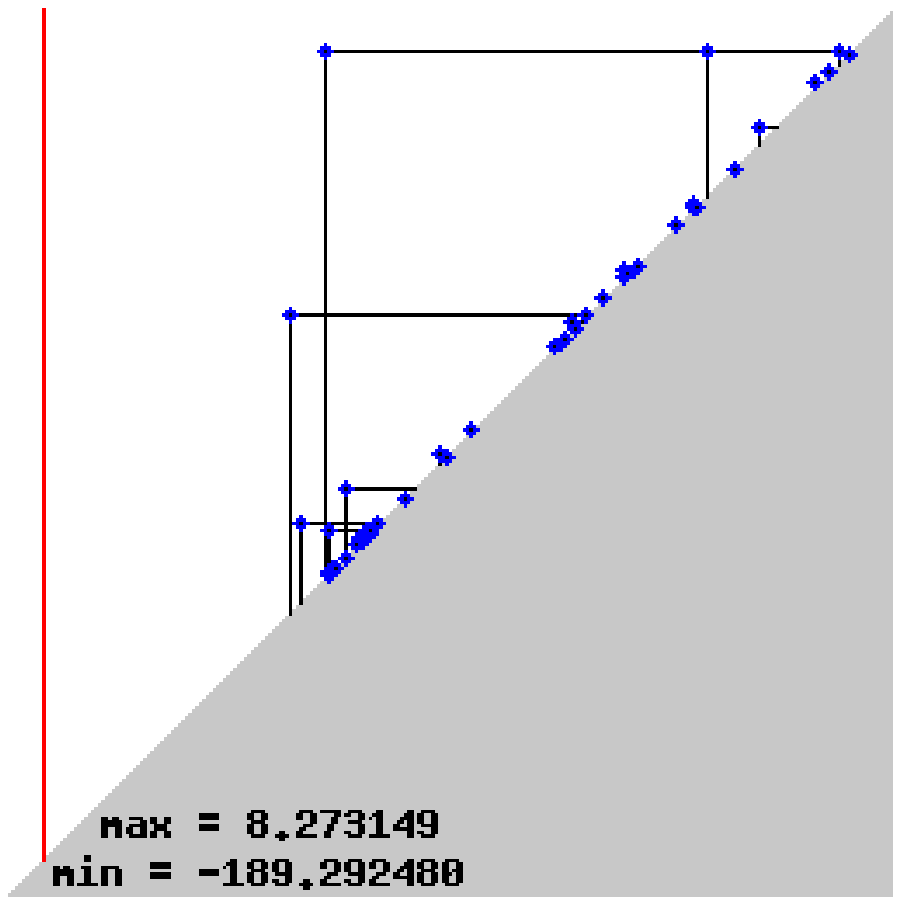} &
\includegraphics[width=0.15\linewidth]{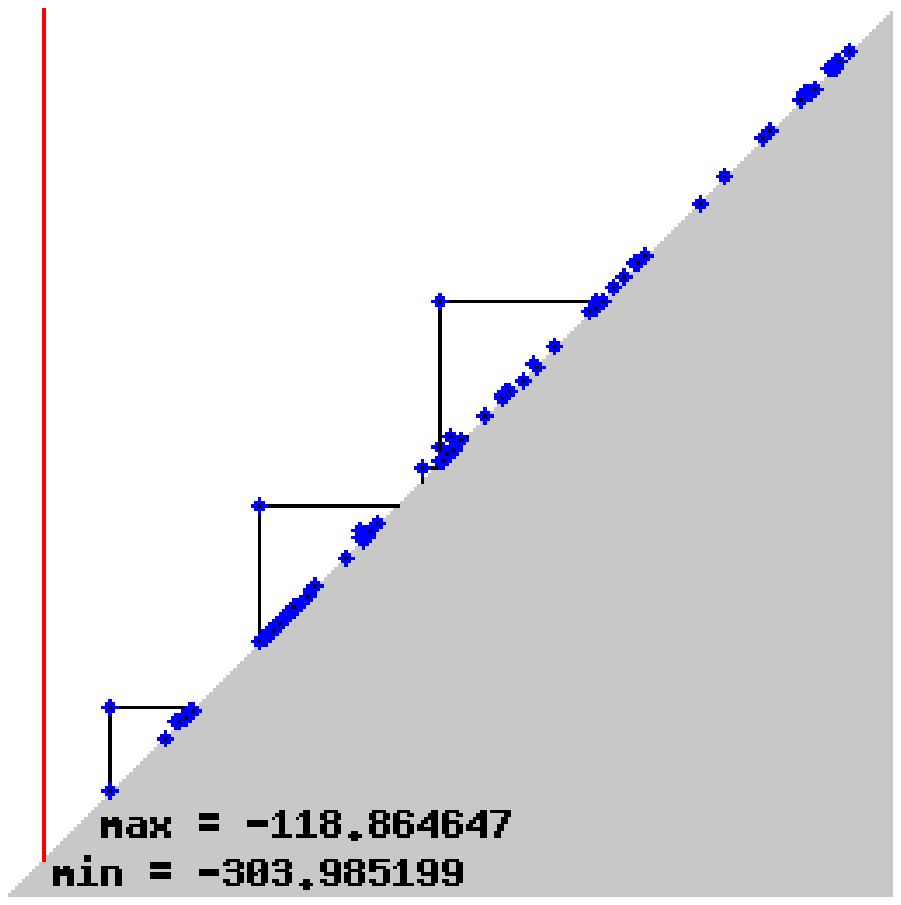} \\
\includegraphics[width=0.15\linewidth]{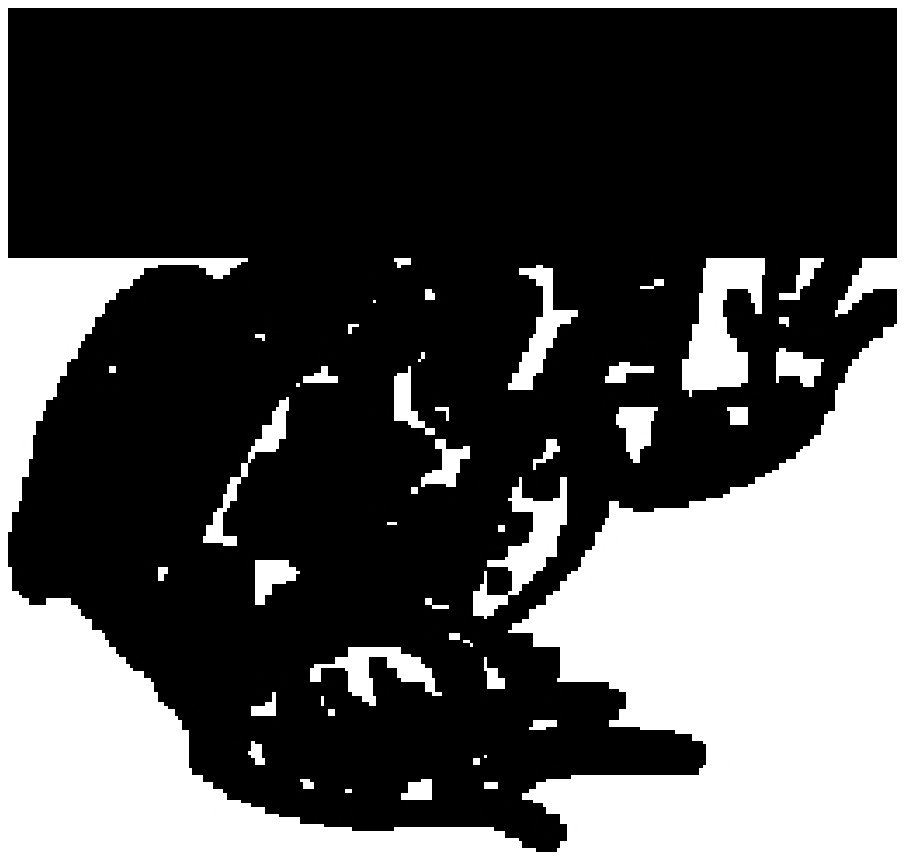} &
\includegraphics[width=0.15\linewidth]{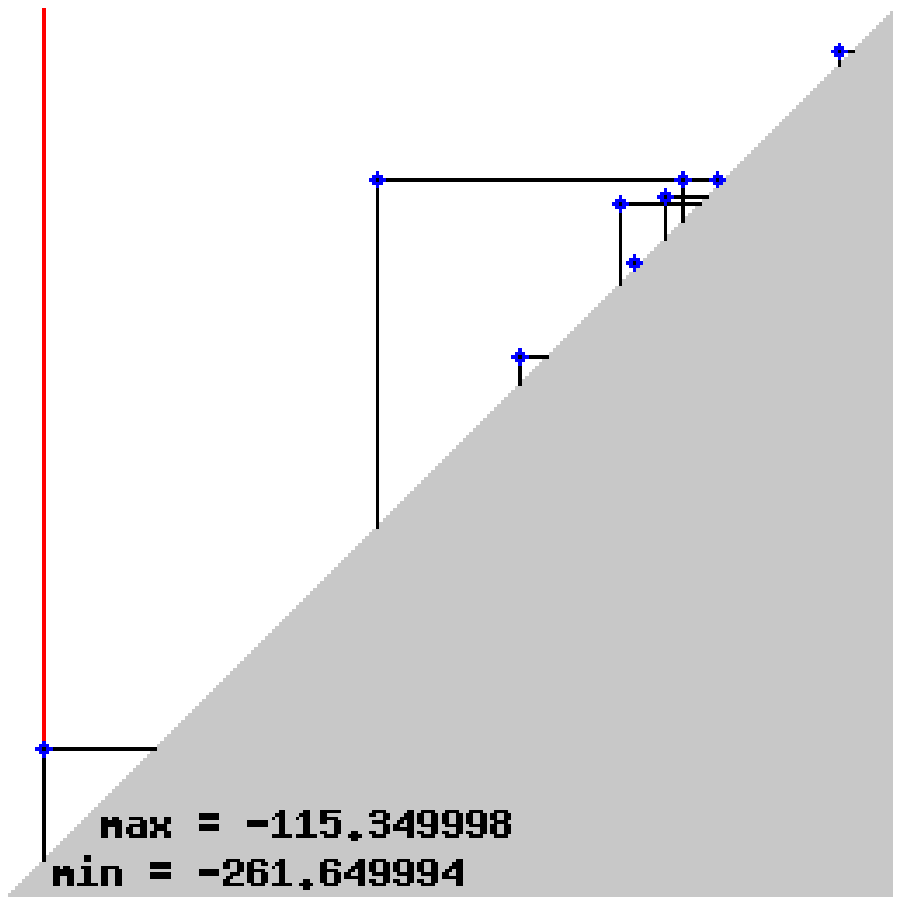} &
\includegraphics[width=0.15\linewidth]{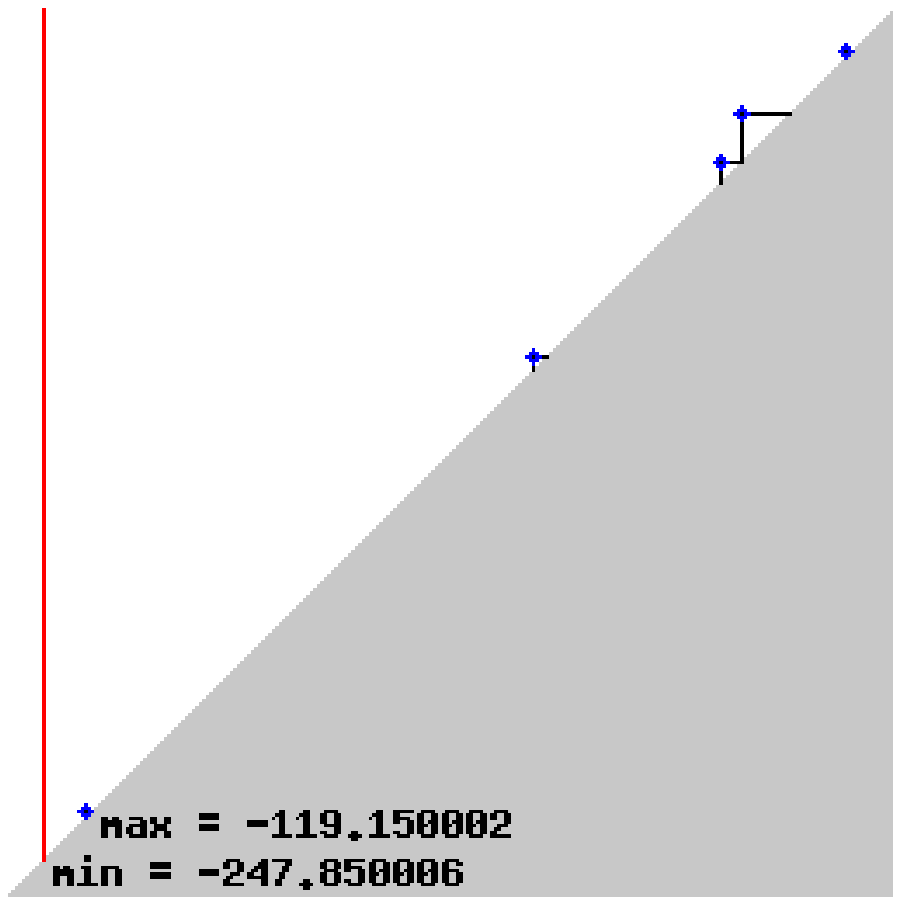} &
\includegraphics[width=0.15\linewidth]{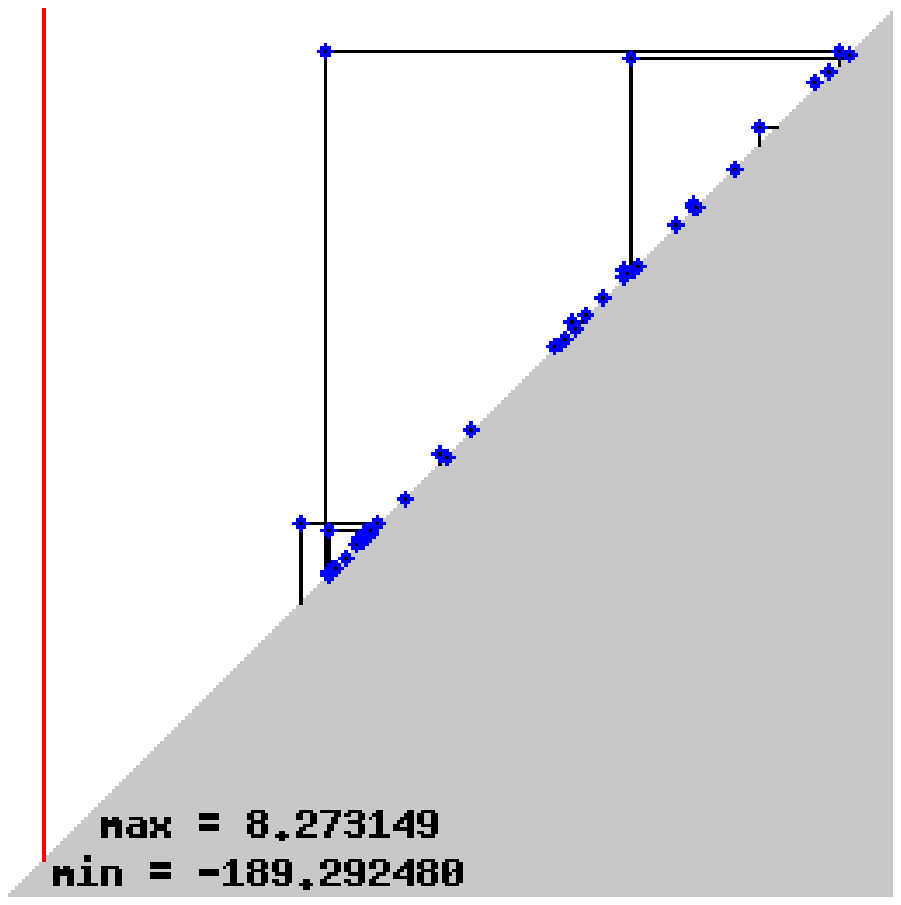} &
\includegraphics[width=0.15\linewidth]{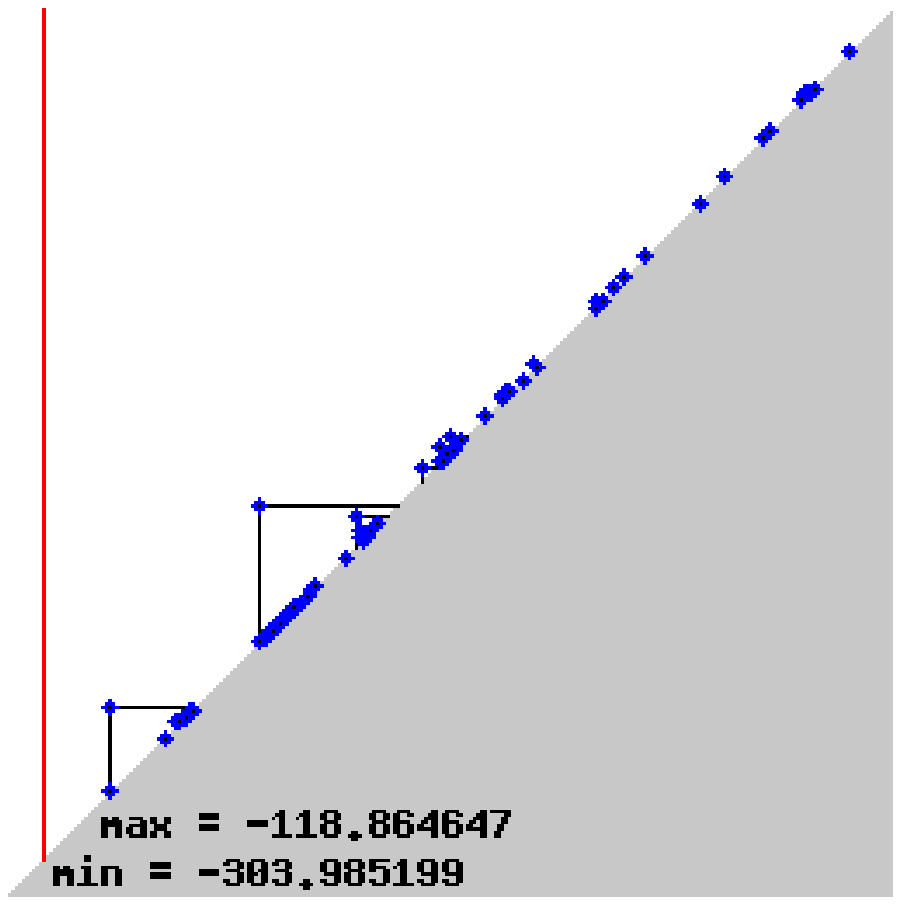} \\
\includegraphics[width=0.15\linewidth]{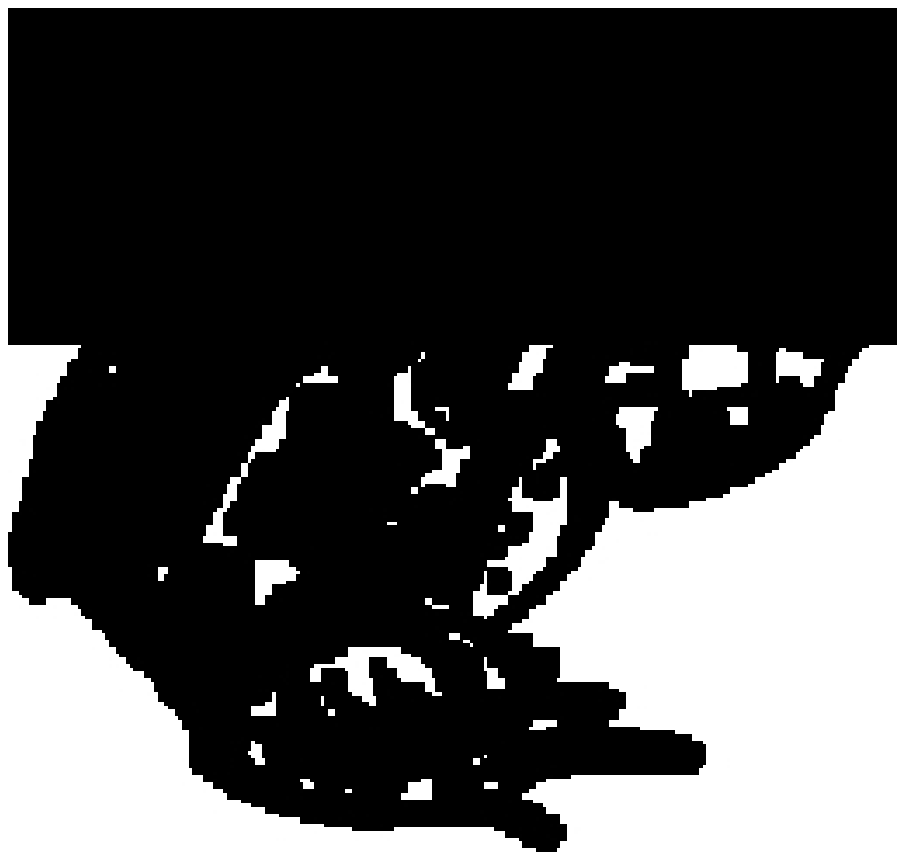} &
\includegraphics[width=0.15\linewidth]{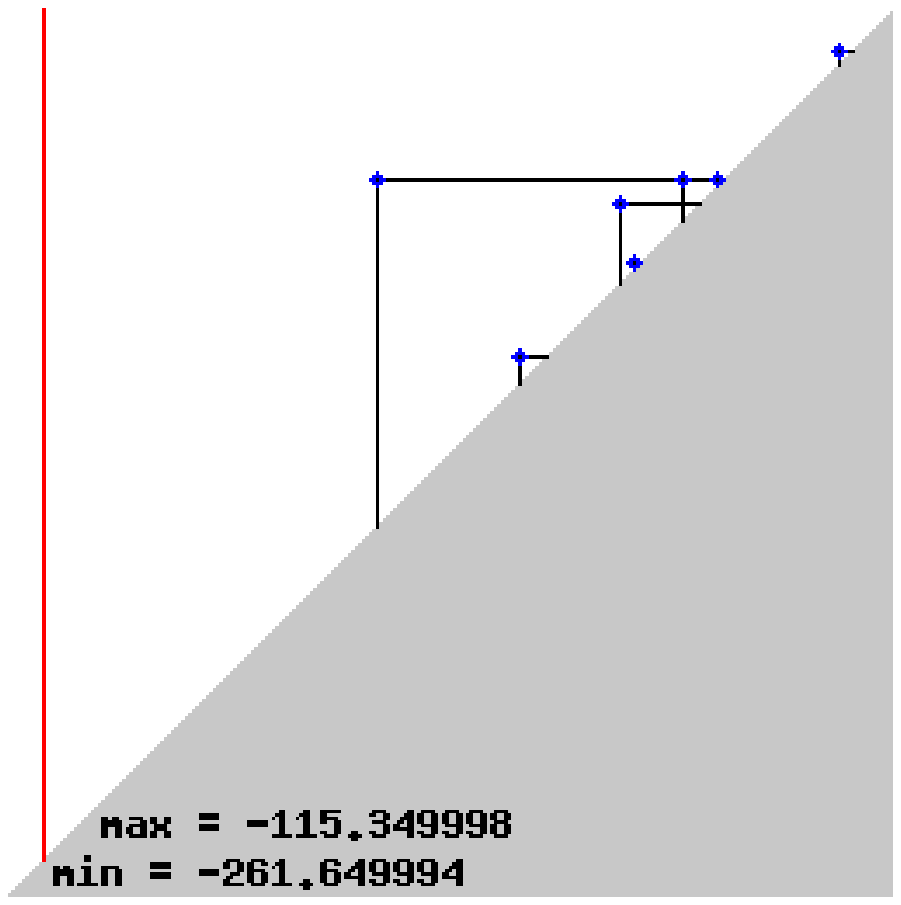} &
\includegraphics[width=0.15\linewidth]{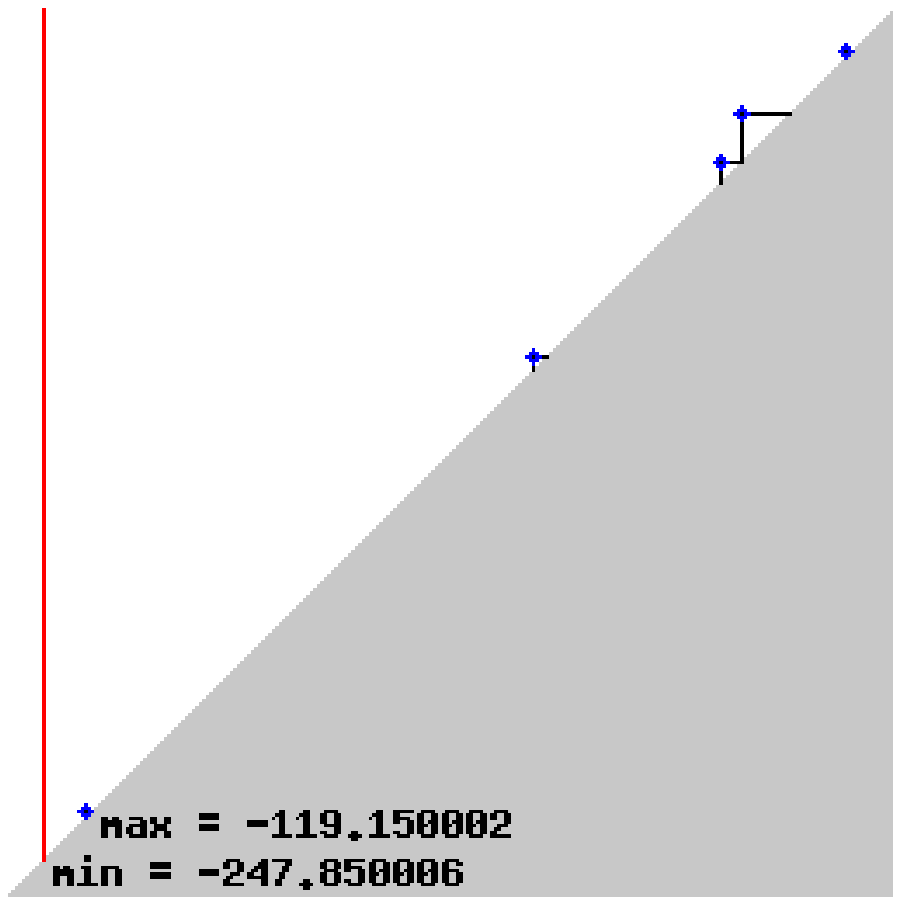} &
\includegraphics[width=0.15\linewidth]{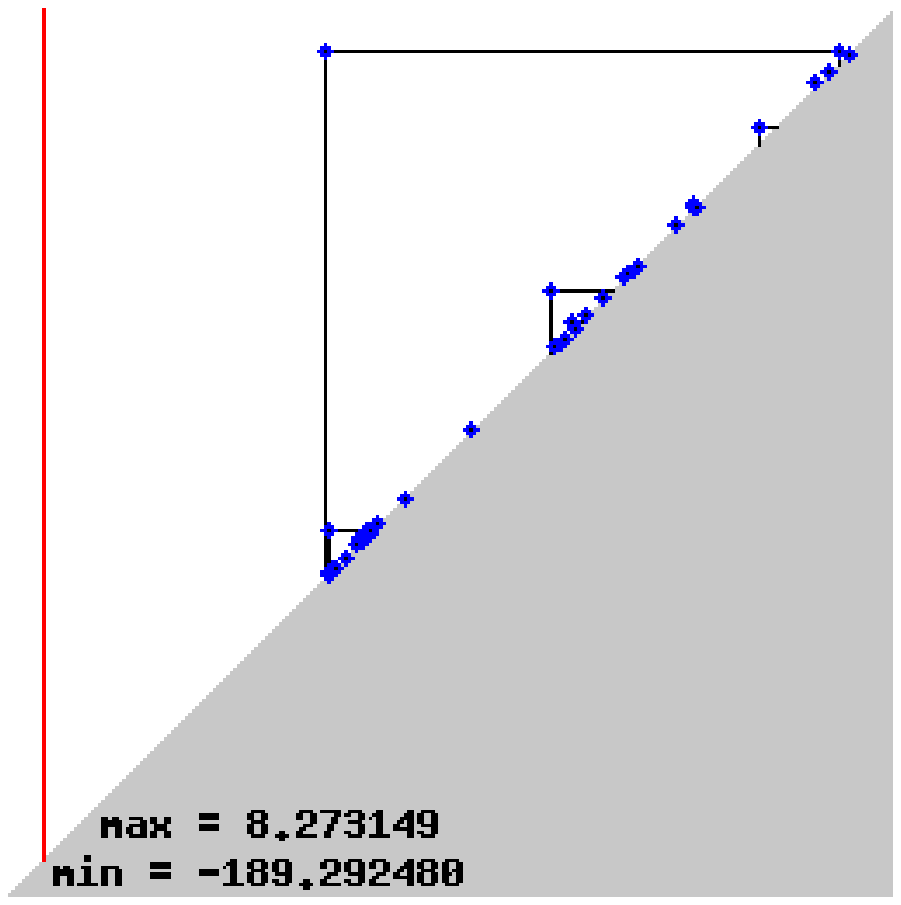} &
\includegraphics[width=0.15\linewidth]{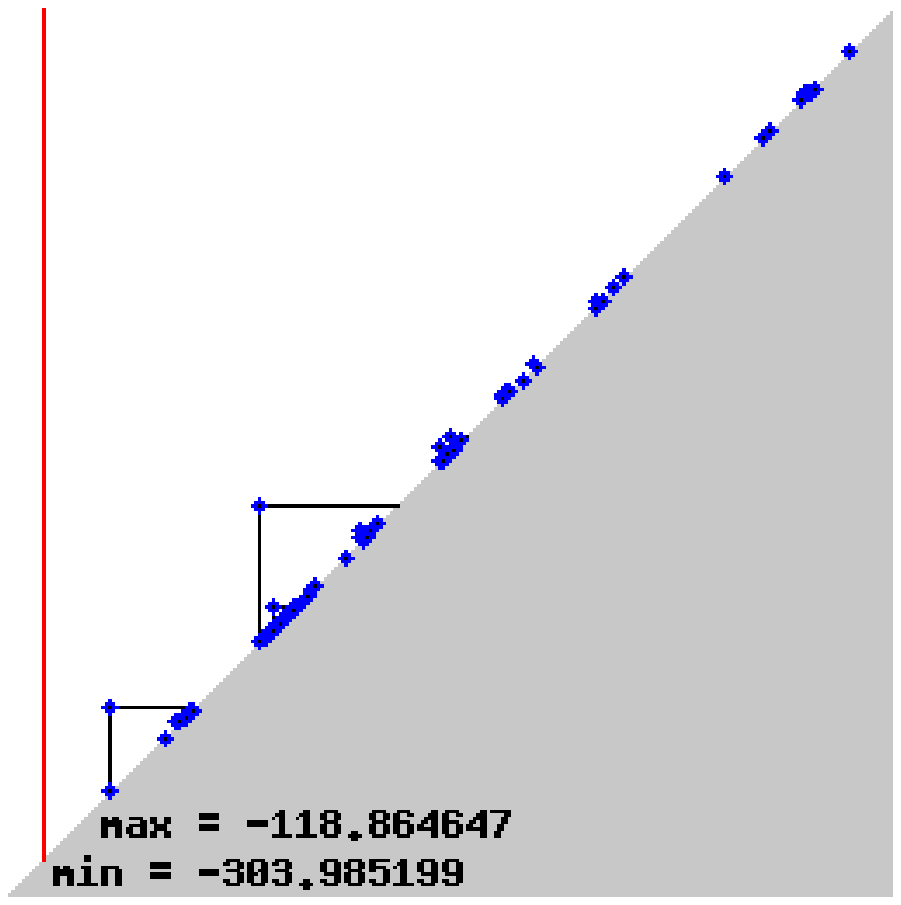} \\
\includegraphics[width=0.15\linewidth]{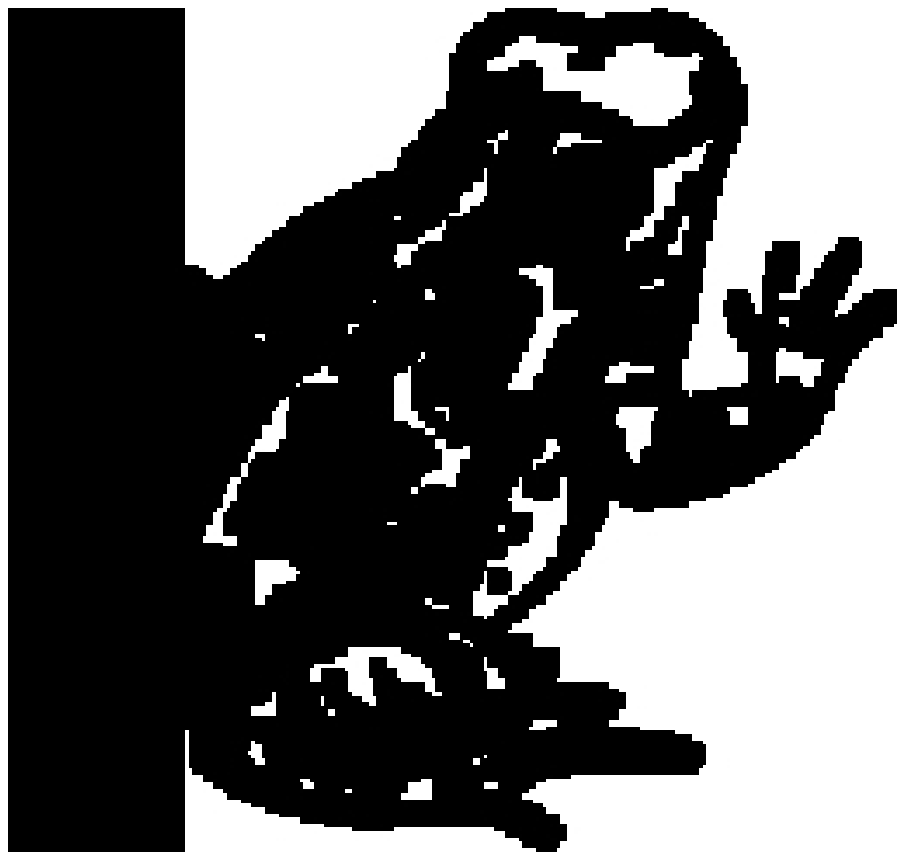} &
\includegraphics[width=0.15\linewidth]{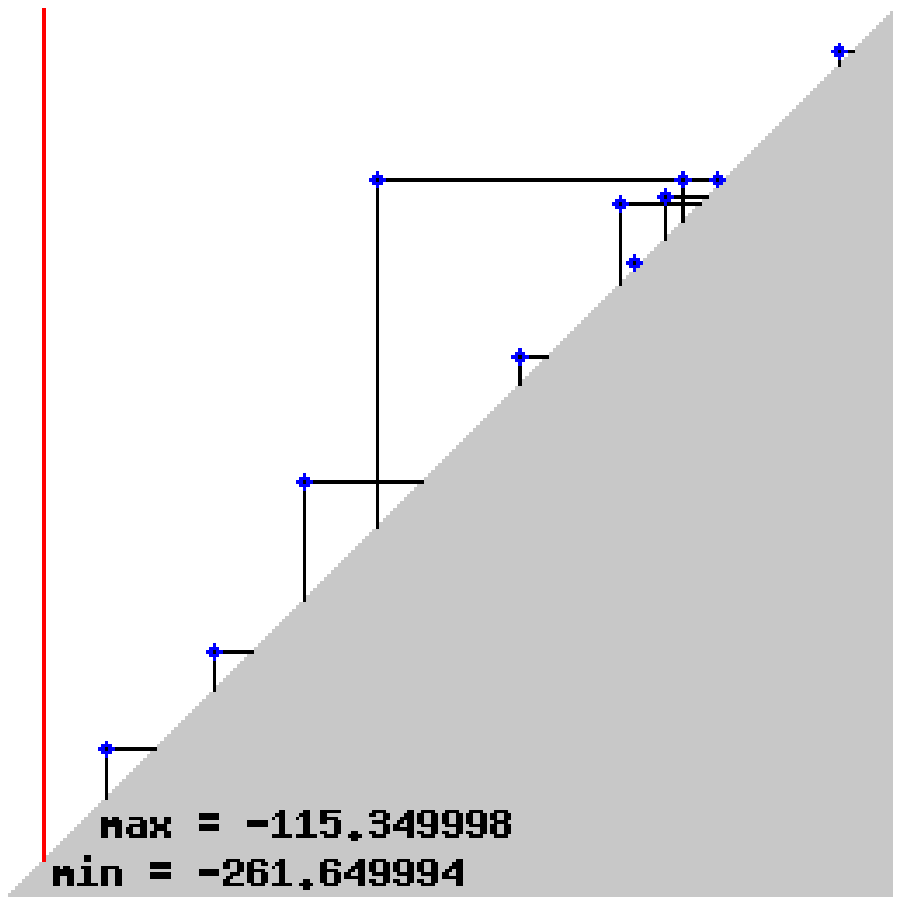} &
\includegraphics[width=0.15\linewidth]{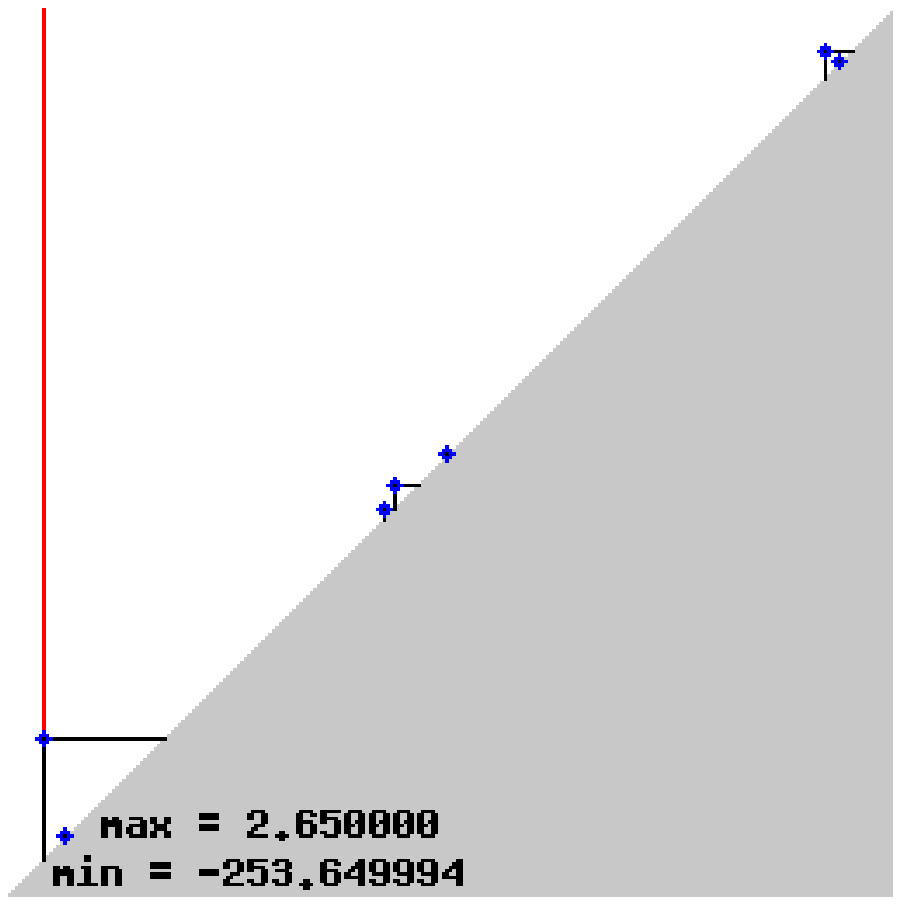} &
\includegraphics[width=0.15\linewidth]{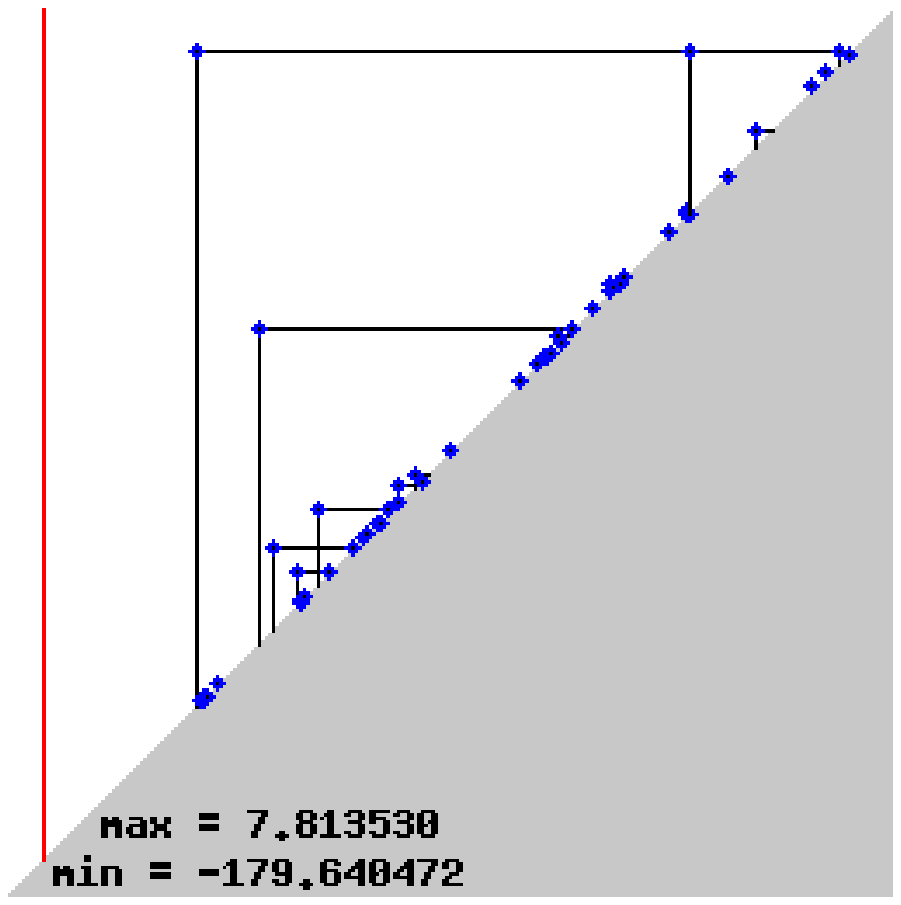} &
\includegraphics[width=0.15\linewidth]{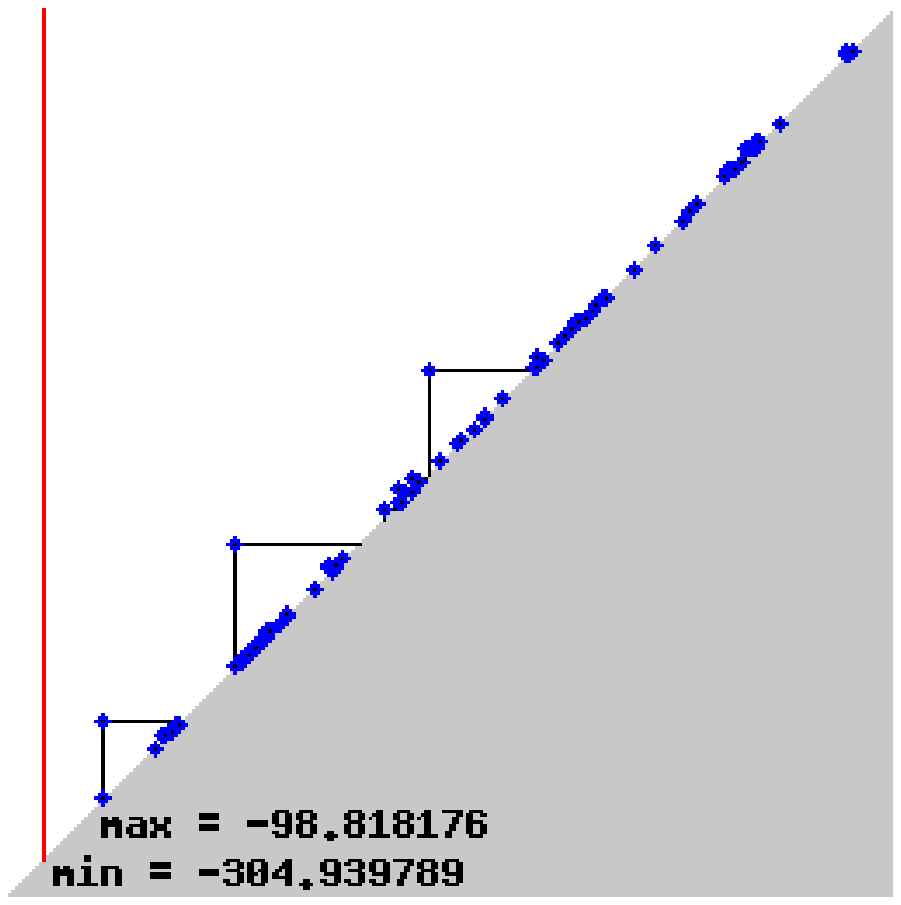} \\
\includegraphics[width=0.15\linewidth]{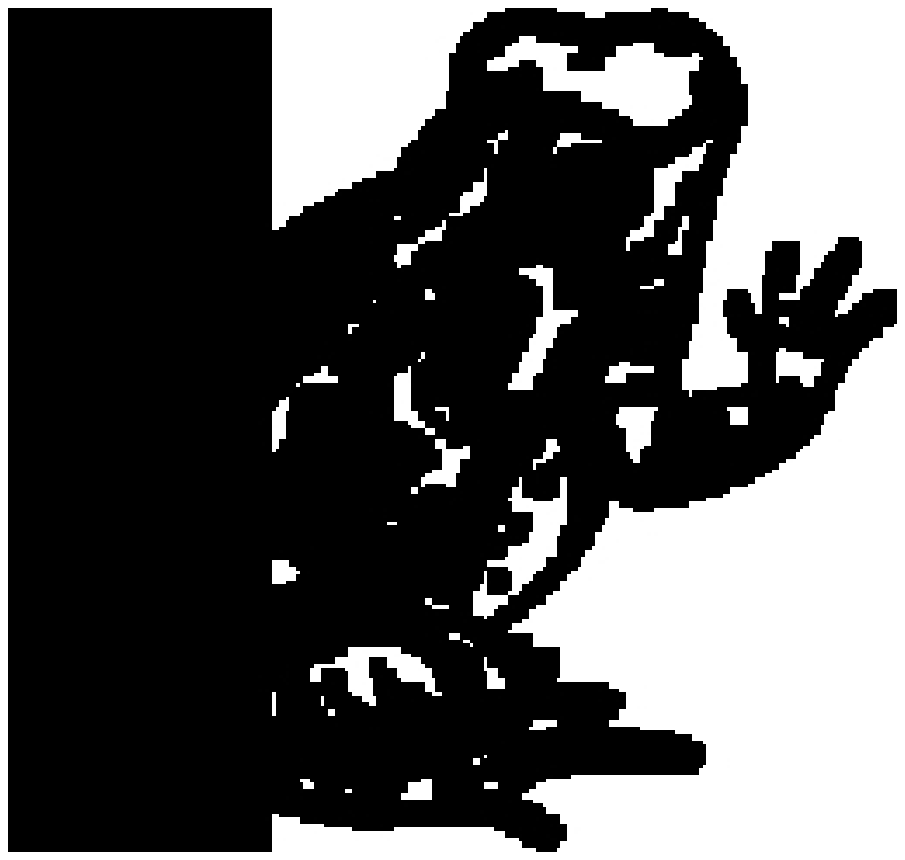} &
\includegraphics[width=0.15\linewidth]{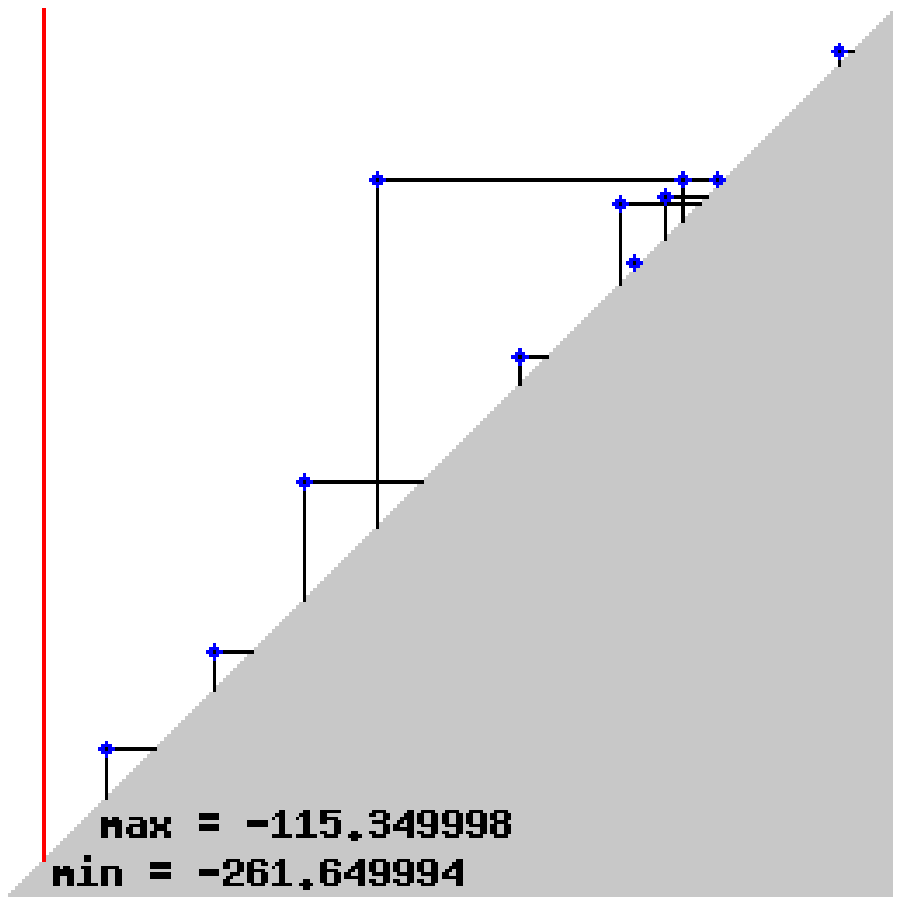} &
\includegraphics[width=0.15\linewidth]{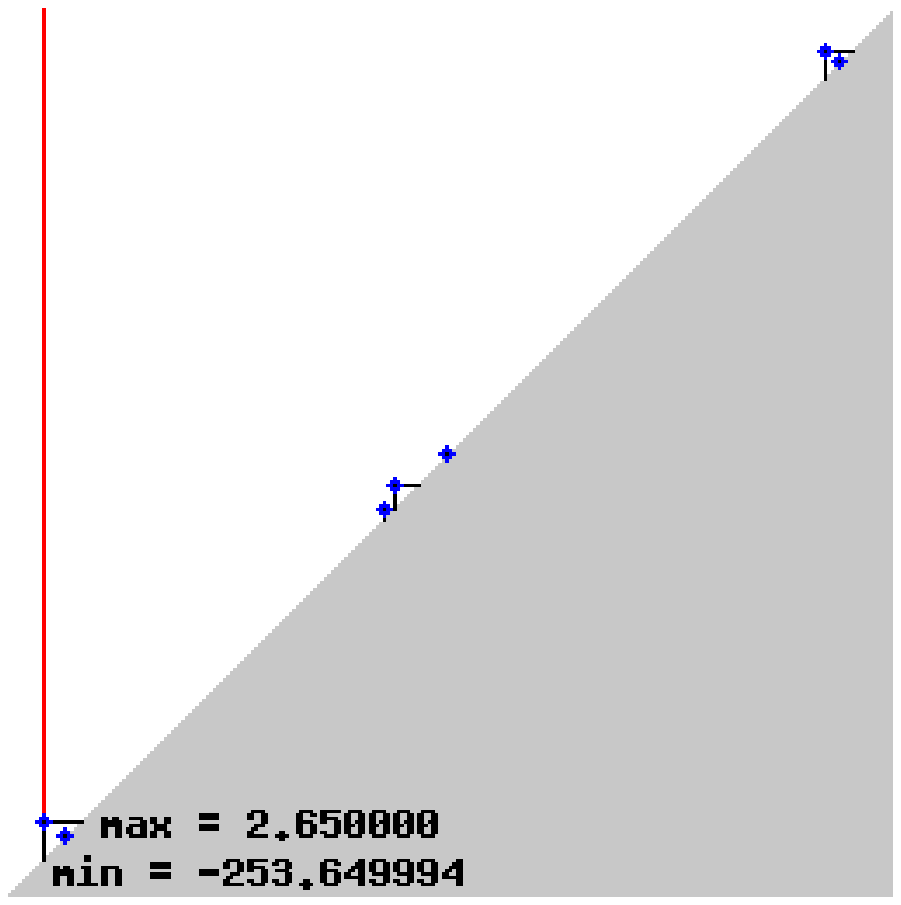} &
\includegraphics[width=0.15\linewidth]{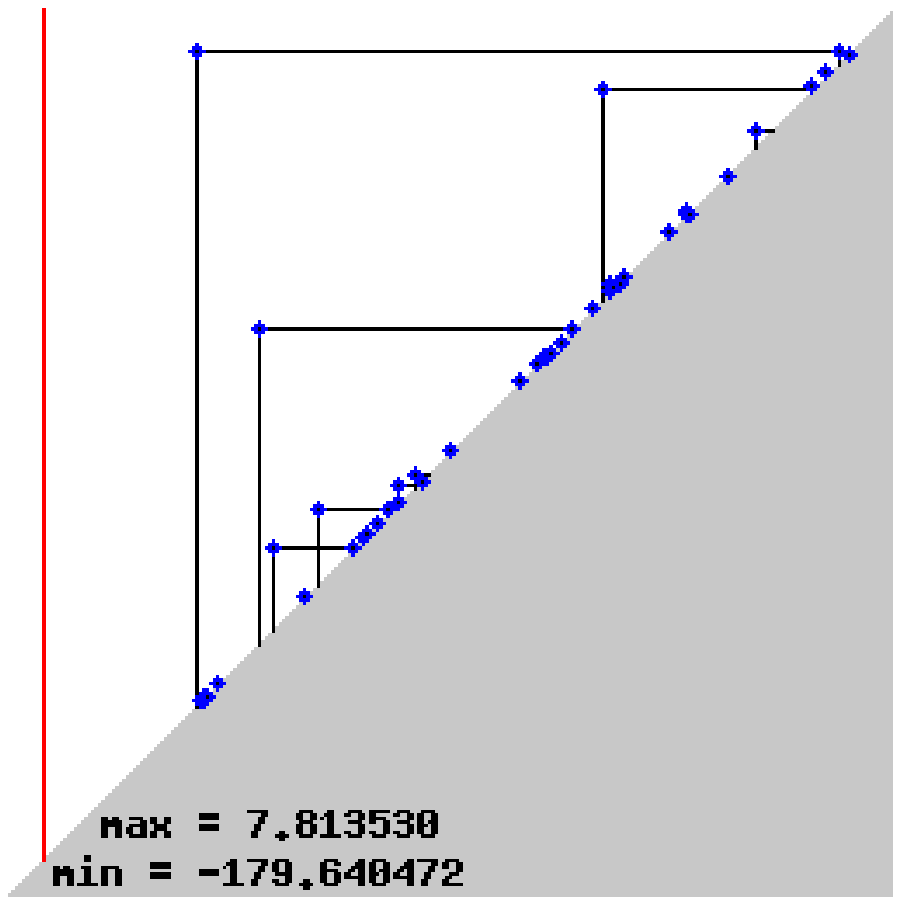} &
\includegraphics[width=0.15\linewidth]{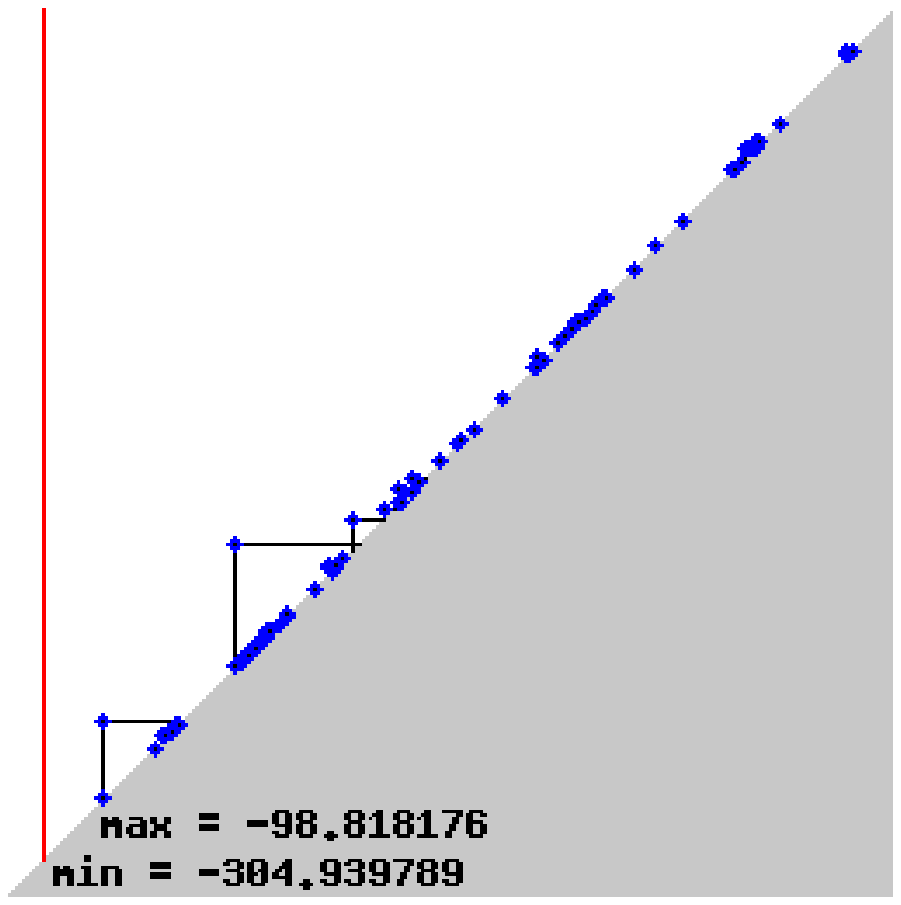} \\
\includegraphics[width=0.15\linewidth]{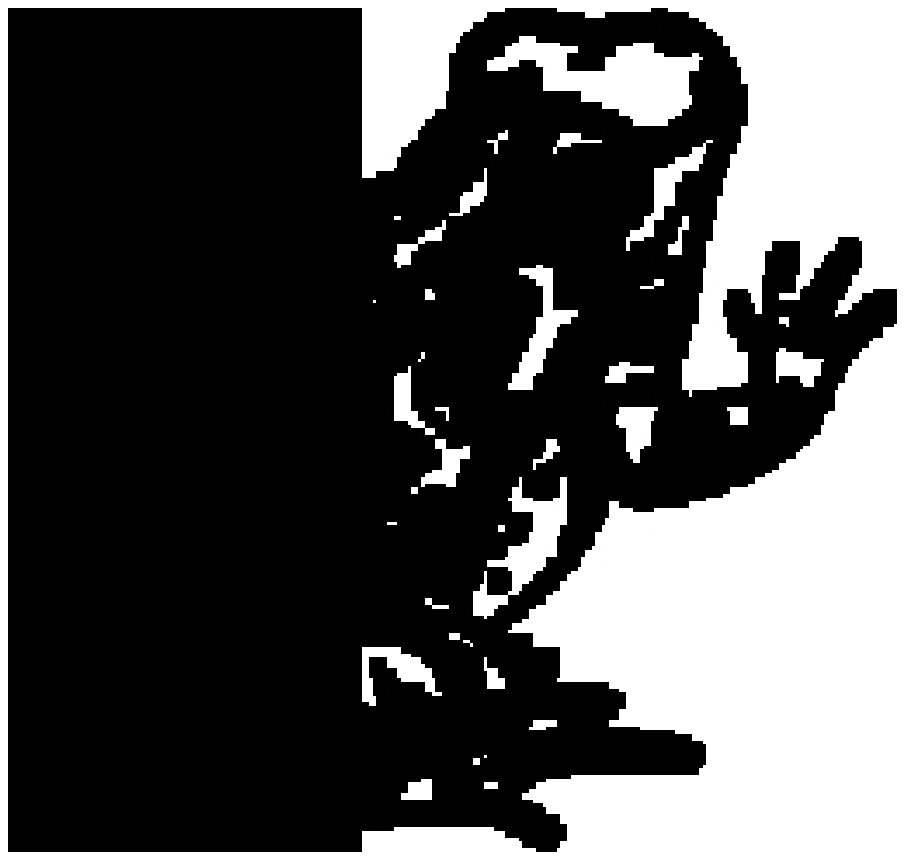} &
\includegraphics[width=0.15\linewidth]{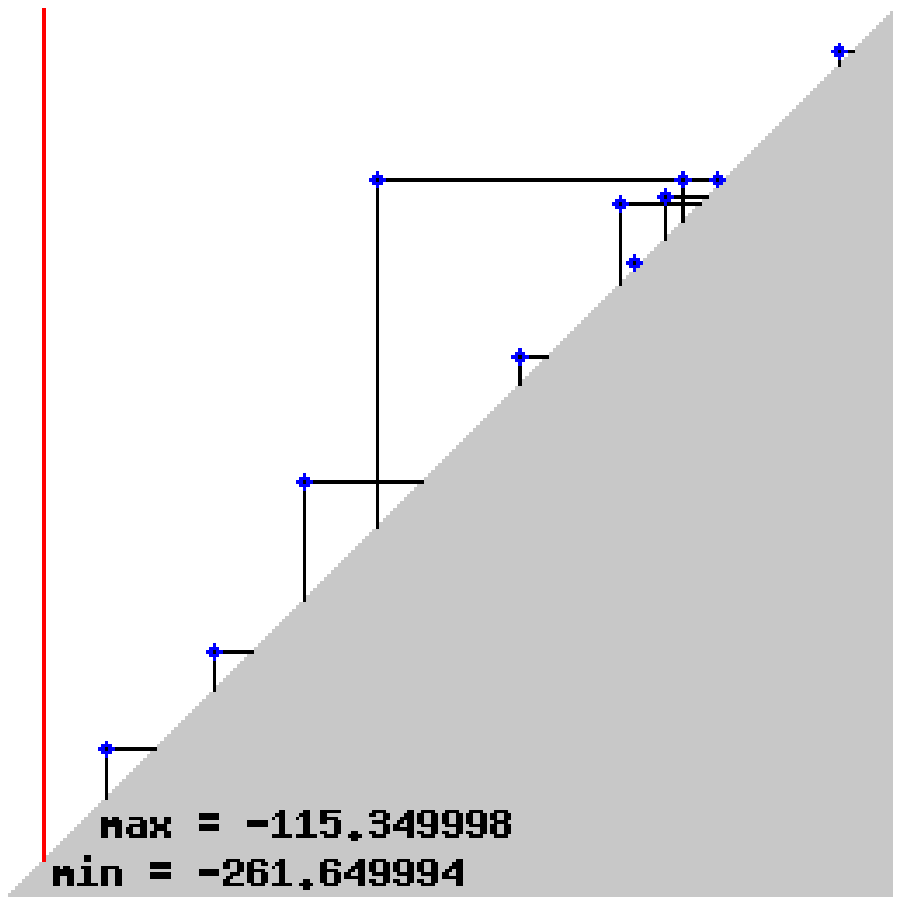} &
\includegraphics[width=0.15\linewidth]{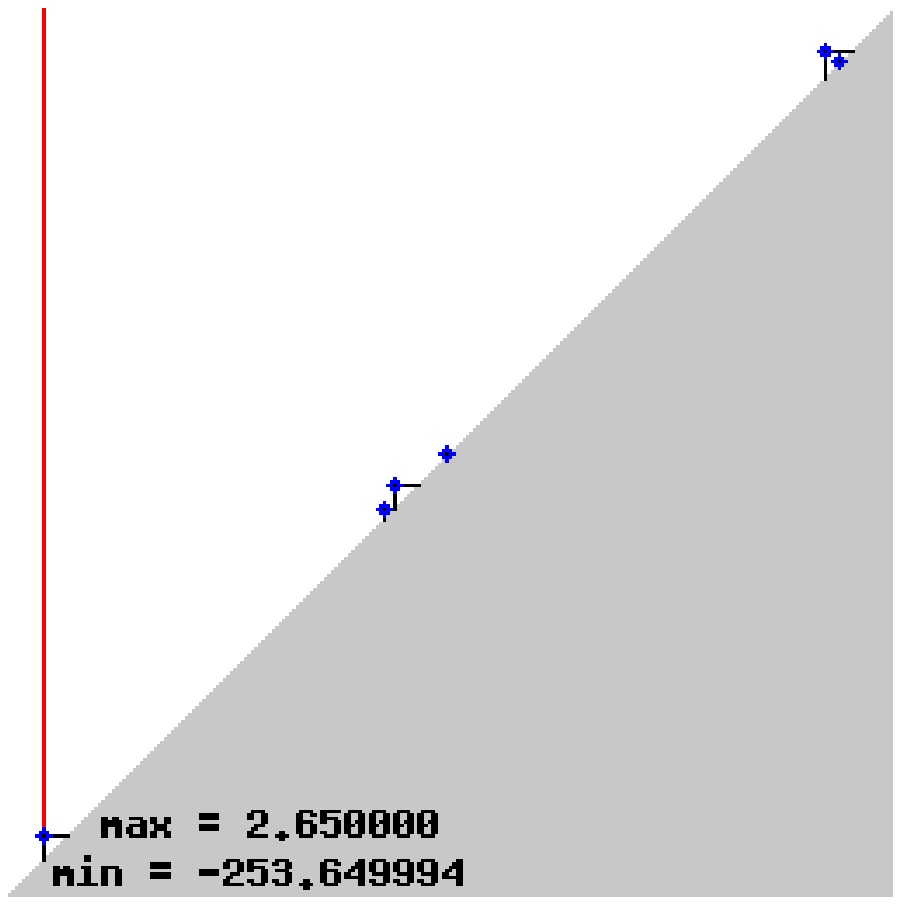} &
\includegraphics[width=0.15\linewidth]{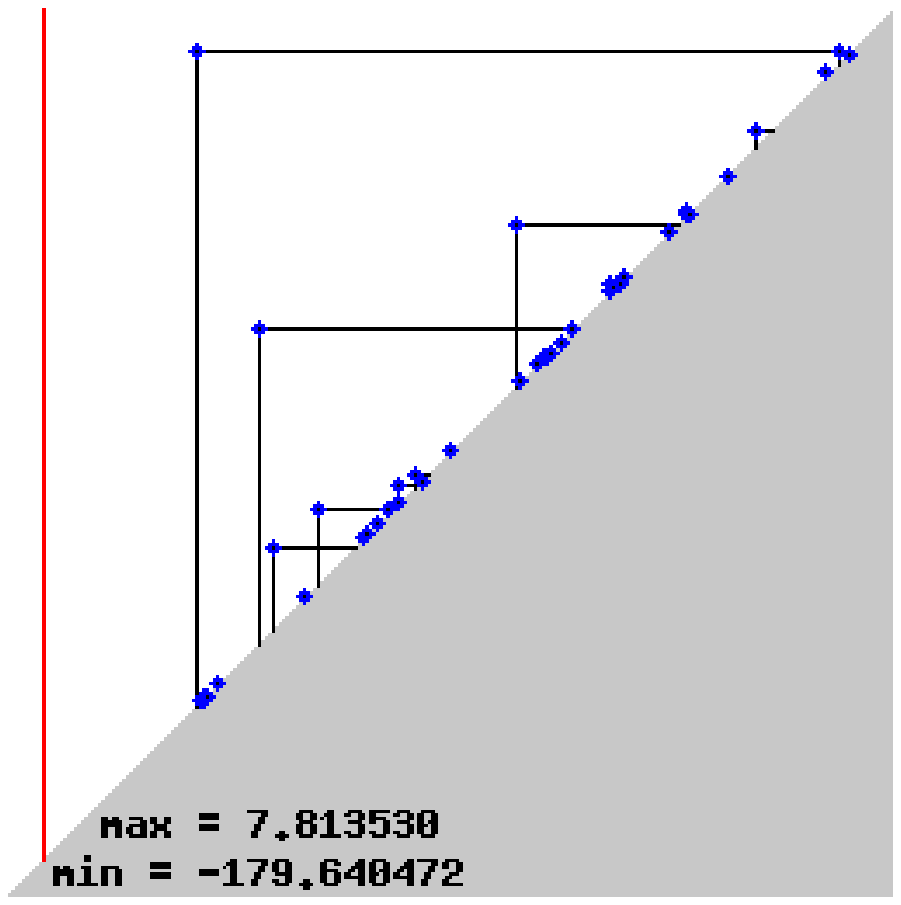} &
\includegraphics[width=0.15\linewidth]{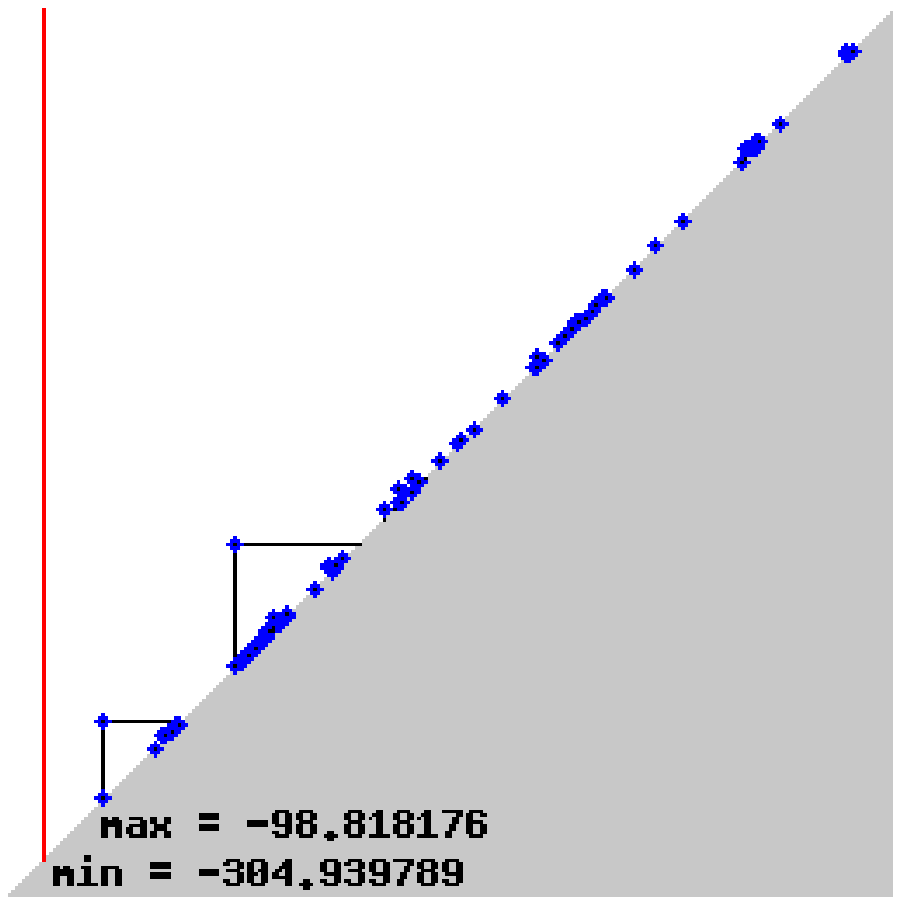} \\
\end{tabular}
\caption{\footnotesize{The first column: (row 1) original ``frog''
shape, (rows 2--4) occluded from top by $20\%$,  $30\%$, $40\%$,
(row 5--7) occluded from left by $20\%$,  $30\%$,  $40\%$. From
second column onwards: corresponding size functions related to
measuring functions defined as minus distances from four lines
rotated by $0$, $\pi/4$, $\pi/2$, $3\pi/4$, with respect to the
horizontal position.}} \label{frogHV}
\end{table}
Eventually, the ``pocket watch'', represented in Table
\ref{pocketHV}, is primarily characterized by several connected
components, whose number decreases as the occluding area
increases. This results in a reduction of the number of
cornerpoints at infinity in its size functions.
\begin{table}[htbp]
\begin{tabular}{cccccc}
\includegraphics[width=0.115\linewidth]{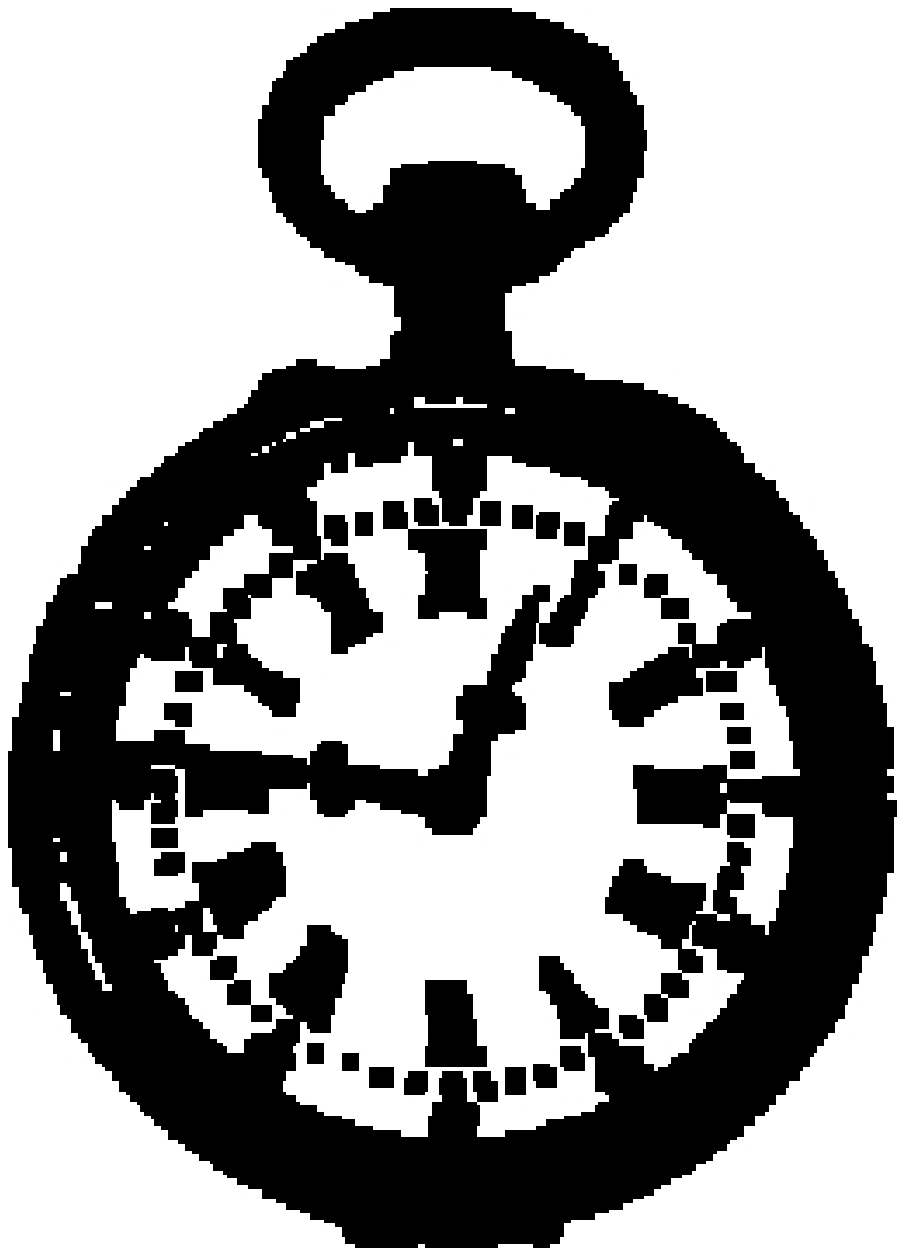} &
\includegraphics[width=0.15\linewidth]{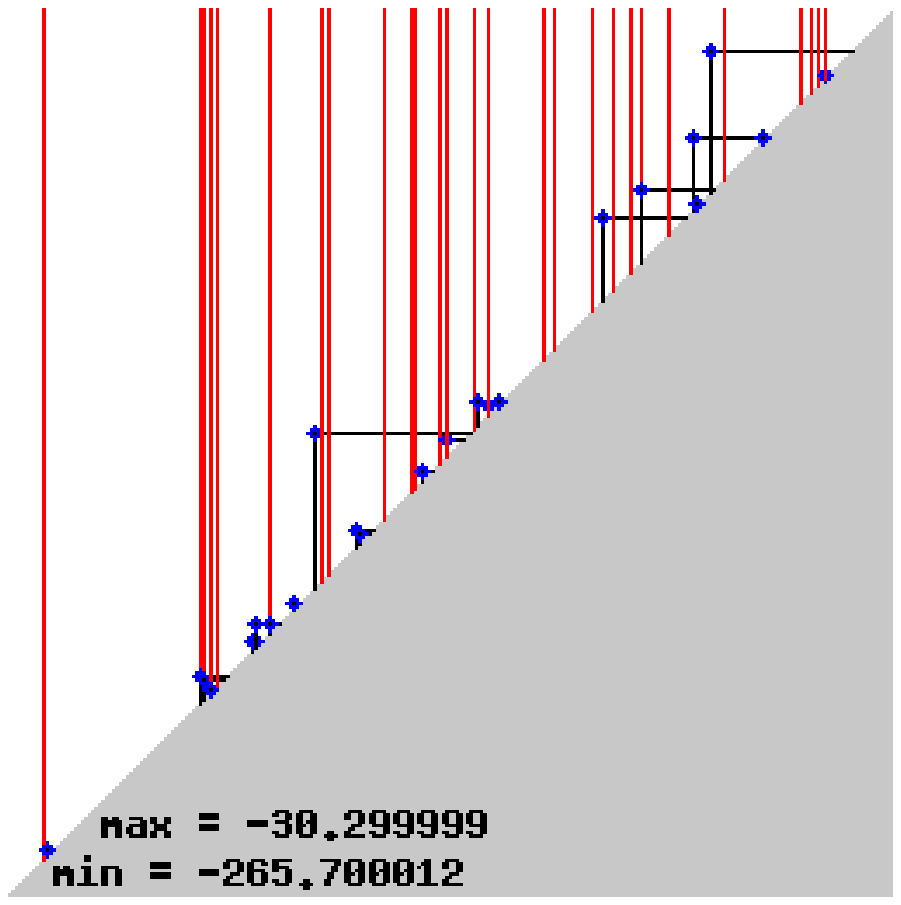} &
\includegraphics[width=0.15\linewidth]{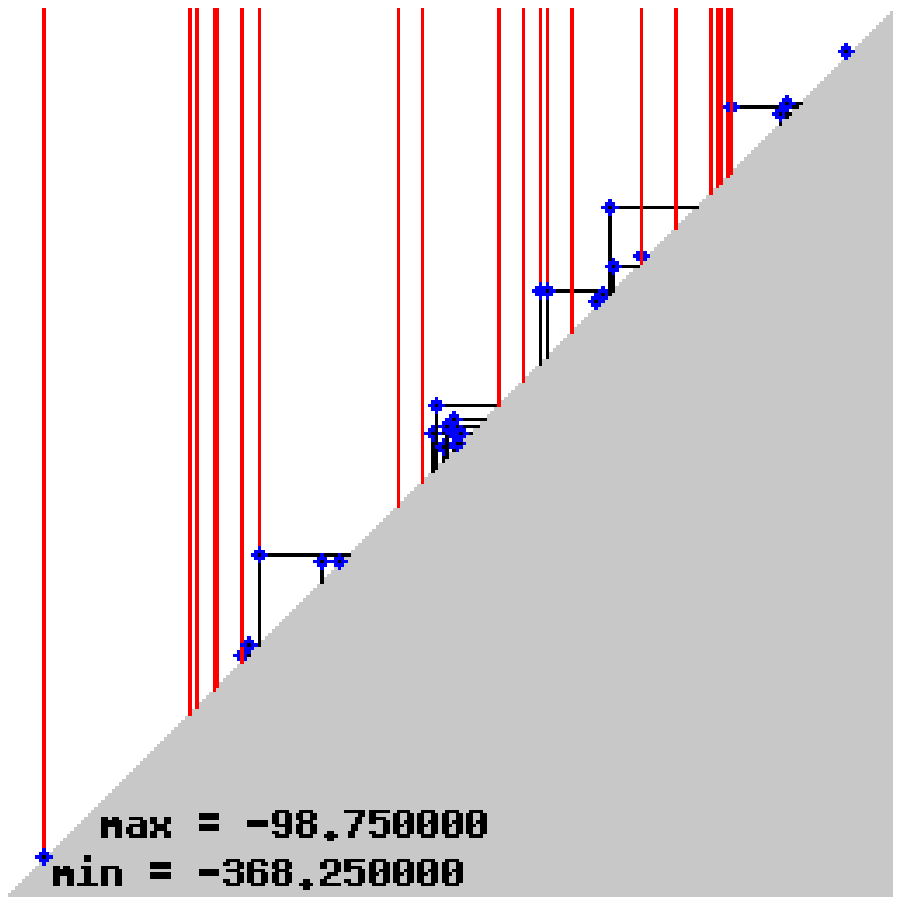} &
\includegraphics[width=0.15\linewidth]{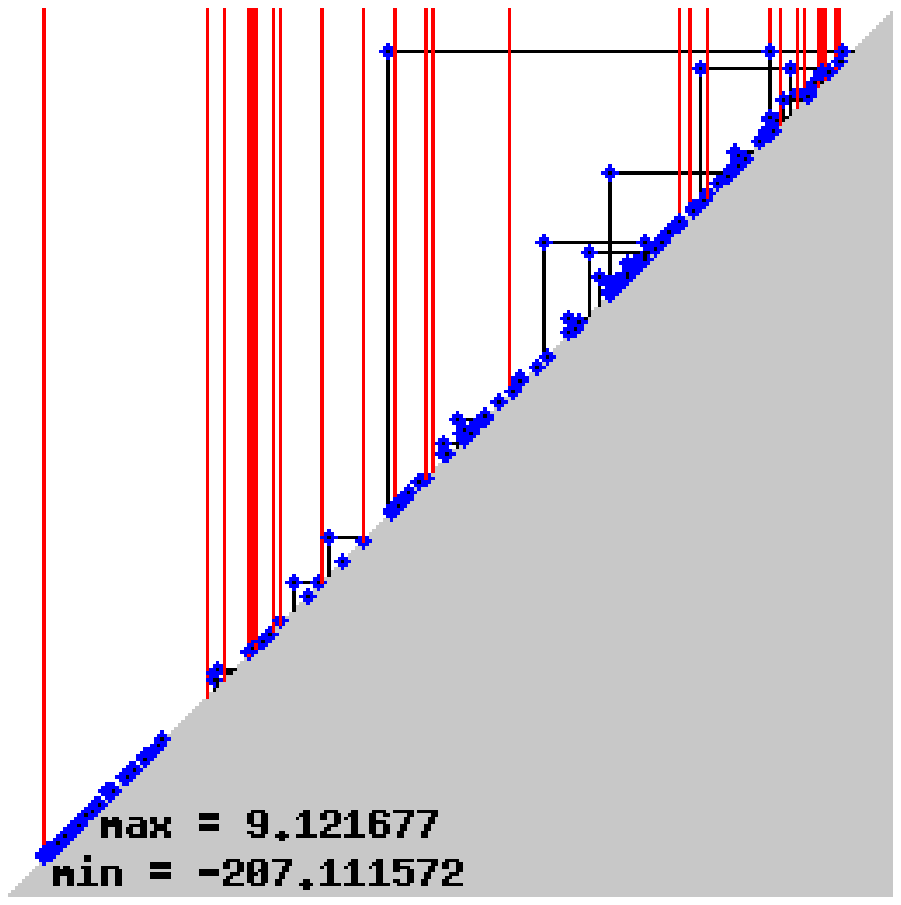} &
\includegraphics[width=0.15\linewidth]{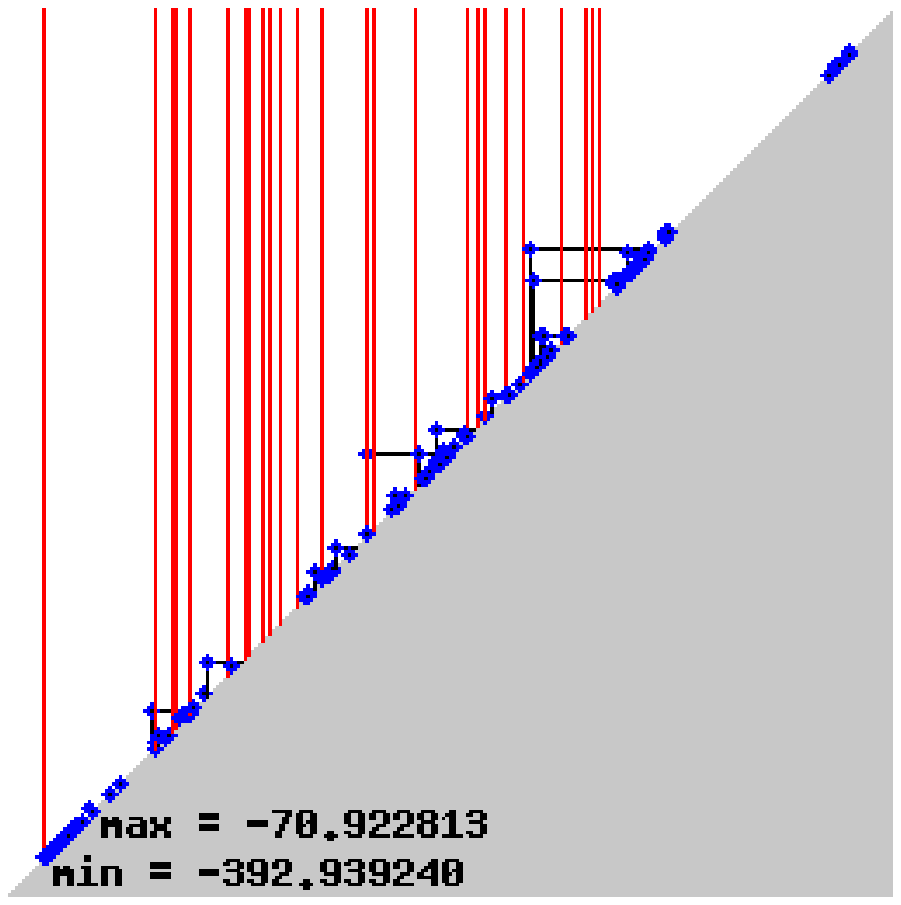} \\
\includegraphics[width=0.115\linewidth]{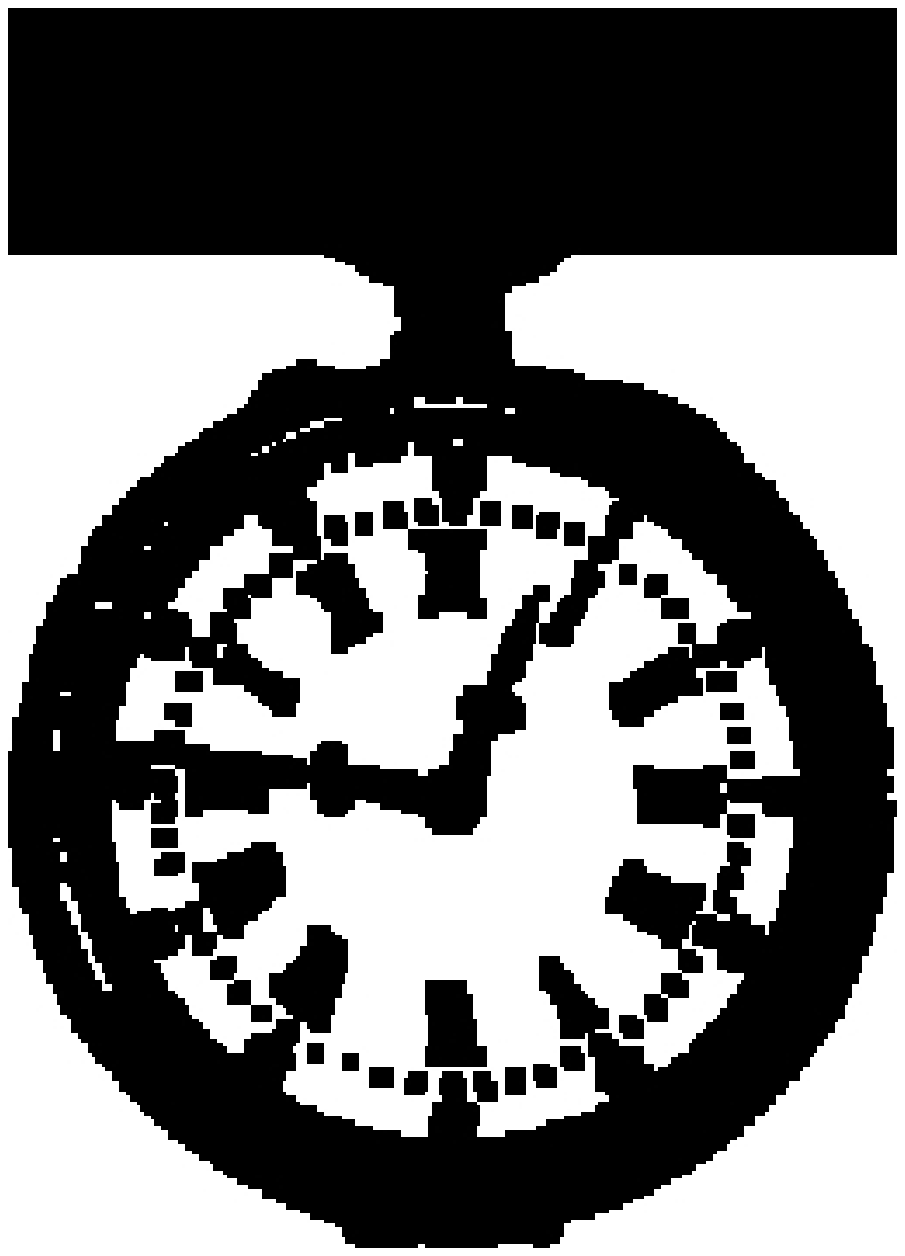} &
\includegraphics[width=0.15\linewidth]{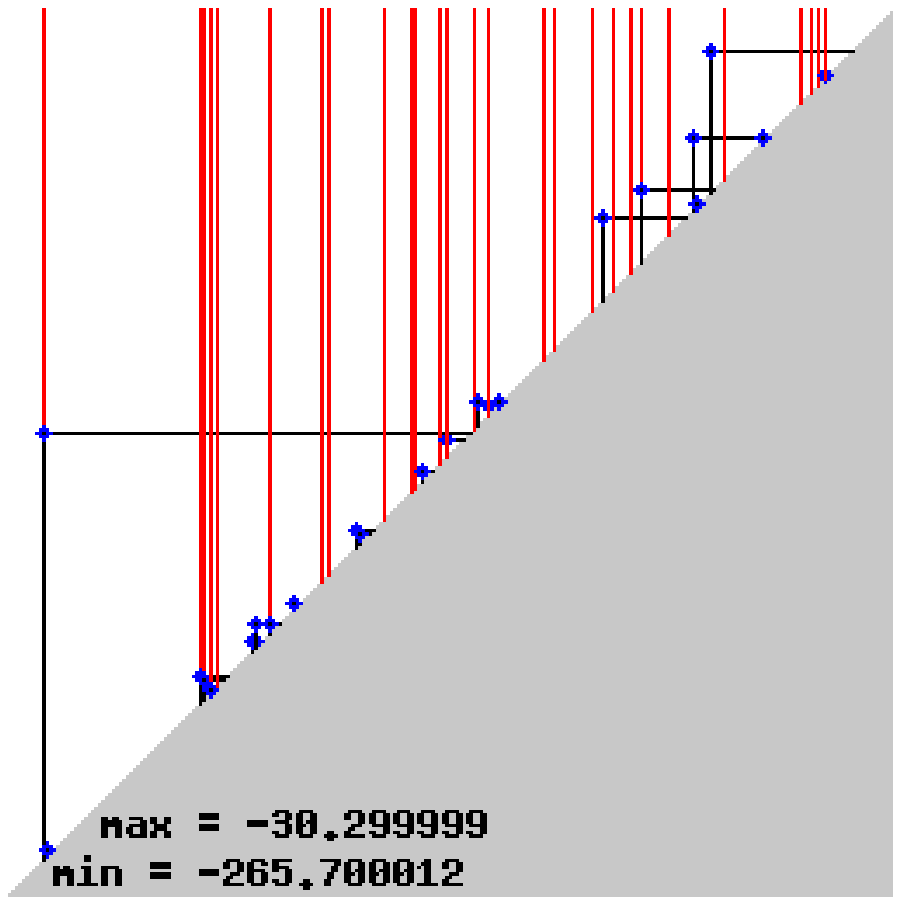} &
\includegraphics[width=0.15\linewidth]{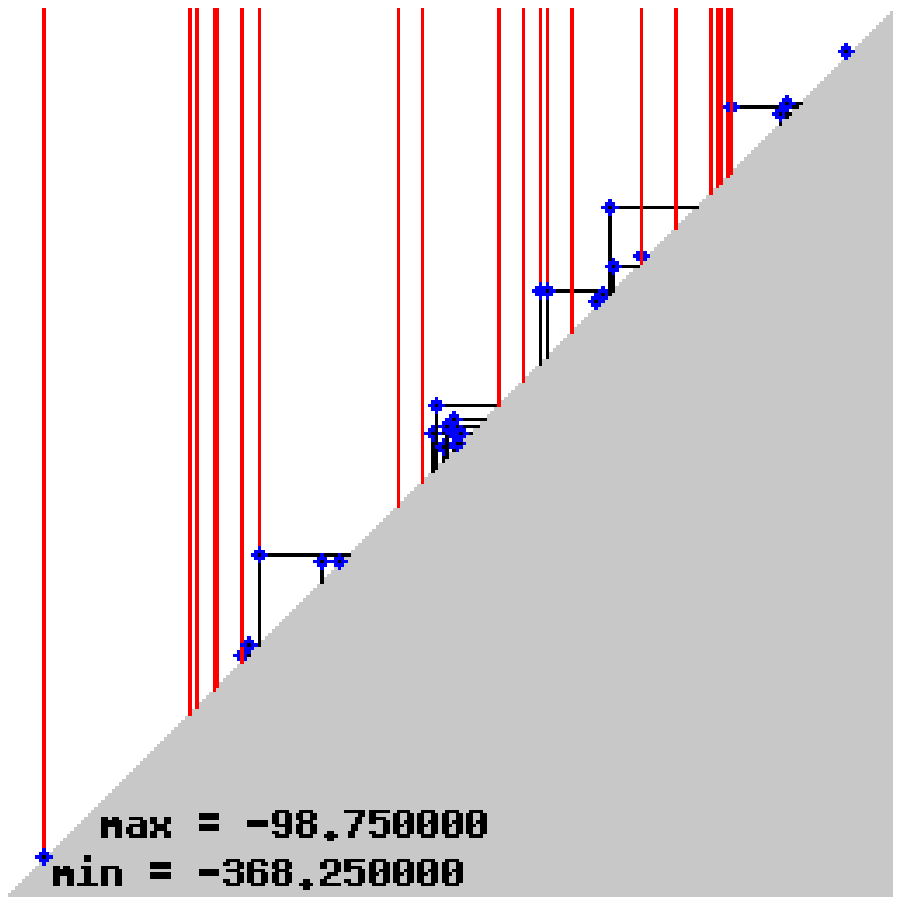} &
\includegraphics[width=0.15\linewidth]{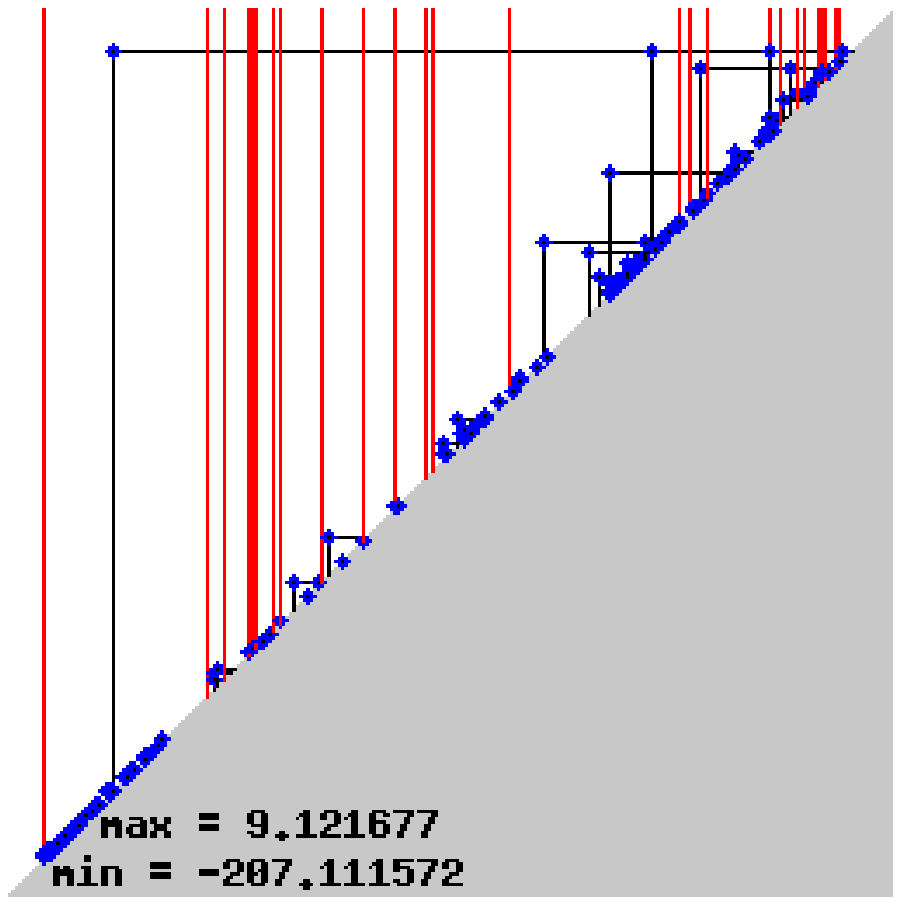} &
\includegraphics[width=0.15\linewidth]{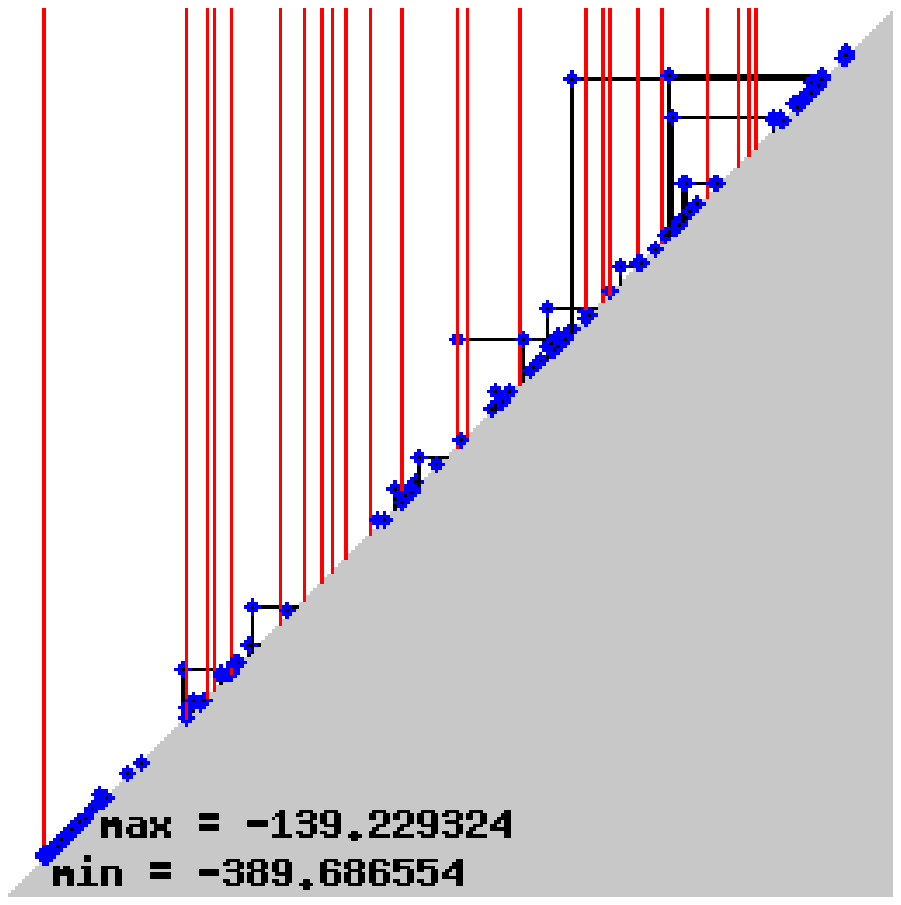} \\
\includegraphics[width=0.115\linewidth]{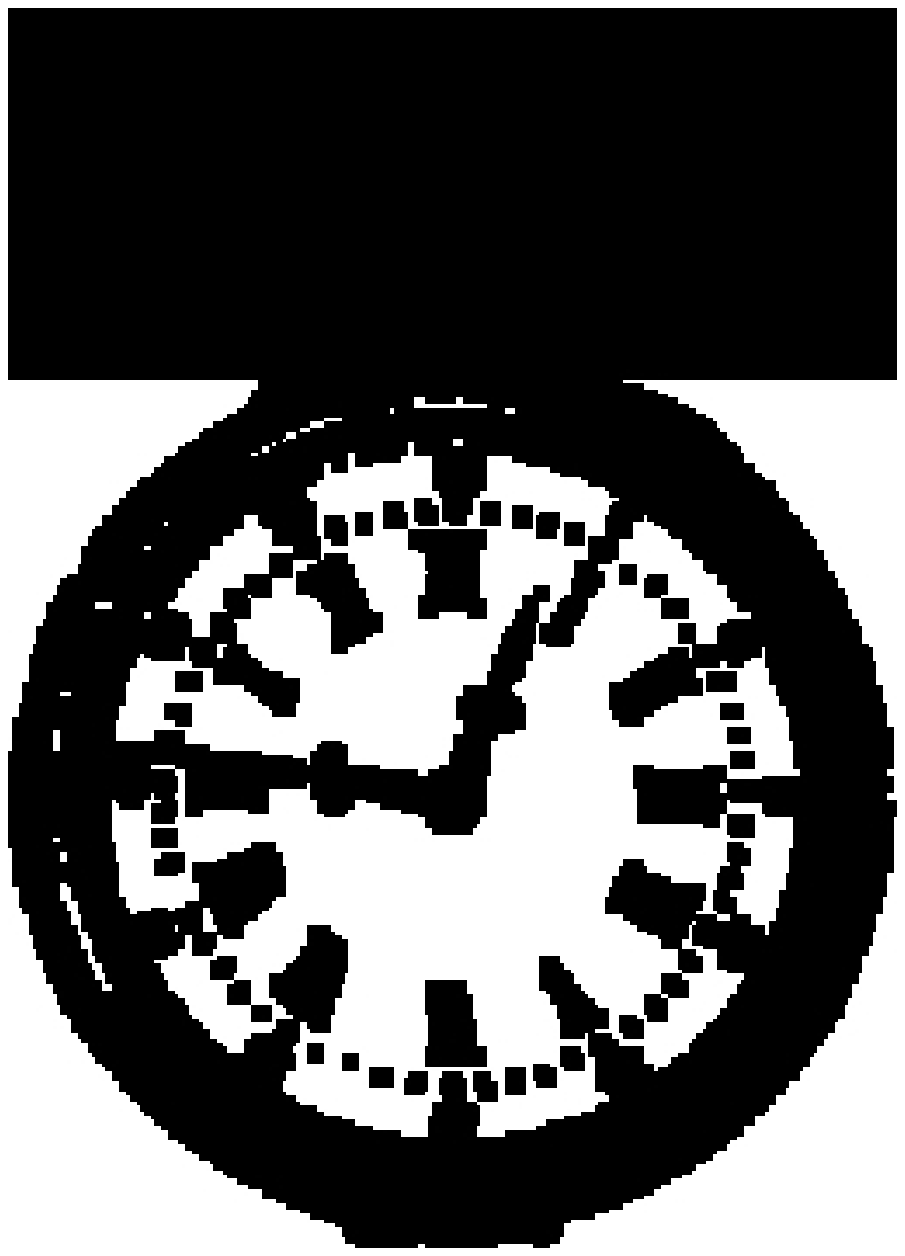} &
\includegraphics[width=0.15\linewidth]{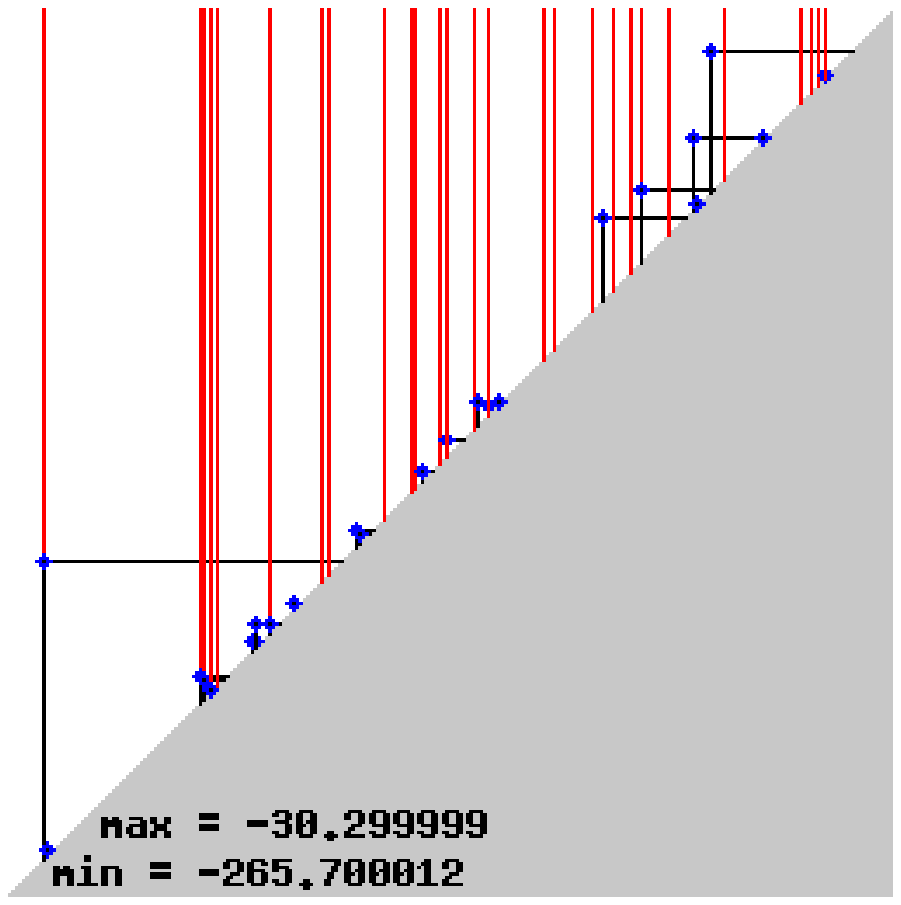} &
\includegraphics[width=0.15\linewidth]{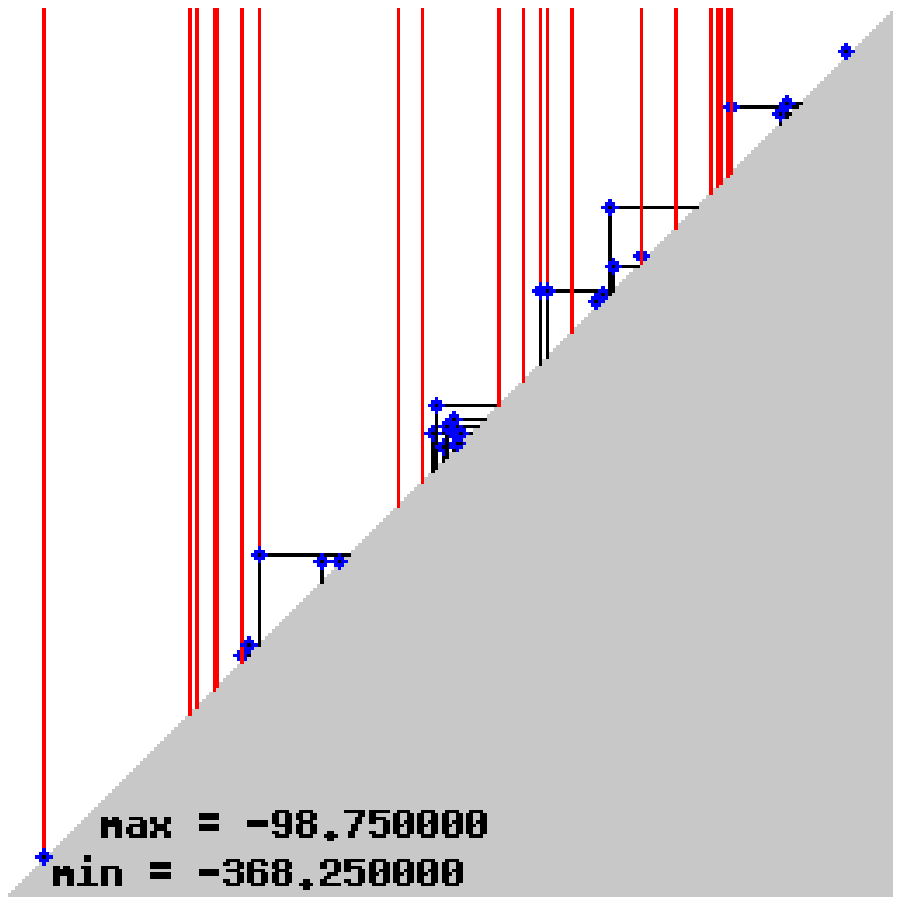} &
\includegraphics[width=0.15\linewidth]{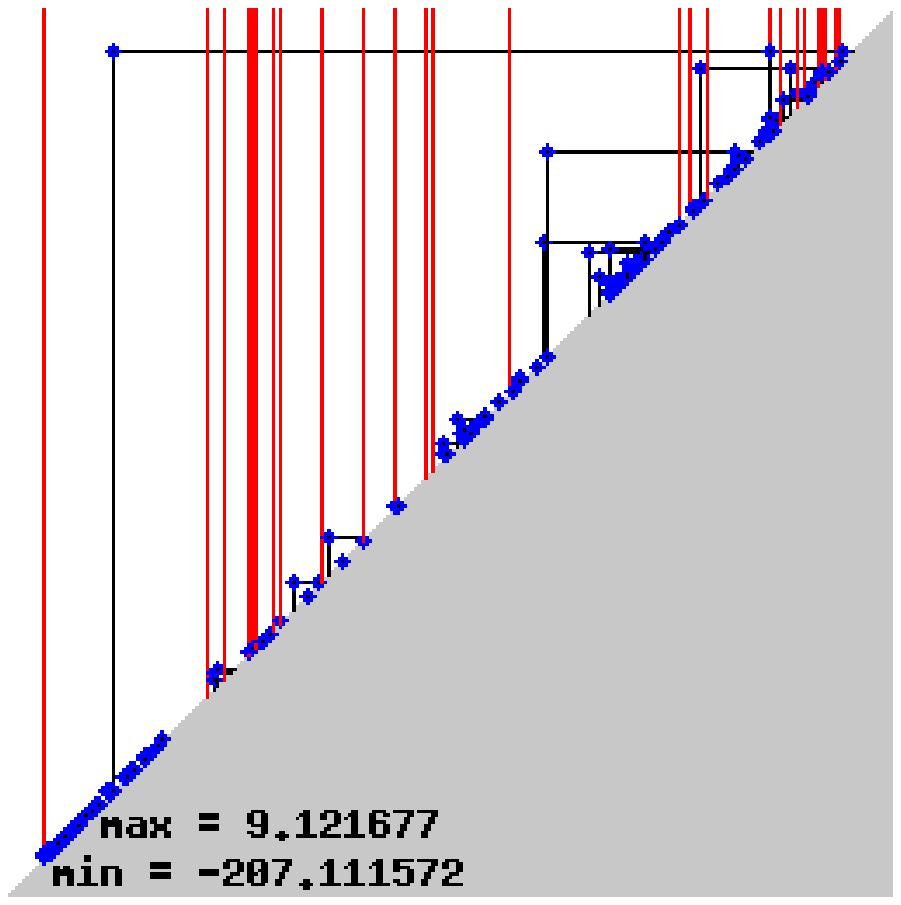} &
\includegraphics[width=0.15\linewidth]{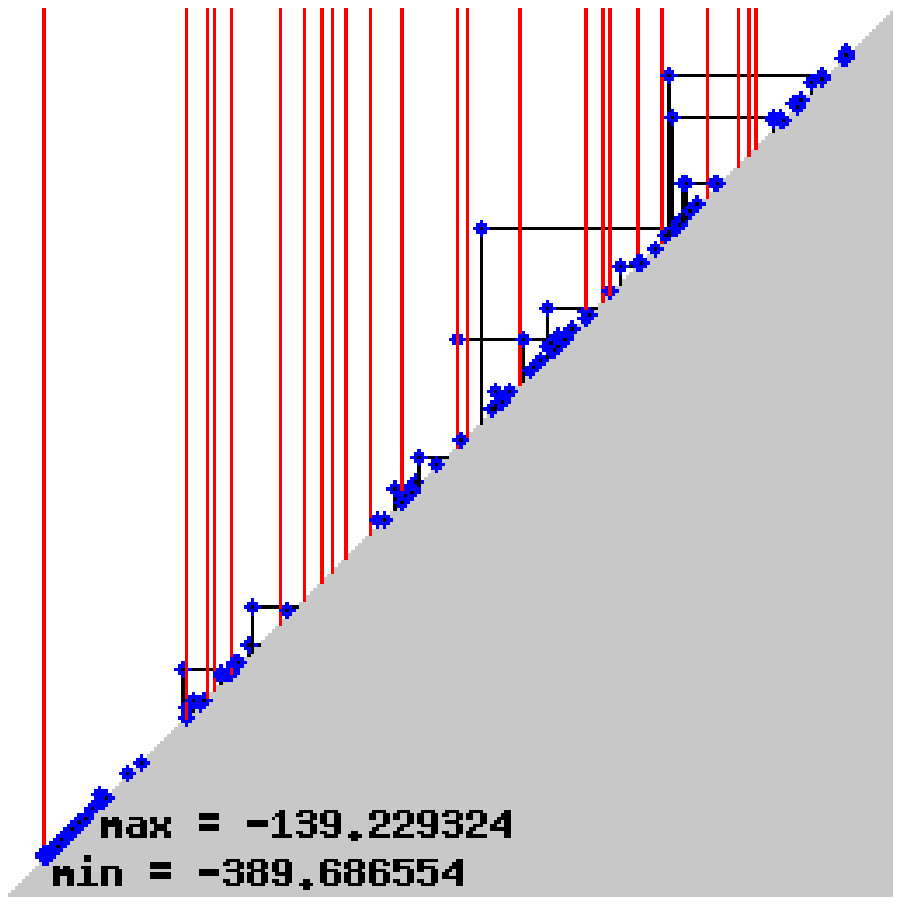} \\
\includegraphics[width=0.115\linewidth]{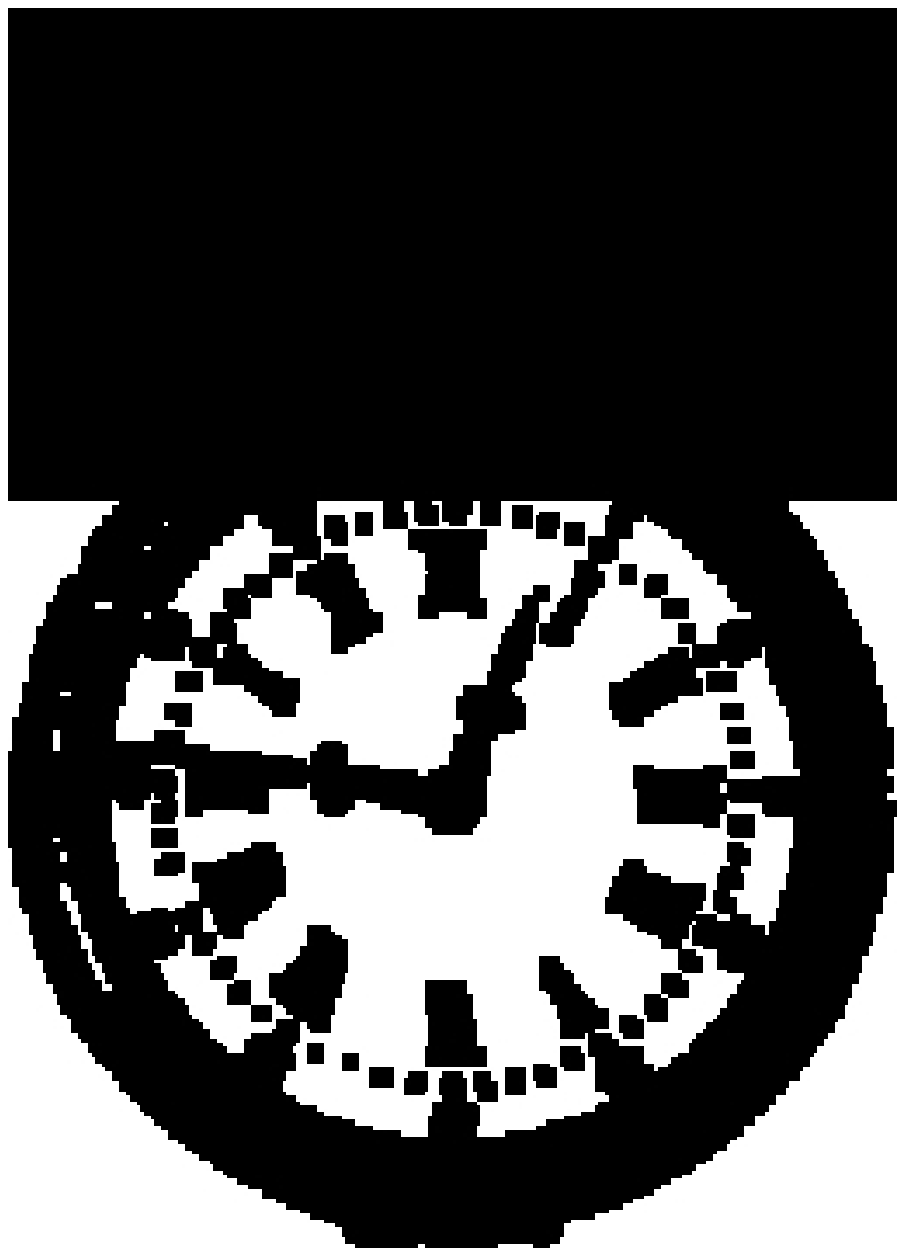} &
\includegraphics[width=0.15\linewidth]{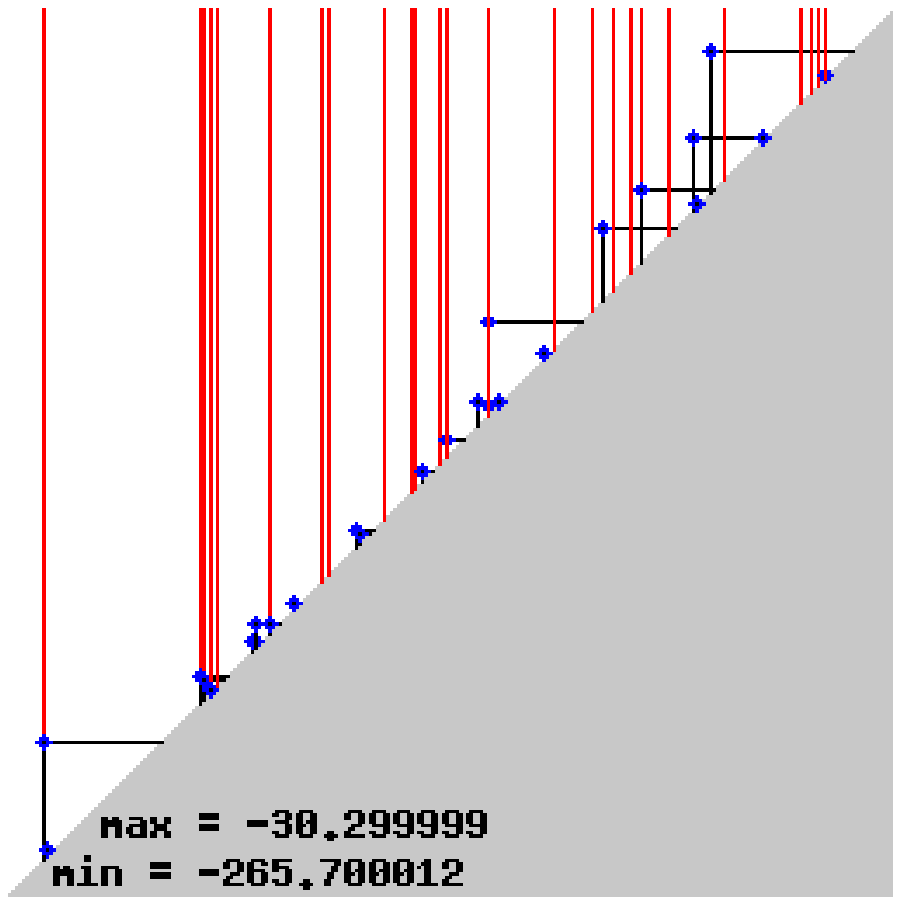} &
\includegraphics[width=0.15\linewidth]{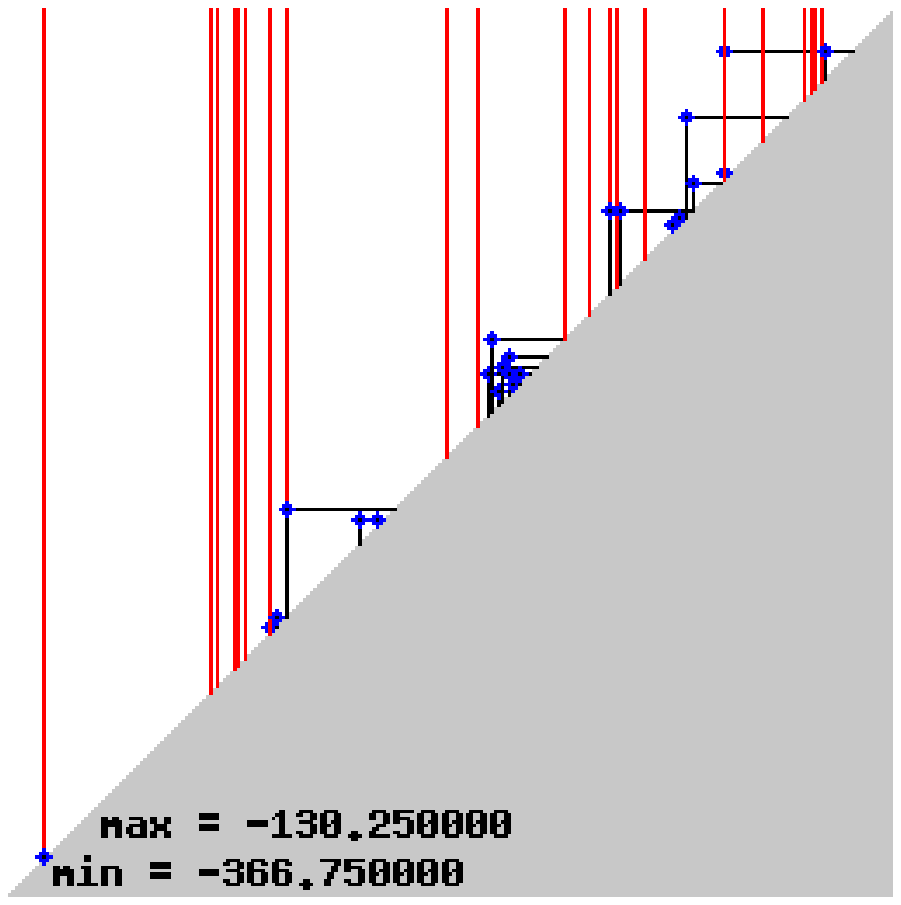} &
\includegraphics[width=0.15\linewidth]{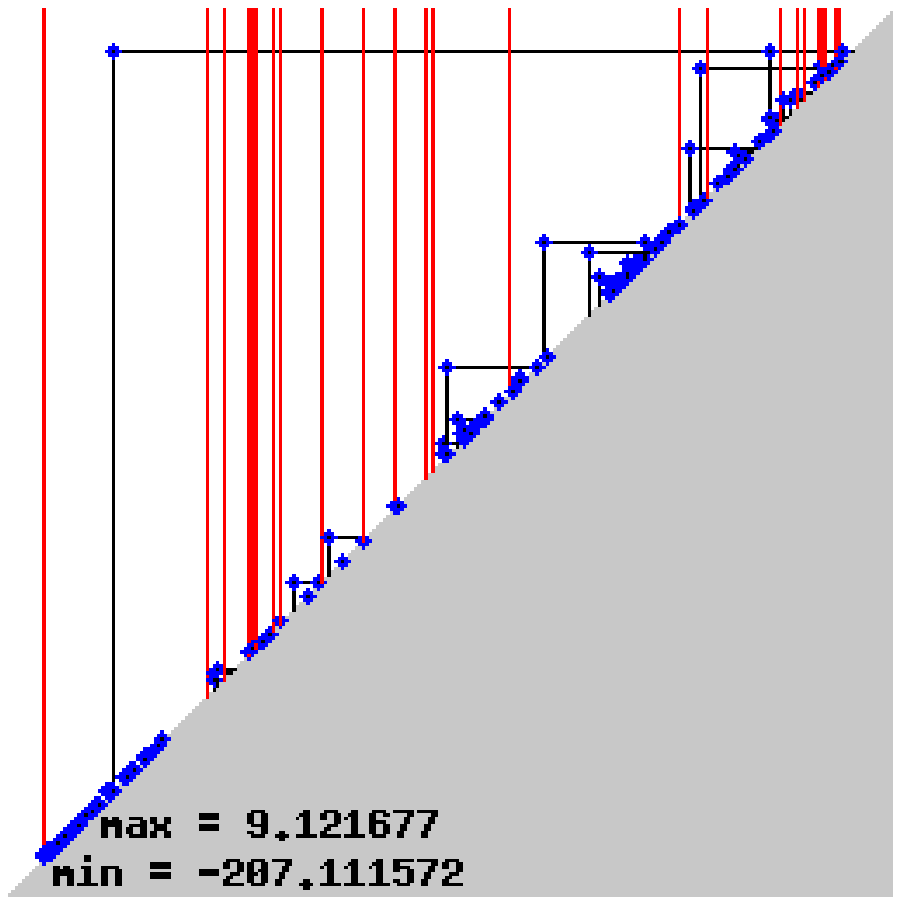} &
\includegraphics[width=0.15\linewidth]{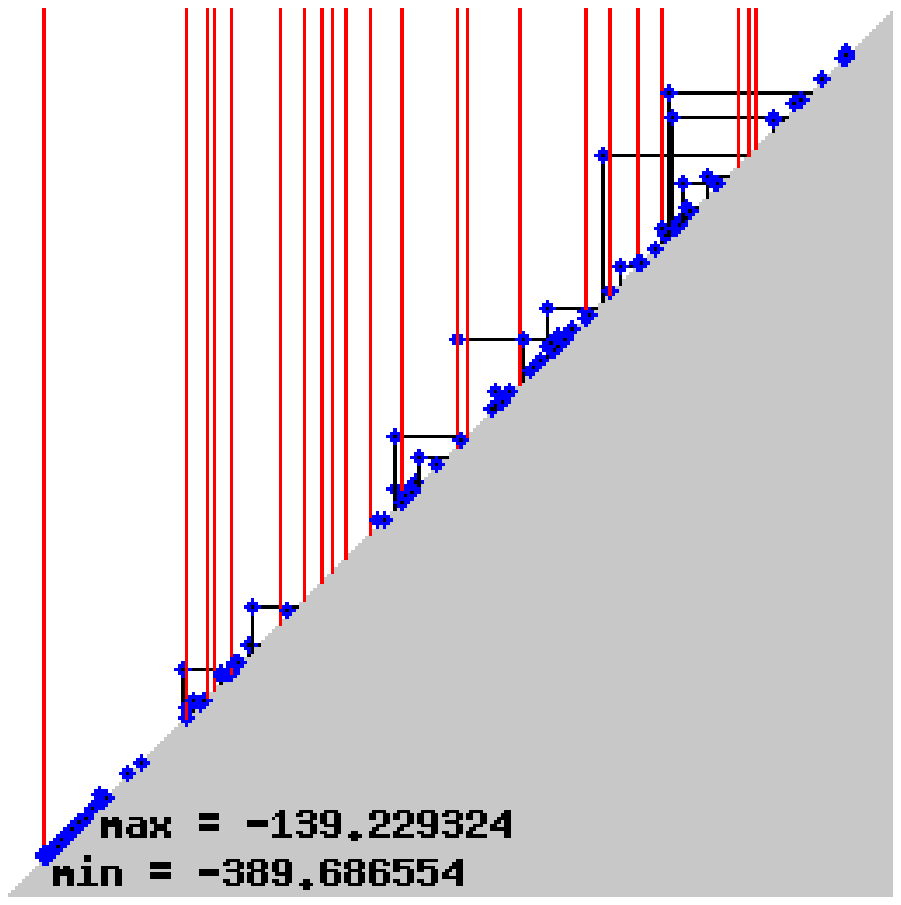} \\
\includegraphics[width=0.115\linewidth]{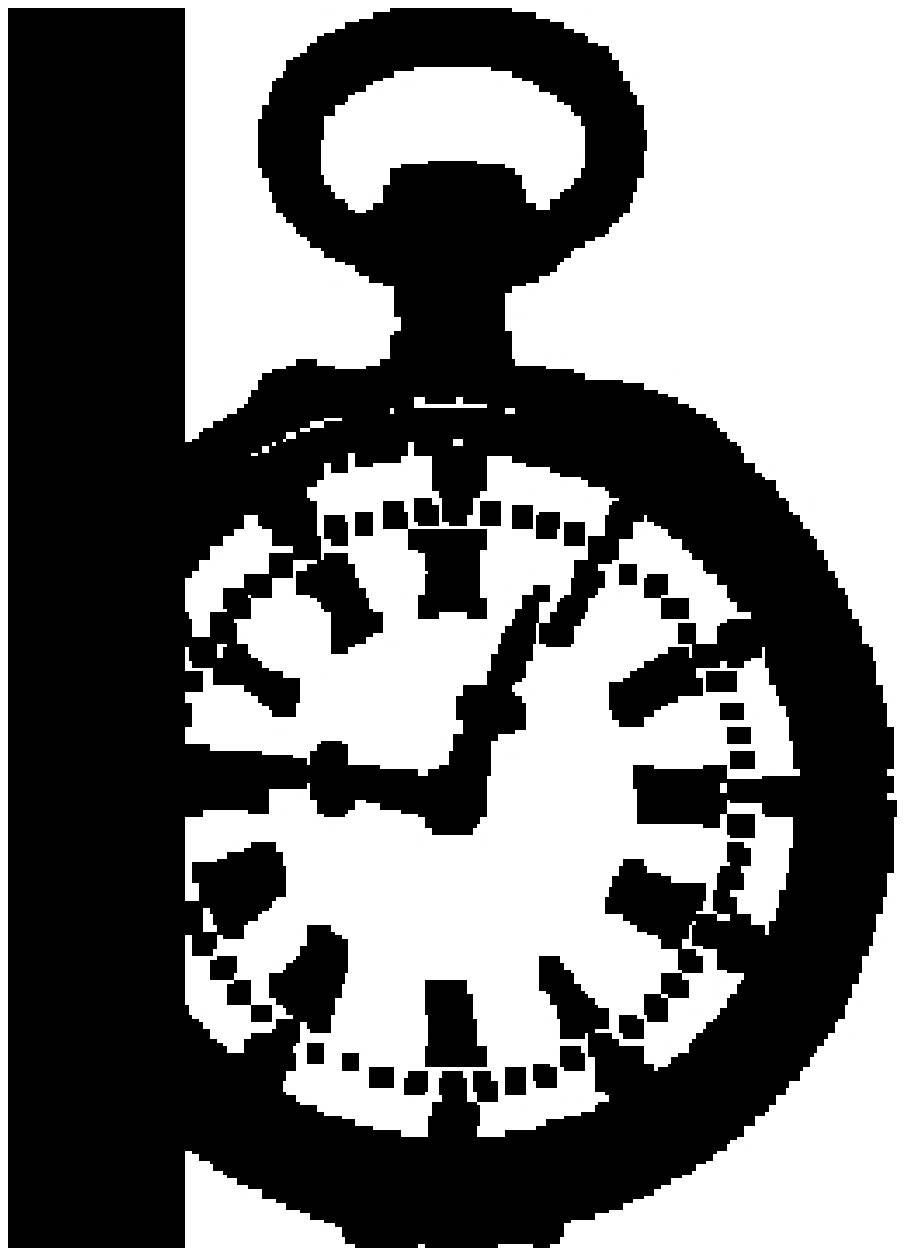} &
\includegraphics[width=0.15\linewidth]{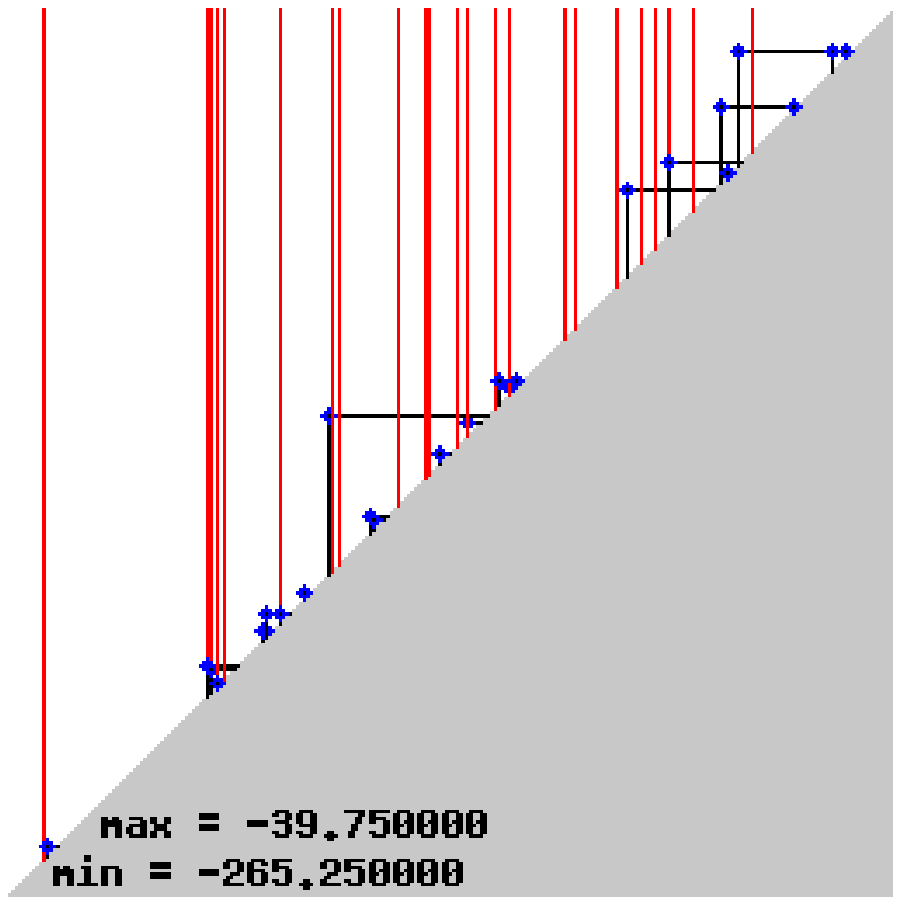} &
\includegraphics[width=0.15\linewidth]{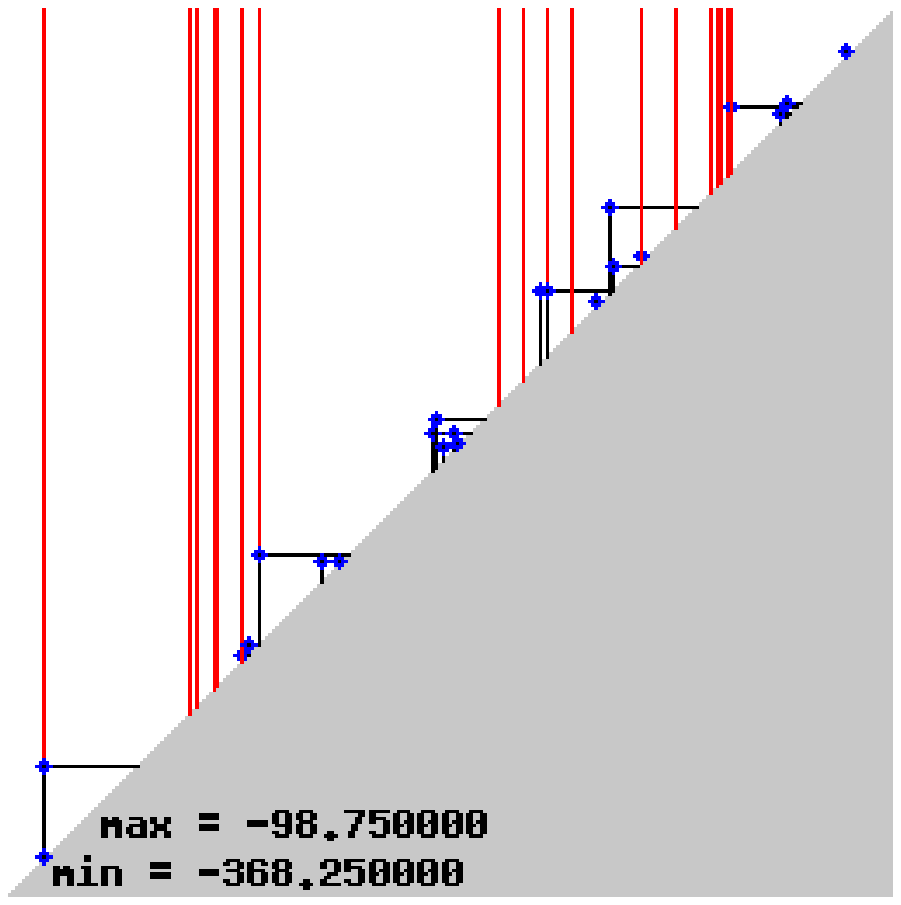} &
\includegraphics[width=0.15\linewidth]{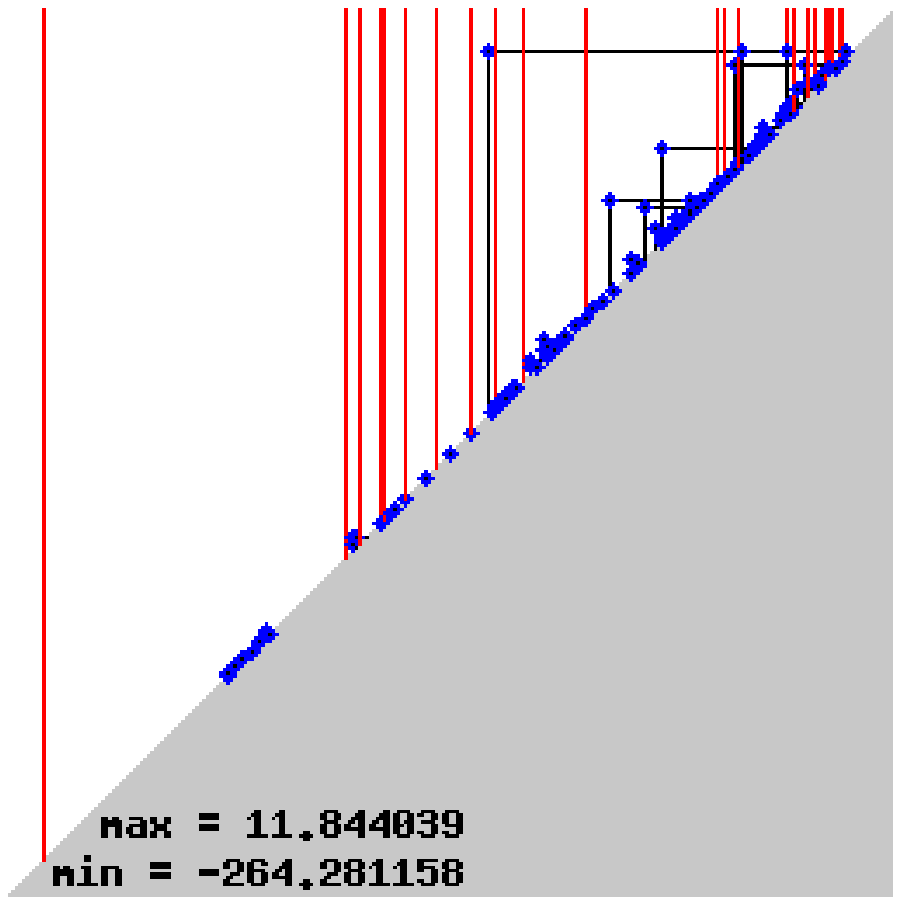} &
\includegraphics[width=0.15\linewidth]{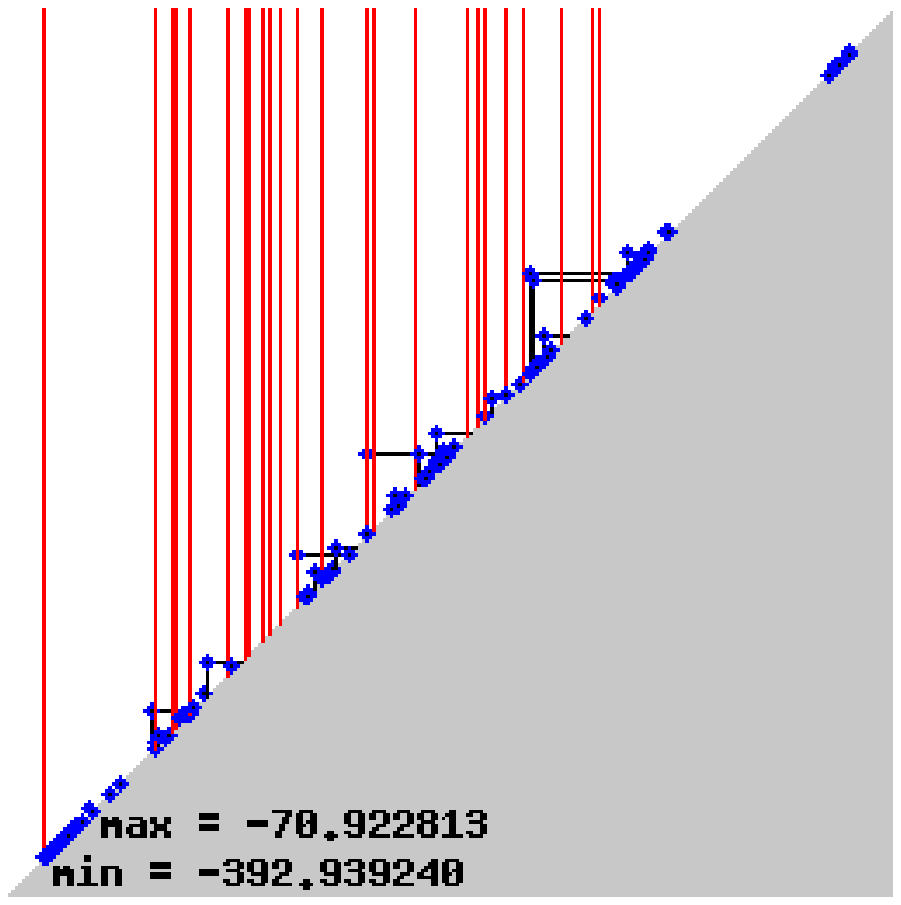} \\
\includegraphics[width=0.115\linewidth]{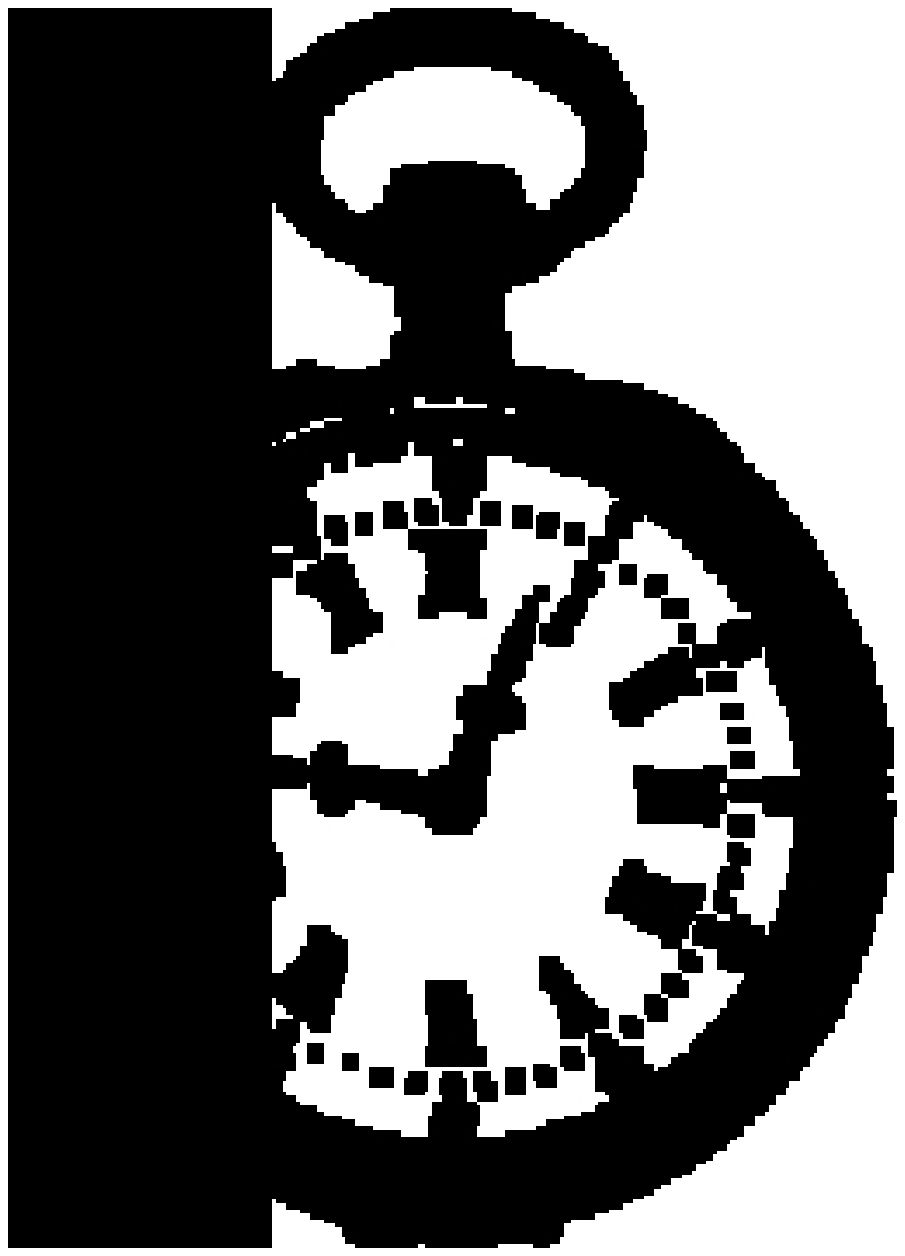} &
\includegraphics[width=0.15\linewidth]{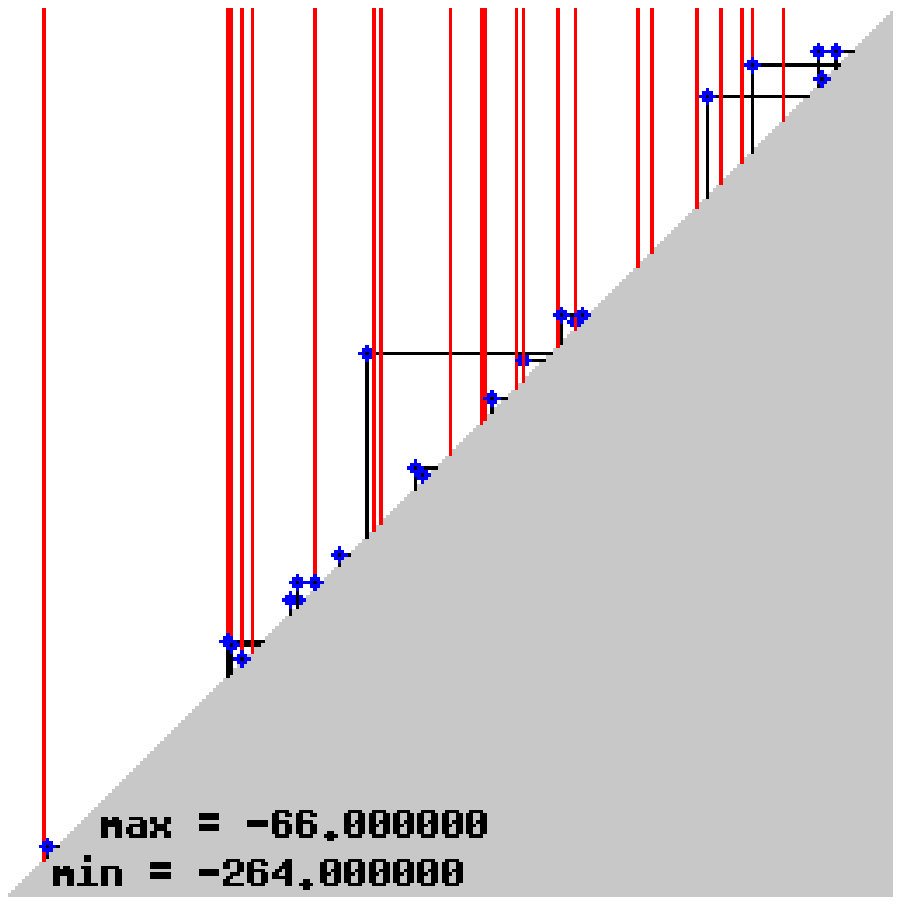} &
\includegraphics[width=0.15\linewidth]{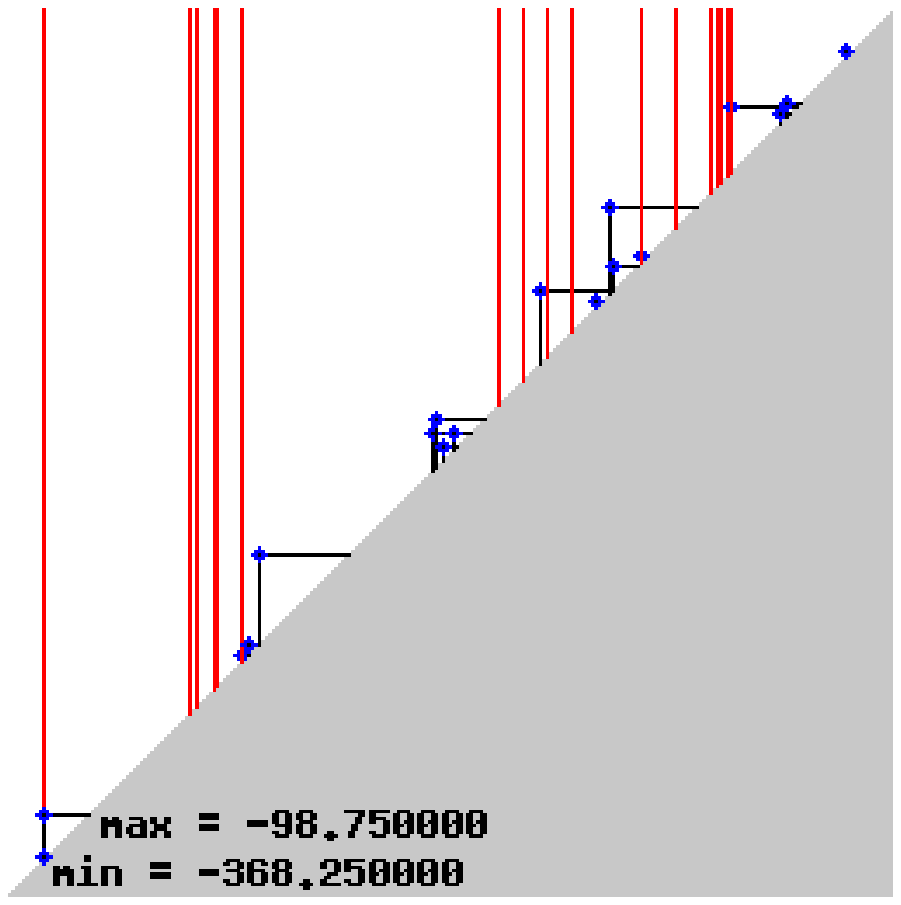} &
\includegraphics[width=0.15\linewidth]{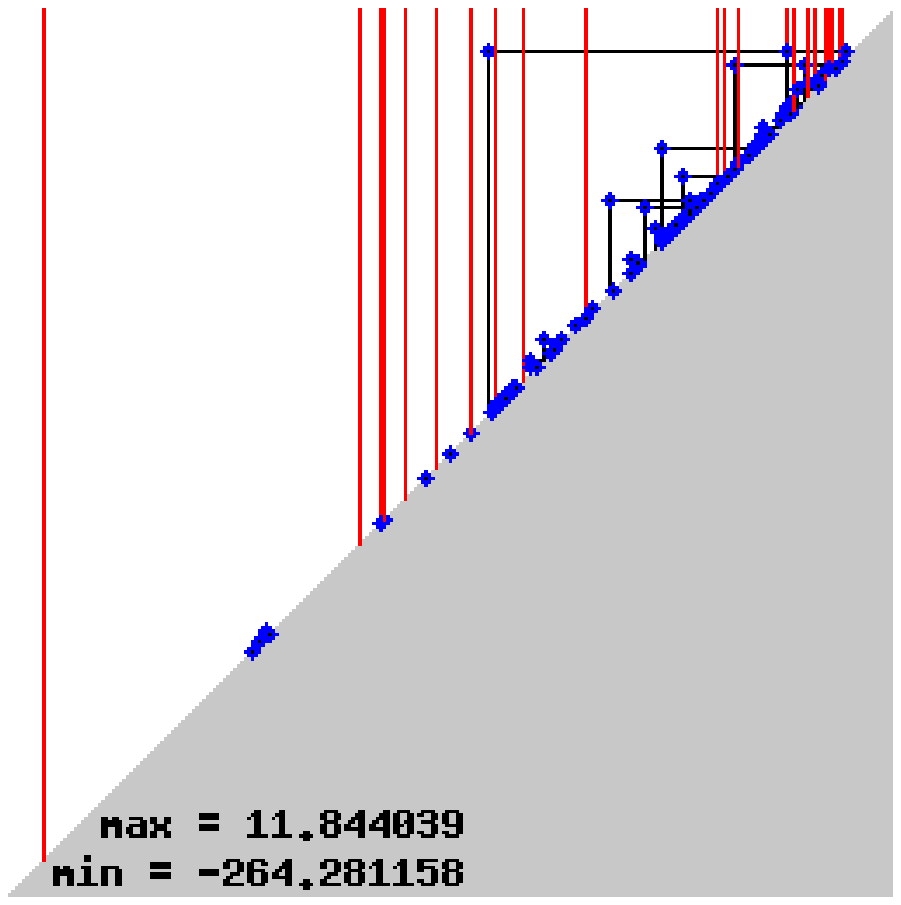} &
\includegraphics[width=0.15\linewidth]{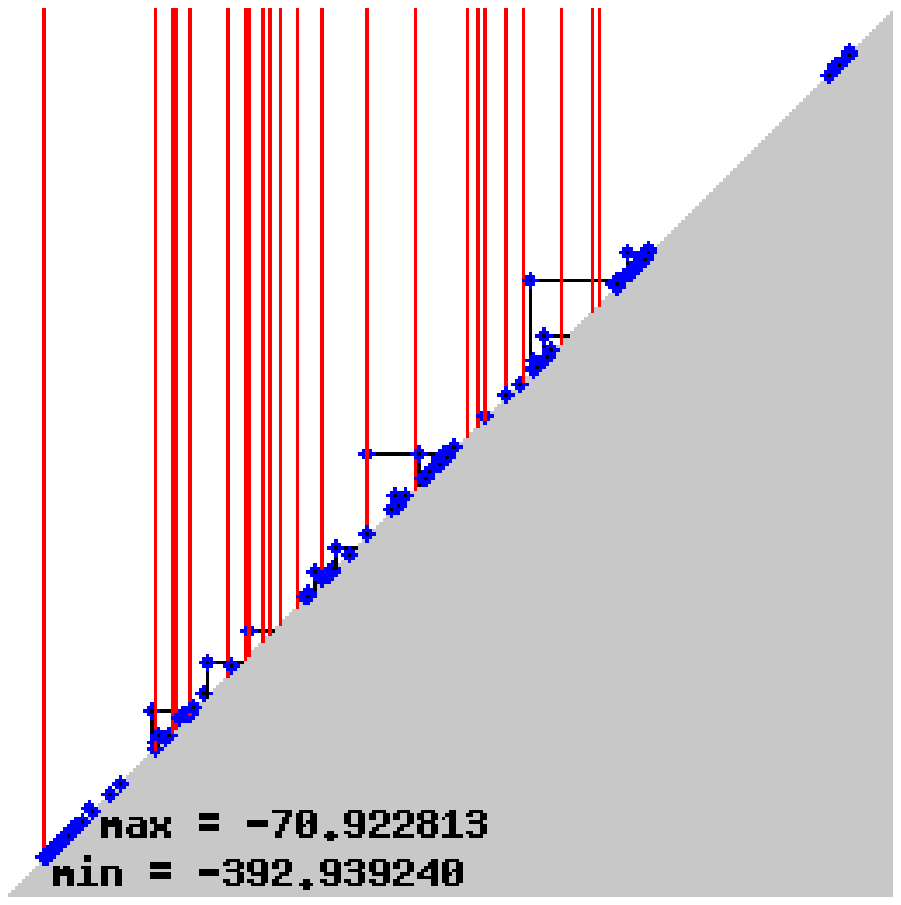} \\
\includegraphics[width=0.115\linewidth]{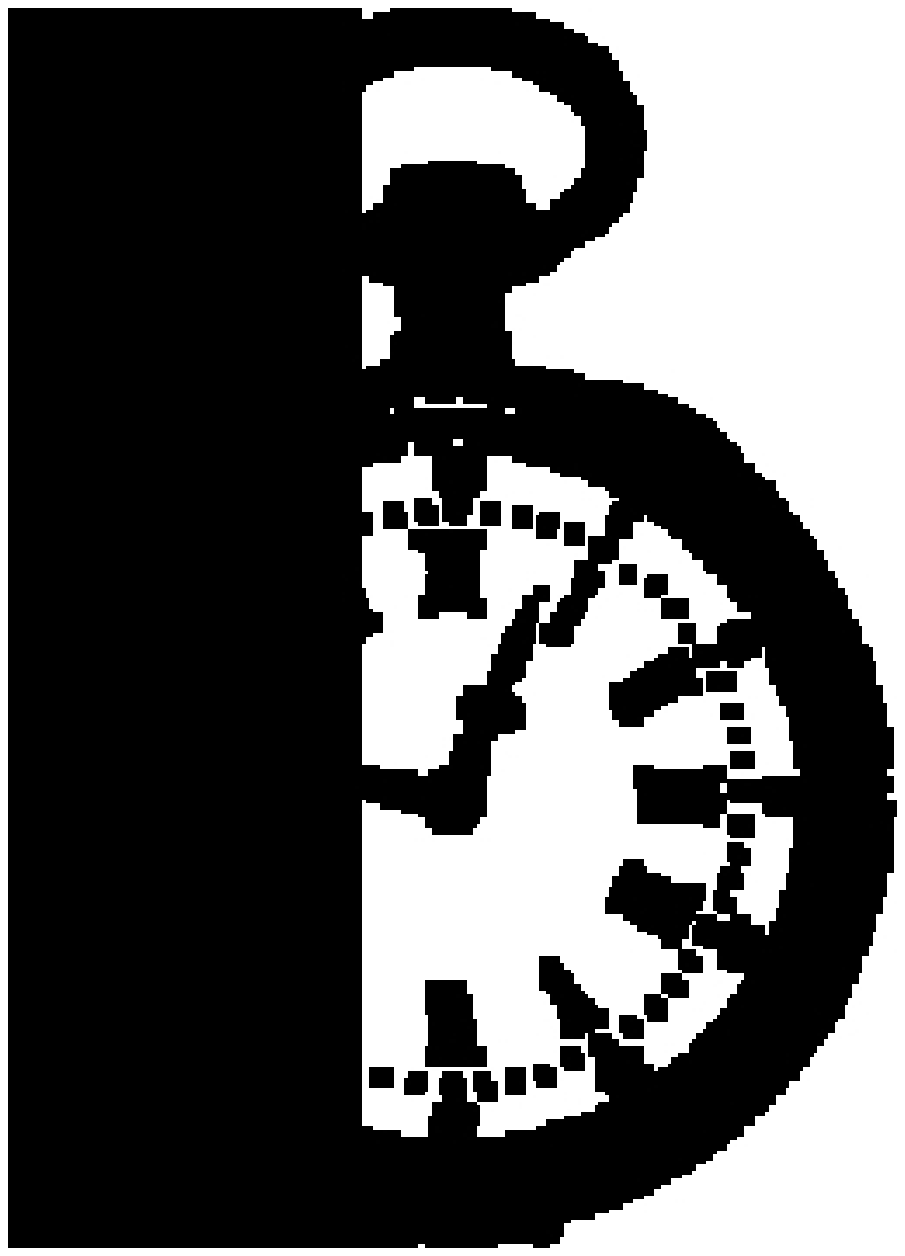} &
\includegraphics[width=0.15\linewidth]{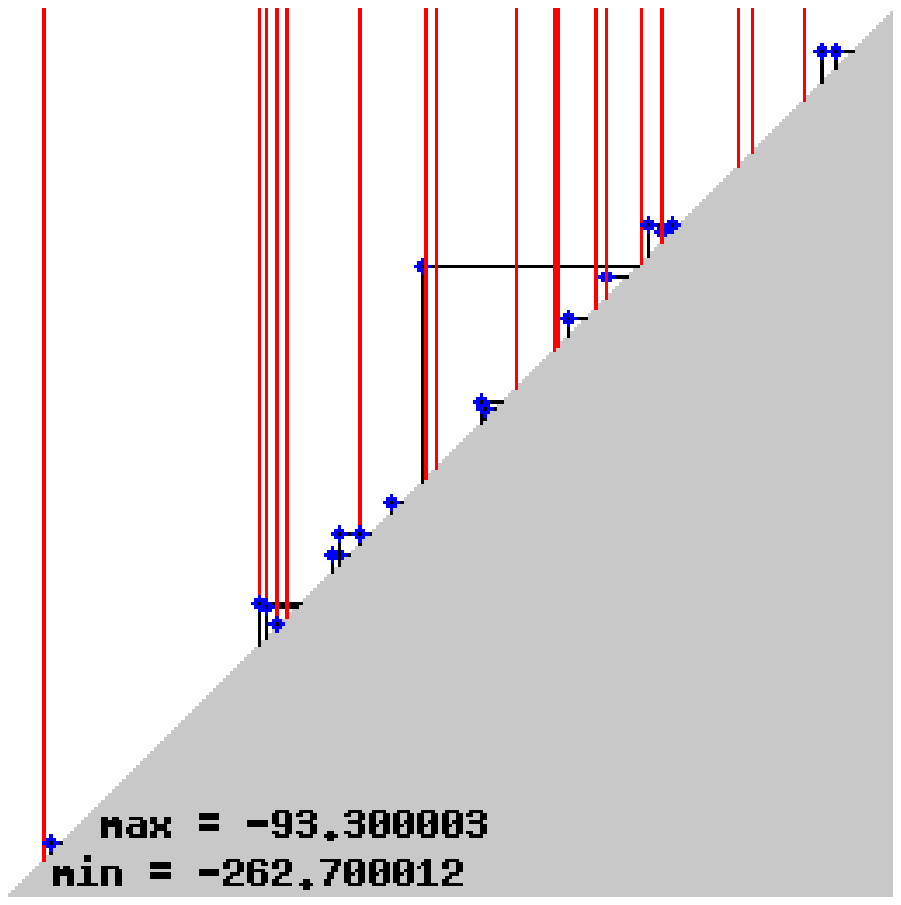} &
\includegraphics[width=0.15\linewidth]{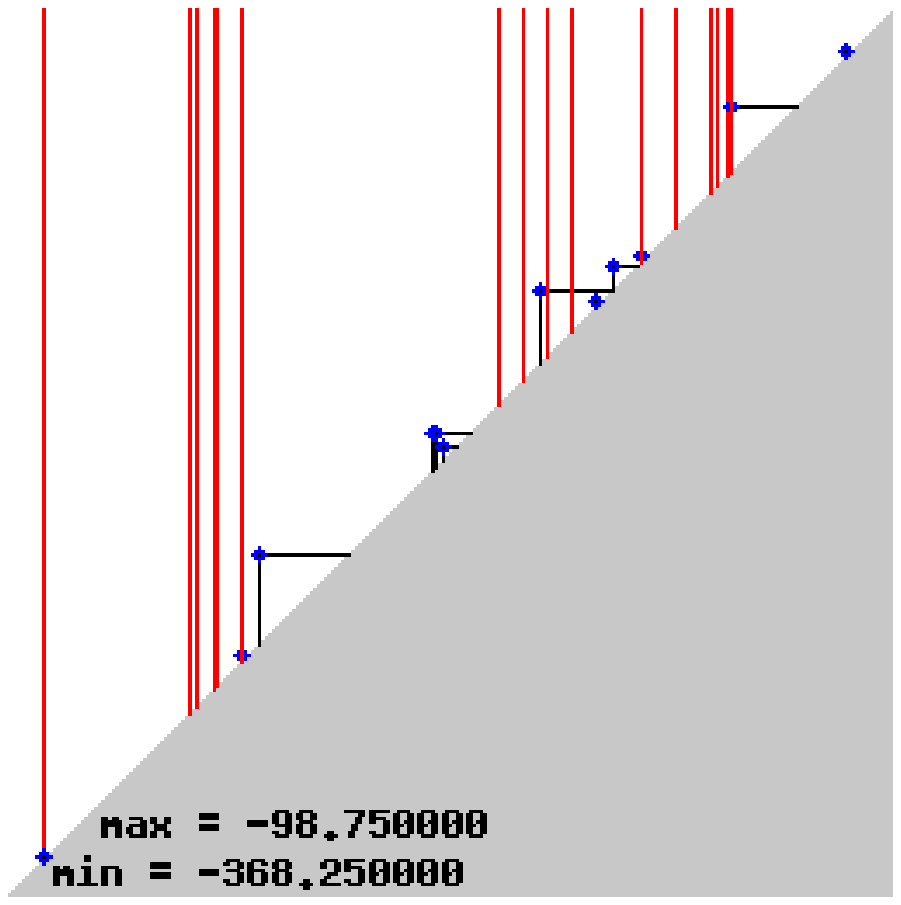} &
\includegraphics[width=0.15\linewidth]{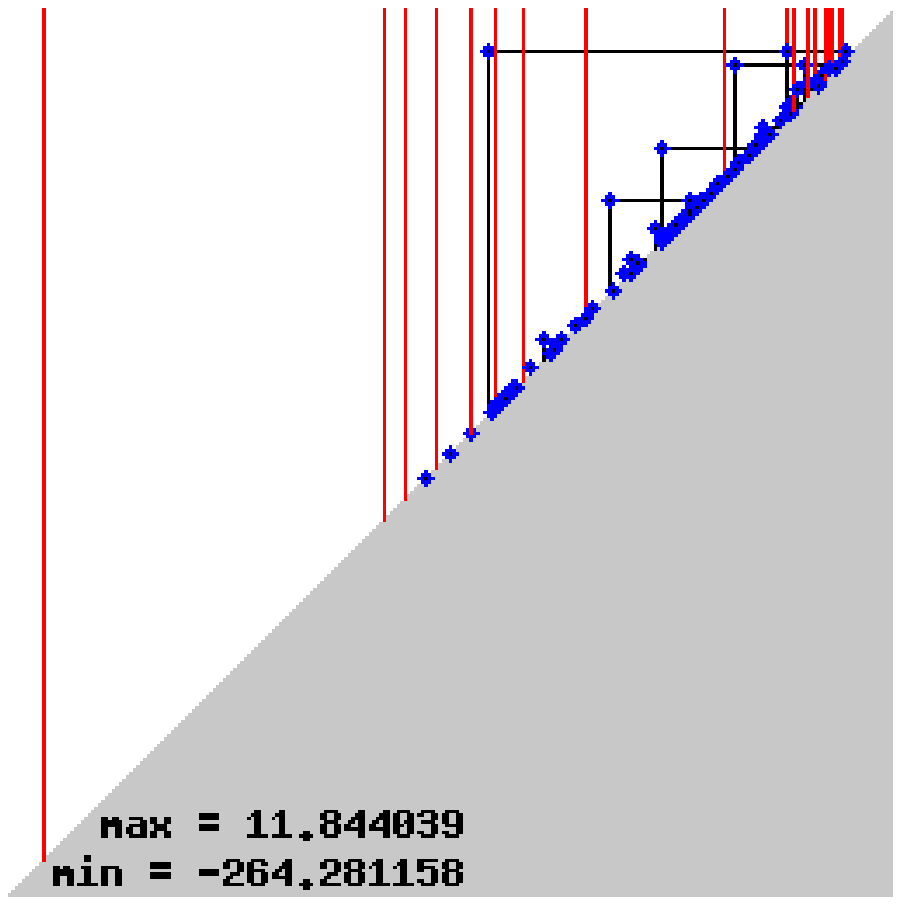} &
\includegraphics[width=0.15\linewidth]{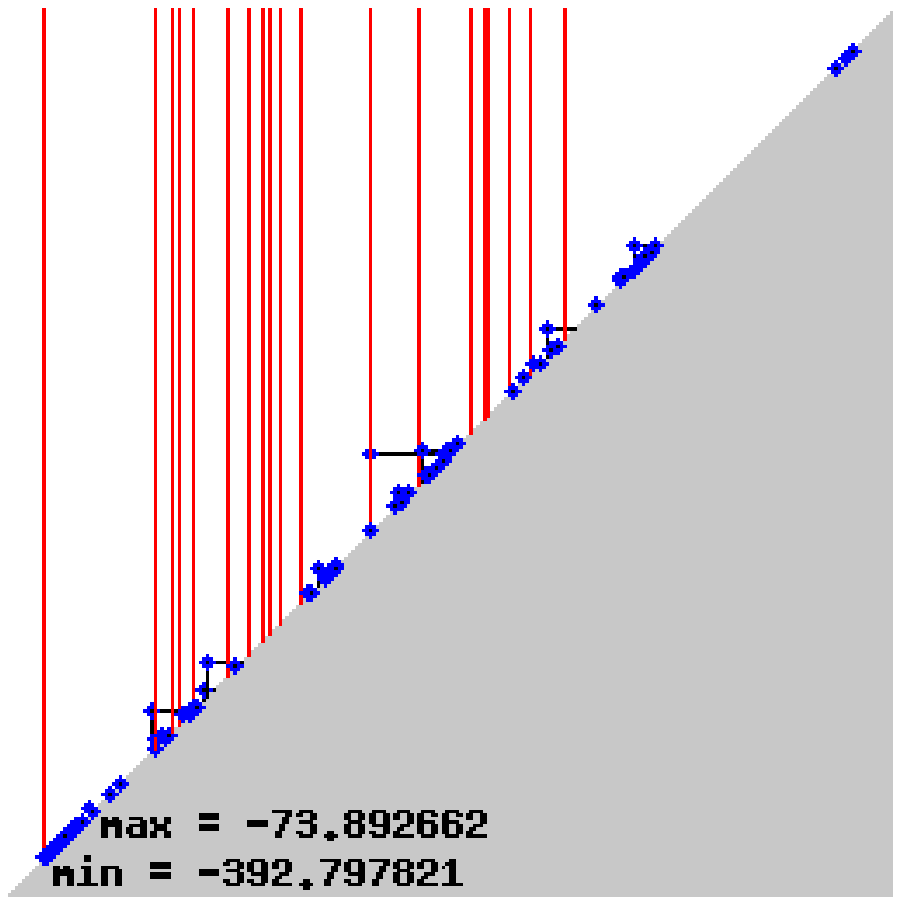} \\
\end{tabular}
\caption{\footnotesize{The first column: (row 1) original ``pocket
watch'' shape, (rows 2--4) occluded from top by $20\%$,  $30\%$,
$40\%$,  (row 5--7) occluded from left by $20\%$,  $30\%$,
$40\%$. From second column onwards: corresponding size functions
related to measuring functions defined as minus distances from
four lines rotated by $0$, $\pi/4$, $\pi/2$, $3\pi/4$, with
respect to the horizontal position.}} \label{pocketHV}
\end{table}
In spite of these topological changes, it can easily be seen that,
given a measuring function, even if the size function related to a
shape and the size function related to the occluded shape are
defined by  different cornerpoints, because of occlusion, a common
subset of these is present, making a partial matching possible
between them.

The second experiment is a recognition test for occluded shapes by
comparison of size functions. In this case the rectangular-shaped
occlusion is not visible (see Table \ref{W20}). When the original
shape is disconnected by the occlusion, we retain only the
connected component of greatest area. With reference to the
notation used in our theoretical setting, here we are considering
$X$ as the original shape, $A$ as the  the occluded shape, and $B$
as the invisible part of $X$.

\begin{table}[htbp]
\begin{tabular}{|c|c|c|c|c|c|}
\hline
\includegraphics[width=0.15\linewidth]{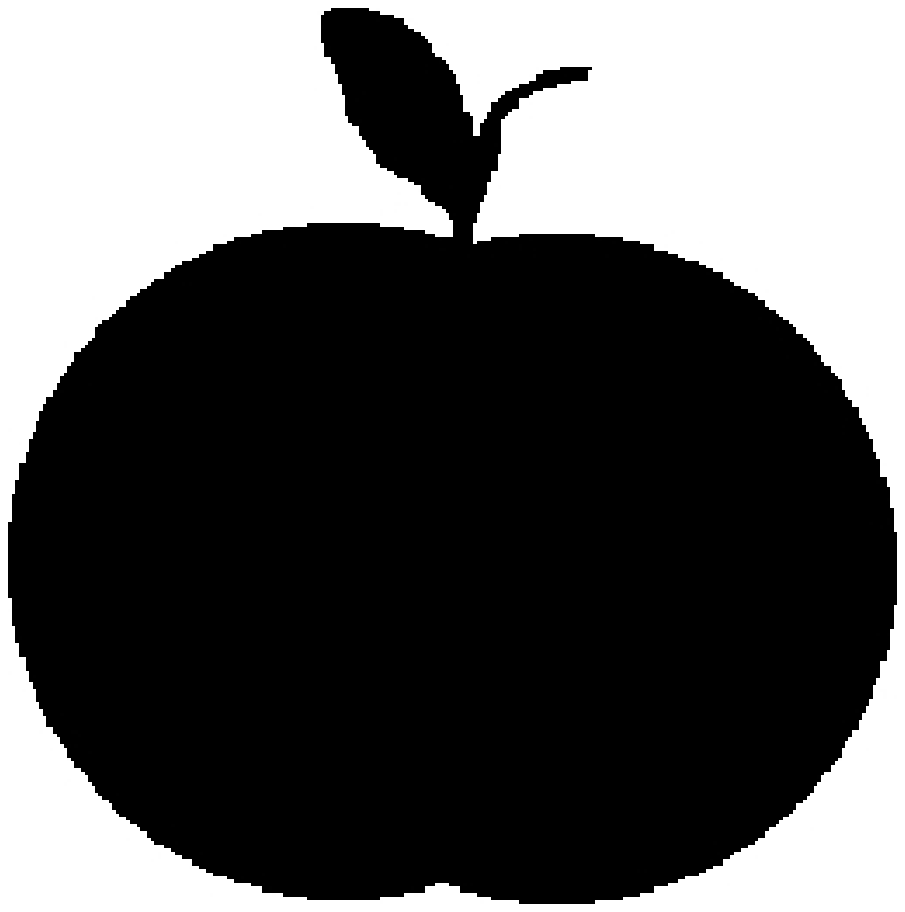} &
\includegraphics[width=0.15\linewidth]{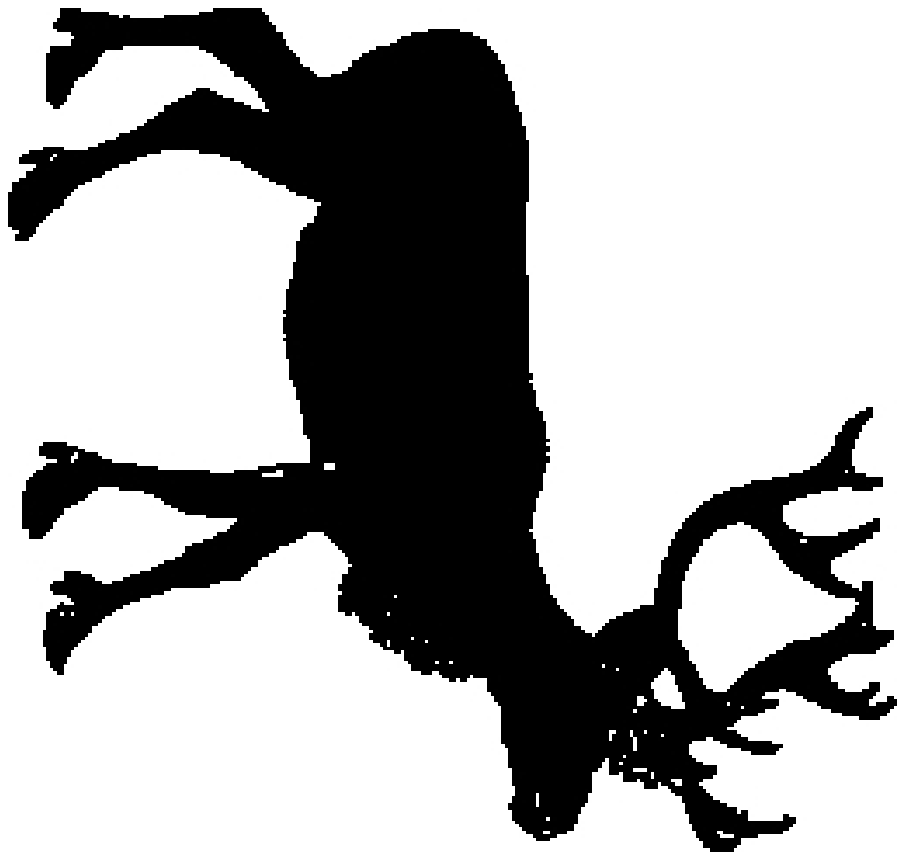} &
\includegraphics[width=0.13\linewidth]{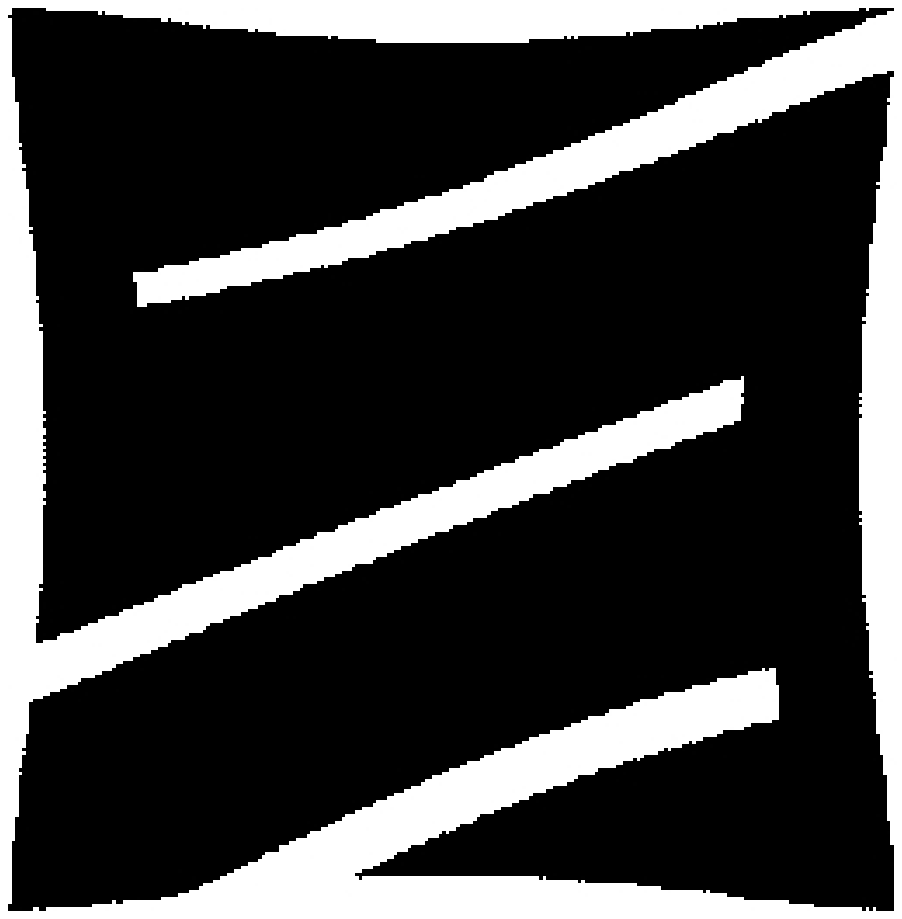} &
\includegraphics[width=0.08\linewidth]{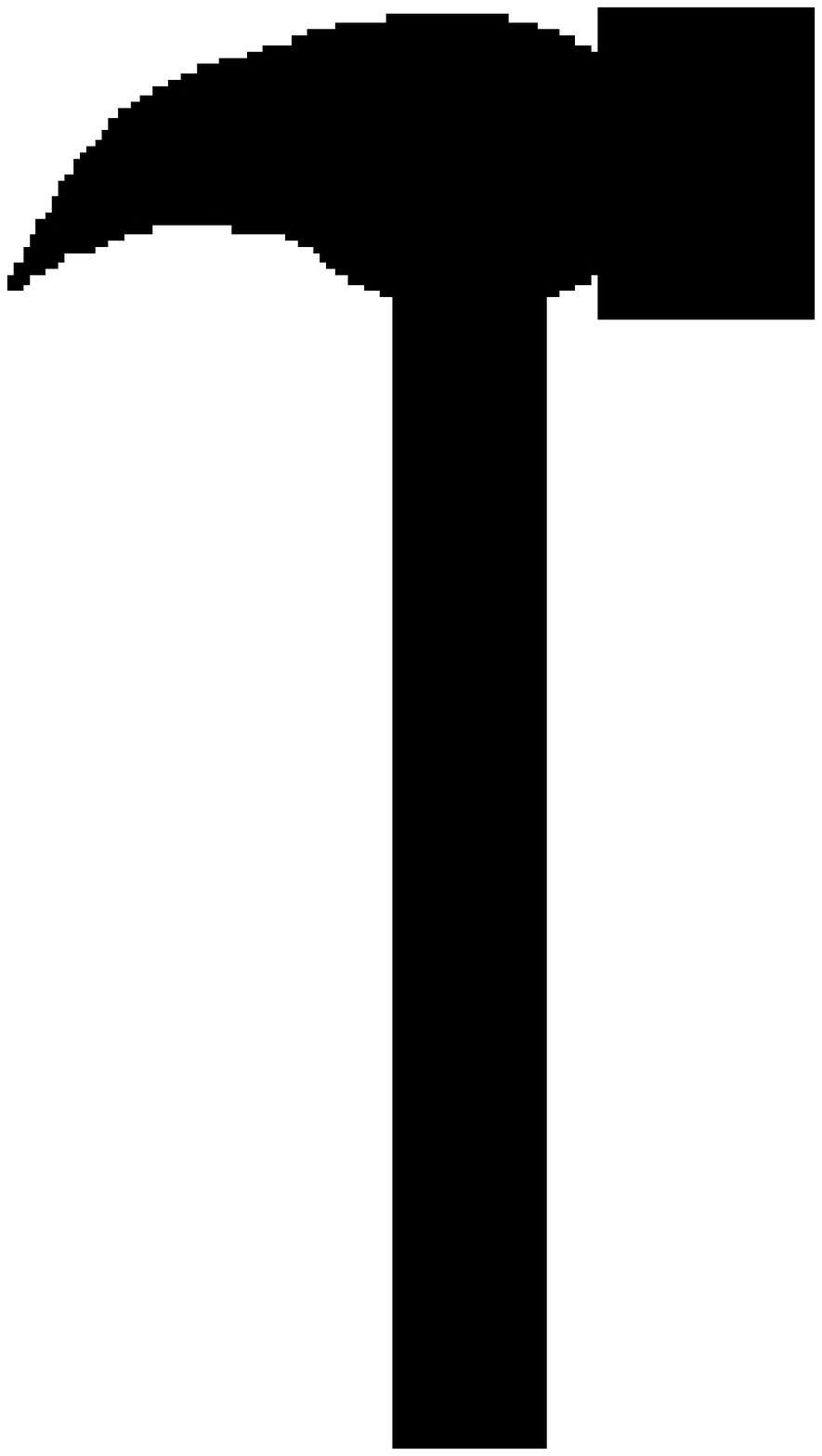} &
\includegraphics[width=0.12\linewidth]{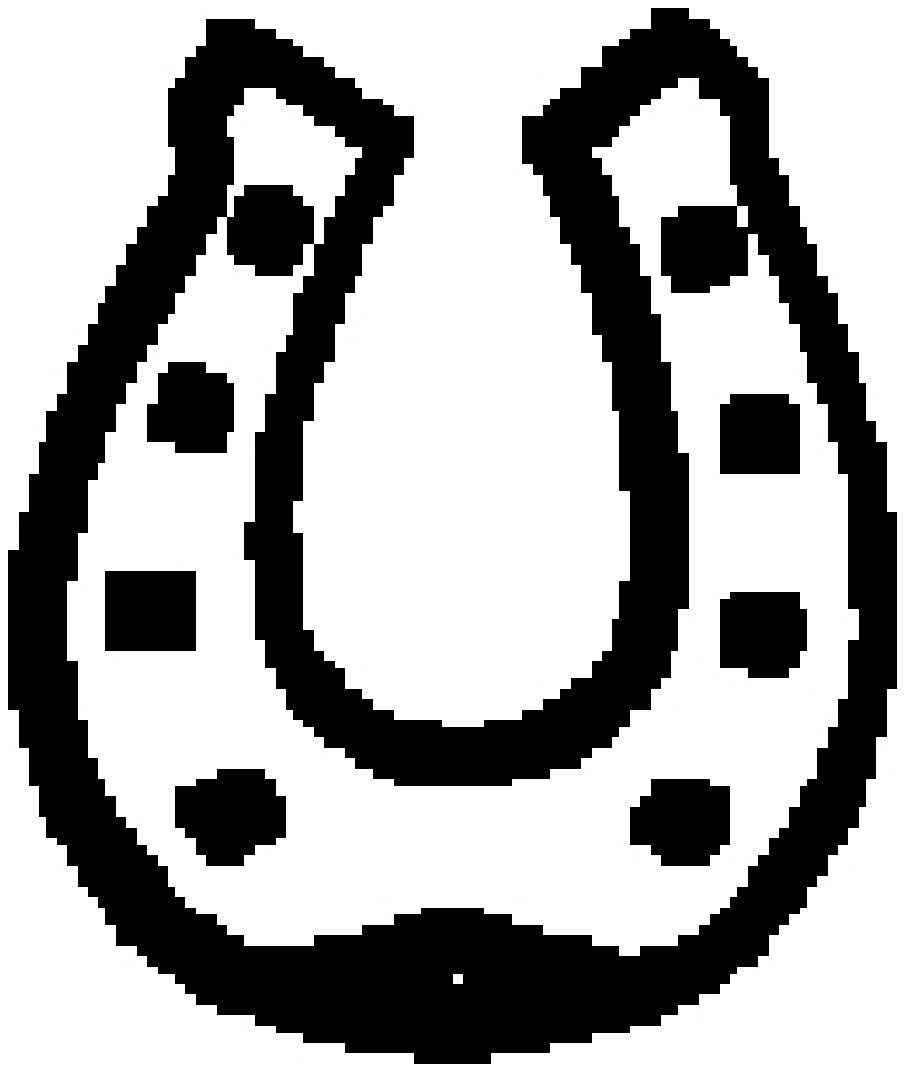} &
\includegraphics[width=0.15\linewidth]{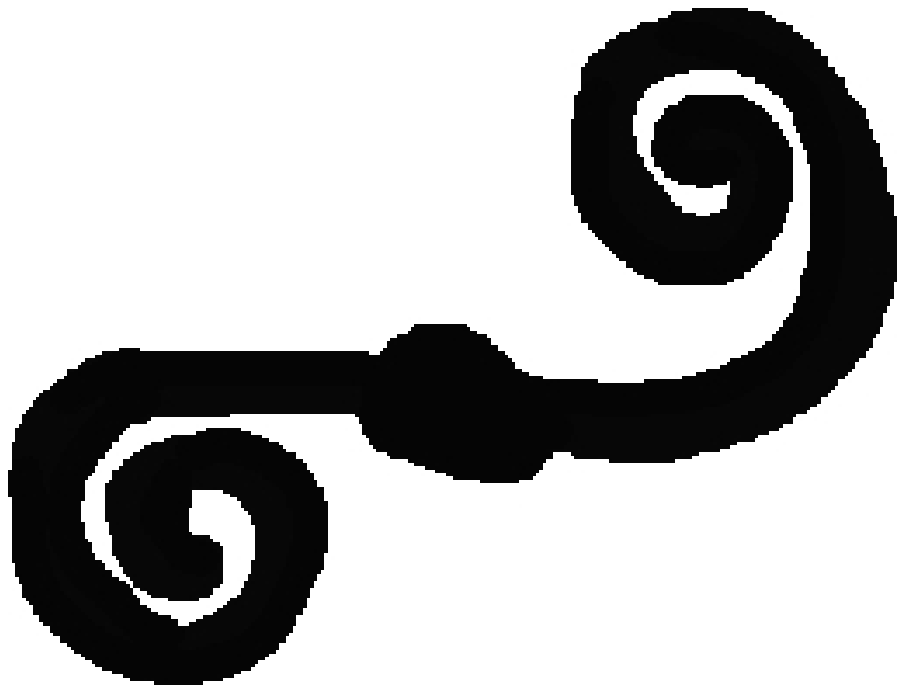} \\
\hline
\includegraphics[width=0.15\linewidth]{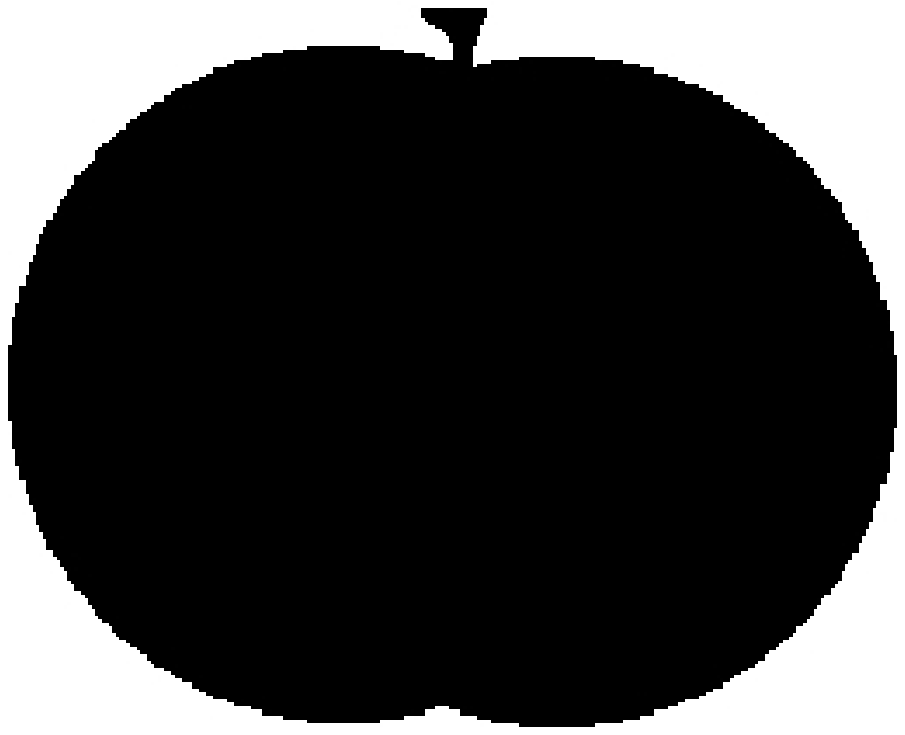} &
\includegraphics[width=0.15\linewidth]{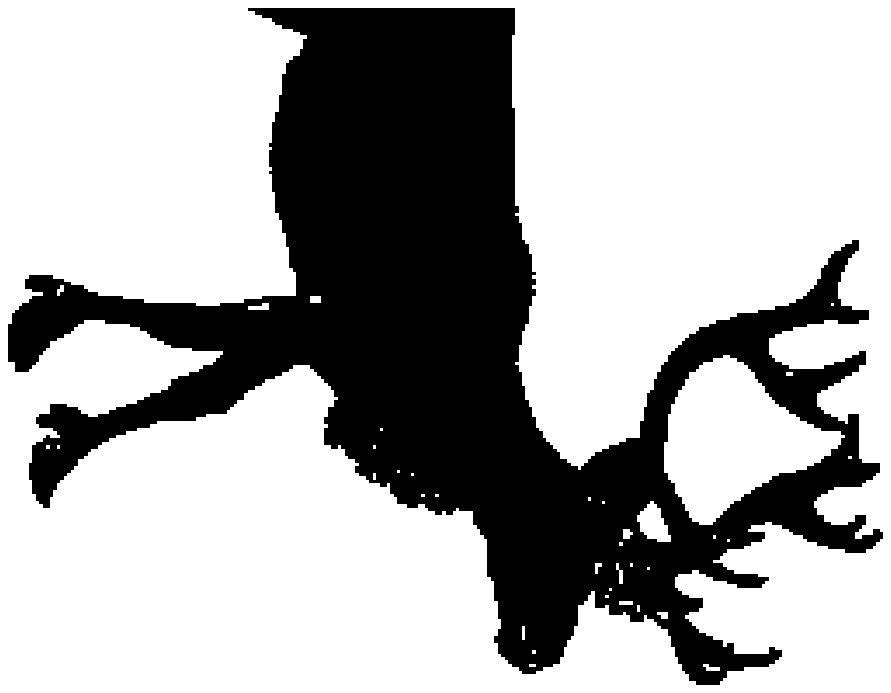} &
\includegraphics[width=0.13\linewidth]{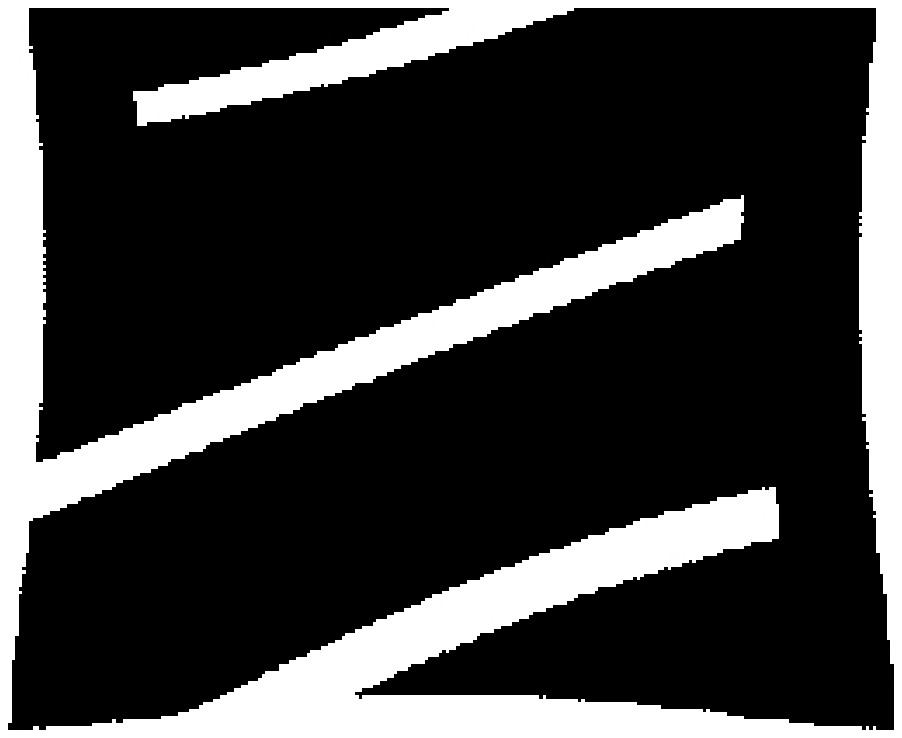} &
\includegraphics[width=0.08\linewidth]{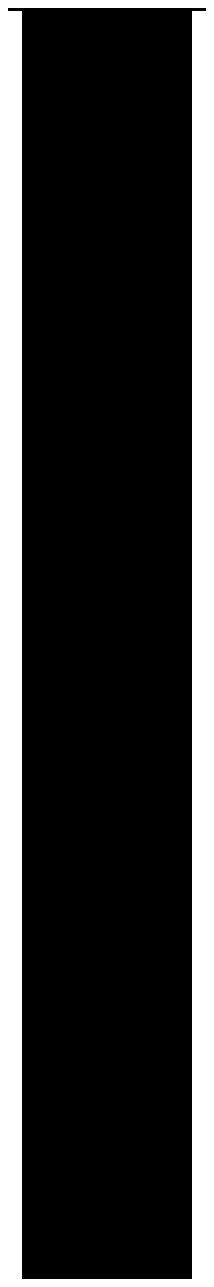} &
\includegraphics[width=0.12\linewidth]{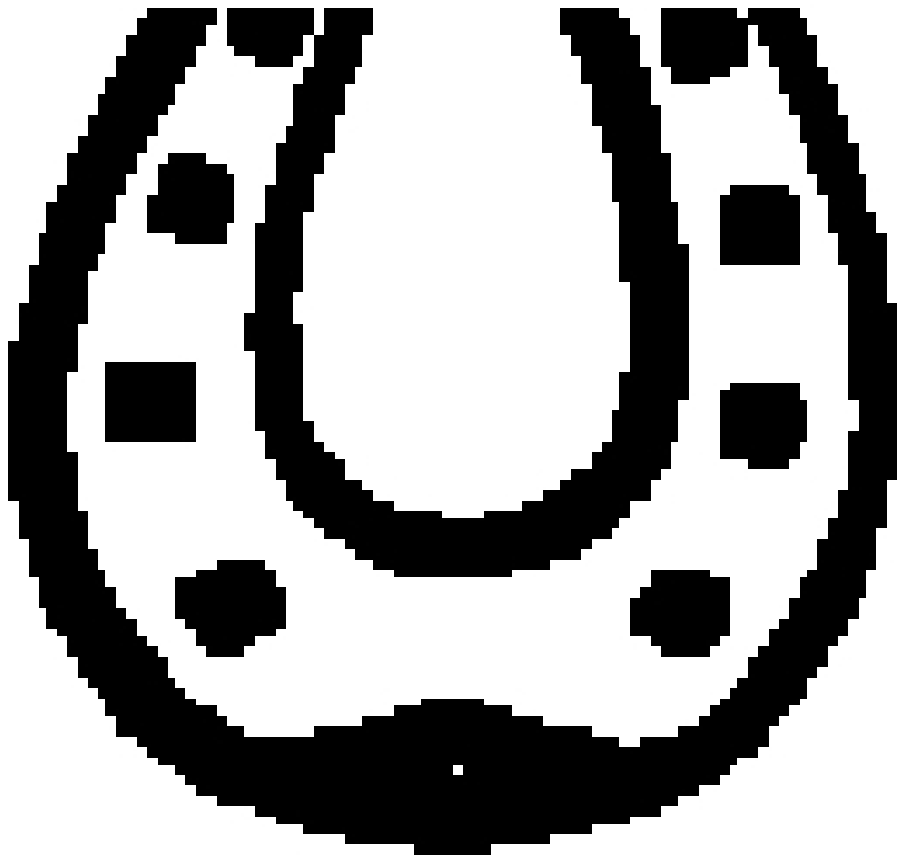} &
\includegraphics[width=0.15\linewidth]{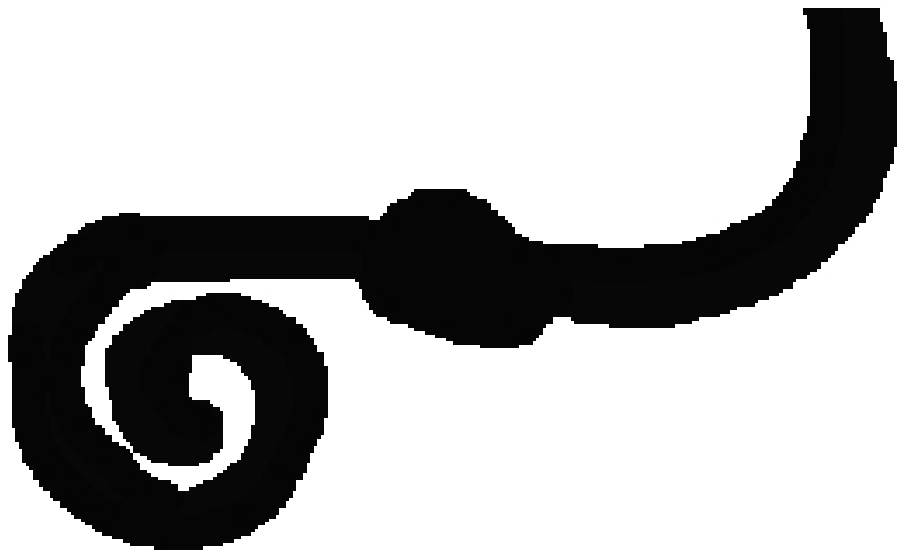} \\
\hline
\includegraphics[width=0.15\linewidth]{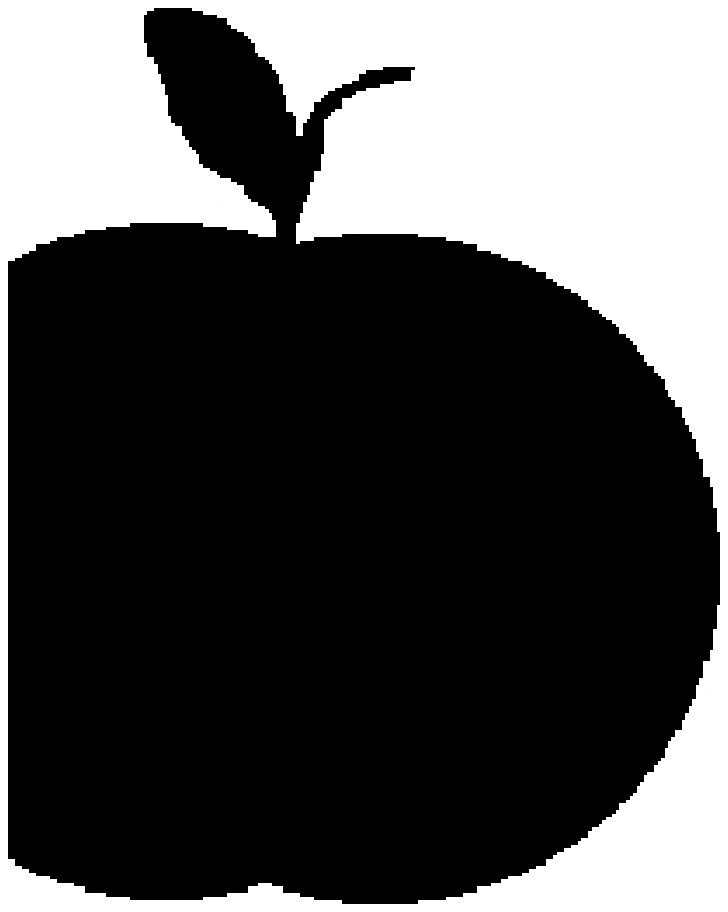} &
\includegraphics[width=0.15\linewidth]{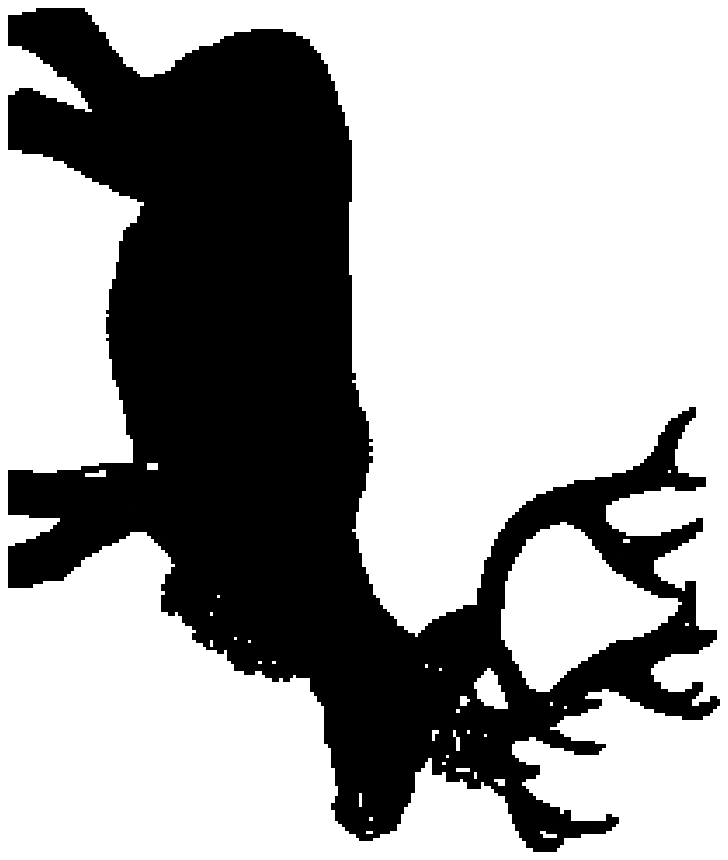} &
\includegraphics[width=0.13\linewidth]{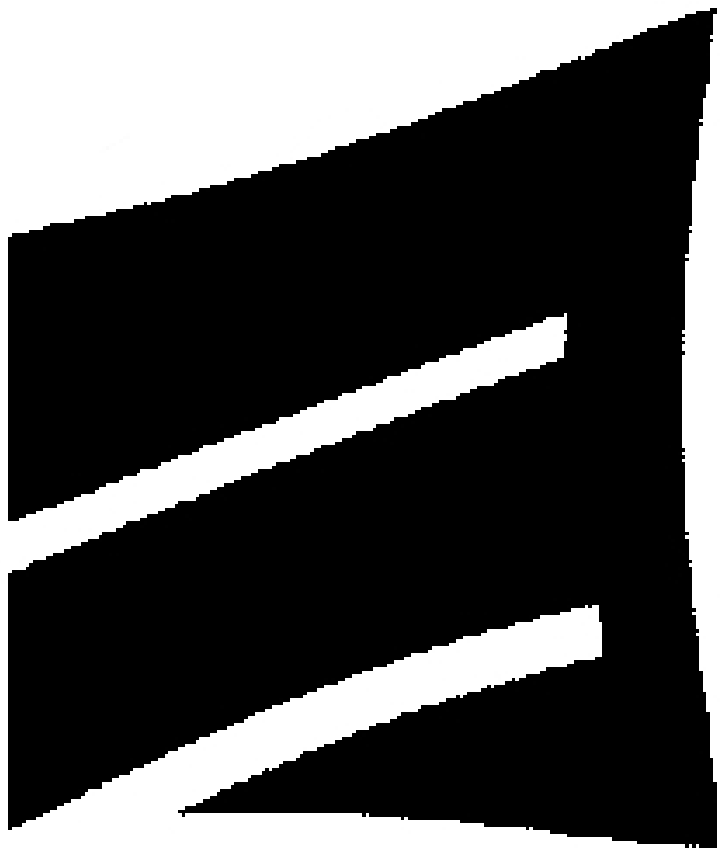} &
\includegraphics[width=0.08\linewidth]{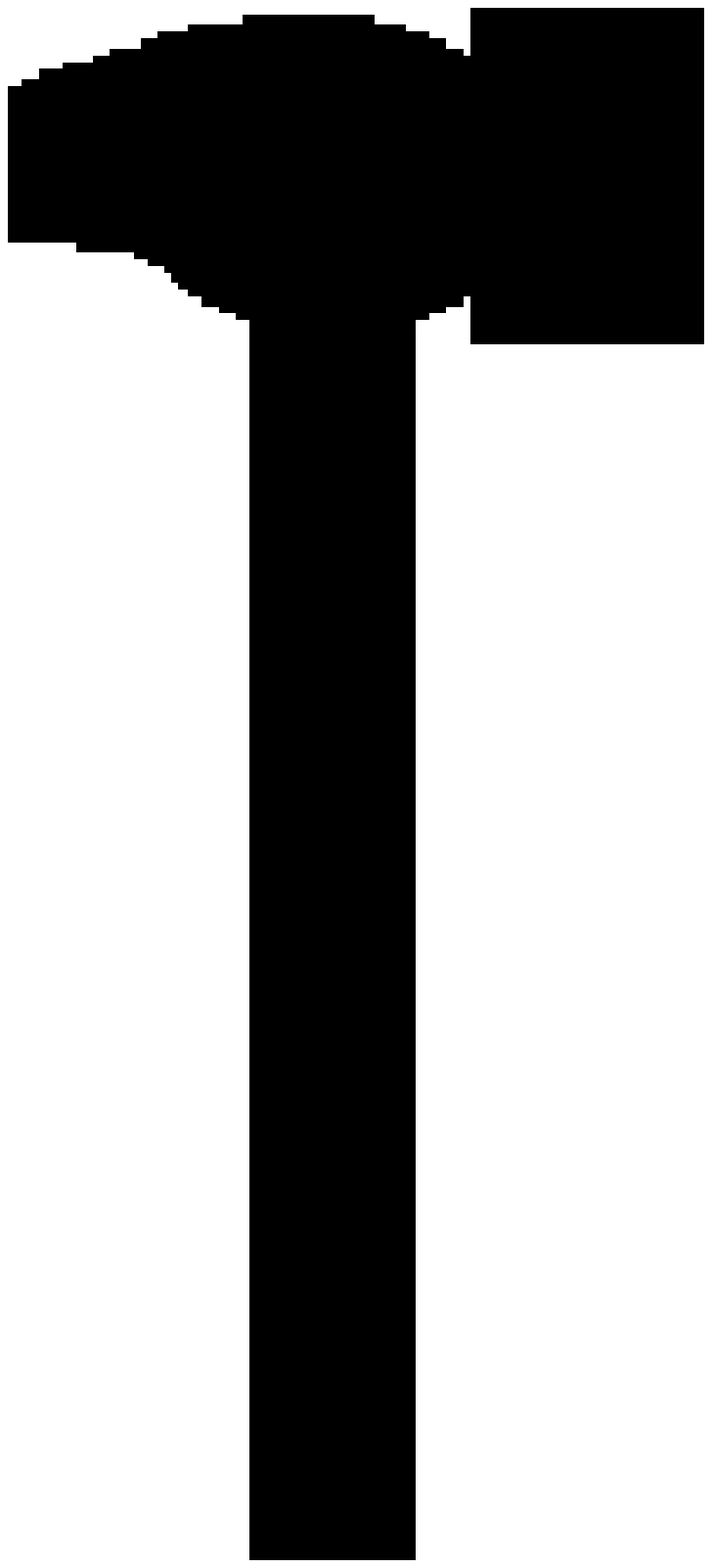} &
\includegraphics[width=0.12\linewidth]{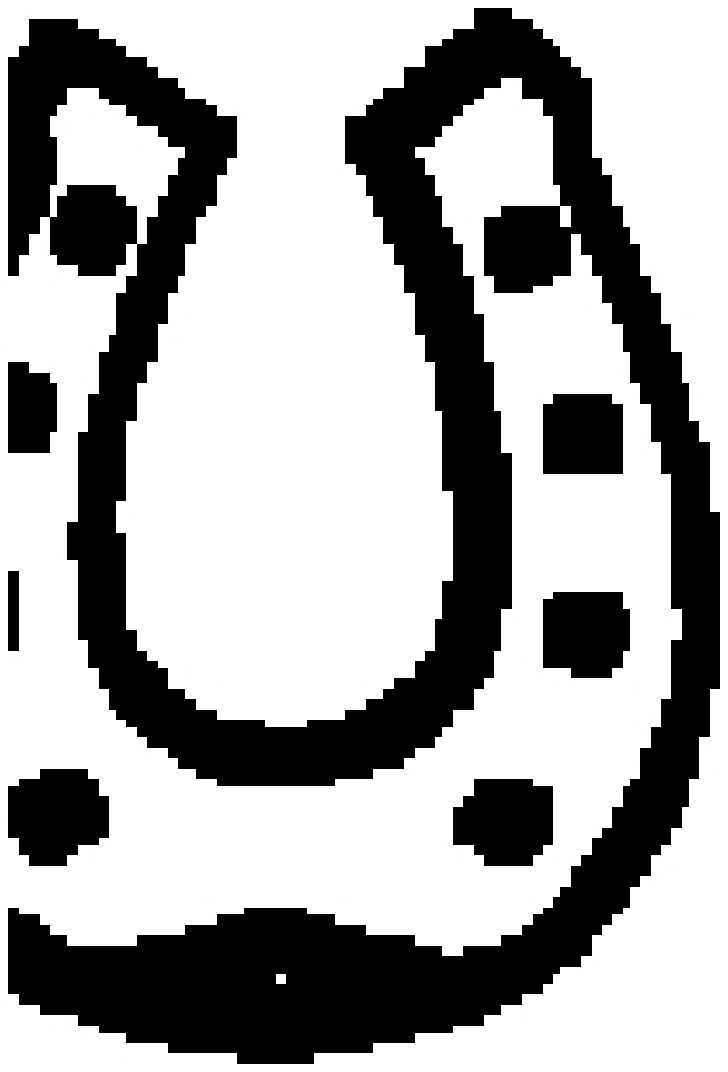} &
\includegraphics[width=0.15\linewidth]{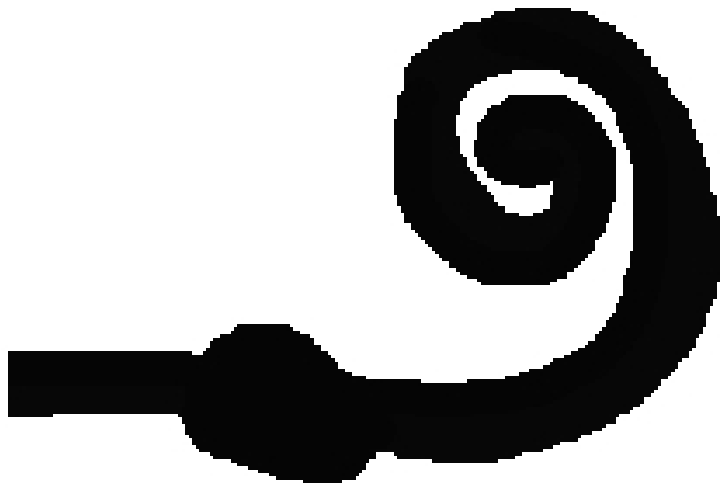} \\
\hline
\end{tabular}
\caption{\footnotesize{The first row: some instances from the
MPEG-7 dataset; the second and third rows: by $20\%$ occluded from
the top and from the left, respectively.}} \label{W20}
\end{table}

By varying the amount of occluded area, we compare  each occluded
shape with each of the $70$ original shapes. Comparison is
performed by calculating the sum of the eight Hausdorff distances
between the sets of cornerpoints for the   size functions
associated with the corresponding eight measuring functions. Then
each occluded shape is assigned to the class of its nearest
neighbor among the original shapes.

In Table \ref{graph}, two graphs describe the rate of correct
recognition in the presence of an increasing percentage of
occlusion. The leftmost graph is related to the occlusion from the
top, the rightmost one is related to the same occlusion from the
left.
\begin{table}[htbp]
\begin{tabular}{cc}
\includegraphics[width=0.5\linewidth]{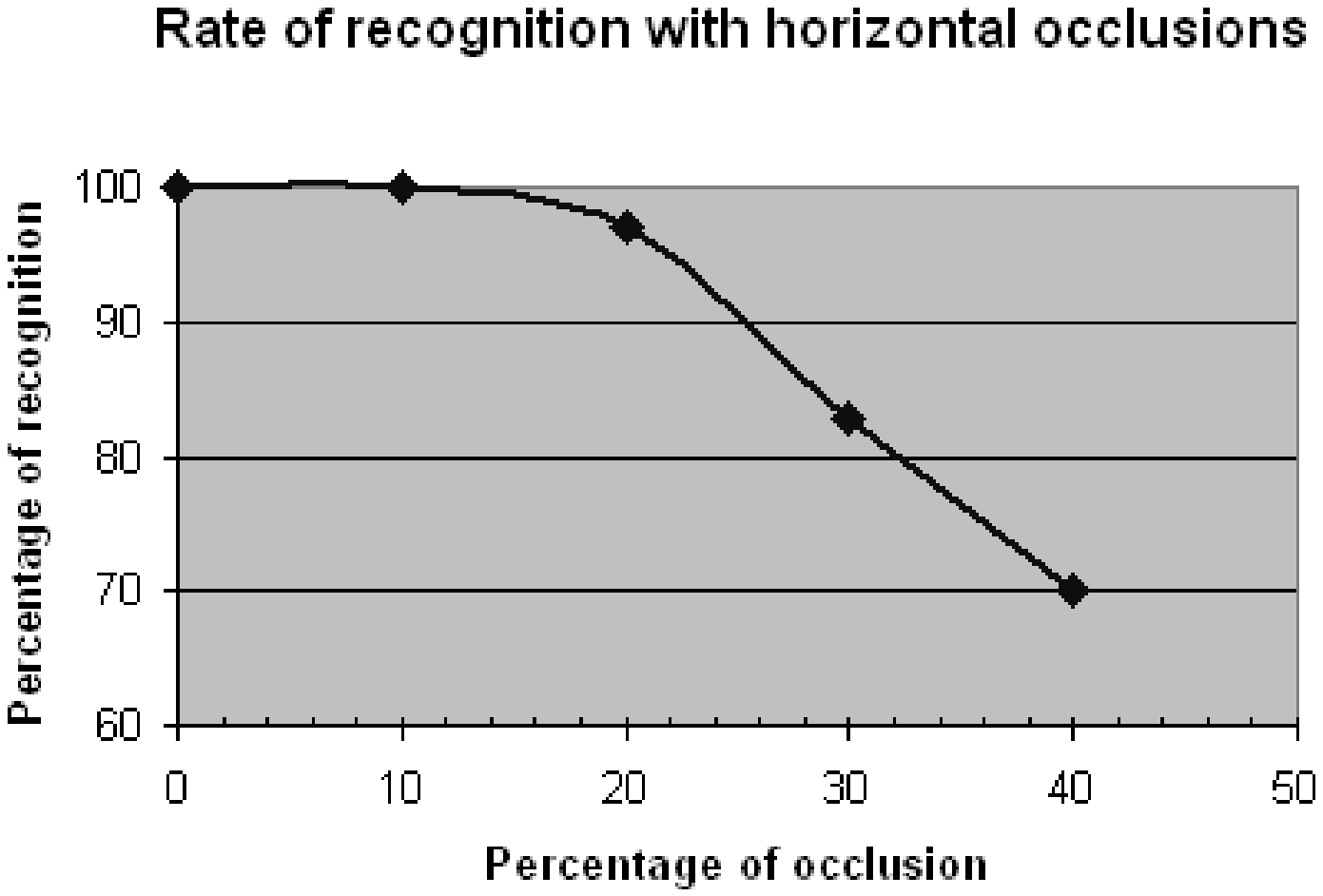} &
\includegraphics[width=0.5\linewidth]{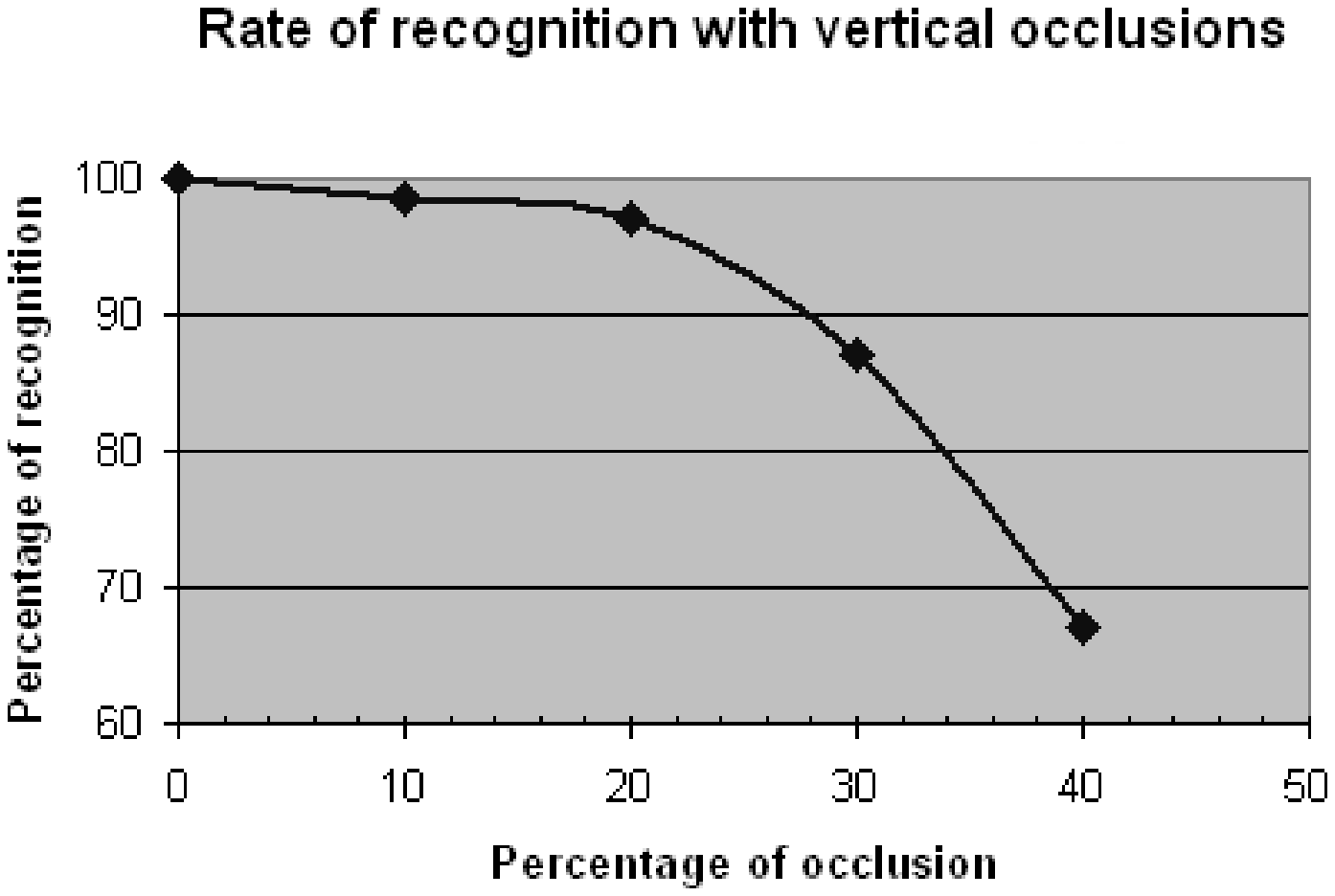} \\
\end{tabular}
\caption{\footnotesize{The leftmost (rightmost, respectively)
graph describes the recognition trend when the occluded area from
the top (left, respectively) increases.}} \label{graph}
\end{table}

\section{Discussion}

The main contribution of this paper is the analysis of the
behavior of size functions in the presence of occlusions.

Specifically we have proved that size functions assess a partial
matching between shapes by showing common subsets of cornerpoints.

Therefore, using size functions, recognition of a shape that is
partially occluded by a foreground shape becomes an easy task.
Indeed, recognition is achieved simply by associating with the
occluded shape that form whose size function presents the largest
common subset of cornerpoints (as in the experiment in Table
\ref{cprecognition}).

In practice, however, shapes may undergo other deformations due to
perspective, articulations, or noise, for instance. As a
consequence of these alterations, cornerpoints may move. Anyway,
small continuous changes in shape induce small displacements in
cornerpoint configuration.

It has to be expected that, when a shape is not only occluded but
also deformed, it will not be possible to find a common subset of
cornerpoints between the original shape and the occluded one,
since the deformation has slightly changed the cornerpoint
position. At the same time, however, the Hausdorff distance
between the size function of the original shape and the size
function of the occluded shape will not need to be small, because
it takes into account the total number of cornerpoints, including,
for example, those inherited from the occluding pattern (as in the
experiment in Table \ref{graph}).

The present work is a necessary  step, in view of the more general
goal of recognizing shapes in the presence of both occlusions and
deformations. The development  of a method to measure partial
matching of cornerpoints that do not exactly overlap but are
slightly shifted, would be desirable.

\renewcommand{\thesection}{A}
\setcounter{equation}{0}  

\section{Appendix}\label{cech}

{\bf \Cech homology.} In this description of \Cech homology
theory, we follow \cite{HY61}.

Given a compact Hausdorff space $X$, let $\Sigma (X)$ denote the
family of all finite coverings of $X$ by open sets. The coverings
in $\Sigma (X)$ will be denoted by script letters $\U$, $\V$,
$\ldots\ $ and the open sets in a covering by italic capitals $U$,
$V$, $\ldots\ $ An element $\U$ of $\Sigma (X)$  may be considered
as a simplicial complex if we define {\em vertex} to mean {\em
open set $U$ in $\U$} and agree that a subcollection $U_0,\ldots,
U_p$ of such vertices constitutes a $p$-simplex if and only if the
intersection $\cap_{i=0}^pU_i$ is not empty. The resulting complex
is known as the {\em nerve of the covering $\U$}.

Given a covering $\U$ in $\Sigma(X)$, we may define the chain
groups $C_p(\U,G)$, the cycle groups $Z_p(\U,G)$, the boundary
groups $B_p(\U,G)$, and the homology groups $H_p(\U,G)$.

The collection $\Sigma(X)$ of finite open coverings of a space $X$
may be partially ordered by refinement. A covering $\V$ refines
the covering $\U$, and we write $\U<\V$, if every element of $\V$
is contained in some element of $\U$. It turns out that
$\Sigma(X)$ is a direct set under refinement.

If $\U<\V$ in $\Sigma(X)$, then there is a simplicial mapping
$\pi_{\U\V}$ of $\V$ into $\U$ called a {\em projection}. This is
defined by taking $\pi_{\U\V}(V)$, $V\in\V$, to be any (fixed)
element $U$ of $\U$ such that $V$ is contained in $U$. There may
be  many projections of $\V$ into $\U$. Each projection
$\pi_{\U\V}$ induces a chain mapping of $C_p(\V,G)$ into
$C_p(\U,G)$, still denoted by $\pi_{\U\V}$, and this in turn
induces homomorphisms $_{*}\pi_{\U\V}$ of $H_p(\V,G)$ into
$H_p(\U,G)$. If $\U<\V$ in $\Sigma(X)$, then it can be proved that
any two projections of $\V$ into $\U$ induce the same homomorphism
of $H_p(\V,G)$ into $H_p(\U,G)$.

Now we are ready to define a \Cech cycle. A {\em $p$-dimensional
\Cech cycle} of the space $X$ is a collection $z_p=\{z_p(\U)\}$ of
$p$-cycles $z_p(\U)$, one for each and every cycle group
$Z_p(\U,G)$, $\U\in\Sigma(X)$, with the property that if $\U<\V$,
then $\pi_{\U\V}z_p(\V)$ is homologous to $z_p(\U)$. Each cycle
$z_p(\U)$ in the collection $z_p$ is called a {\em coordinate of
the \Cech cycle.} Hence a \Cech cycle has a coordinate on every
covering of the space $X$. The addition of \Cech cycles is defined
by setting $ \{z_p(\U)\}+\{z'_p(\U)\}=\{z_p(\U)+z'_p(\U)\}$. The
homology relation is defined as follows. A \Cech cycle
$z_p=\{z_p(\U)\}$ is homologous to zero (or is a {\em bounding
\Cech cycle}) if each coordinate $z_p(\U)$ is homologous to zero
on the covering $\U$, for all $\U$ in $\Sigma(X)$. Then two \Cech
cycles $z_p$ and $z'_p$ are {\em homologous \Cech cycles} if their
difference $z_p-z'_p$ is homologous to zero. This homology
relation is an equivalence relation. The corresponding equivalence
classes $[z_p]$ are the elements of the {\em $p$th \Cech homology
group $\check H(X,G)$}, where $[z_p]+[z'_p]=[z_p+z'_p]$.

Let us now see how continuous mappings between spaces induce
homomorphisms on \Cech homology groups. Let $f:X\rightarrow Y$ be
a continuous mapping of $X$ into $Y$, where both $X$ and $Y$ are
compact Hausdorff spaces. Then each open covering $\U\in
\Sigma(Y)$ can be associated with an open covering $f^{-1}(\U)\in
\Sigma(X)$. In particular, we may define a simplicial mapping
$f_\U$ of $f^{-1}(\U)$ into $\U$ by setting $f_\U(f^{-1}(U))=U$
for each non-empty set $f^{-1}(U)$, $U\in \U$. If $\U<\V$, then
the maps $f_\U$ and $f_\V$ commute with the projection of
$f^{-1}(\V)$ into $f^{-1}(\U)$ and the projection of $\V$ into
$\U$. Now we can define the {\em homomorphism induced by the
continuous mapping $f$} as the map $f_*:\check H_p(X,G)\rightarrow
\check H_p(Y,G)$ by setting, for every $z_p\in \check H_p(X,G)$,
$f_*(z_p)=\{f_\U(z_{p}(f^{-1}(\U))\}$.

It is also possible to define relative \Cech cycles in the
following way. If $A$ is a closed subset of $X$, we say that a
{\em simplex} $\langle U_0,\ldots ,U_p\rangle$ of $\U\in
\Sigma(X)$ is {\em on} $A$ if and only if the intersection
$\cap_{i=0}^pU_i$ meets $A$. The collection of all simplexes of
$\U$ on $A$ is a closed subcomplex $\U_A$ of $\U$. Therefore, we
may consider the relative simplicial groups $H_p(\U,\U_A, G)$ over
a coefficient group $G$. Since for $\V>\U$ in $\Sigma(X)$, the
projection $\pi_{\U\V}$ of $\V$ into $\U$ projects $\V_A$ into
$\U_A$, each projection $\pi_{\U\V}$ is a simplicial mapping of
the pair $(\V,\V_A)$ into the pair $(\U,\U_A)$. We may define a
{\em $p$-dimensional \Cech cycle} of the space $X$ {\em relative
to $A$} as a collection $z_p=\{z_p(\U)\}$ of $p$-chains $z_p(\U)$,
$\U\in \Sigma (X)$, with the property that $z_p(\U)$ is a
$p$-cycle on $\U$ relative to $\U_A$, and if $\U<\V$, then
$\pi_{\U\V}z_p(\V)$ is homologous to $z_p(\U)$ relative to $\U_A$.
Evidently, $\check H_p(X,\emptyset)= \check H_p(X)$ and $\check
H_p(X,X)= 0$, for each integer $p$.

\renewcommand{\thesection}{B}
\setcounter{equation}{0}  
\section{Appendix}\label{exactness}

{\bf Exactness axiom in \Cech homology and Mayer-Vietoris
sequence.}

\Cech homology theory has all the axioms of homology theories
except the exactness axiom. However, if some assumptions are made
on the considered spaces and coefficients, this axiom also holds.
Indeed, in \cite{EiSt52}, Chap. IX, Thm. 7.6 (see also
\cite{Ke61}), we read the following result concerning the sequence
of a pair $(X,A)$

\begin{eqnarray*}
\begin{array}{cccccccccccc}
\cdots\rightarrow\!\!\!&\!\!\!\check{H}_{p+1}(X,
A)\!\!\!&\!\!\!\stackrel{\partial}{\rightarrow}\!\!\!&\!\!\!\check{H}_p(A)\!\!\!&\!\!\!\stackrel{i_*}{\rightarrow}\!\!\!&\!\!\!\check{H}_p(X)\!\!\!&\!\!\!\stackrel{j_*}{\rightarrow}\!\!\!&\!\!\!\check{H}_{p}(X,
A)\!\!\!&\!\!\!\rightarrow\!\!\!&\!\!\!
\cdots\!\!\!\rightarrow\!\!\!&\!\!\!\check{H}_0(X,
A)\!\!\!\rightarrow\!\!\!&\!\!\!0
\end{array}
\end{eqnarray*}
which, in general, is only of order $2$ (this means that the
composition of any two successive homomorphisms of the sequence is
zero, i.e. $\I\subseteq \ker$).

\begin{thm}{\rm (\cite{EiSt52}, Chap. IX, Thm. 7.6)}\label{pair}
If $(X,A)$ is compact and $G$ is a vector space over a field, then
the homology sequence of the pair $(X,A)$ is exact.
\end{thm}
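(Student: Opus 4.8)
The plan is to use the presentation of \Cech homology recalled in Appendix \ref{cech}: for a compact pair, $\check H_p(X,A)$ is the inverse limit, over the directed set $\Sigma(X)$ of finite open covers ordered by refinement, of the relative simplicial homology groups $H_p(\U,\U_A,G)$ of the nerves, the bonding maps being the homomorphisms ${}_{*}\pi_{\U\V}$ induced by the projections. The whole argument reduces the exactness of the \Cech sequence to the exactness of the simplicial sequences, one for each cover, together with the vanishing of the derived limit functor $\varprojlim^1$.

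First I would observe that, since $X$ is compact, every $\U\in\Sigma(X)$ is a finite cover, so its nerve is a finite simplicial complex. For such complexes ordinary simplicial homology satisfies the exactness axiom, so for each $\U$ the homology sequence of the pair $(\U,\U_A)$,
\[
\cdots\rightarrow H_p(\U_A)\rightarrow H_p(\U)\rightarrow H_p(\U,\U_A)\stackrel{\partial}{\rightarrow} H_{p-1}(\U_A)\rightarrow\cdots,
\]
is exact. These sequences are compatible with the projections $\pi_{\U\V}$, hence they assemble into an inverse system of long exact sequences indexed by $\Sigma(X)$, whose inverse limit is, by definition, the \Cech sequence of $(X,A)$.

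Next I would invoke the behaviour of $\varprojlim$ on short exact sequences of inverse systems. Breaking each long exact simplicial sequence into short exact sequences of kernels and images, and applying the standard six-term exact sequence relating $\varprojlim$ and $\varprojlim^1$, one sees that $\varprojlim$ preserves exactness provided the $\varprojlim^1$ of each system of kernels (equivalently, images) vanishes. This is exactly where the hypothesis on $G$ enters: when $G$ is a vector space over a field, each $H_p(\U,G)$ is a \emph{finite-dimensional} vector space, because the nerve of $\U$ is a finite complex, and every subsystem involved consists of finite-dimensional vector spaces as well. For an inverse system of finite-dimensional vector spaces the images of the bonding maps into any fixed term form a descending chain of subspaces of a finite-dimensional space, hence stabilise; thus the system satisfies the Mittag-Leffler condition and $\varprojlim^1=0$.

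Feeding $\varprojlim^1=0$ into the six-term sequences then yields exactness of the inverse-limit sequence, which is precisely the asserted exactness of the homology sequence of the pair $(X,A)$. I expect the $\varprojlim^1$-vanishing step to be the crux: in general the \Cech sequence is only of order $2$, as noted just before the statement, and without the compactness-plus-field hypothesis the derived limit can be non-zero and exactness genuinely fails. Here compactness forces finite nerves and hence finite-dimensional homology, while the field assumption makes Mittag-Leffler applicable; together they remove the only obstruction. This is the content of \cite{EiSt52}, Chap. IX, Thm. 7.6.
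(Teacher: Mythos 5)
The first thing to note is that the paper offers no proof of this statement: Theorem \ref{pair} is imported verbatim from \cite{EiSt52} (Chap.~IX, Thm.~7.6), and the closest the authors come to an argument of this type is the proof of Theorem \ref{MVrel}, where the key step is again black-boxed as ``the inverse limit of an inverse system of exact lower sequences of vector spaces over a field is exact'', with a pointer to \cite{EiSt52}, Chap.~VIII, Thm.~5.7 and to \cite{Ke61}. So you are supplying an argument where the paper supplies a citation. Your architecture --- realize $\check H_p(X,A)$ as the inverse limit over $\Sigma(X)$ of the groups $H_p(\U,\U_A,G)$ of the nerves, observe that each nerve is a finite complex so that each simplicial homology sequence is exact, and then show that passage to the limit preserves exactness --- is indeed the architecture of the quoted references, and you correctly locate the crux in the limit step: it is exactly there that exactness fails for general coefficients.

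Two points in your execution do not hold up as written. The lesser one: $H_p(\U,G)$ is finite-dimensional only when $G$ is finite-dimensional over its field $F$; for a general coefficient vector space you should write $H_p(\U,\U_A,G)\cong H_p(\U,\U_A,F)\otimes_F G$ (no torsion term over a field) and note that the images of the bonding maps are of the form $W\otimes_F G$ with $W$ ranging over a descending chain of subspaces of the finite-dimensional space $H_p(\U,\U_A,F)$, which therefore stabilizes. The serious one: the directed set $\Sigma(X)$ of finite open coverings of a compact Hausdorff space is uncountable in general, while the implication ``Mittag-Leffler implies $\varprojlim^1=0$'' is a theorem about inverse systems indexed by $\N$ (or by a directed set with a countable cofinal subset). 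Over an arbitrary directed set it fails: there are inverse systems of non-empty sets with surjective bonding maps and empty limit, and correspondingly surjective systems of modules with non-vanishing $\varprojlim^1$. So the step you yourself identify as the crux is justified by a criterion that does not apply to the index set at hand. The classical repair --- which is what \cite{EiSt52}, Chap.~VIII, Thm.~5.7 actually does for compact coefficient groups, and what \cite{Ke61} does for vector spaces over a field --- is a (linear) compactness argument: finite-dimensional vector spaces are linearly compact, an inverse system of non-empty closed affine subspaces of linearly compact spaces has non-empty limit over \emph{any} directed set, and this yields directly the surjectivity statements needed for exactness of the limit sequence, with no countability hypothesis and no $\varprojlim^1$ formalism. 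With that substitution your outline becomes the standard proof; without it, the argument has a genuine gap at its central step.
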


It follows that, if $(X,A)$ is compact and $G$ is a vector space
over a field, \Cech homology satisfies all the axioms of homology
theories, and therefore all the general theorems in Chap. I of
\cite{EiSt52} also hold for \Cech homology. In particular, using
\cite{EiSt52}, Chap. I, Thm. 15.3, we have the exactness of the
Mayer-Vietoris sequence in \Cech homology:

\begin{thm}\label{MV}
Let $(X,A, B)$ be a compact proper triad and $G$ be a vector space
over a field. The Mayer-Vietoris sequence of  $(X,A,B)$  with
$X=A\cup B$
\begin{eqnarray*}
\begin{array}{cccccccccccc}
\cdots\rightarrow\!\!\!&\!\!\!\check{H}_{p+1}(X)\!\!\!&\!\!\!\stackrel{\Delta}{\rightarrow}\!\!\!&\!\!\!\check{H}_p(A\cap
B)\!\!\!&\!\!\!\stackrel{\alpha}{\rightarrow}\!\!\!&\!\!\!\check{H}_p(A)\oplus
\check{H}_p(B)\!\!\!&\!\!\!\stackrel{\beta}{\rightarrow}\!\!\!&\!\!\!\check{H}_{p}(X)\!\!\!&\!\!\!\rightarrow\!\!\!&\!\!\!
\cdots\!\!\!\rightarrow\!\!\!&\!\!\!\check{H}_0(X)\!\!\!\rightarrow\!\!\!&\!\!\!0
\end{array}
\end{eqnarray*}
is exact.
\end{thm}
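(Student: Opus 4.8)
The plan is to deduce the exactness of the Mayer--Vietoris sequence from the Eilenberg--Steenrod axiomatic framework, exactly as one does for ordinary homology, once we have made sure that \Cech homology genuinely satisfies all the axioms in the setting at hand. The key observation is that, on the category of compact Hausdorff spaces, \Cech homology fulfils every Eilenberg--Steenrod axiom except possibly exactness: the homotopy, excision, dimension and functoriality axioms hold in full generality (see \cite{EiSt52}). Thus the only axiom whose validity must be secured is exactness, and this is precisely the content of Theorem \ref{pair}: for a compact pair and coefficients $G$ in a vector space over a field, the homology sequence of the pair is exact. Restricting attention to this category, therefore, \Cech homology is a bona fide homology theory in the sense of Eilenberg--Steenrod, and the Mayer--Vietoris sequence becomes available as a purely formal consequence.

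First I would verify that the hypotheses of Theorem \ref{pair} are met for every pair entering the construction. Since $(X,A,B)$ is a compact proper triad with $X=A\cup B$, the subspaces $A$, $B$ and $A\cap B$ are closed in $X$, hence compact, so that the pairs $(X,A)$, $(X,B)$, $(A,A\cap B)$ and $(B,A\cap B)$ are all compact; by Theorem \ref{pair} each of their long homology sequences is exact. Moreover, the hypothesis that the triad is \emph{proper} is exactly what supplies the excision isomorphisms $\check{H}_p(A,A\cap B)\cong\check{H}_p(X,B)$ and $\check{H}_p(B,A\cap B)\cong\check{H}_p(X,A)$, induced by inclusion, for every $p\ge 0$. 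With these two ingredients in place, I would assemble the Mayer--Vietoris sequence by the standard, purely formal construction of \cite{EiSt52}, Chap. I, Thm. 15.3: one splices the exact sequences of the pairs listed above and transports the relative groups along the excision isomorphisms, so that the connecting homomorphism $\Delta$ arises from a boundary map composed with the inverse of an excision isomorphism, while $\alpha$ and $\beta$ are built from the inclusion-induced maps. Exactness at each node then follows term by term from the exactness of the constituent pair sequences together with the commutativity forced by functoriality.

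The main obstacle is not the formal splicing, which is classical, but ensuring that we never leave the category in which all axioms are simultaneously available. The crucial step is therefore the appeal to Theorem \ref{pair}, since exactness is the single axiom that \Cech homology can fail in general, and it is recovered here only because of the compactness of the triad and the choice of field coefficients. Once this has been checked for each of the four compact pairs involved, the conclusion follows directly from the Eilenberg--Steenrod machinery, and no \Cech-specific argument beyond Theorem \ref{pair} is required.
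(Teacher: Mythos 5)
Your proposal is correct and follows essentially the same route as the paper: the paper likewise observes that once Theorem \ref{pair} restores the exactness axiom for compact pairs with field coefficients, \Cech homology satisfies all the Eilenberg--Steenrod axioms, and the Mayer--Vietoris sequence then follows formally from \cite{EiSt52}, Chap.\ I, Thm.\ 15.3. Your additional remarks on verifying compactness of the constituent pairs and on the role of the proper-triad hypothesis in supplying the excision isomorphisms are consistent with, and slightly more explicit than, the paper's brief argument.
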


Concerning homomorphisms between Mayer-Vietoris sequences, from
\cite{EiSt52}, Chap. I, Thm. 15.4, we  deduce the following
result.

\begin{thm}\label{MVhom}
If  $(X,A, B)$ and $(Y,C,D)$ are  proper triads, $X=A\cup B$,
$Y=C\cup D$, and $f:(X,A,B)\rightarrow (Y,C,D)$ is a map of one
proper triad into another, then $f$ induces a homomorphism of the
Mayer-Vietoris sequence of  $(X, A, B)$ into that of  $(Y, C, D)$
such that commutativity holds in the diagram
\begin{eqnarray*}\label{MVcomm}
\begin{array}{cccccccccc}
\cdots\rightarrow\!\!\!&\!\!\!\check{H}_{p+1}(X)\!\!\!&\!\!\!{\rightarrow}\!\!\!&\!\!\!\check{H}_p(A\cap B)\!\!\!&\!\!\!{\rightarrow}\!\!\!&\!\!\!\check{H}_p(A)\oplus \check{H}_p(B)\!\!\!&\!\!\!{\rightarrow}\!\!\!&\!\!\!\check{H}_{p}(X)\!\!\!&\!\!\!\rightarrow\!\!\!&\!\!\! \cdots\\
&&&&&&&&&\\
&\downarrow &&\downarrow &&\downarrow &&\downarrow &&\\
&&&&&&&&&\\
 \cdots\rightarrow\!\!\!&\!\!\!\check{H}_{p+1}(Y)\!\!\!&\!\!\!
{\rightarrow}\!\!\!&\!\!\!\check{H}_p(C\cap D)\!\!\!&\!\!\!
{\rightarrow}\!\!\!&\!\!\!\check{H}_p(C)\oplus
\check{H}_p(D)\!\!\!&\!\!\!{\rightarrow}\!\!\!&\!\!\!\check{H}_p(Y)\!\!\!&\!\!\!\rightarrow\!\!\!&\!\!\!\cdots\\
&&&&&&&&&\\
\end{array}
\end{eqnarray*}
\end{thm}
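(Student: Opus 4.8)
The plan is to derive this statement as a direct application of the general naturality theorem for the Mayer--Vietoris sequence in an axiomatic homology theory, namely \cite{EiSt52}, Chap. I, Thm. 15.4, once we have checked that \Cech homology qualifies as such a theory in our framework.

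First I would recall that, under our standing hypotheses --- the coefficient group $G$ is a vector space over a field and all the spaces are compact Hausdorff --- the exactness axiom for \Cech homology holds by Theorem \ref{pair}. Since \Cech homology satisfies the remaining Eilenberg--Steenrod axioms without restriction, in this setting it is a homology theory in the full axiomatic sense, and hence every general result of Chap. I of \cite{EiSt52} is available. In particular, the Mayer--Vietoris sequence of a proper triad exists and is constructed in a purely formal way (cf. Theorem \ref{MV}): the homomorphisms $\alpha$ and $\beta$ are assembled from the maps induced by the inclusions $A\cap B\hookrightarrow A$, $A\cap B\hookrightarrow B$, $A\hookrightarrow X$, and $B\hookrightarrow X$, while $\Delta$ is built from the connecting homomorphisms of the associated pair sequences.

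Next I would use the fact that a map of proper triads $f:(X,A,B)\rightarrow(Y,C,D)$ carries $A$ into $C$ and $B$ into $D$, and hence $A\cap B$ into $C\cap D$ and $X=A\cup B$ into $Y=C\cup D$. By functoriality of the induced homomorphisms, $f$ thus determines compatible maps on $\check{H}_p(A\cap B)$, $\check{H}_p(A)\oplus\check{H}_p(B)$, and $\check{H}_p(X)$ together with their counterparts for $(Y,C,D)$. Commutativity of the squares containing $\alpha$ and $\beta$ is then immediate, since these maps are composites of induced inclusions and $f$ commutes with the relevant inclusions; the only square needing a separate argument is the one containing $\Delta$, whose commutativity is precisely the naturality of the connecting homomorphism, valid in any homology theory. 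Gathering these facts is exactly the content of \cite{EiSt52}, Chap. I, Thm. 15.4.

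The main --- and essentially the only --- obstacle is verifying that the hypotheses of this abstract theorem are genuinely met: both triads must be proper, and the homology theory must be exact on the pairs involved. Properness is assumed in the statement, while exactness is guaranteed by Theorem \ref{pair}, since $G$ is a vector space over a field and the spaces are compact. With these points secured, no further computation is required and the displayed commutative diagram follows.
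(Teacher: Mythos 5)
Your proposal is correct and follows exactly the route the paper takes: the paper offers no separate proof of Theorem \ref{MVhom}, deducing it directly from \cite{EiSt52}, Chap.\ I, Thm.\ 15.4 after having established (via Theorem \ref{pair}) that \Cech homology satisfies all the Eilenberg--Steenrod axioms for compact pairs with coefficients in a vector space over a field. Your additional remarks on functoriality for the $\alpha$, $\beta$ squares and naturality of the connecting homomorphism for the $\Delta$ square simply make explicit what that citation contains.
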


A relative form of the Mayer-Vietoris sequence, different from the
one proposed in \cite{EiSt52}, is useful in the present paper. In
order to obtain this sequence, we can adapt the construction
explained in \cite{Ha02} to \Cech homology and obtain the
following result.

\begin{thm}\label{MVrel}
If $(X, A, B)$ and $(Y, C, D)$ are compact proper triads with
$X=A\cup B$, $Y=C\cup D$, $Y\subseteq X$, $C\subseteq A$,
$D\subseteq B$, then there is a relative Mayer-Vietoris sequence
of homology groups with coefficients in  a vector space $G$ over a
field
\begin{eqnarray*}
\begin{array}{cccccccccccc}
\cdots\rightarrow\!\!\!&\!\!\!\check{H}_{p+1}(X,Y)\!\!\!&\!\!\!{\rightarrow}\!\!\!&\!\!\!\check{H}_p(A\cap
B, C\cap D)\!\!\!&\!\!\!{\rightarrow}\!\!\!&\!\!\!\check{H}_p(A,
C)\oplus \check{H}_p(B,
D)\!\!\!&\!\!\!{\rightarrow}\!\!\!&\!\!\!\check{H}_{p}(X,
Y)\!\!\!&\!\!\!\rightarrow\!\!\!&\!\!\!
\cdots\!\!\!\rightarrow\!\!\!&\!\!\!\check{H}_0(X,
Y)\!\!\!\rightarrow\!\!\!&\!\!\!0
\end{array}
\end{eqnarray*}
that is exact.
\end{thm}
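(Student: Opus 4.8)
The plan is to adapt the chain-level construction of the relative Mayer--Vietoris sequence given in \cite{Ha02} to the \Cech setting, exploiting the fact that, under our hypotheses, \Cech homology satisfies all the Eilenberg--Steenrod axioms. Indeed, for the compact pairs at hand and coefficients in a vector space over a field, the exactness axiom holds (Theorem \ref{pair}), while the homotopy, excision and dimension axioms hold for \Cech theory on compact Hausdorff pairs in general; moreover, the absolute Mayer--Vietoris sequence (Theorem \ref{MV}) and its naturality (Theorem \ref{MVhom}) are already at our disposal. The three maps of the desired sequence will be the inclusion-induced homomorphisms $\check{H}_p(A\cap B,C\cap D)\to\check{H}_p(A,C)\oplus\check{H}_p(B,D)$, with the usual sign $x\mapsto(i_*x,-j_*x)$, and $\check{H}_p(A,C)\oplus\check{H}_p(B,D)\to\check{H}_p(X,Y)$, the sum $(a,b)\mapsto k_*a+l_*b$, together with a connecting homomorphism produced by the construction below.

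First I would work at the level of the nerves of the finite open coverings, where everything is simplicial and Hatcher's argument applies. Each simplex of the nerve of a covering of $X=A\cup B$ lies on $A$ or on $B$, so the subcomplexes on $A$ and on $B$ cover the whole nerve and the simplicial Mayer--Vietoris short exact sequence of chain complexes is available; relativizing it modulo the subcomplexes coming from $Y$, $C$ and $D$ and then taking homology yields a long exact sequence whose outer term is identified with the relative homology of $(X,Y)$ by excision. Naturality (Theorem \ref{MVhom}) ensures that the ladder built from the covering of $(X;A,B)$ and its restriction to $(Y;C,D)$ commutes, and the exactness of the pair sequence (Theorem \ref{pair}) lets me recognize the remaining terms as $\check{H}_p(A\cap B,C\cap D)$ and $\check{H}_p(A,C)\oplus\check{H}_p(B,D)$. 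Passing to the inverse limit over the directed set of coverings then produces the sequence in the statement.

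The hard part will be controlling the two phenomena that distinguish the \Cech situation from the singular one. The first is that, at a fixed covering, a simplex may lie on $A$ and on $B$ without lying on $A\cap B$, so the identification of the intersection term and the excision isomorphism relating the homology of the ``sum'' complex to $\check{H}_p(X,Y)$ are not immediate; they require passing to sufficiently fine coverings and invoking that $(X;A,B)$ and $(Y;C,D)$ are proper triads. The second is that exactness must survive the inverse limit: this is exactly where the hypothesis that $G$ is a vector space over a field is essential, since the nerves of finite coverings of a compact space are finite complexes with finite-dimensional homology, so the relevant inverse systems satisfy the Mittag--Leffler condition and their $\varprojlim^1$ vanishes, the same mechanism that underlies Theorem \ref{pair}. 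Once these two points are secured, the limit of the exact nerve sequences is itself exact, and a final comparison with the pair sequences of $(A\cap B,C\cap D)$, $(A,C)$, $(B,D)$ and $(X,Y)$ via the Five Lemma confirms that the limiting sequence is precisely the claimed relative Mayer--Vietoris sequence.
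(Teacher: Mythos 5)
Your proposal follows essentially the same route as the paper's proof: form the relative Mayer--Vietoris sequence of \cite{Ha02} at the level of the relative simplicial homology of the nerves $(\U,\U_Y)$, $(\U_A,\U_C)$, $(\U_B,\U_D)$, $(\U_{A\cap B},\U_{C\cap D})$ for each finite open covering $\U$, and then pass to the inverse limit over $\Sigma(X)$, where exactness is preserved precisely because the coefficients form a vector space over a field. The only difference is one of emphasis: you explicitly flag the discrepancy at a fixed covering between $\U_A\cap\U_B$ and $\U_{A\cap B}$ and the role of the proper-triad hypothesis in resolving it, as well as the $\varprojlim$-exactness mechanism, points which the paper's proof leaves implicit in its citations of \cite{Ha02}, \cite{EiSt52} and \cite{Ke61}.
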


\begin{proof}
Given a covering $\mathcal U$ of $\Sigma(X)$, we may consider the
relative simplicial homology groups $H_p({\mathcal U},{\mathcal
U}_Y)$, $H_p({\mathcal U}_A,{\mathcal U}_C)$, $H_p({\mathcal
U}_B,{\mathcal U}_D)$, $H_p({\mathcal U}_{A\cap B},{\mathcal
U}_{C\cap D})$, for every $p \geq 0$.  For these groups the
relative Mayer-Vietoris sequence
\begin{eqnarray*}
\begin{array}{cccccccccc}
\cdots\rightarrow\!\!\!&\!\!\! H_{p+1}({\mathcal U},{\mathcal
U}_Y)        \!\!\!&\!\!\!{\rightarrow}\!\!\!&\!\!\!
H_p({\mathcal U}_{A\cap B},{\mathcal U}_{C\cap
D})\!\!\!&\!\!\!{\rightarrow}\!\!\!&\!\!\! H_p({\mathcal
U}_A,{\mathcal U}_C)         \oplus H_p({\mathcal U}_B,{\mathcal
U}_D)   \!\!\!&\!\!\!{\rightarrow}\!\!\!&\!\!\!    H_p({\mathcal
U},{\mathcal U}_Y)
\!\!\!&\!\!\!\rightarrow\!\!\!&\!\!\! \cdots
\end{array}
\end{eqnarray*}
 is exact (cf. \cite{Ha02}, page 152).

We now recall that the $p$th \Cech homology group of a pair of
spaces $(X,Y)$ over $G$ is the inverse limit of the system of
groups  $\{H_p({\mathcal U}, {\mathcal U}_Y, G), \pi_{{\mathcal
U}{\mathcal V}}\}$ defined on the direct set of all open coverings
of the pair $(X,Y)$  (cf. \cite{EiSt52}, Chap. IX, Thm. 3.2 and
Def. 3.3). The claim is proved recalling that, given an inverse
system of exact lower sequences, where all the terms of the
sequence belong to the category of vector spaces over a field, the
limit sequence is also exact (cf. \cite{EiSt52}, Chap. VIII, Thm.
5.7, and \cite{Ke61}).
\end{proof}
The following result, concerning homomorphisms of relative
Mayer-Vietoris exact sequences, holds. We omit the proof, which
can be obtained in a standard way.

\begin{thm}\label{MVrelhom}
If $(X, A, B)$, $(Y, C, D)$, $(X', A', B')$, $(Y', C', D')$ are
compact proper triads with $X=A\cup B$, $Y=C\cup D$, $Y\subseteq
X$, $C\subseteq A$, $D\subseteq B$, and $X'=A'\cup B'$, $Y'=C'\cup
D'$, $Y'\subseteq X'$, $C'\subseteq A'$, $D'\subseteq B'$, and
$f:X\rightarrow X'$ is a map such that $f(Y)\subseteq Y'$,
$f(A)\subseteq A'$, $f(B)\subseteq B'$, $f(C)\subseteq C'$,
$f(D)\subseteq D'$, then $f$ induces a homomorphism of the
relative Mayer-Vietoris sequences such that commutativity holds in
the diagram
\begin{eqnarray*}
\begin{array}{cccccccccc}
\cdots\rightarrow\!\!\!&\!\!\!\check{H}_{p+1}(X,Y)\!\!\!&\!\!\!{\rightarrow}
\!\!\!&\!\!\!\check{H}_p(A\cap B, C\cap
D)\!\!\!&\!\!\!{\rightarrow}\!\!\!&\!\!\!\check{H}_p(A, C)\oplus
\check{H}_p(B,
D)\!\!\!&\!\!\!{\rightarrow}\!\!\!&\!\!\!\check{H}_{p}(X,
Y)\!\!\!&\!\!\!\rightarrow\!\!\!&\!\!\! \cdots\\
&&&&&&&&&\\
&\downarrow &&\downarrow &&\downarrow &&\downarrow &&\\
&&&&&&&&&\\
\cdots\rightarrow\!\!\!&\!\!\!\check{H}_{p+1}(X',Y')\!\!\!&\!\!\!{\rightarrow}\!\!\!&\!\!\!\check{H}_p(A'\cap
B', C'\cap
D')\!\!\!&\!\!\!{\rightarrow}\!\!\!&\!\!\!\check{H}_p(A',
C')\oplus \check{H}_p(B',
D')\!\!\!&\!\!\!{\rightarrow}\!\!\!&\!\!\!\check{H}_{p}(X',
Y')\!\!\!&\!\!\!\rightarrow\!\!\!&\!\!\! \cdots\\
\end{array}
\end{eqnarray*}
\end{thm}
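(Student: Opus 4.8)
The plan is to imitate the proof of Theorem~\ref{MVrel}, together with the absolute statement of Theorem~\ref{MVhom}: I would establish the asserted commutativity first at the simplicial level, on the nerves of coverings, and only then transfer it to \Cech homology by passing to the inverse limit. The guiding idea is that the whole construction is natural with respect to maps of triads, so that once naturality is verified coordinate-wise it survives the limiting process.

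First I would fix an open covering $\U'$ of $X'$ and form its pullback $\U=f^{-1}(\U')\in\Sigma(X)$. As recalled in Appendix~\ref{cech}, $f$ induces a simplicial map $f_{\U'}:\U\rightarrow\U'$ sending $f^{-1}(U')$ to $U'$. The hypotheses $f(A)\subseteq A'$, $f(B)\subseteq B'$, $f(C)\subseteq C'$, $f(D)\subseteq D'$, $f(Y)\subseteq Y'$ guarantee that $f_{\U'}$ carries each subcomplex $\U_A,\U_B,\U_C,\U_D,\U_Y$ into the corresponding primed subcomplex: if a simplex of $\U$ meets $A$, then its image meets $A'$ because $f(A)\subseteq A'$, and similarly for the others. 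Consequently $f_{\U'}$ induces chain maps between the relative simplicial chain complexes of the pairs $(\U,\U_Y)$, $(\U_A,\U_C)$, $(\U_B,\U_D)$, $(\U_{A\cap B},\U_{C\cap D})$ and their primed analogues.

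Next I would note that the relative simplicial Mayer--Vietoris sequence used in Theorem~\ref{MVrel} is the long exact sequence attached to a short exact sequence of relative chain complexes (cf.~\cite{Ha02}, page~152). The map $f_{\U'}$ induces a morphism of these short exact sequences; it commutes with the inclusion-induced maps by construction, while commutativity with the connecting homomorphism follows from the naturality of the long exact homology sequence associated with a morphism of short exact sequences of chain complexes. This produces, for each covering $\U'$, a commutative ladder between the two simplicial relative Mayer--Vietoris sequences.

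Finally I would pass to the inverse limit over $\Sigma(X')$. Exactly as in the absolute case recalled in Appendix~\ref{cech}, if $\V'>\U'$ then $f^{-1}(\V')>f^{-1}(\U')$ and the maps $f_{\U'}$, $f_{\V'}$ commute with the projections, so the family $\{f_{\U'}\}$ is a morphism of the inverse system defining $\check H_*(X,Y)$ into the one defining $\check H_*(X',Y')$. Since the \Cech induced homomorphism is precisely the limit of this morphism (the $\U'$-coordinate of $f_*(z_p)$ being $f_{\U'}(z_p(f^{-1}(\U')))$) and the inverse limit is functorial, the finite-stage commutative ladders assemble into the desired commutative ladder of \Cech relative Mayer--Vietoris sequences. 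The hard part will be the commutativity with the connecting homomorphism, since the inclusion-induced maps commute with $f_*$ essentially by definition whereas the connecting map is built by a zig-zag; I would dispatch this by invoking naturality of the long exact sequence rather than chasing the construction by hand, the only remaining care being to confirm that the reindexing $\U'\mapsto f^{-1}(\U')$ is compatible with refinement, which is exactly the compatibility already used for the absolute induced homomorphism in Appendix~\ref{cech}.
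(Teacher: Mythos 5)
Your proposal is correct and is precisely the ``standard way'' the paper alludes to when it omits this proof: naturality of the simplicial relative Mayer--Vietoris ladder on the nerves of coverings (with the subcomplex conditions checked exactly as you do from $f(A)\subseteq A'$, etc.), followed by the same passage to the inverse limit used in the paper's proof of Theorem~\ref{MVrel} and the definition of induced homomorphisms recalled in Appendix~\ref{cech}. No gaps.
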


\subsection*{Acknowledgments}
We wish to thank  F. Cagliari, M. Grandis, R. Piccinini for their
helpful suggestions and P. Frosini for suggesting the example in
Figure \ref{patologicEx} (a). Thanks to A. Cerri and F. Medri for
their invaluable help with the software. Anyway, the authors are
solely responsible for any possible errors.

Finally, we wish to express our gratitude to M. Ferri and P. Frosini for
their indispensable support and friendship.

This work was partially performed within the activity of ARCES (University of Bologna).

\end{document}